\newtheorem{theorem}{Theorem}[section]
\newtheorem{proposition}[theorem]{Proposition}
\newtheorem{lemma}[theorem]{Lemma}
\newtheorem{corollary}[theorem]{Corollary}
\theoremstyle{definition}
\newtheorem{example}[theorem]{Example}
\newtheorem{remark}[theorem]{Remark}
\newtheorem{hypothesis}[theorem]{Hypothesis}
\let\originalleft\left
\let\originalright\right
\renewcommand{\left}{\mathopen{}\mathclose\bgroup\originalleft}
\renewcommand{\right}{\aftergroup\egroup\originalright}
\DeclareMathOperator*{\argmin}{argmin}
\newcommand{\me}{\mathcal{E}}
\newcommand{\md}{\mathcal{D}}
\newcommand{\mf}{\mathcal{F}}
\newcommand{\mh}{\mathcal{H}}
\newcommand{\ml}{\mathcal{L}}
\newcommand{\mk}{\mathcal{K}}
\newcommand{\mn}{\mathcal{N}}
\newcommand{\mi}{\mathcal{I}}
\newcommand{\mr}{\mathcal{R}}
\newcommand{\ms}{\mathcal{S}}
\newcommand{\eepstau}{\me_{\eps, \tau}}
\newcommand{\eepstaurhat}[1]{\me_{\eps, \tau}(#1|\hat\rrho)}
\newcommand{\sspc}{{\mathfrak X}_0}
\newcommand{\dst}{\mathrm{d}}
\newcommand{\wass}{{\mathbf W}}
\newcommand{\dd}{\,\mathrm{d}}
\newcommand{\dn}{\mathrm{d}}
\newcommand{\mptrd}{{\mathcal{P}_2}(\Rd)}
\newcommand{\mom}{{\mathfrak m}}
\newcommand{\R}{{\mathbb R}}
\newcommand{\Q}{{\mathbb Q}}
\newcommand{\Rp}{{\mathbb R}_{>0}}
\newcommand{\Rnn}{{\mathbb R}_{\ge0}}
\newcommand{\Rd}{{\mathbb{R}^{d}}}
\newcommand{\dff}{{\mathrm D}}
\newcommand{\supp}{\mathrm{supp}}
\newcommand{\dv}{\mathrm{div}}
\newcommand{\eps}{\varepsilon}
\newcommand{\intrd}{\int_{\mathbb{R}^d}}
\newcommand{\intd}{\mathrm{d}}
\newcommand{\iintrdrd}{\iint_{\mathbb{R}^d \times \mathbb{R}^d}}
\newcommand{\ddszerop}{\left. \frac{\dd}{\dd s} \right|_{s = 0^+}}
\newcommand{\weakto}{\rightharpoonup}
\newcommand{\rhoone}{\rho_{1}}
\newcommand{\rhotwo}{\rho_{2}}
\newcommand{\rrho}{\bm{\rho}}
\newcommand{\rr}{{\mathbf r}}
\newcommand{\eeta}{\bm{\eta}}
\newcommand{\uu}{\bm{u}}
\newcommand{\vv}{\bm{v}}
\title[Convergence to equilibrium with nonlocal interaction]{Convergence to equilibrium for cross diffusion systems with nonlocal interaction}
\author{Daniel Matthes} 
\author{Christian Parsch}
\thanks{This research was supported by the DFG Collaborative Research Center TRR 109, ``Discretization in Geometry and Dynamics''.}
\date{\today}
\begin{document}


\address{D. Matthes, C. Parsch --  Zentrum Mathematik/M8, Technische Universit\"{a}t M\"{u}nchen, Boltzmannstrasse 3, D-80538 Garching, Germany}
\email{matthes@ma.tum.de}
\email{christian.parsch@tum.de}

\begin{abstract}
    We study the existence and the rate of equilibration of weak solutions to a two-component system of non-linear diffusion-aggregation equations, with small cross diffusion effects. The aggregation term is assumed to be purely attractive, and in the absence of cross diffusion, the flow is exponentially contractive towards a compactly supported steady state. Our main result is that for small cross diffusion, the system still converges, at a slightly lower rate, to a deformed but still compactly supported steady state.

    Our approach relies on the interpretation of the PDE system as a gradient flow in a two-component Wasserstein metric. The energy consists of a uniformly convex part responsible for self-diffusion and non-local aggregation, and a totally non-convex part that generates cross diffusion; the latter is scaled by a coupling parameter $\eps>0$. The core idea of the proof is to perform an $\eps$-dependent modification of the convex/non-convex splitting and establish a control on the non-convex terms by the convex ones.
    %
\end{abstract}

\keywords{}
\subjclass[2020]{}

\maketitle

\section{Introduction}

In this paper, we consider the following system of diffusion-aggregation equations with (small) cross diffusion on $\R^d$:
\begin{equation}\label{eq:system-intro}
    \begin{cases}
        \partial_t\rhoone=\dv\left(\rhoone\nabla[F_1'(\rhoone)+\varepsilon\partial_{r_1}h(\rhoone,\rhotwo)+K*\rhotwo]\right),\\
        \partial_t\rhotwo=\dv\left(\rhotwo\nabla[F_2'(\rhotwo)+\varepsilon\partial_{r_2}h(\rhoone,\rhotwo)+K*\rhoone]\right).
    \end{cases}
\end{equation}
Above, the pair $\rrho=(\rho_1,\rho_2)$ of time-dependent probability densities is sought, $F_1,\,F_2$ are given non-linearities for the (self-)diffusions, $h$ induces cross diffusion of strength $\eps\ge0$, and $K:\R^d\to\R$ is an attractive interaction kernel. We shall assume that $F_1$ and $F_2$ are convex and degenerate at zero, $F_1'(0)=F_2'(0)=0$, that the derivatives of $h$ can be controlled by those of $F_1$ and $F_2$ in a way precised further below, and that $K$'s convexity is uniform, $\nabla^2 K\ge \lambda\,\mathbf{1}$ with some $\lambda>0$. The system is augmented with initial conditions,
\begin{align}
    \label{eq:ic-intro}
    \rho_1(0,\cdot) = \rho_1^0,\quad \rho_2(0,\cdot) = \rho_2^0,
\end{align}
which we assume to have finite energy, see below. The PDE system \eqref{eq:system-intro} falls into a general class of macroscopic models for multi-species many-particle systems in which long-range attraction forces compete with local diffusion --- the former tend to accumulate particles, the latter prevent overcrowding. Such competition is fundamental in various models for interacting particles, including chemotaxis \cite{BCC,Luckhaus}, granular media \cite{Gentil,CMV}, social dynamics \cite{gangs,Schlake}, semi-conductors \cite{KMX} and various further applications. Here, we shall not focus on specific choices of the parameters $F_1$, $F_2$, $K$ and $h$ related to any of these models, but consider \eqref{eq:system-intro} under quite general hypotheses.

\subsection{Motivation: persistence of equilibration}
Our goal is to show at the example of the system \eqref{eq:system-intro} the persistence of the equilibration behaviour in contractive aggregation-(self-)diffusion flows under the influence of cross diffusion. Specifically, we perform a rigorous analysis of existence and long-time asymptotics of weak solutions for sufficiently small $\eps>0$, and prove that the solutions are global in time and converge to the unique equilibrium at almost the same exponential rate as in the unperturbed situation $\eps=0$. This result is beyond perturbation theory not only because of its global character, but also because the perturbation is singular as we shall explain next.
 
The point of departure for our analysis is the gradient flow formulation of \eqref{eq:system-intro}: the respective energy functional is
\begin{equation}
    \label{eq:Eeps_def}
    \me_\eps(\rrho):= \intrd [F_1(\rhoone)+F_2(\rhotwo)+\eps h(\rhoone,\rhotwo) + \rhoone K*\rhotwo ]\,\intd x,
\end{equation}
the metric is a combined $L^2$-Wasserstein distance for $\rho_1$ and $\rho_2$. In the absence of cross diffusion, i.e., for $\eps=0$, that functional is uniformly geodesically convex, and --- thanks to the general theory of metric gradient flows \cite{AGS} --- the generated gradient flow \eqref{eq:system-intro} is uniformly exponentially contractive. In particular, there is a unique stationary solution $\bar{\rrho}$ to \eqref{eq:system-intro}, and that is approached at an exponential rate $\lambda>0$, see e.g. \cite[Chapter 11]{AGS}. Now, the perturbation induced by cross diffusion is singular in the sense that for any $\eps>0$, the energy $\me_\eps$ is not even semi-convex anymore, and consequently, the gradient flow \eqref{eq:system-intro} is not contractive. That devastating effect of cross diffusion has been proven in a related situation in \cite[Prop. 2.4]{BMZ}.

Despite the failure of the abstract machinery, we shall prove that for sufficiently small $\eps>0$, the equilibration behaviour persists in the sense that there is a unique stationary solution $\bar{\rrho}_\eps$ to \eqref{eq:system-intro}, which is a deformation of $\bar{\rrho}$ above, and that arbitrary solutions converge to $\bar{\rrho}_\eps$ at an exponential rate $\lambda_\eps\ge\lambda-C_0\eps$. Our result thus contributes to a line of studies on the exponential equilibration in non-convex metric gradient flows, see \cite{Alasio,BMZ,LM,Markowich,MMS,Zinsl-KS,Zinsl-KS1d,Zinsl-NP}. Differently from these previous works, the confining force leading to equilibration in the unperturbed system \eqref{eq:system-intro} is not induced by external potentials, but is self-induced via the non-local interaction of the species. This is very natural from a modeling point of view, but induces additional difficulties both on the technical and on the conceptual level. Generally speaking, the equilibrating force of non-local interactions is more fragile than that of fixed external potentials, and requires a more refined analysis. Most prominently, while the integral $\intrd \rho_1K\ast\rho_2\dd x$ is a uniformly geodesically convex functional of the pair $\rrho=(\rho_1,\rho_2)$ in the considered metric \cite{Zinsl}, this functional is \emph{not} convex in the sense of affine interpolation, which leads to non-convexity of the entire energy $\me_\eps$. Thus we are, for instance, not able to exclude the emergence of additional critical points in the perturbed energy $\me_\eps$ a priori by standard method from the calculus of variations, but need to obtain the uniqueness of the stationary solution $\bar{\rrho}_\eps$ a posteriori from the global equilibration estimates.

There has been a great recent interest in understanding the long-time behaviour of multi-species cross-diffusion systems, and particularly those with non-local effects. The focus has been on proving qualitative properties of the steady states, in particular segregation phenomena, see \cite{Segregation,Sorting,Numerics,Zoology} and references therein for analytical and numerical studies. It appears, however, that there are no rigorous results on the global exponential stability of these stationary solutions, not even in the non-segregated case that we study here.


\subsection{Main result}
Below, we summarize the main results of this paper in Theorem \ref{thm:main}. For a concise statement, introduce the space $\mptrd$ of probability measures on $\Rd$ with finite second moment, and the following subspace $\sspc\subset\mptrd^2$ of pairs of measures whose combined center of mass is at the origin:
\begin{align}
    \label{eq:sspc_def}
    \sspc := \left\{\rrho= (\rho_1, \rho_2) \in \mptrd^2: \intrd x\,\left(\rho_1 + \rho_2\right)\,\intd x = 0\right\}.
\end{align}
We prove:
\begin{theorem} \label{thm:main}
    Assuming that $F_1,F_2,h$ and $K$ satisfy Hypotheses \ref{hyp:general} and \ref{hyp:theta} further below, there exists some $\bar\eps > 0$ such that for every $\eps \in (0, \bar\eps]$, the following is true.
    \begin{itemize}
        \item The energy functional $\me_\eps$ possesses a unique $\eps$-dependent global minimizer $\bar\rrho=(\bar\rho_1,\bar\rho_2)$ in $\sspc$. The densities $\bar\rho_1$ and $\bar\rho_2$ are continuous in $\Rd$, radially symmetric about the origin, and compactly supported. 
        They satisfy, with suitable constants $C_1, C_2 > 0$, the Euler-Lagrange system
        \begin{equation} \label{eq:euler-lagrange_intro}
            \begin{split}
                F_1'(\bar\rho_1) + \varepsilon \partial_{r_1} h(\bar\rho_1, \bar\rho_2) &= \left(C_1 - K * \bar\rho_2\right)_+ \\
                F_2'(\bar\rho_2) + \varepsilon \partial_{r_2} h(\bar\rho_1, \bar\rho_2) &= \left(C_2 - K * \bar\rho_1\right)_+\, .
            \end{split}
        \end{equation}
        \item Given initial data $\rrho^0 \in \sspc$ of finite energy, i.e. $\me_\eps(\rrho^0) < +\infty$, there exists a weakly continuous curve $\rrho: [0, +\infty) \to \sspc$ 
        with $\rrho(0) = \rrho^0$ that satisfies the system  \eqref{eq:system-intro} in the sense that for every compactly supported test function $\zeta\in C^\infty_c(\Rp\times\Rd)$, 
        \begin{equation*}
            \begin{split}
            \int_0^\infty\intrd \left[\rho_1\, \partial_t \zeta - \rho_1\,\nabla\left[F_1'(\rho_1) + \eps \partial_{r_1}h(\rho_1, \rho_2) + K \ast \rho_2\right]\cdot\nabla \zeta \right]\,\intd x\intd t 
             &= 0, \\
            \int_0^\infty\intrd \left[  \rho_2\, \partial_t \zeta - \rho_2\,\nabla\left[F_2'(\rho_2) + \eps \partial_{r_2}h(\rho_1, \rho_2) + K \ast \rho_1\right]\cdot\nabla \zeta \right]\,\intd x\intd t 
            &= 0.
            \end{split}
        \end{equation*}
        \item There is a constant $C_0 > 0$, independent of $\eps \in (0, \bar\eps]$, such that the energy decays exponentially fast along this solution $\rrho$ with rate $\lambda - C_0 \eps$:
        \begin{align} \label{eq:expdecay_intro}
            \me_\eps(\rrho(t)) - \me_\eps(\bar\rrho) \leq C_1 \,\left(\me_\eps(\rrho^0) - \me_\eps(\bar\rrho)\right)\,e^{-2(\lambda - C_0 \eps) t}
        \end{align}
        at every $t \geq 0$ for some constant $C_1$ independent of $\eps$ and $\rrho^0$. Further, $\rrho$ converges to the unique global minimizer $\bar\rrho$ exponentially in $L^1$ with the same rate. 
    \end{itemize}
\end{theorem}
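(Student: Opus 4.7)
The natural strategy is to exploit the gradient flow formulation of Section 1.1 and to split the proof along the three bullet points, while keeping the non-convex cross-diffusion term $\eps h$ tightly controlled throughout. For the existence of a minimizer $\bar\rrho$ of $\me_\eps$ in $\sspc$, I apply the direct method: under the hypotheses, the perturbation $\eps\intrd h(\rho_1,\rho_2)\dd x$ should be dominated by a small fraction of $\intrd[F_1(\rho_1)+F_2(\rho_2)]\dd x$, which keeps $\me_\eps$ bounded below and lower semicontinuous under narrow convergence combined with second-moment control. The bound $\nabla^2 K\ge\lambda\mathbf{1}$ confines the centre-of-mass-free pair and provides tightness of any minimizing sequence, so a minimizer exists. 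The Euler--Lagrange system \eqref{eq:euler-lagrange_intro} is obtained from centre-of-mass-preserving perturbations of $\bar\rrho$; the positive-part structure $(\,\cdot\,)_+$ encodes the free-boundary condition induced by the degeneracy $F_i'(0)=0$. Compact support follows because $K$ grows at least quadratically, hence $K\ast\bar\rho_j$ exceeds $C_i$ outside a bounded set; continuity of $\bar\rho_i$ comes from inverting the strictly monotone map $r\mapsto F_i'(r)+\eps\partial_{r_i}h(r,\bar\rho_j(x))$ against the continuous right-hand side (possible for $\eps$ small by the implicit function theorem). Radial symmetry about the origin is proved by decreasing rearrangement, exploiting that $K$ is radial and convex and that $F_1,F_2$ behave well under rearrangement.

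For the second bullet, I would run the JKO minimizing-movement scheme: $\rrho^{n+1}_\tau$ minimizes $\rrho\mapsto\me_\eps(\rrho)+\tfrac1{2\tau}\wass^2(\rrho,\rrho^n_\tau)$ over $\sspc$, with $\wass$ the product $L^2$-Wasserstein distance. Each step exists by the same direct method as above. The discrete energy--dissipation inequality $\sum_n\wass^2(\rrho^{n+1}_\tau,\rrho^n_\tau)\le 2\tau\bigl(\me_\eps(\rrho^0)-\inf\me_\eps\bigr)$ provides $\tau^{1/2}$-Hölder equicontinuity in $\wass$ of the piecewise-constant interpolants, and hence narrow compactness as $\tau\downarrow 0$. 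The first-order optimality condition at each JKO step is a discrete version of \eqref{eq:system-intro}; passing to the continuum limit in the non-linear fluxes $\rho_i\nabla[F_i'(\rho_i)+\eps\partial_{r_i}h+K\ast\rho_j]$ is performed through flow-interchange estimates together with strong $L^1_{\mathrm{loc}}$-compactness inherited from the dissipation of an auxiliary entropy. Because $K$ is smooth and $h$ is controlled by the $F_i$, this step is technically standard despite the absence of global semi-convexity.

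The heart of the proof is the exponential decay \eqref{eq:expdecay_intro}, and here the main obstacle is that the abstract theory of \cite{AGS} cannot be invoked: $\me_\eps$ is not semi-convex for any $\eps>0$. To bypass this I would carry out the $\eps$-dependent convex/non-convex splitting announced in the abstract, writing $\me_\eps=\widetilde{\me}^{\,c}_\eps+\widetilde{\me}^{\,nc}_\eps$ so that the core $\widetilde{\me}^{\,c}_\eps$ remains $(\lambda-C_0\eps)$-geodesically convex (at a slightly reduced rate) while the non-convex remainder $\widetilde{\me}^{\,nc}_\eps$ is bounded by $O(\eps)$ times the dissipation generated by $\widetilde{\me}^{\,c}_\eps$. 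At $\eps=0$ the core is $\lambda$-geodesically convex thanks to the displacement convexity of the $F_i$ pieces together with the along-pairs convexity of the interaction $\intrd \rho_1 K\ast\rho_2\dd x$; the absorbed fraction of $\eps h$ only lowers this rate by $O(\eps)$. The resulting entropy--entropy dissipation inequality
\begin{equation*}
    |\partial\me_\eps|^2(\rrho)\ge 2(\lambda-C_0\eps)\bigl(\me_\eps(\rrho)-\me_\eps(\bar\rrho)\bigr),
\end{equation*}
combined with the standard energy-dissipation identity along the JKO flow, gives \eqref{eq:expdecay_intro} by Gronwall. The delicate point is to calibrate the absorption of $\eps h$ quantitatively: one must place enough of $h$ into $\widetilde{\me}^{\,c}_\eps$ to tame the remainder by the dissipation, yet not so much as to destroy the geodesic convexity at rate $\lambda-C_0\eps$. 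Hypothesis \ref{hyp:theta} controlling $\partial_{r_i}h$ by $F_i'$ is precisely what should make this quantitative balance possible. Once exponential decay of the excess energy is in hand, uniqueness of $\bar\rrho$ follows a posteriori, since any stationary solution is its own long-time limit; the $L^1$-convergence finally follows from a Csiszár--Kullback-type inequality linking $\me_\eps(\rrho)-\me_\eps(\bar\rrho)$ to $\|\rrho-\bar\rrho\|_{L^1}^2$ on the bounded supports at stake.
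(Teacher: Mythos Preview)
Your outline tracks the paper's architecture closely --- direct method for the minimizer, JKO scheme with a flow-interchange/entropy estimate for compactness, and a Csisz\'ar--Kullback inequality at the end. Two points need correction.

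\textbf{The splitting.} Your description of the convex/non-convex decomposition as ``placing enough of $h$ into $\widetilde{\me}^{\,c}_\eps$'' is too vague and, read literally, cannot work: moving any positive multiple of $\int h(\rho_1,\rho_2)\,\intd x$ into the ``convex'' part destroys geodesic semi-convexity for exactly the same reason $\me_\eps$ itself is not semi-convex. The paper's actual splitting (already sketched in the introduction, equations \eqref{eq:L_def}--\eqref{eq:V_def}) is to \emph{linearize $h$ at the minimizer $\bar\rrho$}: with $V_j := \partial_{r_j}h(\bar\rho_1,\bar\rho_2)$ one moves the \emph{linear} potential-energy terms $\eps\int(\rho_1 V_1+\rho_2 V_2)\,\intd x$ into $\ml_\eps$ and leaves the remainder $h(\rrho)-\rho_1 V_1-\rho_2 V_2$ in $\mn_\eps$. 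This is the crux. The $V_j$ are now fixed functions on $\Rd$, and the regularity of $\bar\rrho$ derived from the Euler--Lagrange system makes them $(-K_0)$-semiconvex (Corollary~\ref{cor:vj_semiconv}), so $\ml_\eps$ is genuinely $(\lambda-\eps K_0)$-geodesically convex. Moreover this choice forces $\bar\rrho$ to be the unique minimizer of $\ml_\eps$ and a critical point of $\mn_\eps$, which is exactly what makes the slope comparison (Proposition~\ref{thm:key_estimate}, Corollary~\ref{cor:estim_slopes}) go through. The paper then uses $\ml_\eps$ --- not $\me_\eps$ --- as the Lyapunov functional along the $\me_\eps$-JKO iterates (Lemma~\ref{lem:decay_estim_step}), and only afterwards transfers the decay to $\me_\eps$ via $|\mn_\eps(\rrho)-\mn_\eps(\bar\rrho)|\le C_\mn(\ml_\eps(\rrho)-\ml_\eps(\bar\rrho))$ (Lemma~\ref{lem:neps_estimate_leps}). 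Your stated entropy--dissipation inequality for $\me_\eps$ itself is not what is established.

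\textbf{Radial symmetry.} The paper does not use rearrangement. Uniqueness of $\bar\rrho$ is obtained a posteriori from the slope comparison (Corollary~\ref{cor:min_unique}), and radial symmetry then follows immediately from uniqueness together with the rotational invariance of $\me_\eps$ (Lemma~\ref{lem:min_uniq}). Your rearrangement route would have to handle the two-component interaction $\int\rho_1 K\ast\rho_2\,\intd x$ with a radially \emph{increasing} kernel, which is not the standard Riesz setting; the paper sidesteps this entirely.
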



\subsection{Strategy of proof}
We give cartoon of our approach to proving the persistence of long-time asymptotics in ODE language. Assume that two potentials $E_0,\,H:\R\to\R$ are given, where $E_0\in C^2(\R)$ is uniformly convex of modulus $\lambda>0$, i.e., $E_0''\ge\lambda$, and $H\in C^1(\R)$ is bounded below but \emph{not} convex. For $\eps\ge0$, define the energy $E_\eps := E_0+\eps H$, which is in general not convex for any $\eps>0$. We wish to study the long-time asymptotics of the associated gradient flow, 
\begin{align}
    \label{eq:ode}
    \dot x = -E_\eps'(x) = -E_0'(x) - \eps H'(x). 
\end{align}
Thanks to the lower bound on $H$, there exists a global minimizer $\bar x_\eps$ of $E_\eps$ for every $\eps\ge0$; a priori, $\bar x_\eps$ need not be unique. Now introduce the two auxiliary functions
\begin{align*}
    L_\eps(x) = E_0(x) + \eps (x-\bar x_\eps)H'(\bar x_\eps),
    \quad
    N_\eps(x) = H(x) - (x-\bar x_\eps)H'(\bar x_\eps),
\end{align*}
and note that $\bar x_\eps$ is the global minimum of the $\lambda$-convex function $L_\eps$, and is a critical point of the non-convex function $N_\eps$. Finally, we add our core hypothesis on the relation between $H$ and $E_0$: for $0<\eps<1$, the following inequality holds for all $x\in\R$:
\begin{align}
    \label{eq:artificial}
    |N_\eps'(x)|\le|L_\eps'(x)| .
\end{align}
With that at hand, $L_\eps$ becomes a Lyapunov functional: for any solution $x$ to \eqref{eq:ode},
\begin{align*}
    -\frac{\dd}{\dn t}\big[L_\eps(x)-L_\eps(x_\eps)\big]
    = -L_\eps'(x)\dot x 
    &= L_\eps'(x)^2 + \eps L_\eps'(x)N_\eps'(x) \\
    &\ge (1-\eps) L_\eps'(x)^2 
    \ge 2\lambda(1-\eps)\big[L_\eps(x)-L_\eps(x_\eps)\big],
\end{align*}
the last inequality being a consequence of $L_\eps$'s $\lambda$-convexity. It follows that $L_\eps(x)$ tends to its minimal value at the (diminished) exponential rate $2\lambda(1-\eps)$. It then further follows easily that $E_\eps(x)$ decays at the same exponential rate, and also that the solution $x$ itself goes to $\bar x_\eps$ at that rate. And in particular, the chosen minimizer $\bar x_\eps$ turns out to be the unique one.


For \eqref{eq:system-intro}, the roles of $E_\eps$ and of $H$ are played by $\me_\eps$, which is $\lambda$-uniformly geodesically convex for $\eps=0$, and by the integral of $h(\rho_1,\rho_2)$, that has no geodesic convexity, respectively. The auxiliary functionals $L_\eps$ and $N_\eps$ correspond to, respectively,
\begin{align}
    \label{eq:L_def}
    \ml_\eps(\rrho) &:= \intrd \left[F_1(\rho_1) + F_2(\rho_2) + \rho_1 K \ast \rho_2 + \eps (\rho_1 V_1 + \rho_2 V_2)\right]\,\intd x \\
    \label{eq:N_def}
    \mn_\eps(\rrho) &:= \intrd \left[h(\rho_1, \rho_2) - (\rho_1 V_1 + \rho_2 V_2)\right]\,\intd x,
\end{align}
where the auxiliary potentials $V_1$ and $V_2$ are chosen to make a global minimizer $\bar\rrho$ of $\me_\eps$, which we obtain via the direct methods, the unique global minimizer of $\ml_\eps$, i.e.,
\begin{align}
    \label{eq:V_def}
    V_1 := \partial_{r_1}h\big(\bar\rho_1,\bar\rho_2),
    \quad
    V_2 := \partial_{r_2}h\big(\bar\rho_1,\bar\rho_2).    
\end{align}
In contrast to $L_\eps$ in the ODE cartoon above, the modulus of convexity of $\ml_\eps$ is not precisely $\lambda$, but $\lambda-\eps K_0$, where $K_0$ is an upper bound on the second derivatives of $V_1$, $V_2$, see Corollary \ref{cor:l_conv_min}. Finally, the core relation \eqref{eq:artificial} is a functional inequality, see Proposition \ref{thm:key_estimate}.

\subsection{Outline}
We start in Section \ref{sec:hyps} by specifying the assumptions on the nonlinearities $F_j$, the coupling term $h$ and the interaction potential $K$. A generic example of functions which satisfy all assumptions is given in the Appendix.
In Section \ref{sec:energy_min}, we prove existence of a global minimizer $\bar\rrho$ of the energy $\me_\eps$ over $\sspc$ by means of the direct methods from the calculus of variations, derive the Euler-Lagrange system, and obtain additional regularity properties. In Section \ref{sec:min_move_scheme}, we analyze the time-discrete minimizing movement scheme for the energy $\me_\eps$, which is then used to construct a weak solution to \eqref{eq:system-intro}--\eqref{eq:ic-intro}. Finally, in Section \ref{sec:exp_cvgce}, we analyze the long-time behaviour of this weak solution and prove exponential convergence to the unique equilibrium for sufficiently small $\eps>0$.

\section{Preliminaries and hypotheses} 
\label{sec:hyps}

\subsection{Notations and definitions}
Recall that $\mptrd$ is the space of probability measures with finite second moment on $\Rd$; we consider $\mptrd$ with the topology induced by narrow convergence. By abuse of notation, we do not distinguish between an absolutely continuous element $\rho\in\mptrd$ and its (Lebesgue-)density $\rho\in L^1(\Rd)$. The $L^2$-Wasserstein distance $\wass$ is a metric on $\mptrd$, and makes $\mptrd$ a complete metric space. Convergence in $\wass$ is equivalent to weak convergence of measures and convergence of the second moment. For an introduction to Wasserstein metrics and the related gradient flows, we refer to \cite{Santa,Vill}. From the Wasserstein metric on $\mptrd$, we directly obtain a metric $\dst$ on the space $\mptrd^2$ of pairs $\rrho=(\rho_1,\rho_2)$ of measures,
\begin{align*}
    \dst\left(\rrho,\eeta\right)^2 := \wass(\rho_1,\eta_1)^2 + \wass(\rho_2,\eta_2)^2.
\end{align*}

The energy functional $\me_\eps:\mptrd^2\to\R\cup\{+\infty\}$ formally introduced in \eqref{eq:Eeps_def} is defined on pairs $\rrho\in\mptrd^2$ with the interpretation that $\me_\eps(\rrho)=+\infty$ unless $\rho_1$ and $\rho_2$ are both absolutely continuous, and the integral expression is in $L^1(\Rd)$; we shall see below that this definition makes $\me_\eps$ weakly lower semi-continuous, at least for sufficiently small $\eps>0$. For brevity, introduce $\mf_\eps: \Rnn^2 \to \Rnn^2$ by $\mf_\eps(\rr) := F_1(r_1) + F_2(r_2) + \eps h(\rr)$, which allows to rewrite $\me_\eps$ as
\begin{align*}
    \me_\eps(\rrho) = \intrd[\mf_\eps(\rrho)+\rho_1K\ast\rho_2]\dd x.
\end{align*}
The functional $\rrho \mapsto \intrd \rho_1 K \ast \rho_2\,\intd x$ will be called the interaction energy.

Next, recall the definition \eqref{eq:sspc_def} of the set $\sspc\subset\mptrd^2$ of pairs $\rrho\in\mptrd^2$ with combined center of mass at zero,
\begin{align*}
    \mom_1[\rho_1] + \mom_1[\rho_2] = 0 
    \quad \text{where} \quad \mom_1[\rho_j]:=\intrd x\,\rho_j \dd x.
\end{align*}
Since $\me_\eps$ is invariant under translations, i.e.,
\begin{align*}
    \me_\eps(\rrho) = \me_\eps(\sigma_v\rrho) \quad \text{with}\ \sigma_v\rrho = \left(\rho_1(\cdot + v), \rho_2(\cdot + v)\right) \in \mptrd
\end{align*}
for any translation $v\in\R^d$ and any $\rrho\in\mptrd$, and since \eqref{eq:system-intro} conserves the combined center of mass, there is no loss of generality to restrict attention to initial data \eqref{eq:ic-intro} and solutions to \eqref{eq:system-intro} belonging to $\sspc$. 

Finally, we introduce the notation $j'=3-j$ for $j\in\{1,2\}$, so that $\rho_{1'} = \rho_2$ and $\rho_{2'}=\rho_1$.

\subsection{Summary of hypotheses}
Throughout this paper, we consider $F_1, F_2, h$ and $K$ fullfilling the following assumptions:
\begin{hypothesis} \label{hyp:general}
Assumptions on the interaction potential $K:\Rd \to \Rnn$:
\begin{itemize}
\item $K$ is radially symmetric about the origin and it holds $K(0) = 0$.
\item $K$ is $\lambda$-convex with some modulus $\lambda > 0$.
\item It holds $K \in C^2(\Rd)$, and there exists a constant $C_K < +\infty$ with $\|\nabla^2 K(z)\| \leq C_K$ for every $z \in \Rd$.
\end{itemize}
Assumptions on the nonlinearities $F_1, F_2: \Rnn \to \Rnn$ and coupling term $h: \Rnn^2 \to \R$:
\begin{itemize}
\item $F_1$ and $F_2$ are $C^2$ and strictly convex with $F_j''(r) > 0$ for every $r > 0$ and $j = 1,2$.
\item It holds $F_1(0) = F_2(0) = F_1'(0) = F_2'(0) = 0$.
\item Their derivatives $F_1'$ and $F_2'$ grow at least linearly for large inputs, but not faster than $F_1$ and $F_2$, respectively. More precisely, it holds for $j = 1,2$:
\begin{equation} \label{eq:fjp_growth}
\liminf_{r\to +\infty} F_j''(r) > 0,\quad \limsup_{r \to +\infty} \frac{F_j'(r)}{F_j(r)} < +\infty
\end{equation}
\item For small inputs, their second derivatives behave similarly to powers of $r$. Explicitly, for $j = 1,2$ there exist constants $m_j, M_j > 0$, exponents $\beta_j \geq 0$ and an $r_0 > 0$ such that for all $0 \leq r \leq r_0$, it holds
\begin{equation} \label{eq:fjpp_power}
m_j r^{\beta_j}  \leq F_j''(r) \leq M_j r^{\beta_j}.
\end{equation}
\item They satisfy the McCann-condition for displacement convexity: For all $r \geq 0$,
\begin{equation} \label{eq:mccann_cond}
F_j(r) - rF_j'(r) + r^2 F_j''(r) \geq 0.
\end{equation}
\item The coupling function $h$ is $C^2$ and satisfies $h(r_1, 0) = h(0, r_2) = \partial_{r_j}h(r_1,0) = \partial_{r_j}h(0, r_2) = 0$ for $j = 1,2$ and every $r_1, r_2 \geq 0$.
\end{itemize}
\end{hypothesis}
It directly follows from the convexity of $F_j$ that the functions $F_j':\Rnn \to \Rnn$ are strictly increasing and thus invertible, since they vanish at $0$ and grow at least linearly. We can thus introduce the following notations: For $j = 1,2$ and $\uu = (u_1, u_2) \in \Rnn^2$, we define
\begin{equation*}
\theta_j(\uu) := \partial_{r_j} h \left((F_1')^{-1}(u_1), (F_2')^{-1}(u_2) \right).
\end{equation*}
In other words, $\theta_j: \Rnn^2 \to \R$ is such that $ \theta_j(F_1'(r_1), F_2'(r_2)) = \partial_{r_j} h(\rr)$ for all $\rr \in \Rnn^2$.
Further, we define for $i,j = 1,2$ the functions $\theta_{j,i}: \Rnn^2 \to \R$ by
\begin{equation*}
\theta_{j,i}(\uu) := \partial_{u_i} \theta_j(\uu) = \frac{\partial_{r_i} \partial_{r_j} h(\rr)}{F_i''(r_i)},
\end{equation*}
where $\rr = (r_1, r_2) = ((F_1')^{-1}(u_1), (F_2')^{-1}(u_2)) $. In addition to Hypothesis \ref{hyp:general}, we impose the following conditions on the $\theta_{j,i}$:

\begin{hypothesis} \label{hyp:theta}
We assume that the $F_j$ and $h$ are such that
\begin{itemize}
\item The $\theta_{j,i}$ are locally Lipschitz in $\Rnn^2$.
\item There are constants $\kappa_{j, i} > 0$ such that
\begin{equation} \label{eq:thetabdd_swap}
\left|\theta_{j,i}(\uu) \right| \leq \kappa_{j,i} \, \min\left\{1, u_1, u_2, \sqrt{\frac{(F_i')^{-1}(u_i)}{(F_j')^{-1}(u_j)}} \right\}
\end{equation}
for all $\uu \in \Rnn^2$, which can also be stated in terms of $F_1, F_2$ and $h$ as
\begin{equation} \label{eq:bddswap_h_r}
\left| \partial_{r_i} \partial_{r_j} h(\rr) \right| \leq \kappa_{j,i}\, F_i''(r_i)\, \min\left\{1, F_1'(r_1), F_2'(r_2), \sqrt{\frac{r_i}{r_j}}\right\}
\end{equation}
for all $\rr \in \Rnn^2$.
\end{itemize}
\end{hypothesis}
\begin{example}
A simple example for an interaction potential $K$ that satisfies Hypothesis \ref{hyp:general} is given by the quadratic potential $K = \frac{\lambda}{2}|\cdot|^2$ with some $\lambda > 0$. An example for $F_1, F_2$ and $h$ that fulfill Hypotheses \ref{hyp:general} and \ref{hyp:theta} is given by Proposition \ref{prop:example} in the appendix. Note that the assumptions on $K$ are independent from those on $F_1, F_2$ and $h$.
\end{example}

\subsection{Basic implications of the hypotheses}

We assume in all of the following that $F_1,F_2,h$ and $K$ are such that Hypotheses \ref{hyp:general} and \ref{hyp:theta} are satisfied.  We start by investigating the assumptions more closely and proving some general consequences of them, which will be useful in our analysis. We first prove some more properties of $F_j$ and $h$.
\begin{proposition} \label{prop:asmpt_corollaries}
The following hold:
\begin{itemize}
\item There is $\eps_0 > 0$ such that the function \begin{equation} \label{eq:ftwoeps}
\rr \mapsto \mf_{2\eps}(\rr) = F_1(r_1) + F_2(r_2) + 2\eps h(\rr)
\end{equation}
is convex on $\Rnn^2$ for every $0 < \eps \leq \eps_0$, strictly if $\eps <  \eps_0$.
\item For all $\rr \in \Rnn^2$ and $\eps \leq \eps_0$, it holds with the constants $\kappa_{j,i}$ from \eqref{eq:thetabdd_swap}:
\begin{equation} \label{eq:fj_feps_estim}
\frac{1}{2} F_1(r_1) + \frac{1}{2} F_2(r_2) \leq \mf_\eps(\rr) \leq \left(1 + \frac{\eps}{2}\kappa_{1,1}\right)F_1(r_1) + \left(1 + \frac{\eps}{2}\kappa_{2,2}\right)F_2(r_2).
\end{equation}
\item With the constants $r_0$ and $m_j,M_j,\beta_j$ from \eqref{eq:fjpp_power}, it holds for $0 \leq r \leq r_0$:
\begin{align} \label{eq:fjp_power}
\frac{m_j}{\beta_j + 1} r^{\beta_j + 1} &\leq F_j'(r) \leq \frac{M_j}{\beta_j + 1} r^{\beta_j + 1}, \\ \label{eq:fj_power}
\frac{m_j}{(\beta_j + 1)(\beta_j + 2)} r^{\beta_j + 2} &\leq F_j(r) \leq \frac{M_j}{(\beta_j + 1)(\beta_j + 2)} r^{\beta_j + 2}.
\end{align}
\item For $j = 1,2$, there are constants $\alpha_j$ such that for all $r \geq 0$, it holds
\begin{equation} \label{eq:estim_dfj}
F_j'(r) \leq \alpha_j\, \left( \min\{1,r\} + F_j(r)\right).
\end{equation}
\item For any bound $H > 0$, there is a constant $\beta_H > 0$ such that for all $\bar\rr \in \Rnn^2$ with $\bar r_1, \bar r_2 \leq H$, all $\rr \in \Rnn^2$ and any $i,j \in \{1,2 \}$, the estimate
\begin{equation} \label{eq:thetabregmanbd}
\max \{r_1, r_2\}\, \left| \theta_{j,i}(\uu) - \theta_{j,i}(\bar\uu)\right|^2 \leq \beta_H\, \left( \dst_{F_1}(r_1| \bar r_1) + \dst_{F_2}(r_2 | \bar r_2) \right)
\end{equation}
holds, where $\uu = \left( F_1'(r_1), F_2'(r_2)\right)$, $\bar\uu = \left( F_1'(\bar r_1), F_2'(\bar r_2)\right)$ and $\dst_F (\cdot | \cdot)$ is the \upshape Bregman divergence \itshape of a convex function $F$, given by
\begin{equation*}
\dst_F(r|\bar r) := F(r) - F(\bar r) - F'(\bar r)(r-\bar r).
\end{equation*}
\end{itemize}
\end{proposition}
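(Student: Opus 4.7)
The plan is to go through the six items in the order stated, since each rests on the previous. For the convexity of $\mf_{2\eps}$ I would compute its Hessian and use the estimates of \eqref{eq:bddswap_h_r} in the forms $|\partial_{r_j}^2 h|\le\kappa_{j,j}F_j''(r_j)$ and $|\partial_{r_1}\partial_{r_2}h|^2\le\kappa_{1,2}\kappa_{2,1}F_1''(r_1)F_2''(r_2)$: for $\eps$ below some explicit threshold $\eps_0$, the diagonal entries exceed $\tfrac12 F_j''(r_j)$ uniformly in $\rr$, and the determinant admits a lower bound of the form $F_1''(r_1)F_2''(r_2)\bigl(\tfrac14-4\eps^2\kappa_{1,2}\kappa_{2,1}\bigr)\geq 0$, yielding convexity on $\Rnn^2$. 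Strict convexity for $\eps<\eps_0$ I would then deduce by writing $\mf_{2\eps}=(1-\eps/\eps_0)(F_1+F_2)+(\eps/\eps_0)\mf_{2\eps_0}$ as a positive convex combination of the strictly convex $F_1+F_2$ and the convex $\mf_{2\eps_0}$. The lower bound in \eqref{eq:fj_feps_estim} then falls out from the identity $\mf_\eps=\tfrac12(F_1+F_2)+\tfrac12\mf_{2\eps}$ together with the pointwise inequality $\mf_{2\eps}\ge 0$, itself forced by the just-established convexity and the vanishings $\mf_{2\eps}(0,0)=0$ and $\nabla\mf_{2\eps}(0,0)=0$ at the origin. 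For the upper bound I would integrate $|\partial_{r_1}^2h(\rr)|\le\kappa_{1,1}F_1''(r_1)$ twice in $r_1$, using the boundary values $h(0,r_2)=\partial_{r_1}h(0,r_2)=0$, to reach $|h(\rr)|\le\kappa_{1,1}F_1(r_1)$; the symmetric integration in $r_2$ yields $|h(\rr)|\le\kappa_{2,2}F_2(r_2)$, and averaging the two produces the claimed ceiling.

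The power-law bounds \eqref{eq:fjp_power}--\eqref{eq:fj_power} are straightforward double antiderivatives of \eqref{eq:fjpp_power} with the initial data $F_j(0)=F_j'(0)=0$. For \eqref{eq:estim_dfj} I would separate $[0,\infty)$ into three zones. On $[0,r_0]$, the upper half of \eqref{eq:fjp_power} yields $F_j'(r)\le Cr\le C\min\{1,r\}$. Above a threshold $R_0$ chosen via the $\limsup$ condition of \eqref{eq:fjp_growth}, $F_j'(r)\le CF_j(r)$. On the compact intermediate interval $[r_0,R_0]$, continuity of $F_j'$ together with strict positivity of $F_j(r_0)$ (a consequence of $F_j''>0$ on $(0,\infty)$) bounds the ratio $F_j'/F_j$. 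Summing the three contributions yields a single $\alpha_j$.

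The main obstacle is the Bregman-divergence inequality \eqref{eq:thetabregmanbd}. I would split $\Rnn^2$ into a bounded and an unbounded regime via a threshold $R>H$ depending on $H$ and the constants of \eqref{eq:fjp_growth}. In the \emph{bounded} regime $\max\{r_1,r_2\}\le R$, both $\uu$ and $\bar\uu$ lie in the compact set $[0,F_1'(R)]\times[0,F_2'(R)]$, so the local Lipschitz continuity of $\theta_{j,i}$ (Hypothesis \ref{hyp:theta}) gives $|\theta_{j,i}(\uu)-\theta_{j,i}(\bar\uu)|^2\le L_R^2\sum_k (F_k'(r_k)-F_k'(\bar r_k))^2$, and these squared differences then have to be absorbed into the Bregman divergences $\dst_{F_k}(r_k|\bar r_k)$ using the power-law asymptotics from items 3--4 together with the weight $\max\{r_1,r_2\}\le R$ on the left. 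In the \emph{unbounded} regime $\max\{r_1,r_2\}>R$, the uniform bound $|\theta_{j,i}|\le\kappa_{j,i}$ from \eqref{eq:thetabdd_swap} turns the left-hand side into at most $4\kappa_{j,i}^2\max\{r_1,r_2\}$, while $\liminf_{r\to\infty}F_j''>0$ combined with $\bar r_j\le H$ forces $\dst_{F_j}(r_j|\bar r_j)\gtrsim (r_j-H)^2\gtrsim r_j$ once $r_j$ is sufficiently large. The real difficulty, and the reason I expect this to be the core technical step, is handling the small-$r$ sub-regime carefully: $F_j''$ vanishes at zero when $\beta_j>0$, so the Bregman divergence degenerates there, and the factor $\max\{r_1,r_2\}$ on the left is precisely what buys the needed room. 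Once $R$ is chosen carefully and the constants are tracked across the transition, the two regime-wise bounds should patch together into a single constant $\beta_H$.
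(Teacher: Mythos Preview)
Your proposal is correct and follows essentially the same route as the paper: Hessian estimate via \eqref{eq:bddswap_h_r} for convexity, the convex-combination trick for strictness, the identity $\mf_\eps=\tfrac12(F_1+F_2)+\tfrac12\mf_{2\eps}$ for the lower bound, double integration of $|\partial_{r_j}^2h|\le\kappa_{j,j}F_j''$ for the upper bound, integration for the power laws, a large/small split for \eqref{eq:estim_dfj}, and a bounded/unbounded dichotomy on $\max\{r_1,r_2\}$ for \eqref{eq:thetabregmanbd}.

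One simplification worth noting: your anticipated difficulty in the bounded regime of \eqref{eq:thetabregmanbd}---the degeneracy of $\dst_{F_j}$ near zero---is not actually there. The paper bypasses the power-law asymptotics entirely via the identity
\[
(F_j'(r_j)-F_j'(\bar r_j))^2=\int_{\bar r_j}^{r_j}2\bigl(F_j'(s)-F_j'(\bar r_j)\bigr)F_j''(s)\,\dd s\le 2M_H\,\dst_{F_j}(r_j|\bar r_j),
\]
where $M_H=\max_{[0,3H]}F_j''$; this holds uniformly for $r_j,\bar r_j\in[0,3H]$ with no small-$r$ case analysis, and the weight $\max\{r_1,r_2\}$ is then absorbed as the constant $3H$. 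The weight is genuinely needed only in the unbounded regime, exactly as you describe.
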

\begin{proof}
We prove convexity of \eqref{eq:ftwoeps} by observing that for every $\rr \in \Rnn^2$, it holds
\begin{align*}
\nabla_\rr^2 \mf_{2\eps}(\rr) = \begin{pmatrix}
F_1''(r_1) + 2 \eps \partial_{r_1} \partial_{r_1} h(\rr) & 2\eps \partial_{r_2} \partial_{r_1} h(\rr) \\ 2\eps\partial_{r_1} \partial_{r_2} h(\rr) & F_2''(r_2) + 2\eps\partial_{r_2} \partial_{r_2} h(\rr)
\end{pmatrix}.
\end{align*}
We show that for some $\eps_0 > 0$, this matrix is positive semi-definite at every $\rr \in \Rnn^2$. Indeed, it holds for any $\vv = (v_1, v_2) \in \R^2$ by assumption \eqref{eq:bddswap_h_r} and $2v_1v_2 \geq -v_1^2-v_2^2$:
\begin{align*}
\vv^T \nabla_\rr^2 \mf_{2\eps}(\rr) \vv &=  \left(F_1''(r_1) + 2 \eps \partial_{r_1} \partial_{r_1} h(\rr)\right)v_1^2 + \left( F_2''(r_2) + \eps\partial_{r_2} \partial_{r_2} h(\rr)\right)v_2^2  + 4\eps \partial_{r_1} \partial_{r_2} h(\rr)v_1v_2 \\
&\geq \left(1-2\eps(\kappa_{1,1} + \kappa_{2,1})\right) \,F_1''(r_1) v_1^2 + \left(1-2\eps(\kappa_{1,2} + \kappa_{2,2})\right)\, F_2''(r_2) v_2^2 \geq 0
\end{align*}
for every sufficiently small $\eps > 0$ by non-negativity of $F_j''(\rr)$. Hence there exists $\eps_0 >0$ such that $F_{2\eps_0}$ is convex on $\Rnn^2$. For every $\eps < \eps_0$, it follows that
\begin{align*}
\mf_{2\eps}(\rr) = \frac{\eps}{\eps_0} F_{2\eps_0}(\rr) + \left(1- \frac{\eps}{\eps_0}\right) \left(F_1(r_1) + F_2(r_2)\right)
\end{align*}
is strictly convex on $\Rnn^2$ by strict convexity of $F_1$ and $F_2$, proving the first claim. To show the first inequality in \eqref{eq:fj_feps_estim}, notice that $\mf_{2\eps}$ is non-negative in $\Rnn^2$, since it is convex and its derivative vanishes at 0. Hence 
\begin{align*}
2 \mf_\eps(\rr) = \mf_{2\eps}(\rr) + F_1(r_1) + F_2(r_2) \geq F_1(r_1) + F_2(r_2).
\end{align*}
The second inequality in \eqref{eq:fj_feps_estim} will follow from the fact that $|\partial^2_{r_i r_i} h(\rr)| \leq \kappa_{i,i} F_i''(r_i)$ for all $\rr \in \Rnn^2$ and $i = 1,2$ by assumption \eqref{eq:bddswap_h_r}. Integrated twice from $0$ to $r_i$ and using the assumption that $h$ and its first derivatives vanish on $\partial\Rnn^2$, this yields $|h(\rr)| \leq \kappa_{i,i} F_i(r_i)$ for all $\rr$, implying \eqref{eq:fj_feps_estim} by definition of $\mf_\eps$.

Inequalities \eqref{eq:fjp_power} and \eqref{eq:fj_power} follow directly by integrating \eqref{eq:fjpp_power} from 0 to $r$ and using that $F_j(0) = F_j'(0) = 0$. The proof of \eqref{eq:estim_dfj} can be done for $r \leq r_0$ and $r > r_0$ separately: For $r \leq r_0$, the inequality follows from \eqref{eq:fjp_power} since $\beta_j \geq 0$ and thus $r^{\beta_j + 1}$ decays at least linearly for small $r$. The case $r > r_0$ follows from the growth assumption \eqref{eq:fjp_growth} and the fact that the right-hand side of \eqref{eq:fj_power} is bounded away from 0 for $ r > r_0$.

It remains to prove \eqref{eq:thetabregmanbd}. Fix $H > 0$ and let $\rr, \bar\rr \in \Rnn^2$ with $\bar r_1, \bar r_2 \leq H$. Define
\begin{align*}
m_H := \min_{s \geq H,\, j = 1,2} F_j''(s), \quad M_H := \max_{s \in [0, 3H],\, j = 1,2} F_j''(s).
\end{align*}
It holds $0 < m_H \leq M_H < +\infty$, by $F_j''(r) > 0$ for $r > 0$ and assumption \eqref{eq:fjp_growth}.
By symmetry of the assumptions and the claim, we can assume w.l.o.g. $r_1 \geq r_2$. We split the proof into two cases: $r_1 \geq 3H$ and $r_1 \leq 3H$. 

Consider first $r_1 \geq 3H$. We have $\bar r_1 \leq H$ and $r_1 \geq \frac{r_1 + H}{2}$, thus it holds
\begin{align*}
\dst_{F_1}(r_1|\bar r_1) = \int_{\bar r_1}^{r_1} F_1''(s)(r_1 - s)\,\intd s \geq \int_H^{\frac{r_1 + H}{2}} F_1''(s)(r_1 - s)\,\intd s \geq \frac{r_1 - H}{2}\, m_H H
\end{align*}
where we have used that $F_1''(s) \geq m_H$ and $r_1 - s \geq r_1 - \frac{r_1 + H}{2} \geq H$ for every $s \in \left[H, \frac{r_1 + H}{2}\right]$ by definition of $m_H$ and $r_1 \geq 3H$, respectively. From \eqref{eq:thetabdd_swap} and $H \leq \frac{r_1 - H}{2}$, it follows
\begin{align*}
r_1 \left|\theta_{j,i}(\uu) - \theta_{j,i}(\bar\uu)\right|^2 \leq 4 \kappa_{j,i}^2 r_1 = 4\kappa_{j,i}^2(r_1 - H + H) \leq 6 \kappa_{j,i}^2(r_1 - H) \leq \frac{12\kappa_{j,i}^2}{m_HH}\, \dst_{F_1}(r_1|\bar r_1).
\end{align*}

In the second case, we have $r_1 \leq 3H$, and thus also $r_2 \leq 3H$ since we assumed $r_1 \geq r_2$. By the monotonicity of $F_1'$ and $F_2'$, this implies that $\uu, \bar \uu \in [0, F_1'(3H)]\times[0,F_2'(3H)]$, which is a compact subset of $\Rnn^2$. Hence by Hypothesis \ref{hyp:theta}, there is a constant $L_H$ such that $\theta_{j,i}$ is $L_H$-Lipschitz on this subset. Thus
\begin{align*}
r_1 \left|\theta_{j,i}(\uu) - \theta_{j,i}(\bar\uu)\right|^2 \leq 3HL_H^2\left((u_1 - \bar u_1)^2 + (u_2 - \bar u_2)^2\right).
\end{align*}
We estimate $(u_j - \bar u_j)^2$ by means of $\dst_{F_j}(r_j|\bar r_j)$: We have
\begin{align*}
(u_j - \bar u_j)^2 = \left(F_j'(r_j) - F_j'(\bar r_j)\right)^2 = \int_{\bar r_j}^{r_j} 2 \left(F_j'(s) - F_j'(\bar r_j)\right) F_j''(s)\,\intd s \leq 2M_H\,\dst_{F_j}(r_j|\bar r_j),
\end{align*}
since the term $F_j'(s) - F_j'(\bar r_j)$ is negative if and only if $r_j < \bar r_j$, in which case the endpoints $\bar r_j$ and $r_j$ of the integral are swapped.  This finishes the proof of \eqref{eq:thetabregmanbd}.
\end{proof}
We additionally prove some important general properties of the energy functional $\me_\eps$ and its components. In all of the following, we assume $\eps \leq \eps_0$ with $\eps_0$ from Proposition \ref{prop:asmpt_corollaries}.

\begin{lemma} \label{lem:eeps_bounds}
Let $\rrho \in \mptrd^2$ be any pair and let $\ms \subset \mptrd^2$ be any family of pairs.
\begin{itemize}
\item The following estimates hold:
\begin{align} \label{eq:estim_fj_int_eeps}
\intrd [F_1(\rho_1) + F_2(\rho_2)]\,\intd x \leq 2\me_\eps(\rrho), \quad \intrd \rho_1 K \ast \rho_2\,\intd x \leq \me_\eps(\rrho).
\end{align}
In particular, the energy $\me_\eps(\rrho)$ is uniformly bounded for $\rrho \in \ms$ if and only if both $F_j(\rho_j)$ and $\rho_1 K \ast \rho_2$ are uniformly bounded in $L^1(\Rd)$ for $\rrho \in \ms$.
\item With the constants $\alpha_j$ from \eqref{eq:estim_dfj}, it holds:
\begin{equation} \label{eq:estim_fjp_eeps}
\intrd F_j'(\rho_j)\,\intd x \leq \alpha_j\,(1+2\me_\eps(\rrho)).
\end{equation}
\item For $\rrho \in \sspc$, the interaction energy $\intrd \rho_1 K \ast \rho_2\,\intd x$ satisfies the bounds
\begin{equation} \label{eq:estim_inter_2mom}
\frac{\lambda}{2} \left(\mom_2[\rho_1] + \mom_2[\rho_2]\right) \leq \intrd \rho_1 K \ast \rho_2\,\intd x \leq C_K\left(\mom_2[\rho_1] + \mom_2[\rho_2]\right).
\end{equation}
In particular, for $\ms \subset \sspc$, $\rrho$'s interaction energy is uniformly bounded for $\rrho \in \ms$ if and only if $\rho_1$ and $\rho_2$ have uniformly bounded second moments for $\rrho \in \ms$.
\end{itemize}
\end{lemma}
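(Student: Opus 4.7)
The plan is to prove the three bullet points separately; each reduces to pointwise inequalities proven in Proposition \ref{prop:asmpt_corollaries} together with basic facts about $K$ and the center of mass.

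For the first bullet, I would start from the pointwise lower bound in \eqref{eq:fj_feps_estim}, namely $\mf_\eps(\rr)\ge\tfrac12 F_1(r_1)+\tfrac12 F_2(r_2)\ge 0$, and integrate it against $\rrho$ over $\Rd$. Since $K\ge 0$ by Hypothesis \ref{hyp:general}, the interaction term $\intrd\rho_1 K\ast\rho_2\dd x$ is non-negative, so adding it preserves the inequality and yields $\tfrac12\intrd[F_1(\rho_1)+F_2(\rho_2)]\dd x\le\me_\eps(\rrho)$, i.e.\ the first estimate in \eqref{eq:estim_fj_int_eeps}. The second estimate in \eqref{eq:estim_fj_int_eeps} follows by the same argument with the roles reversed: $\intrd\mf_\eps(\rrho)\dd x\ge 0$, so dropping it from the definition of $\me_\eps$ leaves $\intrd\rho_1 K\ast\rho_2\dd x\le\me_\eps(\rrho)$. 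The ``in particular'' claim is then immediate: each of the two summands in $\me_\eps$ is non-negative and individually controlled by $\me_\eps$, so uniform boundedness of $\me_\eps$ on a family $\ms$ is equivalent to uniform $L^1$-boundedness of both $F_j(\rho_j)$ and $\rho_1 K\ast\rho_2$.

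For the second bullet I would apply the pointwise estimate \eqref{eq:estim_dfj} with $r=\rho_j(x)$ and integrate. Since $\min\{1,\rho_j\}\le\rho_j$ and $\rho_j$ is a probability density, $\intrd\min\{1,\rho_j\}\dd x\le 1$, while $\intrd F_j(\rho_j)\dd x\le 2\me_\eps(\rrho)$ by the first bullet. Combining gives $\intrd F_j'(\rho_j)\dd x\le \alpha_j(1+2\me_\eps(\rrho))$.

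For the third bullet, the key observation is that $K$ is sandwiched between two quadratics. Since $K\in C^2$ is radially symmetric about the origin with $K(0)=0$, one has $\nabla K(0)=0$, so the $\lambda$-convexity of $K$ gives $K(z)\ge\tfrac{\lambda}{2}|z|^2$, while the uniform bound $\|\nabla^2 K\|\le C_K$ and Taylor's theorem give $K(z)\le\tfrac{C_K}{2}|z|^2\le C_K(|x|^2+|y|^2)$ when $z=x-y$. Writing
\begin{align*}
\iintrdrd|x-y|^2\rho_1(x)\rho_2(y)\dd x\dd y = \mom_2[\rho_1]+\mom_2[\rho_2]-2\,\mom_1[\rho_1]\cdot\mom_1[\rho_2],
\end{align*}
the center-of-mass constraint $\rrho\in\sspc$ forces $\mom_1[\rho_2]=-\mom_1[\rho_1]$, so the cross term is $2|\mom_1[\rho_1]|^2\ge 0$. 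Thus the lower bound from $\lambda$-convexity yields $\intrd\rho_1K\ast\rho_2\dd x\ge\tfrac{\lambda}{2}(\mom_2[\rho_1]+\mom_2[\rho_2])$, and the upper bound on $\|\nabla^2 K\|$ combined with $|x-y|^2\le 2(|x|^2+|y|^2)$ yields $\intrd\rho_1K\ast\rho_2\dd x\le C_K(\mom_2[\rho_1]+\mom_2[\rho_2])$. The equivalence claim follows.

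I do not anticipate any genuine obstacle; the only subtlety worth flagging is the use of $\nabla K(0)=0$ (from $C^2$-radial symmetry) to conclude $K(z)\ge\tfrac{\lambda}{2}|z|^2$ from the $\lambda$-convexity hypothesis, and the exploitation of the $\sspc$-constraint to make the cross moment non-negative rather than merely bounded.
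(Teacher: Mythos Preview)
Your proposal is correct and follows essentially the same route as the paper's proof: the first two bullets are identical in spirit (pointwise bounds from Proposition~\ref{prop:asmpt_corollaries} plus non-negativity of the interaction and unit mass), and for the third bullet both arguments sandwich $K$ between $\tfrac{\lambda}{2}|z|^2$ and $\tfrac{C_K}{2}|z|^2$ and exploit the $\sspc$-constraint for the lower bound. The only cosmetic difference is in the upper bound: the paper keeps the cross term $C_K|\mom_1[\rho_1]|^2$ and bounds it via Jensen by $\tfrac{C_K}{2}(\mom_2[\rho_1]+\mom_2[\rho_2])$, whereas you bypass the cross term entirely using $|x-y|^2\le 2(|x|^2+|y|^2)$; your route is slightly more direct and does not even need the center-of-mass constraint for the upper inequality.
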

\begin{proof}
The interaction energy is clearly non-negative by our assumptions on $K$, thus the first estimate in \eqref{eq:estim_fj_int_eeps} follows directly from \eqref{eq:fj_feps_estim}. The second one holds since $\mf_\eps$ is a convex function whose derivative vanishes at 0, and thus is non-negative in $\Rnn^2$. By \eqref{eq:estim_dfj}, it holds $F_j'(\rho_j) \leq \alpha_j\,(\rho_j + F_j(\rho_j))$. From this we obtain \eqref{eq:estim_fjp_eeps} by integrating and using \eqref{eq:estim_fj_int_eeps} and the fact that $\rho_j$ has unit mass.

In order to prove \eqref{eq:estim_inter_2mom}, note that by $\lambda$-convexity of $K$ and the bound on its second derivative, it holds $\frac{\lambda}{2}|z|^2 \leq K(z) \leq \frac{C_K}{2} |z|^2$. Since for $\rrho \in \sspc$, we have $\mom_1[\rho_2] = -\mom_1[\rho_1]$, we thus obtain the lower bound
\begin{align*}
\intrd \rho_1 K \ast \rho_2\, \intd x &= \iintrdrd \rho_1(x) \rho_2(y) K(x-y) \, \intd x \intd y
\geq \frac{\lambda}{2} \iintrdrd \rho_1(x) \rho_2(y) |x-y|^2 \, \intd x \intd y \\
&=  \frac{\lambda}{2} \iintrdrd \rho_1(x) \rho_2(y) \left[ |x|^2 + |y|^2 - 2x \cdot y \right] \, \intd x \intd y \\
&= \frac{\lambda}{2} \left( \mom_2[\rho_1] + \mom_2[\rho_2] \right) - \lambda \mom_1[\rho_1] \cdot \mom_1[\rho_2] = \frac{\lambda}{2} \left( \mom_2[\rho_1] + \mom_2[\rho_2] \right) + \lambda |\mom_1[\rho_1]|^2 \\
&\geq \frac{\lambda}{2} \left( \mom_2[\rho_1] + \mom_2[\rho_2] \right).
\end{align*}
For the upper bound, the same argument yields
\begin{align*}
\intrd \rho_1 K \ast \rho_2\,\intd x \leq \frac{C_K}{2}\left( \mom_2[\rho_1] + \mom_2[\rho_2] \right) + C_K |\mom_1[\rho_1]|^2 \leq C_K\left( \mom_2[\rho_1] + \mom_2[\rho_2] \right),
\end{align*}
where the last step follows from $|\mom_1[\rho_1]|^2 = \frac{1}{2}(|\mom_1[\rho_1]|^2 + |\mom_1[\rho_2]|^2) \leq \frac{1}{2}(\mom_2[\rho_1] + \mom_2[\rho_2])$.
\end{proof}
In the next section, we shall use these properties to proof existence of a global minimizer $\bar\rrho \in \sspc$ of $\me_\eps$ via the direct method from the calculus of variations.

\section{Existence and properties of the minimizer for the energy functional} \label{sec:energy_min}

\subsection{Existence of minimizers}
We start by proving the following weak coercivity result for the energy $\me_\eps$. The proof mainly uses the estimates from Lemma \ref{lem:eeps_bounds}.
\begin{lemma} \label{lem:eeps_coer}
The sublevel sets of the energy functional $\me_\eps$ in $\sspc$ are pre-compact in the weak $L^1$-topology, i.e. any sequence of pairs $(\rrho^n) = (\rho_1^n, \rho_2^n) \subset \sspc$ with $\me_\eps(\rrho^n)$ uniformly bounded contains a subsequence converging weakly in $L^1(\Rd)$ to some $\rrho \in \sspc$. The same holds true for sequences $(\rrho^n) \subset \mptrd^2$ with $\me_\eps(\rrho^n)$ uniformly bounded, such that their combined center of mass $\mom_1[\rho_1^n] + \mom_1[\rho_2^n]$ converges to $0$. 
\end{lemma}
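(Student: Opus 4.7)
The plan is to apply the Dunford--Pettis theorem to each component. To do so, I need to show that, along a uniformly bounded energy sequence $(\rrho^n)\subset \sspc$, both families $(\rho_1^n)$ and $(\rho_2^n)$ are (i) tight and (ii) uniformly integrable in $L^1(\R^d)$. Once a weakly converging subsequence is extracted, it remains to identify the limit as an element of $\sspc$, i.e.\ to verify conservation of unit mass, finiteness of the second moment, and the vanishing center-of-mass condition.

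First, from the assumption $\me_\eps(\rrho^n)\le C$ and the estimates of Lemma \ref{lem:eeps_bounds} (namely \eqref{eq:estim_fj_int_eeps} and \eqref{eq:estim_inter_2mom}), I obtain simultaneous uniform bounds
\begin{equation*}
    \intrd F_j(\rho_j^n)\dd x \le 2C,\qquad \mom_2[\rho_j^n] \le \tfrac{2}{\lambda}\,C,\qquad j=1,2,
\end{equation*}
for the sublevel case in $\sspc$; in the second scenario, where only the combined center of mass is vanishing in the limit, \eqref{eq:estim_inter_2mom} must be replaced by a slight variant that keeps the defect $\lambda|\mom_1[\rho_1^n]|^2$ on the right-hand side, but since $\mom_1[\rho_1^n]+\mom_1[\rho_2^n]\to 0$ this defect is still bounded and the same second-moment control persists for large $n$. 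Tightness of each $(\rho_j^n)$ then follows from Markov's inequality: $\int_{|x|\ge R}\rho_j^n\dd x \le R^{-2}\,\mom_2[\rho_j^n] \to 0$ uniformly in $n$ as $R\to\infty$.

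For uniform integrability, I use the de la Vallée--Poussin criterion with test function $F_j$. By hypothesis \eqref{eq:fjp_growth}, $\liminf_{r\to\infty} F_j''(r) > 0$, which upon two integrations forces $F_j(r)/r\to +\infty$ as $r\to +\infty$. Combined with the uniform $L^1$-bound on $F_j(\rho_j^n)$, de la Vallée--Poussin yields equi-integrability of $(\rho_j^n)$. Dunford--Pettis now provides a subsequence (not relabeled) and limits $\rho_j\in L^1(\Rd)$ with $\rho_j^n \weakto \rho_j$ weakly in $L^1(\Rd)$ for $j=1,2$.

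Finally, I must verify $\rrho=(\rho_1,\rho_2)\in\sspc$. Testing weak $L^1$-convergence against the constant function $1\in L^\infty$ gives $\int\rho_j\dd x=1$, so $\rho_j$ are probability densities. Lower semi-continuity of the second moment under weak $L^1$-convergence (Fatou applied to $|x|^2\rho_j^n\chi_{|x|\le R}$ and sending $R\to\infty$) gives $\mom_2[\rho_j]<\infty$. For the center-of-mass condition, the main point is to pass to the limit in $\mom_1[\rho_j^n]=\int x\,\rho_j^n\dd x$; this requires more than plain weak $L^1$-convergence since $x\notin L^\infty$. The key observation is that the uniform second-moment bound yields the tail estimate
\begin{equation*}
    \int_{|x|\ge R}|x|\,\rho_j^n\dd x \le \frac{1}{R}\int_{|x|\ge R}|x|^2\,\rho_j^n\dd x \le \frac{1}{R}\cdot\frac{2C}{\lambda},
\end{equation*}
which vanishes uniformly in $n$ as $R\to\infty$. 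Combining this with weak $L^1$-convergence tested against the bounded truncation $x\,\chi_{|x|\le R}$ shows $\mom_1[\rho_j^n]\to\mom_1[\rho_j]$, so the vanishing combined center-of-mass condition passes to the limit in both scenarios. The main (but still standard) obstacle is precisely this last step: avoiding a non-standard notion of convergence by exploiting the second-moment bound to upgrade weak $L^1$ convergence to convergence of first moments.
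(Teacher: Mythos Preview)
Your approach is essentially correct and closely parallels the paper's proof: both rely on the second-moment bound coming from \eqref{eq:estim_inter_2mom} for tightness, on the superlinear growth of $F_j$ via de la Vall\'ee--Poussin for equi-integrability, and on convergence of first moments (guaranteed by the uniform second-moment bound) to verify that the limit lies in $\sspc$.

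There is one imprecision in your handling of the second scenario. You claim the ``defect'' $\lambda|\mom_1[\rho_1^n]|^2$ remains bounded because $\mom_1[\rho_1^n]+\mom_1[\rho_2^n]\to 0$, but the latter says nothing a priori about the individual centers --- they could drift to infinity in opposite directions. What actually works: writing $w_n:=\mom_1[\rho_1^n]+\mom_1[\rho_2^n]$, the computation behind \eqref{eq:estim_inter_2mom} together with Young's inequality yields
\[
\intrd \rho_1^n\, K\ast\rho_2^n\,\dd x \ \ge\ \tfrac{\lambda}{2}\big(\mom_2[\rho_1^n]+\mom_2[\rho_2^n]\big) + \tfrac{\lambda}{2}|\mom_1[\rho_1^n]|^2 - \tfrac{\lambda}{2}|w_n|^2,
\]
so the correct defect to track is $\tfrac{\lambda}{2}|w_n|^2$, which is indeed bounded since $w_n\to 0$. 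The paper sidesteps this altogether by a cleaner device: it translates each $\rrho^n$ by $v_n=\tfrac{1}{2}w_n$ so that the shifted pair lies in $\sspc$, applies \eqref{eq:estim_inter_2mom} directly there (translation invariance of $\me_\eps$ preserves the energy bound), and then transfers the second-moment bound back using $v_n\to 0$. Either route closes the argument.
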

\begin{proof}
Take any sequence $(\rrho^n) \subset \mptrd$ such that $\mom_1[\rho_1^n] + \mom_1[\rho_2^n] \to 0$ and $\me_\eps(\rrho^n)$ is uniformly bounded. We consider the shifted pair $\tilde\rrho^n := \rrho^n(\cdot - v_n)$ with the shift $v_n := \frac{1}{2}(\mom_1[\rho_1^n] + \mom_1[\rho_2^n])$. We have $\tilde\rrho^n \in \sspc$, and from the translation invariance of $\me_\eps$, it follows that $\me_\eps(\tilde\rrho^n) = \me_\eps(\rrho^n)$ is uniformly bounded. It thus follows from \eqref{eq:estim_fj_int_eeps} and \eqref{eq:estim_inter_2mom} that the second moments $\mom_2[\tilde\rho_1^n], \mom_2[\tilde\rho_2^n]$ are uniformly bounded. Together with $v_n \to 0$, this implies uniform boundedness of the second moments of $\rho_j^n$:
\begin{align*}
\mom_2[\rho_j^n] = \intrd |x|^2\tilde\rho_j^n(x + v_n)\,\intd x = \intrd |y - v_n|^2\tilde\rho_j^n(y)\,\intd y \leq 2\mom_2[\tilde\rho_j^n] + 2|v_n|^2.
\end{align*}  In particular, the sequence $(\rho_j^n)$ is tight as probability measures for $j=1,2$, and thus converges narrowly to a $\rho_j \in \mptrd$ along some subsequence, again denoted $(\rho_j^n)$. Since narrow convergence and uniform boundedness of the second moments imply $\mom_1[\rho_j^n] \to \mom_1[\rho_j]$, we have $\rrho \in \sspc$.

To prove $\rho_j^n \weakto \rho_j$ weakly in $L^1(\Rd)$, observe that by \eqref{eq:estim_fj_int_eeps}, the integrals of $F_j(\rho_j^n)$ are uniformly bounded. Since $F_j(r)$ grows super-linearly for $r \to +\infty$ by the at least linear growth of $F_j'(r)$, the de la Vallée-Poussin compactness criterion, see e.g. \cite[Theorem 1.28]{Roub}, yields compactness of the sequence $\rho_j^n$ with respect to local weak $L^1$-convergence. Together with the narrow convergence $\rho_j^n \weakto \rho_j$, this implies $\rho_j^n \weakto \rho_j$ weakly in $L^1_{loc}(\Rd)$. Together with tightness, this yields $\rho_j^n \weakto \rho_j$ weakly in $L^1(\Rd)$, proving the claim.
\end{proof}
In order to apply the direct method in the calculus of variations, we need to prove lower semi-continuity of the functional $\me_\eps$ in the same topology.
\begin{lemma} \label{lem:eeps_lsc}
The energy functional $\me_\eps$ is lower semi-continuous in $\mptrd^2$ with respect to weak $L^1$-convergence.
\end{lemma}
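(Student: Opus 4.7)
The plan is to decompose $\me_\eps(\rrho)=\intrd\mf_\eps(\rrho)\,\intd x+\intrd\rho_1 K\ast\rho_2\,\intd x$ and establish lower semi-continuity of each summand separately. For both parts, the main tool is the upgrade from weak $L^1$-convergence to narrow convergence of measures: for $\rho_j^n,\rho_j\in\mptrd$ with $\rho_j^n\weakto\rho_j$ in $L^1(\Rd)$, testing against $\varphi\in C_b(\Rd)\subset L^\infty(\Rd)$ shows that $\rho_j^n\to\rho_j$ narrowly as probability measures. This is crucial for treating the interaction term.

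For the nonlinear part, Proposition \ref{prop:asmpt_corollaries} shows that $\mf_{2\eps}$ is convex and non-negative on $\Rnn^2$ for $\eps\leq\eps_0$. Writing
\begin{align*}
\mf_\eps=\tfrac{1}{2}\mf_{2\eps}+\tfrac{1}{2}(F_1+F_2)
\end{align*}
exhibits $\mf_\eps$ as a sum of convex, non-negative functions on $\Rnn^2$, vanishing at the origin. I would then invoke the classical Ioffe-Serrin result on lower semi-continuity of convex integrals to conclude that $\rrho\mapsto\int_A\mf_\eps(\rrho)\,\intd x$ is weakly $L^1$-lower semi-continuous on any bounded $A\subset\Rd$. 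To pass to the unbounded domain $\Rd$, I apply the bounded-domain version on each ball $B_R$ and send $R\to\infty$; the monotonicity $\int_{B_R}\mf_\eps(\rrho)\nearrow\intrd\mf_\eps(\rrho)$ (valid since $\mf_\eps\geq 0$) together with $\liminf_n\intrd\mf_\eps(\rrho^n)\geq\liminf_n\int_{B_R}\mf_\eps(\rrho^n)$ yields the desired bound.

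For the interaction part, the standard tensorization result for narrow convergence gives $\rho_1^n\otimes\rho_2^n\to\rho_1\otimes\rho_2$ narrowly on $\R^{2d}$. The main obstacle is that the kernel $K(x-y)$ is unbounded on $\R^{2d}$, so narrow convergence cannot be applied to it directly. I would resolve this by truncating: since $K\geq 0$, I approximate $K$ from below by $K_R:=\min(K,R)$, which lies in $C_b(\R^{2d})$. Narrow convergence then gives, for each fixed $R>0$,
\begin{align*}
\liminf_n\iintrdrd\rho_1^n(x)\rho_2^n(y)K(x-y)\,\intd x\,\intd y\geq\lim_n\iintrdrd\rho_1^n\rho_2^n K_R=\iintrdrd\rho_1\rho_2 K_R.
\end{align*}
Sending $R\to\infty$ and using monotone convergence (again using $K_R\nearrow K$ pointwise and $K_R\geq 0$) yields $\liminf_n\int\rho_1^n K\ast\rho_2^n\,\intd x\geq\int\rho_1 K\ast\rho_2\,\intd x$. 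Combining with the nonlinear part completes the argument. The only delicate point is the unboundedness of $K$, and the truncation-plus-monotone-convergence trick is what bridges the gap between narrow convergence and the quadratic growth of $K$.
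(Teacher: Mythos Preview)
Your proposal is correct and follows essentially the same two-part strategy as the paper: split $\me_\eps$ into the convex integrand part and the interaction part, handle the first by convexity and the second by truncating $K$ from below and passing to the limit via monotone convergence. The only differences are cosmetic: for the convex part the paper uses the one-line argument that Fatou's lemma gives strong-$L^1$ lower semi-continuity of $\int\mf_\eps(\rrho)\,\intd x$, which convexity of $\mf_\eps$ then upgrades to weak-$L^1$ lower semi-continuity (no localization needed), and for the interaction part the paper truncates $K$ spatially to $K\mathbf{1}_{B_R}\in L^\infty$ and tests against weak $L^1$-convergence of the tensor product $\rho_1^n\otimes\rho_2^n$, rather than capping the values of $K$ and invoking narrow convergence as you do.
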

\begin{proof}
Since $\mf_\eps$ is non-negative by \eqref{eq:fj_feps_estim}, Fatou's lemma yields lower semi-continuity of the functional $\rrho \mapsto \intrd \mf_\eps(\rrho)\,\intd x$ with respect to strong $L^1$-convergence. Since $\mf_\eps$ is a convex function, this implies weak lower semi-continuity. It remains to show that the interaction energy $\intrd \rho_1 K \ast \rho_2\,\intd x$ is lower semi-continuous with respect to weak $L^1$-convergence. Let $\rrho^n \in \mptrd^2$ with $\rho_j^n \weakto \rho_j$ weakly in $L^1(\Rd)$, and for any radius $R > 0$, define
\begin{align*}
K_R \in L^{\infty}(\Rd),\ K_R(z) := \left\{\begin{array}{ll}
K(z) & |z| \leq R \\ 0 & |z| > R
\end{array}
\right..
\end{align*} 
Also note that $\rho_j^n \weakto \rho_j$ weakly in $L^1(\Rd)$ for $j = 1,2$ implies $\rho_1^n \otimes \rho_2^n \weakto \rho_1 \otimes \rho_2$ weakly in $L^1(\Rd \times \Rd)$. Since $0 \leq K_R \leq K$ in $\Rd$, this yields
\begin{align*}
&\liminf_{n \to \infty} \intrd \rho_1^n K \ast \rho_2^n\,\intd x \geq \liminf_{n \to \infty} \intrd \rho_1^n K_R \ast \rho_2^n\,\intd x + \liminf_{n \to \infty} \intrd \rho_1^n (K - K_R) \ast \rho_2^n\,\intd x \\
&\geq \liminf_{n \to \infty} \iintrdrd \rho_1^n \otimes \rho_2^n(x,y)\, K_R(x-y)\,\intd x\intd y = \intrd \rho_1 K_R \ast \rho_2\,\intd x.
\end{align*}
Since $R$ was arbitrary, we can take the limit $R \to \infty$. By the monotone convergence theorem, the integral on the right-hand side converges to $\intrd \rho_1 K \ast \rho_2 \,\intd x$, proving weak lower semi-continuity of the interaction energy.
\end{proof}
Together, these two lemmas yield existence of a global minimizer of the energy $\me_\eps$ in $\sspc$.
\begin{corollary} \label{cor:ex_min}
There exists a pair $\bar\rrho = (\bar\rho_1, \bar\rho_2) \in \sspc$, depending on $\eps \in (0, \eps_0]$, that minimizes the energy functional $\me_\eps$ over $\mptrd$.
\end{corollary}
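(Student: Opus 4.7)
The claim is a textbook application of the direct method in the calculus of variations, with all essential analytical work already carried out in Lemmas \ref{lem:eeps_coer} and \ref{lem:eeps_lsc}. The plan is therefore simply to assemble the three standard ingredients: finiteness of the infimum, compactness of a minimizing sequence, and lower semi-continuity of the functional along the extracted limit.

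First, I would check that $m_\eps := \inf_{\rrho \in \sspc} \me_\eps(\rrho)$ is finite. Lower boundedness is immediate from \eqref{eq:estim_fj_int_eeps} (and non-negativity of the interaction energy), which gives $\me_\eps \geq 0$. Finiteness from above is obtained by exhibiting one admissible competitor of finite energy, for instance a pair of centered Gaussians $\rho_1^\ast,\rho_2^\ast$ with equal means and small variance: their second moments are finite, so by \eqref{eq:estim_inter_2mom} the interaction term is finite, while the growth bound \eqref{eq:fjp_growth} together with \eqref{eq:fj_feps_estim} ensures that $\intrd \mf_\eps(\rrho^\ast)\,\intd x < +\infty$.

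Next, I pick a minimizing sequence $(\rrho^n) \subset \sspc$ with $\me_\eps(\rrho^n) \to m_\eps$. Without loss of generality $\me_\eps(\rrho^n) \leq m_\eps + 1$ for all $n$, so Lemma \ref{lem:eeps_coer} applies: along a (non-relabelled) subsequence, $\rho_j^n \weakto \bar\rho_j$ weakly in $L^1(\Rd)$ for $j=1,2$, with limit $\bar\rrho = (\bar\rho_1,\bar\rho_2) \in \sspc$. In particular, the zero-center-of-mass constraint is preserved in the limit, as shown inside the proof of that lemma via uniform boundedness of second moments and narrow convergence.

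Finally, Lemma \ref{lem:eeps_lsc} gives
\begin{equation*}
\me_\eps(\bar\rrho) \leq \liminf_{n\to\infty} \me_\eps(\rrho^n) = m_\eps,
\end{equation*}
so $\bar\rrho$ attains the infimum, proving the corollary. There is no genuine obstacle here: the coercivity/l.s.c. infrastructure has already absorbed the two non-trivial points, namely the interaction of the nonlocal term with weak $L^1$-convergence (handled by the truncation $K_R$ and monotone convergence in Lemma \ref{lem:eeps_lsc}) and the persistence of $\sspc$ under the limit. Uniqueness of the minimizer is not claimed at this stage and is deferred to the later long-time analysis, as discussed in the strategy subsection.
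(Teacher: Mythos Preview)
Your approach is essentially the paper's: direct method via Lemma~\ref{lem:eeps_coer} for compactness and Lemma~\ref{lem:eeps_lsc} for lower semi-continuity, yielding a minimizer $\bar\rrho \in \sspc$. However, you stop at minimality over $\sspc$, whereas the corollary asserts that $\bar\rrho$ minimizes $\me_\eps$ over all of $\mptrd^2$. The paper closes this remaining gap in one line by invoking the translation invariance of $\me_\eps$: every $\rrho \in \mptrd^2$ can be shifted so that its combined center of mass vanishes without changing $\me_\eps(\rrho)$, hence $\inf_{\sspc}\me_\eps = \inf_{\mptrd^2}\me_\eps$ and the minimizer over $\sspc$ is automatically global. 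You should add this final observation.
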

\begin{proof}
The existence of $\bar\rrho \in \sspc$ that minimizes $\me_\eps$ over $\sspc$ follows from the previous two lemmas via the direct method from the calculus of variations. Since $\me_\eps$ is translation invariant, the minimality of $\bar\rrho$ over $\sspc$ implies that $\bar\rrho$ also minimizes $\me_\eps(\bar\rrho)$ over the whole space $\mptrd^2$.
\end{proof}
\begin{remark}
Note that we do not obtain uniqueness of the minimizer $\bar\rrho$ directly, since the energy functional $\me_\eps$ is not convex on $\sspc$, neither along geodesics, because of the coupling term, nor in the flat sense, because of the interaction term. We shall prove uniqueness of the minimizer $\bar\rrho$ for every sufficiently small $\eps > 0$ in section \ref{sec:exp_cvgce}, see Corollary \ref{cor:min_unique}.
\end{remark}

\subsection{Euler-Lagrange system}
We now use a variational argument to prove that the minimizer $\bar\rrho$ obtained from Corollary \ref{cor:ex_min} satisfies the Euler-Lagrange system \eqref{eq:euler-lagrange_intro}.

\begin{lemma}\label{lem:el_intineq}
Let $\psi_1, \psi_2 \in L^1(\Rd) \cap L^\infty(\Rd)$ be compactly supported, and assume that  $\intrd \psi_1 = \intrd \psi_2 = 0$ and $\bar\rho_j + \psi_j \geq 0$ in $\Rd$ for $j = 1,2$. Additionally, assume that there exist constants $H_j > 0$ such that $\psi_j = 0$ on the set $\{\bar\rho_j > H_j\}$. Then it holds
    \begin{equation*}
        \intrd \left( F_1'(\bar\rho_1) + \varepsilon \partial_{r_1} h(\bar\rho_1, \bar\rho_2) + K \ast \bar\rho_2 \right) \psi_1 \,\intd x\, + \intrd \left( F_2'(\bar\rho_2) + \varepsilon \partial_{r_2} h(\bar\rho_1, \bar\rho_2) + K \ast \bar\rho_1 \right) \psi_2 \,\intd x \geq 0.
    \end{equation*}
\end{lemma}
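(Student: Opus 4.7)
The plan is to test the minimizer $\bar\rrho$ against the linear perturbation family $\bar\rrho^{(s)} := (\bar\rho_1 + s\psi_1, \bar\rho_2 + s\psi_2)$ for $s \in (0,1]$, exploit $\me_\eps(\bar\rrho^{(s)}) \geq \me_\eps(\bar\rrho)$, and pass to the one-sided derivative as $s \to 0^+$.

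First I would check admissibility of $\bar\rrho^{(s)}$. Non-negativity follows by writing $\bar\rho_j + s\psi_j = (1-s)\bar\rho_j + s(\bar\rho_j + \psi_j)$ as a convex combination of two non-negative functions; unit mass is preserved since $\intrd \psi_j\,\intd x = 0$; and $\mom_2[\bar\rho_j + s\psi_j] < +\infty$ because $\psi_j$ is bounded and compactly supported. Hence $\bar\rrho^{(s)} \in \mptrd^2$ for every $s \in [0,1]$. Since $\me_\eps$ is translation invariant, Corollary \ref{cor:ex_min} makes $\bar\rrho$ a global minimizer over all of $\mptrd^2$, not merely over $\sspc$, so $\me_\eps(\bar\rrho^{(s)}) \geq \me_\eps(\bar\rrho)$ holds even though $\bar\rrho^{(s)}$ may have drifted away from the center-of-mass constraint.

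Next, I would split $\me_\eps$ into the diffusion part $\intrd [F_1(\bar\rho_1^{(s)}) + F_2(\bar\rho_2^{(s)})]\,\intd x$, the coupling part $\eps \intrd h(\bar\rho_1^{(s)}, \bar\rho_2^{(s)})\,\intd x$, and the interaction part $\intrd \bar\rho_1^{(s)} K \ast \bar\rho_2^{(s)}\,\intd x$, and compute the right derivative of each at $s = 0^+$. The interaction part is bilinear, so its $s^1$-coefficient is explicit and, after using the radial symmetry of $K$, equals $\intrd \psi_1 \,(K\ast \bar\rho_2)\,\intd x + \intrd \psi_2\,(K\ast \bar\rho_1)\,\intd x$. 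Finiteness of these integrals is immediate since each $\psi_j$ is bounded and compactly supported while $K \ast \bar\rho_{j'}$ grows at most quadratically by the bound $\|\nabla^2 K\| \leq C_K$.

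The diffusion and coupling contributions require a dominated-convergence argument, and here the hypothesis $\psi_j \equiv 0$ on $\{\bar\rho_j > H_j\}$ is essential. On $\supp \psi_j$ we have $\bar\rho_j^{(s)} \leq H_j + \|\psi_j\|_\infty$ uniformly in $s \in [0,1]$, so $F_j'$ is bounded there by continuity and the diffusion difference quotient is dominated by a constant multiple of $|\psi_j|$. For the coupling part the subtlety is that the other variable $\bar\rho_{j'}$ need not be bounded on $\supp \psi_j$; however, integrating the bound $|\partial_{r_j}^2 h(\rr)| \leq \kappa_{j,j} F_j''(r_j)$ from \eqref{eq:bddswap_h_r} with respect to $r_j$ and using the boundary condition $\partial_{r_j} h(0, r_{j'}) = 0$ from Hypothesis \ref{hyp:general} yields the uniform-in-$r_{j'}$ estimate $|\partial_{r_j} h(\rr)| \leq \kappa_{j,j} F_j'(r_j)$. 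Applied on $\supp \psi_j$, this furnishes the domination by $\kappa_{j,j} F_j'(H_j + \|\psi_j\|_\infty)\,|\psi_j|$, and dominated convergence gives the pointwise derivative $\partial_{r_1} h(\bar\rho_1,\bar\rho_2)\psi_1 + \partial_{r_2} h(\bar\rho_1,\bar\rho_2)\psi_2$. Summing the three limits and dividing the inequality $\me_\eps(\bar\rrho^{(s)}) \geq \me_\eps(\bar\rrho)$ by $s > 0$ before sending $s \to 0^+$ yields the claim. I do not foresee any serious obstacle; the only delicate point is the uniform-in-$r_{j'}$ control of $\partial_{r_j} h$ just described, which is precisely what Hypothesis \ref{hyp:theta} is tailored to provide.
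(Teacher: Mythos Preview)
Your proposal is correct and follows essentially the same route as the paper's proof: perturb along $\bar\rrho + s(\psi_1,\psi_2)$, use global minimality over $\mptrd^2$, and differentiate at $s=0^+$ via dominated convergence. Your justification of dominated convergence for the coupling term---using the uniform-in-$r_{j'}$ bound $|\partial_{r_j} h(\rr)| \leq \kappa_{j,j} F_j'(r_j)$ obtained by integrating \eqref{eq:bddswap_h_r}---is in fact more explicit than what the paper writes, which simply asserts that dominated convergence applies ``using the properties of $\psi_1,\psi_2$''.
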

\begin{proof}
    Let $\psi_1$ and $\psi_2$ as above. Then for any $s \in[0,1]$, the perturbation $\rrho^s := \bar{\rrho} + s \left(\psi_1, \psi_2\right)$ fulfills $\rho^s_j \geq 0$ and $\int \rho^s_j = 1$ for $j = 1,2$, and thus defines a pair of probability distributions. Since $\bar\rrho$ minimizes $\me_\eps$ over $\mptrd^2$, it holds $\me_\eps(\rrho^s) \geq \me_\eps(\bar\rrho)$ for all $s \in [0,1]$, hence  
    \begin{align*}
        \ddszerop \me_\eps (\rrho^s) = \ddszerop \me_\eps (\bar\rho_1 + s \psi_1, \bar\rho_2 + s \psi_2) \geq 0\,.
    \end{align*}
    Using the properties of $\psi_1$, $\psi_2$, in particular the fact that they are supported on a set where $\bar\rho_j$ is bounded, the dominated convergence theorem applies and provides that 
    \begin{align*}
        0 &\leq \ddszerop \intrd \left[\mf_\eps(\bar\rho_1 + s \psi_1,\bar\rho_2 + s \psi_2) + (\bar\rho_1 + s \psi_1) K \ast (\bar\rho_2 + s \psi_2) \right] \intd x \\
        &= \intrd \left( \partial_{r_1} \mf_\eps (\bar\rho_1, \bar\rho_2) + K \ast \bar\rho_2 \right) \psi_1 \,\intd x + \intrd \left( \partial_{r_2} \mf_\eps (\bar\rho_1, \bar\rho_2) + K \ast \bar\rho_1 \right) \psi_2 \,\intd x \\
        &= \intrd \left( F_1'(\bar\rho_1) + \varepsilon \partial_{r_1} h(\bar\rho_1, \bar\rho_2) + K \ast \bar\rho_2 \right) \psi_1 \,\intd x \\ 
        & \qquad + \intrd \left( F_2'(\bar\rho_2) + \varepsilon \partial_{r_2} h(\bar\rho_1, \bar\rho_2) + K \ast \bar\rho_1 \right) \psi_2 \,\intd x,
    \end{align*}
    which proves the asserted integral inequality.
\end{proof}
\begin{lemma} \label{lem:el_cineq}
There are positive constants $C_1,C_2\in\R$ such that $\bar\rrho$ satisfies
\begin{equation} \label{eq:el_c1ineq}
\begin{split}
F_1'(\bar\rho_1) + \eps\partial_{r_1}h(\bar\rho_1,\bar\rho_2) + K*\bar\rho_2 &\ge C_1 \\
F_2'(\bar\rho_2) + \eps\partial_{r_2}h(\bar\rho_1,\bar\rho_2) + K*\bar\rho_1 &\ge C_2
\end{split}
\end{equation}
with equality a.e. on $\{\bar\rho_1 > 0 \}$ and a.e. on $\{\bar\rho_2 > 0\}$ respectively.
\end{lemma}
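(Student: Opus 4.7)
My plan is to derive the lemma from Lemma \ref{lem:el_intineq} via a Lagrange-multiplier / exchange-of-mass argument. Setting $u_j := F_j'(\bar\rho_j) + \eps\partial_{r_j}h(\bar\rho_1,\bar\rho_2) + K\ast\bar\rho_{j'}$, the statement decomposes into three parts: (a) $u_j$ is essentially constant on $\{\bar\rho_j > 0\}$, with common value I will call $C_j$; (b) $u_j \geq C_j$ a.e.\ on $\{\bar\rho_j = 0\}$; and (c) $C_j > 0$.

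For (a), I fix truncation levels $0 < \delta < H < +\infty$ and apply Lemma \ref{lem:el_intineq} to the perturbation $\psi_j := \alpha \mathbf{1}_A - \beta \mathbf{1}_B$, $\psi_{j'} := 0$, where $A, B \subset \{\delta \leq \bar\rho_j \leq H\}$ are any two bounded measurable sets of positive finite Lebesgue measure and $\alpha, \beta > 0$ satisfy $\alpha|A| = \beta|B|$ (zero mean) and $\beta \leq \delta$ (so that $\bar\rho_j + \psi_j \geq 0$). The resulting inequality $\alpha \int_A u_j\,\dd x \geq \beta \int_B u_j\,\dd x$, together with its counterpart obtained by swapping $A$ and $B$, gives $|A|^{-1}\int_A u_j\,\dd x = |B|^{-1}\int_B u_j\,\dd x$; since $A, B$ were arbitrary, $u_j$ is essentially constant on $\{\delta \leq \bar\rho_j \leq H\}$. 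Letting $\delta \to 0^+$ and $H \to +\infty$, the exceptional set $\{0 < \bar\rho_j < \delta\} \cup \{\bar\rho_j > H\}$ has vanishing measure (by $\bar\rho_j \in L^1$), which proves (a). For (b), I assume by contradiction the existence of $\eta > 0$ and a bounded $B_0 \subset \{\bar\rho_j = 0\}$ of positive measure on which $u_j < C_j - \eta$, and apply Lemma \ref{lem:el_intineq} to $\psi_j := \alpha \mathbf{1}_{B_0} - \beta\mathbf{1}_A$ for a bounded $A \subset \{\delta \leq \bar\rho_j \leq H\}$ (on which $u_j \equiv C_j$ by (a)). With $\alpha|B_0| = \beta|A|$ and $\beta \leq \delta$, I get
\begin{align*}
    \intrd u_j\,\psi_j\,\dd x \leq \alpha|B_0|(C_j - \eta) - \beta|A|\,C_j = -\alpha|B_0|\,\eta < 0,
\end{align*}
contradicting the lemma.

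For (c), I integrate the identity $u_j = C_j$ against $\bar\rho_j$ and use $\int \bar\rho_j = 1$ to get
\begin{align*}
    C_j = \intrd \bar\rho_j\,F_j'(\bar\rho_j)\,\dd x + \eps \intrd \bar\rho_j\,\partial_{r_j}h(\bar\rho_1,\bar\rho_2)\,\dd x + \intrd \bar\rho_j\,K\ast\bar\rho_{j'}\,\dd x.
\end{align*}
The first term is strictly positive because $\bar\rho_j$ is a nontrivial probability density and $F_j' > 0$ on $(0, +\infty)$; the third is strictly positive since $\lambda$-convexity of $K$ with $K(0) = 0$ and radial symmetry give $K(z) \geq \frac{\lambda}{2}|z|^2$, and the absolutely continuous measures $\bar\rho_1, \bar\rho_2$ cannot be co-concentrated at a single point. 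Integrating \eqref{eq:bddswap_h_r} once in $r_{j'}$ and using the boundary conditions $\partial_{r_j}h(r_1, 0) = \partial_{r_j}h(0, r_2) = 0$ yields $|\partial_{r_j}h(\bar\rho_1,\bar\rho_2)| \leq \kappa_{j,j'} F_{j'}'(\bar\rho_{j'})$, so the coupling integral is $O(\eps)$ thanks to the $L^1$-bound \eqref{eq:estim_fjp_eeps} of Lemma \ref{lem:eeps_bounds}; for $\eps \leq \bar\eps$ small enough it is dominated by the positive terms, giving $C_j > 0$.

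The principal obstacle is technical: the perturbation $\psi_j$ must simultaneously satisfy \emph{all} constraints of Lemma \ref{lem:el_intineq} --- compact support, $L^\infty$-boundedness, zero mean, $\bar\rho_j + \psi_j \geq 0$, and vanishing on $\{\bar\rho_j > H_j\}$ --- while producing a sign-definite value of $\int u_j \psi_j\,\dd x$. The two-sided truncation $\{\delta \leq \bar\rho_j \leq H\}$ is the key device: the upper bound handles the support and boundedness conditions, while the lower bound makes the sign-preservation condition $\beta \leq \delta$ compatible with $\beta > 0$, so that the perturbation remains nontrivial. Choosing the truncation levels so that the restricted sets $A^\pm$ (respectively $B_0, A$) retain positive measure is possible because $\bar\rho_j \in L^1$ forces $|\{\bar\rho_j > H\}| \to 0$ as $H \to +\infty$.
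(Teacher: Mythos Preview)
Your parts (a) and (b) follow the same exchange-of-mass strategy via Lemma~\ref{lem:el_intineq} as the paper, only packaged slightly differently: the paper does (a) and (b) in a single contradiction argument, choosing sets $M_\alpha = \{u_j \leq \alpha\} \cap \{\bar\rho_j \leq H\} \cap B_R$ and $M_\beta = \{u_j \geq \beta\} \cap \{0 < \bar\rho_j \leq H\} \cap B_R$ with $\alpha < \beta$, and the perturbation $\psi_j = -\bar\rho_j\,\mathds{1}_{M_\beta} + (\text{const})\,\mathds{1}_{M_\alpha}$. One small inaccuracy in your (a): the set $\{0 < \bar\rho_j < \delta\}$ need \emph{not} have vanishing measure as $\delta \to 0$ (think of a Gaussian), so your justification ``by $\bar\rho_j \in L^1$'' is wrong for that part. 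But you do not actually need this: the nested sets $\{\delta \leq \bar\rho_j \leq H\}$ cover $\{\bar\rho_j > 0\}$ as $\delta \downarrow 0$, $H \uparrow \infty$, and the constants you obtain on them agree on overlaps, which is enough.

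Your part (c) has a genuine small gap. Integrating \eqref{eq:bddswap_h_r} in $r_{j'}$ gives $|\partial_{r_j}h(\rr)| \leq \kappa_{j,j'} F_{j'}'(r_{j'})$, so the coupling integral is bounded by $\eps\kappa_{j,j'}\intrd \bar\rho_j\,F_{j'}'(\bar\rho_{j'})\dd x$ --- but \eqref{eq:estim_fjp_eeps} controls $\intrd F_{j'}'(\bar\rho_{j'})\dd x$, not $\intrd \bar\rho_j\,F_{j'}'(\bar\rho_{j'})\dd x$; the latter mixes the two components and is not obviously finite at this stage of the argument. The fix is immediate: integrate \eqref{eq:bddswap_h_r} in $r_j$ instead (using $|\partial_{r_j}^2 h| \leq \kappa_{j,j}F_j''(r_j)$ and $\partial_{r_j}h|_{r_j=0}=0$) to obtain $|\partial_{r_j}h(\rr)| \leq \kappa_{j,j}F_j'(r_j)$; then the coupling term is dominated by $\eps\kappa_{j,j}$ times your first term $\intrd \bar\rho_j F_j'(\bar\rho_j)\dd x$, and for $\eps \leq \eps_0 < 1/\kappa_{j,j}$ the sum stays positive. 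The paper's argument is shorter still and avoids any smallness condition on $\eps$ beyond $\eps \leq \eps_0$: by strict convexity of $\mf_\eps$ (Proposition~\ref{prop:asmpt_corollaries}) one has $F_j'(r_j) + \eps\partial_{r_j}h(\rr) = \partial_{r_j}\mf_\eps(\rr) \geq 0$ pointwise, and $K \ast \bar\rho_{j'} > 0$ everywhere since $K > 0$ on $\Rd\setminus\{0\}$ and $\bar\rho_{j'}$ is non-singular; hence $u_j > 0$ everywhere, so $C_j > 0$.
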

\begin{proof}
For brevity, we introduce the notation
\[ EL_1 := F_1'(\bar\rho_1) + \eps\partial_{r_1}h(\bar\rho_1,\bar\rho_2) + K*\bar\rho_2,\]
and similarly $EL_2$. Since both inequalities in \eqref{eq:el_c1ineq} are analogous to each other, we only need to prove the first one. Suppose there does not exist a constant $C_1$ such that $EL_1 \equiv C_1$ a.e on $\{\bar\rho_1 > 0 \}$ and $EL_1 \geq C_1$ a.e. on $\Rd$. Then there exist constants $\alpha < \beta$, a height $H > 0$ and a radius $R > 0$ such that the bounded measurable sets 
\[M_{\alpha} := \{EL_1 \leq \alpha \} \cap \{\bar\rho_1 \leq H\} \cap B_R, \qquad M_{\beta} := \{EL_1 \geq \beta \} \cap \{0 < \bar\rho_1 \leq H\} \cap B_R\]
both have strictly positive measure. Now define
\[ \psi_1 := -\bar\rho_1 \mathds{1}_{M_{\beta}} + \frac{1}{|M_{\alpha}|} \left(\int_{M_{\beta}} \bar\rho_1 \, \intd x \right) \mathds{1}_{M_{\alpha}}.\]
The properties $EL_1 \leq \alpha$ on $M_{\alpha}$ and $EL_1 \geq \beta$ on $M_{\beta}$ yield
\begin{align*}
\intrd EL_1\, \psi_1\,\intd x &= -\int_{M_{\beta}} EL_1\,\bar\rho_1\, \intd x +  \frac{1}{|M_{\alpha}|}\left(\int_{M_{\beta}} \bar\rho_1 \, \intd x \right) \int_{M_{\alpha}} EL_1\,\intd x \\
&\leq (\alpha - \beta) \int_{M_{\beta}} \bar\rho_1 \, \intd x < 0,
\end{align*}
where the last inequality follows from $\bar\rho_1 > 0$ on $M_{\beta}$.
This is however a contradiction to Lemma \ref{lem:el_intineq} with $\psi_2 \equiv 0$, since it is easily checked that $\psi_1 \in L^1(\Rd)$ satisfies all properties needed to apply the lemma.

It remains to show that $C_1$ and $C_2$ are strictly positive. We observe that the strict convexity of $\mf_\eps$ implies for $j = 1,2$:
\begin{equation} \label{eq:DFeps_nn}
F_j'(r_j) + \eps \partial_{r_j} h(\rr) = \partial_{r_j} \mf_\eps(\rr) \geq 0 \quad \text{for all } \rr \in \Rnn^2,
\end{equation}
with equality if and only if $r_j = 0$. The interaction terms $K \ast \bar\rho_j$ are strictly positive on $\Rd$ by positivity of $K$ in $\Rd \setminus \{0\}$ and the fact that $\bar\rho_j$ is non-singular. Hence $EL_j(x) > 0$ for all $x \in \Rd$ and $j = 1,2$, and thus $C_1, C_2 > 0$. This concludes the proof.
\end{proof}
\begin{lemma} \label{lem:el_rhogr0}
For a.e. $x \in \Rd$, it holds
\begin{align*}
K \ast \bar\rho_2(x) > C_1 \quad &\Leftrightarrow \quad \bar\rho_1(x) = 0 \\
K \ast \bar\rho_2(x) < C_1 \quad &\Leftrightarrow \quad \bar\rho_1(x) > 0
\end{align*}
and the same statements hold true for $\bar\rho_2(x) \geqq 0$ and $K \ast \bar\rho_1(x) \lessgtr C_2$.
\end{lemma}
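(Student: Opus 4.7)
The plan is to derive both equivalences --- and the analogous pair for the second component, which follows by symmetry --- from the Euler--Lagrange alternative of Lemma \ref{lem:el_cineq}, once we know that the level set $\{K \ast \bar\rho_2 = C_1\}$ is Lebesgue-negligible. I expect the only delicate point to be precisely this negligibility: without it, a positive-measure portion of $\{\bar\rho_1 = 0\}$ could sit on $\{K \ast \bar\rho_2 = C_1\}$ and spoil the strict inequality in one direction.

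First I would transfer strict convexity from $K$ to the convolution. Since $\nabla^2 K \geq \lambda \mathbf{1}$ by Hypothesis \ref{hyp:general} and $\bar\rho_2$ is a probability density, $\nabla^2(K \ast \bar\rho_2)(x) = \intrd \nabla^2 K(x-y)\bar\rho_2(y)\dn y \geq \lambda \mathbf{1}$ at every $x \in \Rd$. Hence $K \ast \bar\rho_2$ is strictly $\lambda$-convex on $\Rd$; in particular every sublevel set $\{K \ast \bar\rho_2 \leq c\}$ is a strictly convex body whose topological boundary carries the whole level set $\{K \ast \bar\rho_2 = c\}$ and has vanishing Lebesgue measure. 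Applied with $c = C_1$, this yields the negligibility of $\{K \ast \bar\rho_2 = C_1\}$.

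For the equivalences themselves, I would recall from the proof of Lemma \ref{lem:el_cineq} (see \eqref{eq:DFeps_nn}) that $F_1'(r_1) + \eps\partial_{r_1}h(\rr) = \partial_{r_1}\mf_\eps(\rr) \geq 0$ on $\Rnn^2$, with equality exactly when $r_1 = 0$, by strict convexity of $\mf_\eps$ and the boundary conditions $F_1'(0) = 0$ and $\partial_{r_1}h(0, r_2) = 0$. Then a.e.\ on $\{\bar\rho_1 > 0\}$ the equality case of Lemma \ref{lem:el_cineq} gives $K \ast \bar\rho_2 = C_1 - \partial_{r_1}\mf_\eps(\bar\rho_1, \bar\rho_2) < C_1$, while a.e.\ on $\{\bar\rho_1 = 0\}$ the inequality collapses to $K \ast \bar\rho_2 \geq C_1$, which combines with the negligibility of $\{K \ast \bar\rho_2 = C_1\}$ to upgrade to $K \ast \bar\rho_2 > C_1$ a.e.\ on that set. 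Taking contrapositives of these two one-sided implications yields both asserted equivalences. The statements for $\bar\rho_2$ and $K \ast \bar\rho_1$ follow verbatim after swapping the roles of the indices.
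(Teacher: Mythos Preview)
Your proof is correct and follows essentially the same approach as the paper: both arguments use the Euler--Lagrange alternative of Lemma~\ref{lem:el_cineq} together with the sign characterisation $\partial_{r_1}\mf_\eps(\rr)>0 \Leftrightarrow r_1>0$ from \eqref{eq:DFeps_nn}, and both dispose of the borderline case by observing that the $\lambda$-convexity of $K\ast\bar\rho_2$ forces the level set $\{K\ast\bar\rho_2=C_1\}$ to have Lebesgue measure zero. The only difference is cosmetic ordering---the paper first proves the implications starting from $K\ast\bar\rho_2\gtrless C_1$ and then passes to contrapositives, whereas you start from $\bar\rho_1$---and you give slightly more detail on why the level set is negligible.
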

\begin{proof}
If $K \ast \bar\rho_2(x) > C_1$, then it follows from \eqref{eq:DFeps_nn} that \eqref{eq:el_c1ineq} does not hold with equality, hence up to a null-set, $\bar\rho_1(x) = 0$. On the other hand, if $K \ast \bar\rho_2(x) < C_1$, then \eqref{eq:el_c1ineq} implies $F_1'(\bar\rho_1(x)) + \eps\partial_{r_1}h(\bar\rho_1(x),\bar\rho_2(x)) > 0$, and thus $\bar\rho_1(x) > 0$.

It follows immediately that $\bar\rho_1(x) = 0$ implies $K \ast \bar\rho_2(x) \geq C_1$ and that $\bar\rho_1(x) > 0$ implies $K \ast \bar\rho_2(x) \leq C_1$ for a.e. $x$. Since $K$, and thus $K \ast \bar\rho_2$, is $\lambda$-convex on $\Rd$, the set $\{x: K \ast \bar\rho_2(x) = C_1 \}$ has measure zero, which concludes the proof.
\end{proof}
From the previous two results, we obtain the Euler-Lagrange system and a characterization of the supports of $\bar\rho_1$ and $\bar\rho_2$:
\begin{theorem} \label{thm:euler-lagrange}
Any global minimizer $\bar\rrho \in \sspc$ of the energy functional $\me_\eps$ satisfies the Euler-Lagrange system
\begin{equation} \label{eq:euler-lagrange}
    \begin{split}
        F_1'(\bar\rho_1) + \varepsilon \partial_{r_1} h(\bar\rho_1, \bar\rho_2) &= \left(C_1 - K * \bar\rho_2\right)_+ \\
        F_2'(\bar\rho_2) + \varepsilon \partial_{r_2} h(\bar\rho_1, \bar\rho_2) &= \left(C_2 - K * \bar\rho_1\right)_+ 
    \end{split}
\end{equation}
in $\Rd$. Moreover, the supports of $\bar\rho_1$ and $\bar\rho_2$ are given by the compact convex sets
\begin{equation} \label{eq:supp_minimizer}
\begin{split}
\supp\,\bar\rho_1 &= \{K \ast \bar\rho_2 \leq C_1\} \\
\supp\,\bar\rho_2 &= \{K \ast \bar\rho_1 \leq C_2\}.
\end{split}
\end{equation}
\end{theorem}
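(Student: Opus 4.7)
The plan is to combine Lemmas \ref{lem:el_cineq} and \ref{lem:el_rhogr0} directly, with only a brief auxiliary argument about continuity and sublevel sets of $K\ast\bar\rho_j$.

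First, I would derive the Euler-Lagrange system pointwise by case analysis on the sign of $\bar\rho_1$ (and analogously for $\bar\rho_2$). On the set $\{\bar\rho_1>0\}$, Lemma \ref{lem:el_cineq} gives equality $F_1'(\bar\rho_1)+\eps\partial_{r_1}h(\bar\rho_1,\bar\rho_2)+K\ast\bar\rho_2 = C_1$, and Lemma \ref{lem:el_rhogr0} guarantees $K\ast\bar\rho_2\le C_1$ almost everywhere there, so the right-hand side of the first line of \eqref{eq:euler-lagrange} equals $C_1-K\ast\bar\rho_2=(C_1-K\ast\bar\rho_2)_+$. On the complementary set $\{\bar\rho_1=0\}$, the left-hand side vanishes: $F_1'(0)=0$ by Hypothesis \ref{hyp:general} and $\partial_{r_1}h(0,\bar\rho_2)=0$ by the boundary assumptions on $h$; meanwhile Lemma \ref{lem:el_rhogr0} yields $K\ast\bar\rho_2\ge C_1$ a.e., making $(C_1-K\ast\bar\rho_2)_+=0$. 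Both sides match, which gives the first identity in \eqref{eq:euler-lagrange}; the second follows by the identical argument.

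For the support characterization, I would argue that $K\ast\bar\rho_2$ is continuous on $\Rd$ (in fact $C^2$, since $K\in C^2$ with bounded Hessian and $\bar\rho_2$ is a probability measure), so the set $\{K\ast\bar\rho_2<C_1\}$ is open and $\{K\ast\bar\rho_2\le C_1\}$ is closed. Lemma \ref{lem:el_rhogr0} identifies $\{\bar\rho_1>0\}$ with $\{K\ast\bar\rho_2<C_1\}$ up to a null set, so $\supp\bar\rho_1$ is contained in the closed set $\{K\ast\bar\rho_2\le C_1\}$. For the reverse inclusion, I note that $\lambda$-convexity of $K$ is inherited by $K\ast\bar\rho_2$ (convolution with a probability measure preserves $\lambda$-convexity), so the level set $\{K\ast\bar\rho_2=C_1\}$ has empty interior; consequently every point of $\{K\ast\bar\rho_2\le C_1\}$ lies in the closure of $\{K\ast\bar\rho_2<C_1\}=\{\bar\rho_1>0\}$ modulo null sets, and hence belongs to $\supp\bar\rho_1$.

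Finally, convexity and compactness of the supports come for free from the same observation: sublevel sets of the $\lambda$-convex function $K\ast\bar\rho_2$ are convex, and since $\lambda>0$ forces $K\ast\bar\rho_2(x)\to+\infty$ as $|x|\to\infty$ (each translate $K(x-y)$ grows at least quadratically in $|x|$ uniformly in $y$ on the support of $\bar\rho_2$, which has bounded second moment by Lemma \ref{lem:eeps_bounds}), the sublevel sets are also bounded.

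I do not expect any serious obstacle: the substantive pointwise information was already extracted in Lemmas \ref{lem:el_cineq} and \ref{lem:el_rhogr0}. The only subtlety is justifying that the support equals the \emph{closed} sublevel set rather than just lying inside it, which is what the $\lambda$-convexity of $K\ast\bar\rho_2$ buys us by ruling out a ``thick'' level set $\{K\ast\bar\rho_2=C_1\}$.
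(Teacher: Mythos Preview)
Your proposal is correct and follows essentially the same route as the paper: the Euler--Lagrange system is obtained by combining Lemmas \ref{lem:el_cineq} and \ref{lem:el_rhogr0} via the case split $\{\bar\rho_1>0\}$ versus $\{\bar\rho_1=0\}$, and the support identity comes from Lemma \ref{lem:el_rhogr0} together with the observation $\overline{\{K\ast\bar\rho_2<C_1\}}=\{K\ast\bar\rho_2\le C_1\}$, which the paper also deduces from $\lambda$-convexity of $K\ast\bar\rho_2$. Your write-up is simply more explicit than the paper's two-sentence proof, including the added justification of compactness and convexity of the sublevel sets.
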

\begin{proof}
The form \eqref{eq:euler-lagrange} of the Euler-Lagrange system follows immediately from Lemmas \ref{lem:el_cineq} and \ref{lem:el_rhogr0}, and the characterization \eqref{eq:supp_minimizer} of the supports follows from Lemma \ref{lem:el_rhogr0} and the observation $\overline{\{K \ast \bar\rho_2 < C_1\}} = \{K \ast \bar\rho_2 \leq C_1\}$, which holds by $\lambda$-convexity of $K \ast \bar\rho_2$.
\end{proof}
\begin{remark}
In the case that $K = \frac{\lambda}{2}|\cdot|^2$ is the quadratic interaction potential, the Euler-Lagrange system can be written as
\begin{equation} \label{eq:el_quadratic}
\begin{split}
F_1'(\bar\rho_1) + \eps \partial_{r_1}h(\bar\rho_1, \bar\rho_2) &= (\tilde C_1 - \frac{\lambda}{2}|x|^2)_+ \\
F_2'(\bar\rho_2) + \eps \partial_{r_2}h(\bar\rho_1, \bar\rho_2) &= (\tilde C_2 - \frac{\lambda}{2}|x|^2)_+.
\end{split}
\end{equation}
with the constants $\tilde C_1, \tilde C_2 > 0$ chosen uniquely such that $\intrd \bar\rho_j \,\intd x = 1$ for $j = 1,2$. It holds $\tilde C_j = C_j - \frac{\lambda}{2} \mom_2[\bar\rho_{j'}]$ with the constants $C_j$ from \eqref{eq:euler-lagrange}; recall that $j' := 3-j$. The form \eqref{eq:el_quadratic} is considerably simpler to solve than \eqref{eq:euler-lagrange}, since the right-hand side does not explicitly depend on $\bar\rrho$. It can be proven by showing that in the case $K = \frac{\lambda}{2}|\cdot|^2$, the energy functional $\me_\eps$ is strictly convex in the flat sense on $\sspc$, implying that the minimizer $\bar\rrho \in \sspc$ is unique. This uniqueness yields radial symmetry of $\bar\rrho$, see Lemma \ref{lem:min_uniq}, hence $\mom_1[\rho_j] = 0$ for both components. A direct calculation then yields $K \ast \bar\rho_j = \frac{\lambda}{2}\left[|\cdot|^2 + \mom_2[\bar\rho_j]\right]$, thus \eqref{eq:el_quadratic} follows from \eqref{eq:euler-lagrange}. 
\end{remark}

\subsection{Properties of the minimizers}

In the next results, we prove some additional properties of the minimizers $\bar\rrho$.

\begin{lemma} \label{lem:minim_bdd}
For any $\bar\eps \in (0, \eps_0]$, the minimal energy $\me_\eps(\bar\rrho)$, the constants $C_1, C_2$ from the Euler-Lagrange system \eqref{eq:euler-lagrange} and the diameters of $\supp\,\bar\rho_1$ and $\supp\,\bar\rho_2$ are $\eps$-uniformly bounded for $\eps \in (0, \bar\eps]$. If $\bar\eps  < 1 / \max\{\kappa_{1,1}, \kappa_{2,2}\} $, there exists an $\eps$-independent constant $H_0 > 0$ such that $\bar\rho_j \leq H_0$ in $\Rd$ for $j = 1,2$ and any $\eps \in (0, \bar\eps]$.
\end{lemma}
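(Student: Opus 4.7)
The plan is to prove the four claims in order: first the uniform energy bound, then the bound on $C_1, C_2$, then the diameter, and finally the pointwise height.

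\emph{Uniform energy bound.} Fix any smooth, compactly supported probability density $\rho^\star$ centred at the origin, so that the pair $(\rho^\star, \rho^\star)$ lies in $\sspc$. Every term in $\me_\eps(\rho^\star, \rho^\star)$ is $\eps$-independent except the coupling term $\eps \intrd h(\rho^\star, \rho^\star)\,\dd x$, which is controlled uniformly in $\eps \in (0, \eps_0]$ via the estimate $|h(\rr)| \leq \kappa_{1,1} F_1(r_1)$ established in the proof of Proposition \ref{prop:asmpt_corollaries}. Minimality of $\bar\rrho$ yields $\me_\eps(\bar\rrho) \leq C$ uniformly, whence Lemma \ref{lem:eeps_bounds} provides uniform bounds on $\mom_2[\bar\rho_j]$ and, via \eqref{eq:estim_fjp_eeps}, on $\intrd F_j'(\bar\rho_j)\,\dd x$.

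\emph{Bound on $C_1, C_2$.} This is the main step. I integrate the Euler--Lagrange identity for $j = 1$ over $\Rd$. The bound $|\partial_{r_1} h(\rr)| \leq \kappa_{1,1} F_1'(r_1)$, obtained by integrating Hypothesis \ref{hyp:theta} with $i = j = 1$ once from $r_1 = 0$ (where $\partial_{r_1} h$ vanishes), controls the left-hand side by $(1 + \eps\kappa_{1,1}) \intrd F_1'(\bar\rho_1)\,\dd x$, which is uniformly bounded by the previous step. For a matching lower bound on the right-hand side, let $x_\star$ be the global minimizer of the $\lambda$-convex function $K \ast \bar\rho_2$ and set $B_1 := C_1 - K\ast\bar\rho_2(x_\star)$; since $\supp \bar\rho_1 = \{K\ast\bar\rho_2 \leq C_1\}$ is non-empty, we have $x_\star \in \supp \bar\rho_1$ and $B_1 \geq 0$. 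The two-sided curvature bound $\lambda\mathbf{1} \leq \nabla^2 K \leq C_K \mathbf{1}$ transfers to $K\ast\bar\rho_2$ and yields the inclusion $\{|x-x_\star| \leq \sqrt{2B_1/C_K}\} \subseteq \supp\bar\rho_1$ together with the pointwise bound $(C_1 - K\ast\bar\rho_2)(x) \geq B_1 - \tfrac{C_K}{2}|x-x_\star|^2$ on this ball. Integrating over it produces a lower bound of the form $c_d B_1^{d/2+1}$, so $B_1$ is uniformly bounded. Combined with $K\ast\bar\rho_2(x_\star) \leq K\ast\bar\rho_2(0) \leq \tfrac{C_K}{2}\mom_2[\bar\rho_2]$ (using $K(z) \leq \tfrac{C_K}{2}|z|^2$), this shows $C_1 = B_1 + K\ast\bar\rho_2(x_\star)$ is uniformly bounded. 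The bound on $C_2$ is symmetric.

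\emph{Diameter and height.} The $\lambda$-convexity of $K\ast\bar\rho_{j'}$ immediately gives the inclusion $\supp \bar\rho_j \subseteq \{|x - x_\star^j| \leq \sqrt{2C_j/\lambda}\}$, so the diameters of $\supp \bar\rho_j$ are uniformly bounded. For the pointwise height bound, reinsert $|\partial_{r_j} h(\bar\rho_j,\bar\rho_{j'})| \leq \kappa_{j,j} F_j'(\bar\rho_j)$ into the Euler--Lagrange identity to get $(1 - \eps\kappa_{j,j}) F_j'(\bar\rho_j) \leq C_j$ pointwise. Once $\bar\eps < 1/\max\{\kappa_{1,1},\kappa_{2,2}\}$, the prefactor is bounded away from zero independently of $\eps$, and inverting the strictly increasing function $F_j'$ yields the $\eps$-uniform pointwise bound $\bar\rho_j \leq H_0$.

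The main obstacle is the second step: one cannot simply evaluate the Euler--Lagrange identity at a boundary point of $\supp \bar\rho_j$ to bound $C_j$ by $K\ast\bar\rho_{j'}$ there, because the location of such a boundary point depends on the diameter, which in turn depends on $C_j$. The Pokhozhaev-type argument above circumvents this circularity by converting the already available $L^1$-bound on $(C_j - K\ast\bar\rho_{j'})_+$ into a bound on the single scalar $B_j$; crucially, this uses the upper curvature bound $\nabla^2 K \leq C_K\mathbf{1}$, not only the convexity lower bound, in order to guarantee a ball of macroscopic size inside $\supp \bar\rho_j$.
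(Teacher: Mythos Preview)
Your proof is correct and follows essentially the same route as the paper: bound the energy by comparison with a fixed test pair, use $|\partial_{r_j}h|\le\kappa_{j,j}F_j'$ to sandwich the Euler--Lagrange relation, integrate to control $\intrd(C_j-K\ast\bar\rho_{j'})_+\,\dd x$, exploit the \emph{upper} curvature bound on $K$ to convert this $L^1$-bound into a bound on $B_j=C_j-\min K\ast\bar\rho_{j'}$, and then use $\lambda$-convexity for the diameter and the lower sandwich for the height. The only cosmetic difference is that the paper bounds $\min K\ast\bar\rho_{j'}$ via $\min K\ast\bar\rho_{j'}\le\intrd\bar\rho_j\,K\ast\bar\rho_{j'}\,\dd x\le\me_\eps(\bar\rrho)$, whereas you use $\min K\ast\bar\rho_{j'}\le K\ast\bar\rho_{j'}(0)\le\tfrac{C_K}{2}\mom_2[\bar\rho_{j'}]$; both are immediate from Lemma~\ref{lem:eeps_bounds}.
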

\begin{proof}
Since for any fixed $\rrho \in \sspc$, the energy $\me_\eps(\rrho)$ is $\eps$-uniformly bounded for small $\eps$, the minimal value $\me_\eps(\bar\rrho)$ is uniformly bounded as well. To prove the other claims, we write the Euler-Lagrange system using the functions $\theta_j$:
\begin{equation} \label{eq:el_theta}
\begin{split}
F_1'(\bar\rho_1) + \eps \theta_1(F_1'(\bar\rho_1), F_2'(\bar\rho_2)) = (C_1 - K \ast \bar\rho_2)_+ \\
F_2'(\bar\rho_2) + \eps \theta_2(F_1'(\bar\rho_1), F_2'(\bar\rho_2)) = (C_2 - K \ast \bar\rho_1)_+
\end{split}
\end{equation}
It follows from assumption \eqref{eq:thetabdd_swap} that $|\theta_j(\uu)| \leq \kappa_{j,i} u_i$ for all $\uu \in \Rnn^2$ and any $i,j = 1,2$. It thus follows from  \eqref{eq:el_theta} that
\begin{align} \label{eq:el_fj_lowerbd}
(1 - \eps \kappa_{1,1})\,F_1'(\bar\rho_1) \leq (C_1 - K \ast \bar\rho_2)_+ \leq (1 + \eps \kappa_{1,1})\,F_1'(\bar\rho_1),
\end{align}
and analogously for $F_2'(\bar\rho_2)$. For the integral of the right-hand side, we have by \eqref{eq:estim_dfj}
\begin{align*}
 (1 + \eps \kappa_{1,1}) \intrd F_1'(\bar\rho_1)\,\intd x \leq (1 + \eps \kappa_{1,1})\,\alpha_1\, \left(1 + 2\me_\eps(\bar\rrho)\right),
\end{align*}
which is $\eps$-uniformly bounded for $\eps \in (0, \bar\eps]$. Hence by \eqref{eq:el_fj_lowerbd}, the integral of $(C_1 - K \ast \bar\rho_2)_+$ is $\eps$-uniformly bounded as well. With $\bar x := \argmin_x K \ast \bar\rho_2$, we have, using the bound $C_K$ on the Hessian of $K \ast \bar\rho_2$ and inequality \eqref{eq:estim_fj_int_eeps}, 
\begin{align*}
K \ast \bar\rho_2(x) \leq K \ast \bar\rho_2(\bar x) + \frac{C_K}{2} |x - \bar x|^2 \leq \intrd \bar\rho_1 K \ast \bar\rho_2\,\intd x + \frac{C_K}{2} |x - \bar x|^2 \leq \me_\eps(\bar\rrho) + \frac{C_K}{2} |x - \bar x|^2
\end{align*}
for every $x \in \Rd$. Hence we can estimate the integral of $(C_1 - K \ast \bar\rho_2)_+$ from below by
\begin{align*}
\intrd (C_1 - K \ast \bar\rho_2)_+ \,\intd x \geq \intrd (C_1 - \me_\eps(\bar\rrho) - C_K |x-\bar x|^2)_+\,\intd x.
\end{align*}
Since the left-hand side of this estimate is $\eps$-uniformly bounded, the same is true for the right-hand side, which implies that $C_1$ is $\eps$-uniformly bounded for $\eps \in (0, \bar\eps]$. Since it holds $\frac{\lambda}{2}|x-\bar x|^2 \leq K \ast \bar\rho_2 \leq C_1$ in $\supp\,\bar\rho_1$, this directly implies that the diameter of $\supp\,\bar\rho_1$ is $\eps$-uniformly bounded for $\eps \in (0,\bar\eps]$.  If $\bar\eps  < 1 / \max\{\kappa_{1,1}, \kappa_{2,2}\} $, implying that $1 - \eps \kappa_{1,1}$ is bounded away from 0 uniformly for $\eps \in (0, \bar\eps]$, the uniform boundedness of $C_1$ and non-negativity of $K \ast \bar\rho_2$ together with \eqref{eq:el_fj_lowerbd} imply uniform boundedness of $F_1'(\bar\rho_1)$, and thus also for $\bar\rho_1$ by the growth of $F_1'$. The proof for the other component is analogous.
\end{proof}

In the next lemma, we prove an $\eps$-uniform regularity result for the minimizer $\bar\rrho$.
\begin{lemma} \label{lem:minim_fj_lip}
Assuming that $\bar\eps > 0$ is sufficiently small, the functions $F_j'(\bar\rho_j)$ are Lipschitz in $\Rd$, and the gradients $\nabla F_j'(\bar\rho_j)$ are Lipschitz in the common support $\{\bar\rho_1 > 0\} \cap \{\bar\rho_2 > 0\}$, both with Lipschitz constants independent of $\eps \in (0, \bar\eps]$.
\end{lemma}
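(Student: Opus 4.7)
My plan is to reformulate the Euler--Lagrange system \eqref{eq:euler-lagrange} as a pointwise algebraic equation in the transformed unknowns $u_j := F_j'(\bar\rho_j)$. Introduce
\begin{align*}
\phi_j(x) := \bigl(C_j - K\ast\bar\rho_{j'}(x)\bigr)_+,
\qquad
T_\eps(\uu) := \uu + \eps\bigl(\theta_1(\uu),\theta_2(\uu)\bigr)^T,
\end{align*}
so that \eqref{eq:euler-lagrange} reads $T_\eps(\uu(x)) = (\phi_1(x),\phi_2(x))$ for every $x\in\Rd$. The key steps will be (i) to prove that $T_\eps:\Rnn^2\to\Rnn^2$ admits a Lipschitz inverse with constant uniform in $\eps\in(0,\bar\eps]$, and (ii) to show that $x\mapsto(\phi_1(x),\phi_2(x))$ is Lipschitz on $\Rd$ uniformly in $\eps$; the Lipschitz claim for $F_j'(\bar\rho_j)$ then follows by composition.

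For step (i), the Jacobian of $T_\eps$ is $I + \eps\Theta(\uu)$ with $\Theta_{ji}(\uu) := \theta_{j,i}(\uu)$, and Hypothesis \ref{hyp:theta} provides $|\theta_{j,i}|\le \kappa_{j,i}$ globally on $\Rnn^2$. Thus for $\bar\eps$ small enough, the symmetric part of $DT_\eps$ has smallest eigenvalue bounded below by a positive constant uniformly in $\uu$ and $\eps$; integrating along straight segments yields the strong monotonicity estimate $(T_\eps(\uu)-T_\eps(\vv))\cdot(\uu-\vv) \ge (1-C\eps)|\uu-\vv|^2$, hence injectivity of $T_\eps$ and the desired uniform bi-Lipschitz bound on its inverse. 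To get bijectivity onto $\Rnn^2$ I would combine this with the fact that $T_\eps$ maps $\Rnn^2$ into itself and leaves each face $\{u_j=0\}$ invariant — both consequences of $|\theta_j(\uu)|\le \kappa_{j,j}u_j$ (integrated from \eqref{eq:thetabdd_swap}) together with the vanishing of $\partial_{r_j}h$ on $\partial\Rnn^2$ from Hypothesis \ref{hyp:general} — and Minty-type surjectivity of strongly monotone continuous operators on the closed convex cone $\Rnn^2$; equivalently, Banach fixed-point applied to the contraction $\uu\mapsto\phi-\eps(\theta_1(\uu),\theta_2(\uu))$.

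For step (ii), the bounded Hessian $\|\nabla^2K\|\le C_K$, together with $\nabla K(0)=0$ and the uniform diameter bound on $\supp\bar\rho_{j'}$ from Lemma \ref{lem:minim_bdd}, shows that $\nabla(K\ast\bar\rho_{j'})$ is bounded uniformly in $\eps$ on the sublevel set $\{K\ast\bar\rho_{j'}<C_j\}$; since the positive part is $1$-Lipschitz, $\phi_j$ inherits a uniform Lipschitz constant on $\Rd$. Composing with the uniformly Lipschitz $T_\eps^{-1}$ from step (i) then gives the first claim.

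For the Lipschitz bound on $\nabla u_j$ inside the common support $\{\bar\rho_1>0\}\cap\{\bar\rho_2>0\}$, I observe that there the positive part in $\phi_j$ is inactive, so $\phi_j = C_j - K\ast\bar\rho_{j'}$ is $C^2$ with Hessian uniformly bounded by $C_K$. Differentiating $T_\eps(\uu(x))=\phi(x)$ yields
\begin{align*}
\nabla\uu(x) = \bigl(I+\eps\Theta(\uu(x))\bigr)^{-1}\nabla\phi(x),
\end{align*}
and I will argue that both factors are Lipschitz in $x$ uniformly in $\eps$: $\nabla\phi_i = -\nabla K\ast\bar\rho_{i'}$ is Lipschitz because of the Hessian bound on $K$, while the matrix factor is Lipschitz because the $\theta_{j,i}$ are locally Lipschitz by Hypothesis \ref{hyp:theta}, $\uu$ is itself Lipschitz from step (i) and takes values in a compact subset of $\Rnn^2$ fixed by Lemma \ref{lem:minim_bdd}, and the map $A\mapsto(I+\eps A)^{-1}$ is smooth near $A=0$. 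The product is then Lipschitz with a constant controlled uniformly in $\eps$. The hardest step I anticipate is extracting the global bi-Lipschitz inversion of $T_\eps$ on the closed cone $\Rnn^2$ respecting the boundary structure: infinitesimal positivity of $DT_\eps$ is clear, but upgrading it to a genuine bijection of $\Rnn^2$ with uniform constants requires either a monotone-operator or a degree-theoretic argument, and the degenerate structure of $\theta_{j,i}$ near the axes (encoded by the $\min$ in \eqref{eq:thetabdd_swap}) will need bookkeeping.
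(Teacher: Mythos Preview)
Your proposal is correct and follows essentially the same approach as the paper: rewrite the Euler--Lagrange system as $T_\eps(\uu)=\phi$, invert $T_\eps$ with an $\eps$-uniform Lipschitz bound via the Jacobian control $DT_\eps = I+\eps\Theta$, exploit the uniform Lipschitz property of $\phi_j=(C_j-K\ast\bar\rho_{j'})_+$ from the bounded supports in Lemma \ref{lem:minim_bdd}, and then differentiate inside the common support where the positive part is inactive. The only tactical difference is in the surjectivity step: where you propose monotone-operator or fixed-point arguments, the paper extends $T_\eps$ by the identity to all of $\R^2$ and obtains surjectivity on $\Rp^2$ more directly by observing that $T_\eps(\uu)=\nabla\mf_\eps(\rr)$ and minimizing the strictly convex, coercive function $\rr\mapsto\mf_\eps(\rr)-\vv\cdot\rr$ over $\Rnn^2$, which avoids the boundary bookkeeping you anticipate.
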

\begin{proof}
We consider the function $\Gamma_\eps: \R^2 \to \R^2$, defined by
\begin{equation*}
\Gamma_\eps(\uu) := \left\{ \begin{array}{ll}
\uu + \eps \begin{pmatrix} \theta_1(\uu) \\ \theta_2(\uu)
\end{pmatrix} = \nabla \mf_\eps\left(\rr\right) & \text{ for } \uu \in \Rnn^2 \\
\uu & \text{ else} 
\end{array}\right.,
\end{equation*}
where $\rr =  ((F_1')^{-1}(u_1), (F_2')^{-1}(u_2))$ for $\uu \in \Rnn^2$. It follows from the form \eqref{eq:el_theta} of the Euler-Lagrange system that
\begin{equation} \label{eq:el_gamma}
\Gamma_\eps\left(F_1'(\bar\rho_1), F_2'(\bar\rho_2)\right) = \begin{pmatrix}
(C_1 - K \ast \bar\rho_2)_+ \\ (C_2 - K \ast \bar\rho_1)_+
\end{pmatrix}.
\end{equation}
By strict convexity of $\mf_\eps$, the map $\Gamma_\eps$ is injective. Since both $\theta_j$ vanish on $\partial \Rnn^2$, we have $\Gamma_\eps = \mathrm{id}$ there, and $\Gamma_\eps$ is continuous in $\R^2$. To prove that $\Gamma_\eps$ maps $\Rp^2$ to itself surjectively, take any $\vv = (v_1, v_2) \in \Rp^2$ and consider the minimizer $\bar\rr$ of the function $\rr \mapsto \mf_\eps(\rr) - \vv \cdot \rr$ over $\Rnn^2$. The minimizer exists by the superlinear growth of $\mf_\eps$, and it does not lie on the boundary $\partial \Rnn^2$, as for any $\rr$ with $r_i = 0$, it holds $\partial_{r_i} (\mf_\eps(\rr) - \vv \cdot \rr) = -v_i < 0$. We thus have $\bar\rr \in \Rp^2$, and thus by the necessary first-order minimality condition, it holds $\nabla \mf_\eps(\bar\rr) = \vv$, implying that $\Gamma_\eps(F_1'(\bar r_1), F_2'(\bar r_2)) = \vv$. This shows that $\Gamma_\eps$ maps $\R^2$, $\Rnn^2$ and $\Rp^2$ onto themselves bijectively, and we can rewrite \eqref{eq:el_gamma} as
\begin{equation} \label{eq:el_gammainv}
\begin{pmatrix}
F_1'(\bar\rho_1) \\ F_2'(\bar\rho_2)
\end{pmatrix} = \Gamma_\eps^{-1} \begin{pmatrix}
(C_1 - K \ast \bar\rho_2)_+ \\ (C_2 - K \ast \bar\rho_1)_+
\end{pmatrix}.
\end{equation}
We claim that for $\bar\eps$ sufficiently small, the inverse mapping $\Gamma_\eps^{-1}$ is of class $C^1$, and its Jacobian $\dff \Gamma_\eps^{-1}$ is globally bounded and locally Lipschitz in $\R^2$, uniformly with respect to $\eps \in (0, \bar\eps]$. We prove the claim by the inverse function theorem. In $\Rp^2$, we have with \eqref{eq:thetabdd_swap}
\begin{align*}
\det \dff \Gamma_\eps(\uu) = \det\left[ \mathrm{Id} + \eps \begin{pmatrix}
\theta_{1,1}(\uu) & \theta_{1,2}(\uu) \\ \theta_{2,1}(\uu) & \theta_{2,2}(\uu)
\end{pmatrix} \right] \geq 1 - \eps \left(\kappa_{1,1} + \kappa_{2,2}\right) - \eps^2 \left(\kappa_{1,1}\kappa_{2,2} + \kappa_{1,2}\kappa_{2,1}\right),
\end{align*}
which is $\eps$-uniformly bounded away from 0 for $\eps \in (0,\bar\eps]$, if $\bar\eps > 0$ is sufficiently small.
In $\R^2 \setminus \Rp^2$, we have $\dff \Gamma_\eps = \mathrm{Id}$. Since the $\theta_{j,i}$ vanish on $\partial \Rnn^2$ by hypothesis \eqref{eq:thetabdd_swap}, and since they are locally Lipschitz, $\dff \Gamma_\eps$ is locally Lipschitz on $\R^2$, uniformly for $\eps \in (0, \bar\eps]$. Hence the inverse function theorem is applicable and yields $C^1$-regularity of the inverse $\Gamma_\eps^{-1}$ on $\R^2$, with Jacobian $\dff \Gamma_\eps^{-1} = \left(\dff \Gamma_\eps \circ \Gamma_\eps^{-1}\right)^{-1}$. Since $\det \dff \Gamma_\eps$ is $\eps$-uniformly bounded away from 0 on $\R^2$, this implies that $D\Gamma_\eps^{-1}$ is $\eps$-uniformly bounded in $\R^2$, implying that $\Gamma_\eps^{-1}$ is globally Lipschitz on $\R^2$ with an $\eps$-independent Lipschitz constant for all $\eps \in (0, \bar\eps]$. Since the inverse mapping $A \mapsto A^{-1}$ is Lipschitz on any set on $2\times2$-matrices $A$ with $\det A$ bounded away from 0, this also implies that $\dff\Gamma_\eps^{-1}$ is locally Lipschitz on $\R^2$, uniformly in $\eps$, since it is a composition of locally Lipschitz functions. This finishes the proof of the claim.

By Lemma \ref{lem:minim_bdd}, the diameter of $\supp\,\bar\rho_j$ is $\eps$-uniformly bounded. Together with the bound $|\nabla K(z)| \leq C_K |z|$, this implies that $(C_1 - K \ast \bar\rho_2)_+$ and $(C_2 - K \ast \bar\rho_1)_+$ are globally Lipschitz in $\R^d$, uniformly in $\eps$. It thus follows from \eqref{eq:el_gammainv} and the $\eps$-uniform Lipschitz continuity of $\Gamma_\eps^{-1}$ that the $F_j'(\bar\rho_j)$ are $\eps$-uniformly Lipschitz in $\R^d$. On the set $\{\bar\rho_1 > 0\} \cap \{\bar\rho_2 > 0\}$, it holds $(C_1 - K \ast \bar\rho_2)_+ = C_1 - K \ast \bar\rho_2$ and $(C_2 - K \ast \bar\rho_1)_+ = C_2 - K \ast \bar\rho_1$ by Lemma \ref{lem:el_rhogr0}, which are of class $C^2$ by the regularity of $K$. Thus, their gradients are $\eps$-uniformly Lipschitz in $\{\bar\rho_1 > 0\} \cap \{\bar\rho_2 > 0\}$, and \eqref{eq:el_gammainv} together with the local Lipschitz-continuity of $\dff\Gamma_\eps^{-1}$ implies $\eps$-uniform Lipschitz-continuity of $\nabla F_j'(\bar\rho_j)$ in the common support of $\bar\rho_1$ and $\bar\rho_2$. Note that local Lipschitz-continuity of $\dff\Gamma_\eps^{-1}$ is sufficient, since the constants $C_1, C_2$ are $\eps$-uniformly bounded by Lemma \ref{lem:minim_bdd} and thus $\dff \Gamma_\eps^{-1}$ is $\eps$-uniformly Lipschitz on the compact set $[0,C_1] \times [0,C_2] \subset \R^2$. This finishes the proof.
\end{proof}
An important consequence of this lemma is the following semiconvexity result, which we shall use in section \ref{sec:exp_cvgce} to prove exponential convergence to the equilibrium for system \eqref{eq:system-intro}.
\begin{corollary} \label{cor:vj_semiconv}
For the same $\bar\eps > 0$ as in the previous lemma, there exists an $\eps$-independent constant $K_0 > 0$ such that $V_j := \partial_{r_j} h(\bar\rho_1, \bar\rho_2)$ is $(-K_0)$-semiconvex in $\Rd$ for $j = 1,2$ and every $\eps \in (0, \bar\eps]$.
\end{corollary}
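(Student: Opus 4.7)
The strategy is to establish that $V_j\in C^{1,1}(\Rd)$ with an $\eps$-uniform Lipschitz constant $K_0$ for its gradient; the desired $(-K_0)$-semiconvexity then follows at once, since any $C^{1,1}$ function whose gradient is $L$-Lipschitz has distributional Hessian bounded between $-L\,\mathbf{1}$ and $L\,\mathbf{1}$.

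Write $u_i:=F_i'(\bar\rho_i)$, so that $V_j=\theta_j(u_1,u_2)$, and decompose $\Rd$ into the common support $A:=\{\bar\rho_1>0\}\cap\{\bar\rho_2>0\}$ and its complement. By Theorem \ref{thm:euler-lagrange} together with Lemma \ref{lem:el_rhogr0}, each set $\{\bar\rho_j>0\}$ coincides with the open convex sublevel set $\{K\ast\bar\rho_{j'}<C_{j'}\}$, so $A$ is open and convex. The boundary conditions on $h$ in Hypothesis \ref{hyp:general} give $\theta_j\equiv0$ on $\partial\Rnn^2$, hence $V_j\equiv0$ on $\Rd\setminus A$. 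On $A$, the chain rule gives
\begin{equation*}
    \nabla V_j=\theta_{j,1}(\uu)\,\nabla u_1+\theta_{j,2}(\uu)\,\nabla u_2,\qquad \uu=(u_1,u_2),
\end{equation*}
and a further differentiation yields, almost everywhere, a sum involving $\theta_{j,ik}\,\nabla u_i\otimes\nabla u_k$ and $\theta_{j,i}\,\nabla^2 u_i$. Lemma \ref{lem:minim_fj_lip} supplies $\eps$-uniform Lipschitz bounds for $u_i$ on $\Rd$ and for $\nabla u_i$ on $A$; Lemma \ref{lem:minim_bdd} confines $\uu(A)$ in a fixed compact rectangle; and Hypothesis \ref{hyp:theta} gives the global bound $|\theta_{j,i}|\leq\kappa_{j,i}$ on $\Rnn^2$ as well as local Lipschitz continuity of $\theta_{j,i}$, whence $\theta_{j,ik}$ is bounded a.e.\ on that rectangle. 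Combining these ingredients, one obtains an a.e.\ bound $|\nabla^2 V_j|\leq C$ on $A$, with $C$ independent of $\eps\in(0,\bar\eps]$; hence $\nabla V_j$ is $\eps$-uniformly Lipschitz on $A$.

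The main obstacle is to glue this interior regularity to the trivial bound $V_j\equiv 0$ off $A$ into genuine $C^{1,1}$ regularity on all of $\Rd$. For this, the sharper decay $|\theta_{j,i}(\uu)|\leq\kappa_{j,i}\min\{u_1,u_2\}$ in \eqref{eq:thetabdd_swap}, combined with the global Lipschitz continuity of each $u_i$ and $u_i\equiv 0$ on $\Rd\setminus\{\bar\rho_i>0\}\supset A^c$, yields
\begin{equation*}
    |\nabla V_j(x)|\leq C'\,\min\{u_1(x),u_2(x)\}\leq C''\,\mathrm{dist}(x,A^c)\qquad\text{for all }x\in A.
\end{equation*}
Thus $\nabla V_j$ admits a continuous extension to $\Rd$ with value zero on $A^c$, so $V_j\in C^1(\Rd)$. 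To promote this to a global Lipschitz bound for $\nabla V_j$, take arbitrary $x,y\in\Rd$: if both lie in $A$, the segment joining them stays in $A$ by convexity and the interior bound applies; if both lie in $A^c$, the bound is trivial; in the mixed case $x\in A$, $y\in A^c$, the segment first meets $\partial A$ at some point $z$, at which $\nabla V_j(z)=0$, so $|\nabla V_j(x)-\nabla V_j(y)|=|\nabla V_j(x)|\leq C|x-z|\leq C|x-y|$ by the interior Lipschitz bound passed to the closure. This yields an $\eps$-uniform Lipschitz constant $K_0$ for $\nabla V_j$ on $\Rd$, proving the asserted $(-K_0)$-semiconvexity of $V_j$.
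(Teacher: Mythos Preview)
Your proof is correct and follows essentially the same approach as the paper's: both establish that $\nabla V_j=\sum_i\theta_{j,i}(\bar\uu)\,\nabla F_i'(\bar\rho_i)$ is $\eps$-uniformly Lipschitz on the common support using Lemmas \ref{lem:minim_bdd}, \ref{lem:minim_fj_lip} and Hypothesis \ref{hyp:theta}, and vanishes continuously at the boundary via $|\theta_{j,i}(\uu)|\lesssim\min\{u_1,u_2\}$. Your treatment is in fact more explicit than the paper's on two points the paper leaves implicit: the convexity of the common support $A$ (needed so that the interior Lipschitz estimate transfers along segments), and the gluing argument for the mixed case $x\in A$, $y\in A^c$ via the boundary intersection point $z$.
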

\begin{proof}
We show that for some $\eps$-independent constant $K_0$, the gradient $\nabla V_j$ is $K_0$-Lipschitz in $\Rd$ for $j = 1,2$ and $\eps \in (0, \bar\eps]$. With $\bar\uu := (F_1'(\bar\rho_1), F_2'(\bar\rho_2))$, it holds
\begin{equation} \label{eq:gradvj_theta}
\nabla V_j = \nabla \theta_j(\bar\uu) =
\theta_{j,1}(\bar\uu) \nabla F_1'(\bar\rho_1) + \theta_{j,2}(\bar\uu) \nabla F_2'(\bar\rho_2),
\end{equation}
which is nonzero only in $\{\bar\rho_1 > 0\} \cap \{\bar\rho_2 > 0\}$, since the $\theta_{j,i}$ vanish on $\partial \Rnn^2$ by assumption \eqref{eq:thetabdd_swap}. Inside the common support of $\bar\rho_1$ and $\bar\rho_2$, the $\nabla F_j'(\bar\rho_j)$ are $\eps$-uniformly Lipschitz by Lemma \ref{lem:minim_fj_lip}. Since by Lemma \ref{lem:minim_bdd}, the $\bar\rho_j$ are $\eps$-uniformly bounded by a constant $H_0$, the function $\bar\uu$ only attains values in the compact set $[0,F_1'(H_0)] \times [0, F_2'(H_0)]$ for all $\eps \in (0, \bar\eps]$. By Hypothesis \ref{hyp:theta}, the $\theta_{j,i}$ are Lipschitz on this set, and $\bar\uu$ is globally Lipschitz by Lemma \ref{lem:minim_fj_lip}, uniformly in $\eps$, hence $\theta_{j,i}(\bar\uu)$ is $\eps$-uniformly Lipschitz in $\Rd$. This proves that the expression \eqref{eq:gradvj_theta} is Lipschitz in $\{\bar\rho_1 > 0\} \cap \{\bar\rho_2 > 0\}$. Additionally, it is continuous across the boundary of the common support. To see this, take any $x^*$ on the boundary, and any sequence $x_n \to x^*$ with $x_n \in \{\bar\rho_1 > 0\} \cap \{\bar\rho_2 > 0\}$. Since $x^*$ lies on the boundary and both $F_j'(\bar\rho_j)$ are continuous in $\Rd$, at least one of the two sequences $\bar u_k(x_n) = F_k'(\bar\rho_k(x_n))$ for $k = 1,2$ converges to 0, implying by \eqref{eq:thetabdd_swap} that $\theta_{j,i}(\bar\uu(x_n)) \to 0$ for both $i$. Since the $\nabla F_j'$-terms are bounded along the sequence $(x_n)$, this implies that the expression \eqref{eq:gradvj_theta} evaluated at $x_n$ converges to 0, yielding continuity across the boundary. This proves that the $\nabla V_j$ are $\eps$-uniformly Lipschitz in $\Rd$ with some Lipschitz constant $K_0$, yielding $(-K_0)$-semiconvexity on $V_j$ for every $\eps \in (0, \bar\eps]$.
\end{proof}

\section{Minimizing movement scheme for the energy functional} \label{sec:min_move_scheme}

Fixing any $\tau > 0$, we introduce the functional $\me_{\eps, \tau}$, defined for $\rrho, \hat{\rrho} \in \mptrd^2$ as
\begin{equation*}
\eepstau(\rrho | \hat\rrho) := \frac{1}{2\tau} \dst(\rrho, \hat\rrho)^2 + \me_\eps(\rrho).
\end{equation*}
Our goal is to get a time-discrete variational approximation for a weak solution of system \eqref{eq:system-intro} by iteratively minimizing the functional $\eepstau(\cdot | \hat\rrho)$ over $\mptrd^2$, with $\hat\rrho$ being the minimizing pair obtained in the previous iteration.

\subsection{Existence of minimizers for the modified functional}
We start by proving existence of global minimizers of the functional $\eepstau(\cdot | \hat\rrho)$.

\begin{lemma} \label{lem:yosida_min_exist}
For every pair $\hat\rrho \in \mptrd^2$, the functional $\mptrd^2 \ni \rrho \mapsto \eepstau(\rrho | \hat\rrho)$ possesses a global minimizer $\rrho^+$. Every minimizer $\rrho^+$ satisfies the inequality
\begin{equation} \label{eq:step_estimate}
\frac{1}{2\tau} \dst(\rrho^+, \hat\rrho)^2 \leq \me_\eps(\hat\rrho) - \me_\eps(\rrho^+).
\end{equation}
If $\hat\rrho \in \sspc$, then also $\rrho^+ \in \sspc$ for every global minimizer $\rrho^+$.
\end{lemma}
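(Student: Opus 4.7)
I would treat the three assertions in order, with existence being the substantive part and the other two being short consequences of minimality.

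\emph{Existence via the direct method.} Take a minimizing sequence $\rrho^n$; the infimum is finite since $\eepstau(\cdot|\hat\rrho) \geq 0$ and since $\eepstau(\hat\rrho|\hat\rrho) = \me_\eps(\hat\rrho)$ provides a finite upper bound whenever $\me_\eps(\hat\rrho) < +\infty$ (if $\me_\eps(\hat\rrho) = +\infty$, one instead compares against any fixed pair of compactly supported bounded densities, for which $\me_\eps$ is finite). Non-negativity of both summands then yields uniform bounds on $\me_\eps(\rrho^n)$ and on $\dst(\rrho^n,\hat\rrho)$. The distance bound, combined with the finite second moments of $\hat\rrho$, forces $\mom_2[\rho_j^n]$ to be uniformly bounded, hence tightness of $(\rho_j^n)$ as probability measures. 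Along a subsequence, $\rho_j^n \weakto \rho_j$ narrowly for a pair $\rrho \in \mptrd^2$. Uniform $L^1$-boundedness of $F_j(\rho_j^n)$ from Lemma \ref{lem:eeps_bounds}, together with the super-linear growth of $F_j$ and the de la Vallée--Poussin criterion, upgrades this to weak $L^1$-convergence, exactly as at the end of the proof of Lemma \ref{lem:eeps_coer}. Lemma \ref{lem:eeps_lsc} yields weak $L^1$-lower semi-continuity of $\me_\eps$, while the standard lower semi-continuity of $\wass(\cdot,\hat\rho_j)^2$ under narrow convergence handles the distance term; consequently $\rrho =: \rrho^+$ realises the infimum.

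\emph{Step estimate.} Since $\rrho^+$ is a global minimizer, $\eepstau(\rrho^+|\hat\rrho) \leq \eepstau(\hat\rrho|\hat\rrho) = \me_\eps(\hat\rrho)$; rearranging yields \eqref{eq:step_estimate} immediately.

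\emph{Preservation of $\sspc$.} Assume $\hat\rrho \in \sspc$, set $w := \mom_1[\rho_1^+] + \mom_1[\rho_2^+]$, and for $v \in \Rd$ consider the joint shift $\sigma_v\rrho^+ := (\rho_1^+(\cdot - v), \rho_2^+(\cdot - v))$, which preserves $\me_\eps$ by translation invariance. If $T_j$ denotes the optimal transport map from $\rho_j^+$ to $\hat\rho_j$, then $y \mapsto T_j(y-v)$ is an admissible (sub-optimal) transport from $\sigma_v\rho_j^+$ to $\hat\rho_j$. Expanding the resulting cost and using $\int T_j\,\dd\rho_j^+ = \mom_1[\hat\rho_j]$ gives
\begin{equation*}
    \wass(\sigma_v\rho_j^+,\hat\rho_j)^2 \leq \wass(\rho_j^+,\hat\rho_j)^2 - 2v \cdot (\mom_1[\hat\rho_j] - \mom_1[\rho_j^+]) + |v|^2.
\end{equation*}
Summing over $j$ and using $\mom_1[\hat\rho_1] + \mom_1[\hat\rho_2] = 0$ gives $\dst(\sigma_v\rrho^+,\hat\rrho)^2 \leq \dst(\rrho^+,\hat\rrho)^2 + 2v \cdot w + 2|v|^2$. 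Taking $v = -w/2$ yields a reduction by $|w|^2/2$; if $w \neq 0$, this would strictly decrease $\eepstau(\cdot|\hat\rrho)$, contradicting the minimality of $\rrho^+$. Hence $w = 0$ and $\rrho^+ \in \sspc$.

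The \emph{main obstacle} is the compactness step: Lemma \ref{lem:eeps_coer} is stated for sequences in $\sspc$ or with combined centre of mass tending to zero, whereas here the minimising sequence is only constrained through the Wasserstein penalty to $\hat\rrho$. The key observation is that this penalty already supplies the second-moment bound that Lemma \ref{lem:eeps_coer} normally extracts from the centre-of-mass condition, so the remainder of its proof carries over verbatim. The other two claims are short one-liners, with the translation trick in the $\sspc$-preservation being the only cute point.
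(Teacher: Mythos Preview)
Your argument is correct but organized differently from the paper's. The paper front-loads the translation computation: it first shows (using Brenier's theorem to argue that the shifted map $\nabla\Phi_j(\cdot+v)$ is still the gradient of a convex function and hence \emph{optimal}) that $\wass(\sigma_v\rho_j,\hat\rho_j)^2 = \wass(\rho_j,\hat\rho_j)^2 + |v|^2 + 2(\mom_1[\hat\rho_j]-\mom_1[\rho_j])\cdot v$ holds with equality, and from this concludes that the full minimisation problem is equivalent to the one restricted to pairs preserving $\hat\rrho$'s combined centre of mass. This allows the paper to assume $\hat\rrho\in\sspc$ and invoke Lemma~\ref{lem:eeps_coer} verbatim for the direct method. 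You instead run the direct method directly, using the Wasserstein penalty to supply the second-moment bound that Lemma~\ref{lem:eeps_coer} would otherwise extract from the centre-of-mass condition, and then treat $\sspc$-preservation separately at the end using only the sub-optimal transport inequality. Both routes work; the paper's buys the sharper statement that the restricted and unrestricted problems are equivalent (and recycles Lemma~\ref{lem:eeps_coer} without modification), while yours is more self-contained and avoids the appeal to Brenier.
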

\begin{proof}
We first claim that for any $\hat\rrho \in \mptrd^2$, the minimization problem of $\eepstau(\cdot|\hat\rrho)$ over $\mptrd^2$ is equivalent to the restricted minimization problem over the set of all $\rrho \in \mptrd^2$ which have the same combined center of mass as $\hat\rrho$, i.e. $\mom_1[\rho_1] + \mom_1[\rho_2] = \mom_1[\hat\rho_1] + \mom_1[\hat\rho_2]$.

To prove the claim, fix any $\rrho \in \mptrd^2$ with $\me_\eps(\rrho) < +\infty$, so in particular, the $\rho_j$ for $j = 1,2$ are absolutely continuous with respect to the Lebesgue measure. Denote by $\Phi_j: \Rd \to \R$ convex functions such that $\nabla \Phi_j: \Rd \to \Rd$ are optimal transport maps from $\rho_j$ to $\hat\rho_j$, which exist by Brenier's theorem. Now consider the pair of translated densities $\sigma_v\rrho = \left(\rho_1(\cdot + v), \rho_2(\cdot + v)\right)$, for any direction $v \in \Rd$. Clearly, the maps $\nabla \Phi_j(\cdot + v)$ push $\sigma_v \rho_j$ forward to $\hat\rho_j$, and since they are the gradients of the convex functions $\Phi_j(\cdot + v)$, they are optimal in the transport problem from $\sigma_v \rho_j$ to $\hat\rho_j$. This implies
\begin{align*}
\wass(\sigma_v \rho_j, \hat\rho_j)^2 &= \intrd |\nabla\Phi_j(x+v) - x|^2\, \rho_j(x+v)\, \intd x = \intrd |\nabla\Phi_j(y) - y + v|^2\, \rho_j(y)\, \intd y \\
&= \intrd |\nabla\Phi_j(y) - y|^2\, \rho_j(y)\, \intd y + |v|^2 + 2\, \left(\intrd \left(\nabla\Phi_j(y) - y\right)\,\rho_j(y) \,\intd y\right) \cdot v \\
&= \wass(\rho_j, \hat\rho_j)^2 + |v|^2 + 2\, \left( \mom_1[\hat\rho_j] - \mom_1[\rho_j] \right) \cdot v\,.
\end{align*}
Combining this fact with the translation invariance of the energy $\me_\eps$, we obtain
\begin{align*}
&\eepstau(\sigma_v\rrho|\hat\rrho) = \frac{\wass(\sigma_v \rho_1, \hat\rho_1)^2 + \wass(\sigma_v \rho_2, \hat\rho_2)^2}{2 \tau} + \me_\eps(\sigma_v\rrho) \\
&= \frac{\wass(\rho_1, \hat\rho_1)^2 + \wass(\rho_2, \hat\rho_2)^2 + 2|v|^2 + 2\,(\mom_1[\hat\rho_1] + \mom_1[\hat\rho_2] - \mom_1[\rho_1] - \mom_1[\rho_2])\cdot v}{2 \tau} + \me_\eps(\rrho) \\
&= \eepstau(\rrho|\hat\rrho) + \frac{ |v|^2 + \,(\mom_1[\hat\rho_1] + \mom_1[\hat\rho_2] - \mom_1[\rho_1] - \mom_1[\rho_2])\cdot v}{\tau}\,.
\end{align*}
This expression is quadratic in $v \in \Rd$, and is uniquely minimized by $\bar v$ given by
\begin{align*}
\bar v := -\frac{1}{2}\left(\mom_1[\hat\rho_1] + \mom_1[\hat\rho_2] - \mom_1[\rho_1] - \mom_1[\rho_2]\right).
\end{align*}
For the combined center of mass of this optimally shifted pair $\sigma_{\bar v}\rrho$, we obtain
\begin{align*}
\mom_1[\sigma_{\bar v}\rho_1] + \mom_1[\sigma_{\bar v}\rho_2] = \mom_1[\rho_1] + \mom_1[\rho_2] - 2\bar v = \mom_1[\hat\rho_1] + \mom_1[\hat\rho_2].
\end{align*}
This proves the claimed equivalence of the minimization problems over the whole $\mptrd^2$ and the restricted set of pairs which preserve $\hat\rrho$'s combined center of mass.

To show existence of a global minimizer, we can thus w.l.o.g. assume $\hat\rrho \in \sspc$ and prove existence of a minimizer $\rrho^+$ over $\sspc$. Similarly as in the previous section, we shall use the direct method from the calculus of variations. Since the distance term is non-negative, pre-compactness of the sublevel sets of $\eepstaurhat{\cdot}$ follows directly from Lemma \ref{lem:eeps_coer}. Weak lower semicontinuity follows from Lemma \ref{lem:eeps_lsc} and lower semi-continuity of the Wasserstein distance with respect to weak $L^1$-convergence, proving the existence of a minimizer $\rrho^+ \in \sspc$.

From the minimality of $\rrho^+$, we directly obtain the inequality
\begin{align*}
\frac{1}{2\tau} \dst(\rrho^+, \hat\rrho^+)^2 + \me_\eps(\rrho^+) = \eepstaurhat{\rrho^+} \leq \eepstaurhat{\hat\rrho} = \me_\eps(\hat\rrho),
\end{align*}
which implies \eqref{eq:step_estimate}, thus finishing the proof.
\end{proof}

As a first regularity result, we prove a global $L^2$-estimate for $\nabla F_j'(\rho_j^+)$, restricted to a set where $\rho_j^+$ is smaller than some fixed height $\alpha > 0$.
\begin{lemma} \label{lem:heat_comparison}
Let $\hat\rrho \in \mptrd^2$ be any fixed pair, and let $\alpha > 0$ be a constant. Denote by $\mh_c(\rrho) := \mh(\rho_1) + \mh(\rho_2) :=  \intrd [ \rho_1 \log \rho_1 + \rho_2 \log \rho_2]\,\intd x $ the combined (negative) entropy functional for $\rho \in \mptrd$, and let $[\rho]_\alpha := \min\{\rho, \alpha\}$ denote a density $\rho$ cut off at height $\alpha$. Then any minimizer $\rrho^+$ of the functional $\eepstaurhat{\cdot}$ satisfies
\begin{equation} \label{eq:h1_estimate_min}
\frac{1}{2A}\intrd \left[\left|\nabla F_1'\left([\rho_1^+]_\alpha\right)\right|^2 + \left|\nabla F_2'\left([\rho_2^+]_\alpha\right)\right|^2 \right]\,\intd x \leq \frac{\mh_c(\hat\rrho) - \mh_c(\rrho^+)}{\tau} + 2 d C_K\,,
\end{equation}
where $A := \max_{j = 1,2} \max_{r \leq \alpha} F_j''(r) < + \infty$.
\end{lemma}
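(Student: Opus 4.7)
I will use the flow-interchange technique of Matthes--McCann--Savaré: perturb the minimizer $\rrho^+$ along the heat semigroup (which is the Wasserstein gradient flow of the entropy $\mh$), exploit the minimality of $\rrho^+$ to convert the resulting dissipation of $\me_\eps$ into a bound involving the entropy gap $\mh_c(\hat\rrho)-\mh_c(\rrho^+)$, and finally extract a bound on $\nabla F_j'([\rho_j^+]_\alpha)$ via the convexity properties of $\mf_\eps$.

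Concretely, denote by $S_s$ the heat semigroup on $\mptrd$ and set $\rrho^s:=(S_s\rho_1^+,S_s\rho_2^+)$, so that $\partial_s\rho_j^s=\Delta\rho_j^s$. Since $\rrho^+$ minimizes $\eepstaurhat{\cdot}$, one has $\eepstau(\rrho^s|\hat\rrho)\ge\eepstau(\rrho^+|\hat\rrho)$ for every $s>0$, which rearranges to
\[
\me_\eps(\rrho^+)-\me_\eps(\rrho^s)\le\frac{1}{2\tau}\bigl[\dst(\rrho^s,\hat\rrho)^2-\dst(\rrho^+,\hat\rrho)^2\bigr].
\]
Dividing by $s$ and letting $s\to 0^+$, the right-hand side is controlled via the evolutional variational inequality (EVI), which expresses the fact that the heat flow is the $0$-convex Wasserstein gradient flow of $\mh$ on each component: $\frac12 \ddszerop \wass(\rho_j^s,\hat\rho_j)^2\le\mh(\hat\rho_j)-\mh(\rho_j^+)$. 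Summing over $j=1,2$ gives the $\tau^{-1}\bigl(\mh_c(\hat\rrho)-\mh_c(\rrho^+)\bigr)$ term on the right-hand side of \eqref{eq:h1_estimate_min}.

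For the left-hand side, I compute the dissipation of $\me_\eps$ along the heat flow by integration by parts. For the $\mf_\eps$ part,
\[
-\ddszerop\intrd\mf_\eps(\rrho^s)\,\intd x
=\intrd\sum_{i,j}\partial_{r_i}\partial_{r_j}\mf_\eps(\rrho^+)\,\nabla\rho_i^+\cdot\nabla\rho_j^+\,\intd x,
\]
and using that Proposition \ref{prop:asmpt_corollaries} yields the convexity of $\mf_{2\eps}$, and therefore $\nabla_\rr^2\mf_\eps\ge\frac12\mathrm{diag}(F_1''(r_1),F_2''(r_2))$, this is at least $\frac12\sum_j\intrd F_j''(\rho_j^+)|\nabla\rho_j^+|^2\,\intd x$. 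For the interaction term, symmetry of $K$ gives
\[
\ddszerop\intrd\rho_1^s K\ast\rho_2^s\,\intd x=2\intrd\rho_1^+\,(\Delta K)\ast\rho_2^+\,\intd x,
\]
whose absolute value is bounded by $2dC_K$ using $|\Delta K|\le d\|\nabla^2K\|\le dC_K$ and $\intrd\rho_j^+\,\intd x=1$. Putting these together, $-\ddszerop\me_\eps(\rrho^s)\ge\frac12\sum_j\intrd F_j''(\rho_j^+)|\nabla\rho_j^+|^2\,\intd x-2dC_K$. On the sub-level set $\{\rho_j^+\le\alpha\}$ one has $F_j''(\rho_j^+)\le A$, hence $F_j''(\rho_j^+)|\nabla\rho_j^+|^2=|\nabla F_j'(\rho_j^+)|^2/F_j''(\rho_j^+)\ge A^{-1}|\nabla F_j'(\rho_j^+)|^2=A^{-1}|\nabla F_j'([\rho_j^+]_\alpha)|^2$; on its complement $\nabla F_j'([\rho_j^+]_\alpha)$ vanishes a.e., so the lower bound $\frac{1}{2A}\sum_j\intrd|\nabla F_j'([\rho_j^+]_\alpha)|^2\,\intd x$ for the dissipation follows and yields \eqref{eq:h1_estimate_min} after moving $2dC_K$ to the right-hand side.

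\textbf{Main obstacle.} The delicate point is making the one-sided differentiation at $s=0^+$ rigorous: the minimizer $\rrho^+$ need not be smooth, so the $\mf_\eps$-dissipation identity and the EVI inequality must be justified, typically by integrating the two differential inequalities over $[0,s]$, dividing by $s$ and using $\liminf/\limsup$, while exploiting the instantaneous regularizing effect of the heat semigroup (smoothness and positivity of $S_s\rho_j^+$ for $s>0$) to ensure that the pointwise manipulations involving $F_j''(\rho_j^s)|\nabla\rho_j^s|^2$ and $(\Delta K)\ast\rho_j^s$ are well defined. Once the integral form of the inequality is established, passing $s\downarrow 0$ via lower semicontinuity of the Fisher-information-type functional on the left-hand side (in particular, of $\int|\nabla F_j'([\rho_j^s]_\alpha)|^2\,\intd x$ under weak $L^1$-convergence of $\rho_j^s\to\rho_j^+$) concludes the proof.
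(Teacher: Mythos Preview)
Your proposal is correct and follows essentially the same flow-interchange strategy as the paper: perturb $\rrho^+$ by the heat semigroup, control the distance term via the EVI for $\mh$, bound the interaction contribution by $2dC_K$ through $|\Delta K|\le dC_K$, and extract the $F_j''$-weighted Dirichlet form from the $\mf_\eps$-dissipation, then truncate at height $\alpha$. The only organizational difference is that the paper, instead of invoking the matrix inequality $\nabla_\rr^2\mf_\eps\ge\tfrac12\mathrm{diag}(F_1'',F_2'')$ on the full dissipation, writes $\mf_\eps=\tfrac12\mf_{2\eps}+\tfrac12(F_1+F_2)$ and disposes of the $\mf_{2\eps}$-part \emph{without differentiation} via Jensen's inequality (convexity of $\mf_{2\eps}$ makes $s\mapsto\intrd\mf_{2\eps}(\rrho^s)\,\intd x$ non-increasing along the heat flow); this decouples the components and leaves only the scalar integrals $\intrd F_j(\rho_j^s)\,\intd x$ to be handled by the $s_0\downarrow0$ regularization and the weak-$L^2$ limit you describe, which slightly streamlines the rigorous justification by avoiding the cross terms $\partial_{r_1}\partial_{r_2}h\,\nabla\rho_1\cdot\nabla\rho_2$.
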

\begin{proof}
Let $\rrho^+$ be any minimizer of $\eepstaurhat{\cdot}$ over $\mptrd^2$. We prove \eqref{eq:h1_estimate_min} by analyzing the behaviour of the functional $\eepstaurhat{\cdot}$ along the heat flow with initial datum $\rrho^+$: Denote by $\mk^s$ the heat kernel in $\Rd$, and let $\rho_j^s$ be the solution of the heat equation $\partial_s \rho_j^s = \Delta \rho_j^s$ for $(s,x) \in [0, \infty) \times \Rd$ with initial condition $\rho_j^0 = \rho_j^+$ for $j = 1,2$. Explicitly, we have
\begin{align*}
\mk^s(x) = \frac{1}{(4\pi s)^{d/2}} e^{-\frac{|x|^2}{4s}}, \qquad \rho_j^s = \mk^s \ast \rho_j^+.
\end{align*}
Clearly, $\rrho^s = (\rho_1^s, \rho_2^s) \in \mptrd$ is again a feasible pair of probability distributions, hence by minimality of $\rrho^+$, it holds
\begin{equation} \label{eq:liminf_eepstau_split}
0 \leq \liminf_{s \to 0^+} \frac{\eepstaurhat{\rrho^s} - \eepstaurhat{\rrho^+}}{s} \leq \limsup_{s \to 0^+} \frac{\dst(\rrho^s, \hat\rrho)^2 - \dst(\rrho^+, \hat\rrho)^2}{2\tau s} + \liminf_{s \to 0^+} \frac{\me_\eps(\rrho^s) - \me_\eps(\rrho^+)}{s}.
\end{equation}
It is well known that the heat flow is the metric gradient flow in the Wasserstein space of the (negative) entropy functional $\mh(\rho) = \intrd \rho \log \rho\,\intd x$, which is convex along generalized geodesics. By \cite[Theorem 4.0.4]{AGS}, it thus satisfies the evolutional variational inequality
\begin{equation*}
\frac{1}{2}\limsup_{s \to 0^+}\, \frac{\wass(\rho_j^s, \hat\rho_j)^2 - \wass(\rho_j^+, \hat\rho_j)^2}{s} \leq \mh(\hat\rho_j) - \mh(\rho_j^+).
\end{equation*}
By combining the two components $j = 1,2$ and dividing by $\tau$, this yields
\begin{equation} \label{eq:evi_heat}
\limsup_{s \to 0^+}\, \frac{\dst(\rrho^s, \hat\rrho)^2 - \dst(\rrho^+, \hat\rrho)^2}{2 \tau s} \leq \frac{\mh_c(\hat\rrho) - \mh_c(\rrho^+)}{\tau}.
\end{equation}
We now analyze the change in the energy term. We split it in the following way:
\begin{align*}
\me_\eps(\rrho^s) - \me_\eps(\rrho^+) = \intrd [\mf_\eps(\rrho^s) - \mf_\eps(\rrho^+)]\,\intd x + \intrd [\rho_1^s K \ast \rho_2^s - \rho_1^+ K \ast \rho_2^+]\,\intd x.
\end{align*}
For the interaction term, we use the representation $\rho_j^s = \mk^s\ast \rho_j^+$ together with the symmetry and semigroup property of the heat kernel $\mk^s$ and the properties of convolutions to obtain
\begin{align*}
&\intrd \left[\rho_1^s K \ast \rho_2^s - \rho_1^+ K \ast \rho_2^+ \right]\,\intd x = \intrd [(\mk^s \ast \rho_1^+) K \ast (\mk^s \ast \rho_2^+) - \rho_1^+ K \ast \rho_2^+]\,\intd x  \\
&= \intrd \left[\rho_1^+ K \ast \mk^s \ast \mk^s \ast \rho_2^+ - \rho_1^+ K \ast \rho_2^+\right]\,\intd x = \intrd \rho_1^+ [K \ast \mk^{2s} - K] \ast \rho_2^+\,\intd x.
\end{align*}
Hence by dominated convergence and the equation $\partial_s [K \ast \mk^s] = \Delta [K \ast \mk^s]$, it holds
\begin{equation} \label{eq:k_heat_diff}
\begin{split}
\lim_{s \to 0} \frac{1}{s}\intrd \left[\rho_1^s K \ast \rho_2^s - \rho_1^+ K \ast \rho_2^+ \right]\,\intd x &= \intrd \rho_1^+ \left[\lim_{s\to 0} \frac{K\ast \mk^{2s} - K}{s} \right] \ast  \rho_2^+\,\intd x \\
& = 2\intrd \rho_1^+ \Delta K \ast \rho_2^+\,\intd x \leq 2dC_K.
\end{split}
\end{equation}
The dominated convergence is justified, since from the assumption that $K$ is convex with $\|\nabla^2 K\| \leq C_K$ and thus $0 \leq \Delta K \leq dC_K$ pointwise in $\Rd$, it follows $0 \leq \Delta[K \ast \mk^s] \leq dC_K$ pointwise in $\Rd$ for every $s \geq 0$, which implies that the difference quotients are globally bounded by a constant.

For the $\mf_\eps$-term, we use that $\mf_{2\eps}$ is convex by Proposition \ref{prop:asmpt_corollaries}, and thus $s \mapsto \intrd \mf_{2\eps}(\rrho^s)\,\intd x$ is non-increasing along the heat flow $\rrho^s$: By Jensen and Fubini, we have
\begin{equation} \label{eq:heat_conv_nonincr}
\begin{split}
\intrd \mf_{2\eps}(\rrho^s)\,\intd x &= \intrd \mf_{2\eps}(\mk^s \ast \rrho^+)\,\intd x = \intrd \mf_{2\eps}\left(\intrd \rrho^+(y) \mk^s(x-y)\,\intd y\right)\,\intd x \\
&\leq \iintrdrd \mf_{2\eps}(\rrho^+(y)) \mk^s(x-y) \,\intd x \intd y = \intrd \mf_{2\eps}(\rrho^+)\,\intd y.
\end{split}
\end{equation}
Together with the identity $\mf_\eps(\rr) = \frac{1}{2}\mf_{2\eps}(\rr) + \frac{1}{2} F_1(r_1) + \frac{1}{2} F_2(r_2)$, this implies
\begin{align*}
\intrd [\mf_\eps(\rrho^s) - \mf_\eps(\rrho^+)]\,\intd x \leq \frac{1}{2} \intrd [F_1(\rho_1^s) - F_1(\rho_1^+) ]\,\intd x + \frac{1}{2}\intrd[ F_2(\rho_2^s) - F_2(\rho_2^+)]\,\intd x.
\end{align*}
We estimate the first integral (the second one is done in the same way) by replacing $\rho_1^+$ with $\rho_1^{s_0}$ for some arbitrarily small $s_0 > 0$ and then using a limiting argument. We use the fact that the $\rho_j^\sigma$ solve the heat equation and Fubini to obtain for any $s_0 > 0$:
\begin{equation} \label{eq:fubini_f_heat}
\frac{1}{2} \intrd [F_1(\rho_1^s) - F_1(\rho_1^{s_0}) ]\,\intd x = \frac{1}{2} \intrd \int_{s_0}^s F_1'(\rho_1^\sigma)\Delta\rho_1^\sigma \, \intd \sigma \intd x = \frac{1}{2}\int_{s_0}^s \intrd F_1'(\rho_1^\sigma)\Delta\rho_1^\sigma \,\intd x \intd \sigma.
\end{equation}
To justify the Fubini, it suffices to show that $F_1'(\rho_1^\sigma)$ is uniformly bounded in $L^\infty(\Rd)$, and that $\Delta\rho_1^\sigma$ is uniformly bounded in $L^1(\Rd)$ for $\sigma \in [s_0, s]$. The first claim follows from global uniform boundedness of $\rho_1^\sigma = \mk^\sigma \ast \rho_1^+$, which holds since $\mk^\sigma(x) \leq \mk^{s_0}(0) < +\infty$ for all $x \in \Rd$ and all $\sigma \in [s_0, s]$. The second claim follows in a similar way from the fact that $\Delta \mk^\sigma$ is uniformly bounded in $L^1(\Rd)$ for $\sigma \in [s_0, s]$.

By integrating \eqref{eq:fubini_f_heat} by parts, using that $\nabla [\rho_1^\sigma]_\alpha = \nabla \rho_1^\sigma$ in $\{\rho_1^\sigma \leq \alpha\}$ and $0$ in $\{\rho_1^\sigma > \alpha\}$ for every $\sigma \geq s_0$ by regularity, and that $0 \leq F_1''([\rho_1^\sigma]_\alpha) \leq A$ in $\Rd$, we obtain
\begin{equation} \label{eq:s0_heat_estim}
\begin{split}
&\frac{1}{2} \intrd [F_1(\rho_1^s) - F_1(\rho_1^{s_0}) ]\,\intd x = \frac{1}{2}\int_{s_0}^s \intrd F_1'(\rho_1^\sigma)\Delta\rho_1^\sigma \,\intd x \intd \sigma\\
&= -\frac{1}{2} \int_{s_0}^s \intrd \nabla[ F_1'(\rho_1^\sigma)] \cdot \nabla \rho_1^\sigma \,\intd x \intd \sigma
= -\frac{1}{2}  \int_{s_0}^s \intrd F_1''(\rho_1^\sigma) \left|\nabla \rho_1^\sigma\right|^2\,\intd x\intd \sigma \\&\leq -\frac{1}{2A}  \int_{s_0}^s \intrd F_1''\left([\rho_1^\sigma]_\alpha\right)^2 \left|\nabla [\rho_1^\sigma]_\alpha\right|^2\,\intd x \intd \sigma
= -\frac{1}{2A} \int_{s_0}^s \intrd \left| \nabla F_1'([\rho_1^\sigma]_\alpha)\right|^2\,\intd x\intd \sigma.
\end{split}
\end{equation}
We shall now pass to the limit $s_0 \to 0$ in this estimate. On the right-hand side, the integrand is non-negative, hence by monotone convergence, it holds
\begin{equation*}
-\frac{1}{2A} \int_{s_0}^s \intrd \left| \nabla F_1'([\rho_1^\sigma]_\alpha)\right|^2\,\intd x\intd \sigma \to -\frac{1}{2A} \int_0^s \intrd \left| \nabla F_1'([\rho_1^\sigma]_\alpha)\right|^2\,\intd x\intd \sigma.
\end{equation*}
On the left-hand side, we observe that by convexity of $F_1$, the map $s \mapsto \intrd F_1(\rho_1^s)\,\intd x$ is non-increasing, which follows from Jensen's inequality as in \eqref{eq:heat_conv_nonincr}. In particular, 
\begin{equation*}
\limsup_{s_0 \to 0} \intrd F_1(\rho_1^{s_0})\,\intd x \leq \intrd F_1(\rho_1^+)\,\intd x.
\end{equation*}
Passing to the limit $s_0 \to 0$ in \eqref{eq:s0_heat_estim} thus yields the estimate
\begin{align*}
\frac{1}{2} \intrd [F_1(\rho_1^s) - F_1(\rho_1^{*}) ]\,\intd x \leq -\frac{1}{2A} \int_0^s \intrd \left| \nabla F_1'([\rho_1^\sigma]_\alpha)\right|^2\,\intd x\intd \sigma.
\end{align*}
By adding the term for the other component and dividing by $-s < 0$, this yields
\begin{equation} \label{eq:f_heat_diff}
\frac{1}{2As} \int_0^s \intrd \left[ \left| \nabla F_1'([\rho_1^\sigma]_\alpha)\right|^2 + \left| \nabla F_2'([\rho_2^\sigma]_\alpha)\right|^2 \right] \,\intd x\intd \sigma \leq - \frac{1}{s} \intrd [\mf_\eps(\rrho^s) - \mf_\eps(\rrho^+)]\,\intd x 
\end{equation}
By inserting \eqref{eq:evi_heat} and \eqref{eq:k_heat_diff} into \eqref{eq:liminf_eepstau_split}, we can control the right hand side of \eqref{eq:f_heat_diff} from above as $s \to 0$:
\begin{equation} \label{eq:limsup_control}
\begin{split}
\limsup_{s \to 0^+} \left[- \frac{1}{s} \intrd [\mf_\eps(\rrho^s) - \mf_\eps(\rrho^+)]\,\intd x \right] &= - \liminf_{s \to 0^+} \frac{1}{s} \intrd [\mf_\eps(\rrho^s) - \mf_\eps(\rrho^+)]\,\intd x \\
&\leq \frac{\mh_c(\hat\rrho) - \mh_c(\rrho^+)}{\tau} + 2dC_K
\end{split}
\end{equation}
By \eqref{eq:f_heat_diff}, this implies that $\nabla F_j'([\rho_j^s]_\alpha)$ is bounded in $L^2(\Rd;\Rd)$, uniformly along a subsequence $s \to 0$. Hence by Alaoglu's theorem, there exist $v_j \in L^2(\Rd;\Rd)$ such that $\nabla F_j'([\rho_j^s]_\alpha) \weakto v_j$ weakly in $L^2$ globally up to subsequence.  Additionally, since $F_j'([\rho_j^s]_\alpha)$ is uniformly bounded in $L^\infty$ by $F_j'(\alpha)$, we get that $F_j'([\rho_j^s]_\alpha)$ is locally bounded in $H^1$, uniformly in $s$. Applying again Alaoglu's theorem gives  $u_j \in H^1_{loc}(\Rd)$ such that up to subsequence, $F_j'([\rho_j^s]_\alpha) \rightharpoonup u_j$  weakly in $H^1_{loc}$ as $s \to 0$. In particular, $\nabla F_j'([\rho_j^s]_\alpha) \rightharpoonup \nabla u_j$ weakly in $L^2_{loc}$, thus we can identify $v_j = \nabla u_j$. Also, we have $F_j'([\rho_j^s]_\alpha) \to u_j$ strongly in $L^2$ locally, which implies pointwise a.e. convergence to $u_j$ along a further subsequence. It thus holds $u_j = F_j'([\rho_j^+]_\alpha)$, as $\rho_j^s \to \rho_j^+$ a.e. pointwise. This identification shows that the whole sequence $\nabla F_j'([\rho_j^s]_\alpha)$ converges weakly in $L^2(\Rd)$. All together, we have shown $\nabla F_j'([\rho_j^s]_\alpha) \weakto \nabla F_j'([\rho_j^+]_\alpha)$ weakly in $L^2(\Rd)$ as $s \to 0$.

 By lower semicontinuity of the $L^2$-norm with respect to weak convergence, this implies
\begin{align*}
&\frac{1}{2A}\intrd \left[\left|\nabla F_1'\left([\rho_1^+]_\alpha\right)\right|^2 + \left|\nabla F_2'\left([\rho_2^+]_\alpha\right)\right|^2 \right]\,\intd x \leq \liminf_{s \to 0^+} \frac{1}{2A}\intrd \left[\left|\nabla F_1'\left([\rho_1^s]_\alpha\right)\right|^2 + \left|\nabla F_2'\left([\rho_2^s]_\alpha\right)\right|^2 \right]\,\intd x \\ 
&\leq \limsup_{s \to 0^+} \frac{1}{2As} \int_0^s \intrd \left[ \left| \nabla F_1'([\rho_1^\sigma]_\alpha)\right|^2 + \left| \nabla F_2'([\rho_2^\sigma]_\alpha)\right|^2 \right] \,\intd x\intd \sigma \leq \frac{\mh_c(\hat\rrho) - \mh_c(\rrho^+)}{\tau} + 2dC_K,
\end{align*}
where the last estimate follows from \eqref{eq:f_heat_diff} and \eqref{eq:limsup_control}. This finishes the proof of \eqref{eq:h1_estimate_min}.
\end{proof}

\subsection{Euler-Lagrange system for the minimizing movement step}
As a next step, we derive an Euler-Lagrange system that minimizers $\rrho^+$ satisfy. For simplicity, we prove it under additional assumptions on $\hat\rrho$, and later use approximation arguments to generalize our results.
\begin{lemma} \label{lem:el_yosida}  Assume in addition that both components of $\hat\rrho$ are absolutely continuous with respect to the Lebesgue measure, and that there exist positive radii $R_j > 0$ such that $\hat\rho_j > 0$ a.e. in $B_{R_j}$ and $\hat\rho_j = 0$ a.e. outside of $B_{R_j}$. Fix any minimizer $\rrho^+$ of $\eepstau(\cdot|\hat\rrho)$ over $\mptrd^2$ obtained from Lemma \ref{lem:yosida_min_exist}. Then there exist constants $C_1, C_2$ and optimal pairs $(\varphi_1, \psi_1)$ and $(\varphi_2, \psi_2)$ of $c$-conjugate Kantorovich potentials for the (dual) transport problems of $\rho_1^+$ to $\hat\rho_1$ and of $\rho_2^+$ to $\hat\rho_2$, respectively, such that $\rrho^+$ satisfies the Euler-Lagrange system
\begin{equation} \label{eq:el_yosida}
\begin{split}
F_1'(\rho_1^+) + \eps \partial_{r_1} h(\rho_1^+, \rho_2^+) &= (C_1-\frac{\varphi_1}{\tau} - K \ast \rho_2^+)_+\\
F_2'(\rho_2^+) + \eps \partial_{r_2} h(\rho_1^+, \rho_2^+) &= (C_2-\frac{\varphi_2}{\tau} - K \ast \rho_1^+)_+  \,.
\end{split}
\end{equation}
 
The optimal maps $T_1, T_2: \R^d \to \R^d$, transporting $\rho_1^+$ to $\hat\rho_1$ and $\rho_2^+$ to $\hat\rho_2$, respectively, satisfy
\begin{equation} \label{eq:el_map_yosida}
\begin{split}
\frac{T_1(x) - x}{\tau} &= \nabla\left[F_1'(\rho_1^+) + \eps \partial_{r_1} h(\rho_1^+, \rho_2^+) + K \ast \rho_2^+ \right] \qquad \ml^d\text{-a.e. in } \{\rho_1^+ > 0 \} \\
\frac{T_2(x) - x}{\tau} &= \nabla\left[F_2'(\rho_2^+) + \eps \partial_{r_2} h(\rho_1^+, \rho_2^+) + K \ast \rho_1^+ \right] \qquad \ml^d\text{-a.e. in } \{\rho_2^+ > 0 \}.
\end{split}
\end{equation}
\end{lemma}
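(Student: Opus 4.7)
The plan is to proceed by the same direct variational argument as in Lemma \ref{lem:el_intineq}, with the additional complication that we now need to differentiate the Wasserstein term $\frac{1}{2\tau}\dst(\rrho, \hat\rrho)^2$ along admissible perturbations. The crucial input is the first variation formula: if $\varphi_j$ denotes the Kantorovich potential for the transport from $\rho_j^+$ to $\hat\rho_j$, and if $\chi_j$ is a bounded, compactly supported perturbation with $\intrd \chi_j\,\intd x = 0$, then
\[
\ddszerop \frac{1}{2\tau}\wass(\rho_j^+ + s\chi_j, \hat\rho_j)^2 = \frac{1}{\tau}\intrd \varphi_j\, \chi_j\,\intd x.
\]
The hypotheses on $\hat\rho_j$ (strictly positive on $B_{R_j}$, zero outside) are tailored precisely to ensure the Kantorovich potential exists, is essentially unique, and is locally Lipschitz, so that this formula can be made rigorous.

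First I would verify that $\rho_1^+$ and $\rho_2^+$ are absolutely continuous, e.g.\ by applying Lemma \ref{lem:heat_comparison} with some $\alpha > 0$ large enough, which gives a finite entropy bound. Then Brenier's theorem provides optimal transport maps $T_j$ from $\rho_j^+$ to $\hat\rho_j$ of the form $T_j = \mathrm{id} - \nabla\varphi_j$ with $\varphi_j$ unique (up to constants) on the support of $\rho_j^+$, and $c$-conjugate pairs $(\varphi_j, \psi_j)$ can be chosen in the standard way. Next, for test functions $\chi_j \in L^1(\Rd) \cap L^\infty(\Rd)$ satisfying the same conditions as in Lemma \ref{lem:el_intineq} (compact support, zero mean, vanishing on $\{\rho_j^+ > H_j\}$, and $\rho_j^+ + \chi_j \geq 0$), the perturbed pair $\rrho^s := \rrho^+ + s\bm\chi$ is admissible for $s \in [0,1]$. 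By minimality of $\rrho^+$ and combining the first-variation formula above with the energy-side computation from Lemma \ref{lem:el_intineq} (justified by dominated convergence thanks to the cut-off on $\chi_j$), one obtains
\[
0 \leq \ddszerop \eepstau(\rrho^s \mid \hat\rrho) = \sum_{j=1}^{2}\intrd \left[F_j'(\rho_j^+) + \eps\,\partial_{r_j}h(\rho_1^+,\rho_2^+) + K*\rho_{j'}^+ + \frac{\varphi_j}{\tau}\right]\chi_j\,\intd x.
\]

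Arguing exactly as in Lemmas \ref{lem:el_cineq} and \ref{lem:el_rhogr0}, the richness of admissible $\chi_j$ (built from indicator functions of suitable sublevel/superlevel sets of the bracketed expression) forces the bracketed term to equal some constant $C_j$ a.e.\ on $\{\rho_j^+ > 0\}$ and to be $\geq C_j$ a.e.\ elsewhere; using the non-negativity $\partial_{r_j}\mf_\eps \geq 0$ (with equality iff $r_j = 0$, by strict convexity of $\mf_\eps$ from Proposition \ref{prop:asmpt_corollaries}), one then obtains the $(\cdot)_+$ structure claimed in \eqref{eq:el_yosida}. For the transport map identity \eqref{eq:el_map_yosida}, I would then differentiate the Euler-Lagrange equation on the open set $\{\rho_j^+ > 0\}$, on which the positive part can be dropped; using $T_j - \mathrm{id} = -\nabla\varphi_j$ gives the formula directly, with the gradient on the left-hand side being well-defined thanks to the $C^2$-regularity of $K$ and the local Lipschitz regularity of $\varphi_j$.

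The main obstacle I anticipate is rigorously justifying the first variation formula for the Wasserstein distance squared under the perturbations at hand. Standard results along the lines of \cite[Ch.\ 10]{AGS} and \cite[Ch.\ 7]{Vill} provide this identity when the source measure is absolutely continuous and the Kantorovich potential is sufficiently regular. The compact-support and strict-positivity hypotheses on $\hat\rho_j$ are specifically imposed to secure this regularity (they make $\varphi_j$ essentially unique and locally Lipschitz in a neighborhood of $\supp \rho_j^+$), and this is the one place where these hypotheses enter essentially; as announced in the statement, they are later relaxed by approximation.
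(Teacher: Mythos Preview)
Your overall strategy matches the paper's: perturb the minimizer, differentiate the energy term by dominated convergence, and handle the Wasserstein term via Kantorovich potentials, then argue as in Lemmas~\ref{lem:el_cineq}--\ref{lem:el_rhogr0}. The point you flag as ``the main obstacle'' is indeed where essentially all the work lies, but your proposed resolution is not quite right: the hypotheses on $\hat\rho_j$ do \emph{not} make the Kantorovich potential $\varphi_j$ essentially unique on a neighborhood of $\supp\rho_j^+$. The potential for transport from $\rho_j^+$ to $\hat\rho_j$ is determined (up to an additive constant) only on $\supp\rho_j^+$ itself; outside, different $c$-concave extensions are possible, and since admissible perturbations $\chi_j$ may put mass where $\rho_j^+=0$, the value of $\varphi_j$ there genuinely matters for the variational inequality. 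Your claimed equality for the first variation thus cannot be invoked as a black box.

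The paper handles this in two steps. First, for each fixed perturbation $\eta_0$ it uses the dual formulation together with an Arzel\`a--Ascoli argument---this is where $\supp\hat\rho_j\subset B_{R_j}$ enters, since it forces $|\nabla\Phi_j^s|\le R_j$ uniformly---to extract a limit potential $\varphi_j^0$ for which the one-sided inequality $\liminf_{s\to0^+}\frac{1}{2s}\bigl(\wass(\rho_j^s,\hat\rho_j)^2-\wass(\rho_j^+,\hat\rho_j)^2\bigr)\le\int\varphi_j^0\,\eta_0\,\intd x$ holds. This $\varphi_j^0$, however, depends on $\eta_0$. Second, to obtain a single potential valid for all $\eta$, the paper exploits the strict positivity of $\hat\rho_j$ on the connected set $B_{R_j}$: the \emph{reverse} potential $\psi_j$ is then unique on $B_{R_j}$, and the paper takes $\varphi_j$ to be the $c$-transform of $\psi_j\big|_{B_{R_j}}$ (equivalently $\Phi_j=(\Psi_j|_{B_{R_j}})^*$). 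This is the maximal $c$-concave choice, and one checks $\varphi_j\ge\tilde\varphi_j+C$ for every other optimal $\tilde\varphi_j$, with equality on $\{\rho_j^+>0\}$. Because $\eta\ge0$ on $\{\rho_j^+=0\}$ and $\int\eta=0$, this inequality goes in exactly the right direction. So the role of the hypotheses on $\hat\rho_j$ is to pin down $\psi_j$ on the target side rather than $\varphi_j$ on the source side; your plan should be amended accordingly.
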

\begin{proof}
Take $\hat\rrho \in \sspc$ and $\rrho^+$ as above. We only need to show the first equation in \eqref{eq:el_yosida} and \eqref{eq:el_map_yosida}, since the other one is analogous in both cases.

Fix any $\eta_0 \in L^1(\Rd)$ which is compactly supported, bounded, satisfies $\intrd \eta_0\, \intd x = 0$ and $\rho_1^s := \rho_1^+ + s \eta_0 \geq 0$ for every sufficiently small $s > 0$, and there exists a constant $H > 0$ such that $\eta_0$ vanishes almost everywhere on $\{\rho_1^+ > H\}$, similarly as in the proof of Lemma \ref{lem:el_intineq}. Then for all sufficiently small $s > 0$, it holds $\rho_1^s \in \mptrd$, and thus by minimality of $\rho_1^+$ for the functional $\eepstau((\cdot, \rho_2^+)|\hat\rrho)$ over $\mptrd$, we have with $\rrho^s := (\rho_1^s, \rho_2^+)$:
\begin{align}
0 &\leq \liminf_{s \to 0^+}\, \frac{\eepstaurhat{\rrho^s} - \eepstaurhat{\rrho^+}}{s} = \frac{1}{2\tau} \liminf_{s \to 0^+} \,\frac{\wass(\rho_1^s, \hat\rho_1)^2 - \wass(\rho_1^+, \hat\rho_1)^2}{s} + \ddszerop \me_\eps(\rrho^s) \nonumber \\ \label{eq:deriv_eepstau}
&= \frac{1}{2\tau} \liminf_{s \to 0^+} \,\frac{\wass(\rho_1^s, \hat\rho_1)^2 - \wass(\rho_1^+, \hat\rho_1)^2}{s} + \intrd \left[ F_1'(\rho_1^+) + \eps \partial_{r_1}h(\rho_1^+, \rho_2^+) + K \ast \rho_2^+\right]\,\eta_0\,\intd x.
\end{align}
The last equality follows as in the proof of Lemma \ref{lem:el_intineq} from the dominated convergence theorem. Our goal is to construct a pair $(\varphi_1^0, \psi_1^0)$ of optimal Kantorovich potentials for the transport problem from $\rho_1^+$ to $\hat\rho_1$, such that
\begin{equation} \label{eq:deriv_wass}
\frac{1}{2} \liminf_{s \to 0^+} \,\frac{\wass(\rho_1^s, \hat\rho_1)^2 - \wass(\rho_1^+, \hat\rho_1)^2}{s} \leq \intrd \varphi_1^0\, \eta_0\,\intd x.
\end{equation}
Denote by $(\varphi_1^s, \psi_1^s)$ pairs of optimal Kantorovich potentials for the transport from $\rho_1^s$ to $\hat\rho_1$. We can fix some point $x_0 \in \supp\,\rho_1^+$ and without loss of generality assume $\varphi_1^s(x_0) = 0$ for every $s$. Since $\{\rho_1^+ > 0 \} \subset \{\rho_1^s > 0\}$ for every $s \in [0, \bar s]$ with some $\bar s > 0$, the pair $(\varphi_1^s, \psi_1^s)$ is admissible for the (dual) transport problem from $\rho_1^+$ to $\hat\rho_1$. Hence for every $s \in [0, \bar s]$
\begin{align*}
\frac{1}{2} \left[\wass(\rho_1^s, \hat\rho_1)^2 - \wass(\rho_1^+, \hat\rho_1)^2 \right] \leq  \intrd \left[ \varphi_1^s \, \rho_1^s + \psi_1^s\,\hat\rho_1\right]\,\intd x - \intrd \left[\varphi_1^s \, \rho_1^+ + \psi_1^s\,\hat\rho_1 \right]\,\intd x = s \intrd \varphi_1^s\,\eta_0\,\intd x.
\end{align*}
The proof of \eqref{eq:deriv_wass} thus reduces to the construction of an optimal potential $\varphi_1^0$ for the transport problem from $\rho_1^+$ to $\hat\rho_1$ such that
\begin{equation} \label{eq:pot_liminf}
\liminf_{s \to 0^+} \intrd \varphi_1^s\,\eta_0\,\intd x \leq \intrd \varphi_1^0\,\eta_0\,\intd x.
\end{equation}
Denote $\Phi_1^s := \frac{1}{2}|\cdot|^2 - \varphi_1^s$. Since $(\varphi_1^s, \psi_1^s)$ is a pair of optimal $c$-conjugate potentials, $\Phi_1^s$ is a convex function and satisfies $|\nabla \Phi_1^s(x)| = |T^s(x)|$ for almost every $x \in \{\rho_1^s > 0 \}$, where $T^s\#\rho_1^s = \hat\rho_1$ is the optimal transport map from $\rho_1^s$ to $\hat\rho_1$. Since by assumption, $\hat\rho_1$ vanishes a.e. outside of $B_{R_1}$, this implies $|\nabla \Phi_1^s| \leq R_1$ a.e. on $ \{\rho_1^s > 0 \}$. Since $\Phi_1^s$ is convex on $\Rd$, this implies existence of a convex function $\tilde \Phi_1^s: \Rd \to \R$ with $\tilde \Phi_1^s = \Phi_1^s$ in $\{\rho_1^s > 0\}$ and $|\nabla \tilde\Phi_1^s| \leq R_1$ in $\Rd$. Hence the family of potentials $\{\tilde\Phi_1^s\}_{s \in [0, \bar s]}$ is equi-Lipschitz in $\Rd$, and because $\tilde\Phi_1^s(x_0) = \Phi_1^s(x_0) = 0$ for all $s$, it is bounded pointwise. The Arzelà-Ascoli Theorem thus yields existence of a locally uniformly converging subsequence $(\tilde\Phi_1^{s_k})_k \to \Phi_1^0$ in $\Rd$. 

Clearly, $\varphi_1^0 := \frac{1}{2}|\cdot|^2 - \Phi_1^0$ satisfies \eqref{eq:pot_liminf}. It remains to show that $\varphi_1^0$, together with its $c$-conjugate $\psi_1^0$, forms an optimal pair for the transport from $\rho_1^+$ to $\hat\rho_1$. Since $\Phi_1^0$ is convex, it is sufficient to show $\nabla \Phi_1^0\#\rho_1^+ = \hat\rho_1$. This however follows from the fact that the $\tilde\Phi_1^s$ are convex and $\tilde\Phi_1^{s_k} \to \Phi_1^0$ locally uniformly, which implies $\nabla\tilde\Phi_1^{s_k} \to \nabla \Phi_1^0$ almost everywhere: For any $\zeta \in C^0_c(\Rd)$, it holds by dominated convergence
\begin{align*}
\intrd \zeta\, \nabla \Phi_1^0\#\rho_1^+\, \intd x = \intrd \zeta(\nabla \Phi_1^0(x))\, \rho_1^+(x)\,\intd x = \lim_{k} \intrd \zeta(\nabla \tilde\Phi_1^{s_k}(x))\, \rho_1^+(x)\,\intd x = \intrd \zeta\,\hat\rho_1\,\intd x.
\end{align*}
All together, we have proven the inequality \eqref{eq:deriv_wass} for the optimal potential $\varphi_1^0$.

We can insert this result into \eqref{eq:deriv_eepstau} to obtain
\begin{equation*}
0 \leq \intrd \left[ F_1'(\rho_1^+) + \eps \partial_{r_1}h(\rho_1^+, \rho_2^+) + K \ast \rho_2^+ + \frac{\varphi_1^0}{\tau}\right]\,\eta_0\,\intd x.
\end{equation*}
In order to obtain the Euler-Lagrange system \eqref{eq:el_yosida}, we have to show that for some optimal Kantorovich potential $\varphi_1$, this inequality holds true for all $\eta$ which satisfy our assumptions at once, and not just for the specific $\eta_0$ used above to construct the potentials $\varphi_1^0$ and $\psi_1^0$. We define, using the notations $\Phi_1^0 = \frac{1}{2}|\cdot|^2 - \varphi_1^0$ and $\Psi_1^0 = \frac{1}{2}|\cdot|^2 - \psi_1^0$, the functions
\begin{align*}
\Psi_1 := \left\{ \begin{array}{ll}
\Psi_1^0 & \text{ in } B_{R_1} \\
+\infty & \text{ else}
\end{array} \right.,\quad \Phi_1 := (\hat\Psi_1^0)^*,
\end{align*}
where $(\cdot)^*$ denotes the convex conjugate. Since $\hat\rho_1$ is supported in $B_{R_1}$, it follows that $\Psi_1$ is convex with $\nabla \Psi_1 \# \hat\rho_1 = \nabla \Psi_1^0 \# \hat\rho_1 = \rho_1^+$, implying that the pair of $c$-conjugate potentials $(\varphi_1, \psi_1) := ( \frac{1}{2}|\cdot|^2 - \Phi_1, \frac{1}{2}|\cdot|^2 - \Psi_1)$ is optimal. We claim that for all $\eta$ that have the properties of $\eta_0$ specified above, it holds
\begin{equation} \label{eq:el_intineq_general}
0 \leq \intrd \left[ F_1'(\rho_1^+) + \eps \partial_{r_1}h(\rho_1^+, \rho_2^+) + K \ast \rho_2^+ + \frac{\varphi_1}{\tau}\right]\,\eta\,\intd x.
\end{equation}
To prove this, take any such $\eta$. By the same construction as for $\eta_0$, there exists a pair of optimal potentials $(\tilde\varphi_1, \tilde\psi_1)$, such that \eqref{eq:el_intineq_general} holds with $\tilde \varphi_1$ instead of $\varphi_1$. Now, observe that $\tilde\Psi_1 := \frac{1}{2}|\cdot|^2 - \tilde\psi_1$ is a convex function whose gradient agrees with the gradient of $\Psi_1^0$ a.e. on the connected set $\{\hat\rho_1 > 0\} = B_{R_1}$, since by absolute continuity of $\hat\rho_1$ with respect to the Lebesgue measure, the optimal transport map from $\hat\rho_1$ to $\rho_1^+$ is unique and given by the gradient of any one of the convex potentials $\Psi_1^0$ and $\tilde \Psi_1$. Hence there exists a global constant $C$ such that $\Psi_1^0 = \tilde\Psi_1 + C$ in $B_{R_1}$. By definition of $\Psi_1$, this yields $\Psi_1 \geq \tilde\Psi_1 + C$ in $\Rd$ with equality in $B_{R_1}$. By the properties of the convex conjugate, this implies
\begin{align*}
\frac{1}{2} |\cdot|^2 - \varphi_1 = \Phi_1 = \Psi_1^* \leq \tilde\Psi_1^* - C = \frac{1}{2}|\cdot|^2 - \tilde\varphi_1 - C,
\end{align*}
or equivalently, $\varphi_1 \geq \tilde\varphi_1 + C$ in $\Rd$, with equality almost everywhere on the image of the map $\nabla \Psi_1$ over $B_{R_1}$, which is $\{\rho_1^+ > 0\}$ since $\nabla \Psi_1$ pushes $\hat\rho_1$ to $\rho_1^+$. Using that $\eta \geq 0$ in $\{\rho_1^+ = 0\}$ and $\intrd \eta = 0$, this gives
\begin{align*}
\intrd \varphi_1\eta\,\intd x = \int_{\{\rho_1^+ > 0\}} \left(\tilde\varphi_1 + C\right)\, \eta\,\intd x + \int_{\{\rho_1^+ = 0\}} \varphi_1 \eta\,\intd x \geq \intrd \left(\tilde\varphi_1 + C\right)\,\eta\,\intd x = \intrd \tilde\varphi_1 \eta\,\intd x,
\end{align*}
proving \eqref{eq:el_intineq_general}, since we have already shown that it holds with $\tilde \varphi_1$ instead of $\varphi_1$.

By the same argument as in the proof of Lemma \ref{lem:el_cineq}, \eqref{eq:el_intineq_general} yields the existence of some constant $C_1$, such that everywhere in $\Rd$, it holds
\begin{equation*}
F_1'(\rho_1^+) + \eps \partial_{r_1} h(\rho_1^+, \rho_2^+) + K \ast \rho_2^+ + \frac{\varphi_1}{\tau} \geq C_1,
\end{equation*}
with equality almost everywhere on $\{\rho_1^+ > 0\}$. The form \eqref{eq:el_yosida} then follows in the same way as Lemma \ref{lem:el_rhogr0} from the properties of $\mf_\eps$.
Hence the optimal potential $\varphi_1$ satisfies the first equation in \eqref{eq:el_yosida}. The equation for the other component is derived analogously.

Since the left-hand side of \eqref{eq:el_yosida} is strictly positive on $\{\rho_j^+ > 0\}$, so is the right-hand side. The Kantorovich potentials are differentiable almost  everywhere in $\{\rho_j^+ > 0\}$ with $-\nabla \varphi_j = T_j(x) - x$, hence the gradient of the right-hand side exists in $\{\rho_j^+ > 0\}$, and the equations \eqref{eq:el_map_yosida} follow from \eqref{eq:el_yosida} by taking the gradient on both sides.
\end{proof}

As a consequence of Lemma \ref{lem:el_yosida}, we prove that under the same assumptions, the transition from $\hat\rrho$ to $\rrho^+$ satisfies a time-discrete version of the weak formulation of the evolution equations \eqref{eq:system-intro}.
\begin{corollary} \label{cor:weaksol_conditions}
Let $\hat\rrho$ satisfy the assumptions of Lemma \ref{lem:el_yosida}. Then for all $\zeta \in C^\infty_c(\Rd)$, there hold the estimates
\begin{equation} \label{eq:discrete_weaksol_estim}
\begin{split}
\left| \intrd \frac{\rho_1^+ - \hat\rho_1}{\tau}\,\zeta\,\intd x + \intrd \rho_1^+\, \nabla\left[F_1'(\rho_1^+) + \eps\partial_{r_1}h(\rho_1^+, \rho_2^+) + K \ast \rho_2^+\right]\cdot\nabla \zeta\,\intd x\right| &\leq \frac{\wass(\rho_1^+, \hat\rho_1)^2}{2\tau}\, \|\zeta\|_{C^2}\\
\left| \intrd \frac{\rho_2^+ - \hat\rho_2}{\tau}\,\zeta\,\intd x + \intrd \rho_2^+\, \nabla\left[F_2'(\rho_2^+) + \eps\partial_{r_2}h(\rho_1^+, \rho_2^+) + K \ast \rho_1^+\right]\cdot\nabla \zeta\,\intd x\right| &\leq \frac{\wass(\rho_2^+, \hat\rho_2)^2}{2\tau}\, \|\zeta\|_{C^2}.
\end{split}
\end{equation}
Denoting by $\mr_1$ and $\mr_2$ the respective left-hand sides of these estimates, it holds
\begin{equation} \label{eq:weaksol_estim_error}
\mr_1 + \mr_2 \leq \left( \me_\eps(\hat\rrho) - \me_\eps(\rrho^+) \right)\, \|\zeta\|_{C^2}.
\end{equation}
\end{corollary}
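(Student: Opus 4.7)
The plan is to combine the Euler-Lagrange characterization \eqref{eq:el_map_yosida} of the optimal transport maps with a standard second-order Taylor expansion, and then close the estimate using the energy-dissipation bound \eqref{eq:step_estimate}. Throughout, I denote by $T_j$ the optimal transport map pushing $\rho_j^+$ onto $\hat\rho_j$ that appears in Lemma \ref{lem:el_yosida}.

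First I would rewrite the discrete time derivative as a transport expression. Since $T_j\#\rho_j^+=\hat\rho_j$, for any $\zeta\in C^\infty_c(\R^d)$ I have
\begin{equation*}
\intrd (\hat\rho_j-\rho_j^+)\zeta\,\dd x = \intrd \rho_j^+\bigl[\zeta(T_j(x))-\zeta(x)\bigr]\,\dd x.
\end{equation*}
A second-order Taylor expansion gives $\zeta(T_j(x))-\zeta(x)=\nabla\zeta(x)\cdot(T_j(x)-x)+R_j(x)$ with a remainder bounded by $|R_j(x)|\le \tfrac12\|\nabla^2\zeta\|_\infty|T_j(x)-x|^2\le \tfrac12\|\zeta\|_{C^2}|T_j(x)-x|^2$. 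Substituting \eqref{eq:el_map_yosida}, which is valid $\ml^d$-a.e. on $\{\rho_j^+>0\}$, into the linear term — and noting that multiplication by $\rho_j^+$ kills everything outside this set — yields
\begin{equation*}
\intrd \rho_j^+\,\nabla\zeta\cdot(T_j(x)-x)\,\dd x
=-\tau\intrd \rho_j^+\,\nabla\bigl[F_j'(\rho_j^+)+\eps\partial_{r_j}h(\rrho^+)+K\ast\rho_{j'}^+\bigr]\cdot\nabla\zeta\,\dd x.
\end{equation*}

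Dividing by $\tau$ and combining these two identities produces exactly the expression inside the absolute value of \eqref{eq:discrete_weaksol_estim}, equal to $-\tau^{-1}\intrd \rho_j^+ R_j\,\dd x$. The Taylor bound for $R_j$, together with the identity $\intrd \rho_j^+|T_j(x)-x|^2\,\dd x=\wass(\rho_j^+,\hat\rho_j)^2$, immediately gives
\begin{equation*}
\left|\intrd \frac{\rho_j^+-\hat\rho_j}{\tau}\zeta\,\dd x+\intrd \rho_j^+\nabla[\cdots]\cdot\nabla\zeta\,\dd x\right|\le \frac{\wass(\rho_j^+,\hat\rho_j)^2}{2\tau}\|\zeta\|_{C^2},
\end{equation*}
which is \eqref{eq:discrete_weaksol_estim} for $j=1,2$.

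Finally, summing the two bounds and applying the step estimate \eqref{eq:step_estimate} from Lemma \ref{lem:yosida_min_exist} gives
\begin{equation*}
\mr_1+\mr_2\le \frac{\wass(\rho_1^+,\hat\rho_1)^2+\wass(\rho_2^+,\hat\rho_2)^2}{2\tau}\|\zeta\|_{C^2}=\frac{\dst(\rrho^+,\hat\rrho)^2}{2\tau}\|\zeta\|_{C^2}\le \bigl(\me_\eps(\hat\rrho)-\me_\eps(\rrho^+)\bigr)\|\zeta\|_{C^2},
\end{equation*}
establishing \eqref{eq:weaksol_estim_error}. No single step is conceptually hard; the only point requiring mild care is making sure that \eqref{eq:el_map_yosida} is used only where valid (a.e.\ on $\{\rho_j^+>0\}$), which is automatic because the integrand carries the prefactor $\rho_j^+$. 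The argument goes through under the extra regularity assumptions on $\hat\rrho$ imposed in Lemma \ref{lem:el_yosida}; presumably a density argument later removes those assumptions.
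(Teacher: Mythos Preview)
Your proof is correct and follows essentially the same route as the paper: rewrite the discrete time derivative via the push-forward $T_j\#\rho_j^+=\hat\rho_j$, Taylor-expand $\zeta$ to second order, identify the first-order term using \eqref{eq:el_map_yosida}, bound the remainder by $\tfrac{1}{2\tau}\wass^2\|\zeta\|_{C^2}$, and finish with \eqref{eq:step_estimate}. One small slip: in your second displayed equation the sign is wrong, since $T_j(x)-x=\tau\nabla[\cdots]$ gives $\intrd\rho_j^+\nabla\zeta\cdot(T_j-x)\,\dd x=+\tau\intrd\rho_j^+\nabla[\cdots]\cdot\nabla\zeta\,\dd x$; your stated conclusion $-\tau^{-1}\intrd\rho_j^+R_j\,\dd x$ is nevertheless correct once that sign is fixed.
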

\begin{proof}
Let $T_1 \# \rho_1^+ = \hat\rho_1$ be the optimal transport map, and let $\zeta\in C^\infty_c(\Rd)$ be arbitrary. We obtain for the difference quotient on the left-hand side
\begin{align*}
\intrd \frac{\rho_1^+ - \hat\rho_1}{\tau}\,\zeta\,\intd x &= \frac{1}{\tau}\left[\intrd \rho_1^+\,\zeta\,\intd x - \intrd T_1 \# \rho_1^+\,\zeta\,\intd x\right] = \intrd \rho_1^+(x)\, \frac{\zeta(x) - \zeta\left(T_1(x)\right)}{\tau}\,\intd x  \\
&= -\intrd \rho_1^+(x)\, \left[ \nabla\zeta(x)\cdot\frac{T_1(x) - x}{\tau} + \frac{(T_1(x) - x)\cdot \nabla^2 \zeta(\xi_x) (T_1(x) - x)}{2\tau} \right]\,\intd x
\end{align*}
for some $\xi_x \in [x, T_1(x)]$. Applying \eqref{eq:el_map_yosida} and $\wass(\rho_1^+, \hat\rho_1)^2= \intrd \rho_1^+(x)\,\left| T_1(x)-x\right|^2\,\intd x$ thus yields
\begin{align*}
&\left| \intrd \frac{\rho_1^+ - \hat\rho_1}{\tau}\,\zeta\,\intd x + \intrd \rho_1^+\, \nabla\left[F_1'(\rho_1^+) + \eps\partial_{r_1}h(\rho_1^+, \rho_2^+) + K \ast \rho_2^+\right]\cdot\nabla \zeta\,\intd x\right| \\
&= \left| \intrd \frac{\rho_1^+ - \hat\rho_1}{\tau}\,\zeta\,\intd x + \intrd \rho_1^+(x)\,\nabla \zeta(x)\cdot \frac{T_1(x) - x}{\tau} \,\intd x\right| \\
&\leq \intrd \rho_1^+(x)\, \left|\frac{(T_1(x) - x)\cdot \nabla^2 \zeta(\xi_x) (T_1(x) - x)}{2\tau}\right|\,\intd x \leq \frac{\wass(\rho_1^+, \hat\rho_1)^2}{2\tau}\, \|\zeta\|_{C^2}.
\end{align*}
This proves the first inequality in \eqref{eq:discrete_weaksol_estim}, the second one follows analogously. By adding the two inequalities and applying \eqref{eq:step_estimate}, we obtain \eqref{eq:weaksol_estim_error}.
\end{proof}
\subsection{Regularity estimates for the minimizers}
In order to prove convergence of the minimizing movement scheme to a weak solution of \eqref{eq:system-intro}, we need to prove the estimates from Corollary \ref{cor:weaksol_conditions} for more general $\hat\rrho$, which will be done using an approximating argument. For the convergence proof, we require further regularity results for the minimizers $\rrho^+$, in particular, a global $H^1$-bound for $F_j'(\rho_j^+)$. This will follow from the next results, together with the bound on $\nabla F_j'(\rho_j^+)$ from Lemma \ref{lem:heat_comparison}.

\begin{lemma} \label{lem:h1est_bigrho}
Let $\hat\rrho$ satisfy the assumptions of Lemma \ref{lem:el_yosida}. Then it holds
\begin{equation} \label{eq:h1est_bigrho}
\sum_{j = 1,2} \intrd \left| \nabla [F_j'(\rho_j^+) + \eps \partial_{r_j}h(\rho_1^+, \rho_2^+)]\right|^2\,\rho_j^+\,\intd x \leq 4\, \frac{\me_\eps(\hat\rrho) - \me_\eps(\rrho^+)}{\tau} + \frac{8 C_K^2}{\lambda}\, \me_\eps(\rrho^+)\,.
\end{equation}
\end{lemma}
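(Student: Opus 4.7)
The plan is to use the pointwise identity coming from the Euler--Lagrange system of Lemma~\ref{lem:el_yosida} together with the basic step estimate~\eqref{eq:step_estimate}. From \eqref{eq:el_map_yosida}, for $\ml^d$-a.e.\ $x \in \{\rho_j^+ > 0\}$,
\begin{equation*}
    \nabla\bigl[F_j'(\rho_j^+) + \eps\,\partial_{r_j} h(\rho_1^+,\rho_2^+)\bigr]
    = \frac{T_j(x) - x}{\tau} - \nabla K * \rho_{j'}^+(x),
\end{equation*}
and the integrand on the left-hand side of \eqref{eq:h1est_bigrho} vanishes on $\{\rho_j^+ = 0\}$, so it suffices to estimate $\int \rho_j^+ \bigl|\tfrac{T_j-x}{\tau} - \nabla K*\rho_{j'}^+\bigr|^2 \dd x$ for $j=1,2$.

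Applying the elementary inequality $|a-b|^2 \le 2|a|^2 + 2|b|^2$ splits the bound into two pieces. For the transport piece, $\int \rho_j^+|T_j(x)-x|^2\dd x = \wass(\rho_j^+,\hat\rho_j)^2$, so summing in $j$ gives $\dst(\rrho^+,\hat\rrho)^2$; then \eqref{eq:step_estimate} yields
\begin{equation*}
    \sum_{j=1,2} \frac{2}{\tau^2}\intrd \rho_j^+ |T_j(x)-x|^2 \dd x
    = \frac{2\,\dst(\rrho^+,\hat\rrho)^2}{\tau^2}
    \le 4\,\frac{\me_\eps(\hat\rrho) - \me_\eps(\rrho^+)}{\tau},
\end{equation*}
accounting for the first term on the right of \eqref{eq:h1est_bigrho}.

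For the convolution piece, I will use that $K$ is radially symmetric about the origin with $K(0)=0$, so $\nabla K(0) = 0$, and combined with the bound $\|\nabla^2 K\|\le C_K$ this gives the linear gradient estimate $|\nabla K(z)| \le C_K|z|$ for all $z \in \Rd$. Therefore, by Jensen's (or Cauchy--Schwarz's) inequality,
\begin{equation*}
    |\nabla K * \rho_{j'}^+(x)|^2
    \le \left(\intrd C_K |x-y|\,\rho_{j'}^+(y)\dd y\right)^2
    \le C_K^2 \intrd |x-y|^2 \rho_{j'}^+(y)\dd y.
\end{equation*}
Integrating against $\rho_j^+$ and summing over $j$ produces twice the symmetric quantity $\iintrdrd |x-y|^2 \rho_1^+(x)\rho_2^+(y)\dd x\dd y$. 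The $\lambda$-convexity of $K$ together with $K(0)=\nabla K(0)=0$ gives $K(z)\ge \tfrac{\lambda}{2}|z|^2$ pointwise, so this double integral is controlled by $\tfrac{2}{\lambda}\intrd \rho_1^+ K*\rho_2^+\dd x$, which in turn is bounded by $\tfrac{2}{\lambda}\me_\eps(\rrho^+)$ thanks to \eqref{eq:estim_fj_int_eeps}. Putting the factor $2$ from the split back in yields $\tfrac{8C_K^2}{\lambda}\me_\eps(\rrho^+)$, i.e.\ exactly the second term of \eqref{eq:h1est_bigrho}. The two bounds together prove the claim.

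There is no substantial obstacle: the only point to be careful about is that \eqref{eq:el_map_yosida} only holds on the positivity set of $\rho_j^+$, but since that set carries all of the mass $\rho_j^+$ used as the integration weight, this causes no issue. Worth noting is that we do not need the more delicate bound from \eqref{eq:estim_inter_2mom}, which required $\rrho\in\sspc$; the pointwise estimate $K(z)\ge \tfrac{\lambda}{2}|z|^2$ suffices here precisely because the resulting double integral is against a product of probability measures rather than one of them minus a translation.
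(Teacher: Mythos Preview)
Your proof is correct and follows essentially the same route as the paper: use \eqref{eq:el_map_yosida} to rewrite the gradient, split via $|a-b|^2\le 2|a|^2+2|b|^2$, control the transport term by \eqref{eq:step_estimate}, and bound the convolution term using $|\nabla K(z)|\le C_K|z|$ together with $K(z)\ge\frac{\lambda}{2}|z|^2$. The only cosmetic difference is that the paper applies the pair of inequalities $|\nabla g|^2\le\frac{2C_K^2}{\lambda}g$ directly to $g=K\ast\rho_{j'}^+$ (using that this convolution inherits $\lambda$-convexity, nonnegativity, and the Hessian bound from $K$), whereas you first estimate $\nabla K$ pointwise and then pass to the convolution via Jensen; the two give the identical bound $|\nabla K\ast\rho_{j'}^+|^2\le\frac{2C_K^2}{\lambda}K\ast\rho_{j'}^+$.
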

\begin{proof}
Since $K \ast \rho_j^+$ is differentiable everywhere, the Euler-Lagrange equations in the form \eqref{eq:el_map_yosida} imply that the gradients on the left-hand side are well-defined a.e. in $\{\rho_j^+ > 0 \}$. For $j = 1$ (the other term is analogous), it holds
\begin{align*}
&\intrd \left| \nabla [F_1'(\rho_1^+) + \eps \partial_{r_1}h(\rho_1^+, \rho_2^+)]\right|^2\,\rho_1^+\,\intd x \\
&\leq 2 \intrd  \left| \nabla [F_1'(\rho_1^+) + \eps \partial_{r_1}h(\rho_1^+, \rho_2^+) + K \ast \rho_2^+]\right|^2\,\rho_1^+\,\intd x + 2 \intrd \left| \nabla [K \ast \rho_2^+]\right|^2\,\rho_1^+\,\intd x \\
&\leq \frac{2}{\tau^2} \intrd |T_1(x) - x|^2\,\rho_1^+\,\intd x + \frac{4C_K^2}{\lambda} \intrd \rho_1^+ K \ast \rho_2^+\,\intd x \leq \frac{2}{\tau^2}\, \wass(\rho_1^+, \hat\rho_1)^2 + \frac{4C_K^2}{\lambda}\, \me_\eps(\rrho^+),
\end{align*}
where the second to last inequality uses \eqref{eq:el_map_yosida} for the first integral, and the fact that $K$, and thus $K \ast \rho_2^+$, is non-negative and $\lambda$-convex with its second derivative bounded by $C_K$ for the second integral. Adding the analogous term for $j = 2$ yields
\begin{align*}
\sum_{j = 1,2} \intrd \left| \nabla [F_j'(\rho_j^+) + \eps \partial_{r_j}h(\rho_1^+, \rho_2^+)]\right|^2\,\rho_j^+\,\intd x \leq \frac{2}{\tau^2}\, \dst(\rrho^+, \hat\rrho)^2 + \frac{8C_K^2}{\lambda}\, \me_\eps(\rrho^+),
\end{align*}
which together with \eqref{eq:step_estimate} yields the claimed estimate \eqref{eq:h1est_bigrho}.
\end{proof}
In order to obtain a global $H^1$-estimate for the functions $F_j'(\rho_j^+)$, we first need to prove existence of their gradients. This follows directly from the following lemma, which is a consequence of the Euler-Lagrange system \eqref{eq:el_yosida}.

\begin{lemma} \label{lem:min_lip}
Under the assumptions of Lemma \ref{lem:el_yosida}, the $F_j'(\rho_j^+)$ are locally Lipschitz in $\Rd$.
\end{lemma}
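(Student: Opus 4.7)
The plan is to mimic the strategy of Lemma \ref{lem:minim_fj_lip}, which established Lipschitz regularity for $F_j'(\bar\rho_j)$ at the stationary pair, and adapt it to the one-step minimizer $\rrho^+$. The Euler--Lagrange system \eqref{eq:el_yosida}, rewritten through the map $\theta_j$, reads
\begin{equation*}
\Gamma_\eps\bigl(F_1'(\rho_1^+),F_2'(\rho_2^+)\bigr)
=\Bigl(\bigl(C_1-\tfrac{\varphi_1}{\tau}-K\ast\rho_2^+\bigr)_+,\;\bigl(C_2-\tfrac{\varphi_2}{\tau}-K\ast\rho_1^+\bigr)_+\Bigr),
\end{equation*}
with $\Gamma_\eps$ exactly the map introduced in the proof of Lemma \ref{lem:minim_fj_lip}. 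For $\bar\eps$ sufficiently small, that proof already shows that $\Gamma_\eps$ is a bijection of $\R^2$ whose inverse $\Gamma_\eps^{-1}$ is globally Lipschitz on $\R^2$, uniformly in $\eps\in(0,\bar\eps]$. Consequently, local Lipschitz continuity of $F_j'(\rho_j^+)$ will follow once I establish the same property for the right-hand side above.

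For the right-hand side I would argue component by component. The map $x\mapsto \tfrac12|x|^2-\varphi_j(x)$ is convex (it is the convex potential associated with the optimal Kantorovich pair $(\varphi_j,\psi_j)$ between $\rho_j^+$ and $\hat\rho_j$), and convex functions are locally Lipschitz; hence so is $\varphi_j$. Alternatively, since $\hat\rho_j$ is supported in $B_{R_j}$, the optimal transport map $T_j$ pushing $\rho_j^+$ onto $\hat\rho_j$ satisfies $|T_j|\le R_j$, so $|\nabla\varphi_j|=|x-T_j(x)|$ is locally bounded, yielding local Lipschitz continuity of $\varphi_j$. Next, since $K\in C^2$ with $\|\nabla^2K\|\le C_K$, the convolution $K\ast\rho_{j'}^+$ is $C^1$ with gradient bounded by $C_K\int|x-y|\rho_{j'}^+(y)\,\dd y$, hence globally Lipschitz on compact sets. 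The positive part $(\cdot)_+$ is $1$-Lipschitz and preserves local Lipschitz continuity, so the full right-hand side of the displayed system is locally Lipschitz in $\R^d$.

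Composing with the globally Lipschitz map $\Gamma_\eps^{-1}$ then gives the desired local Lipschitz continuity of $(F_1'(\rho_1^+),F_2'(\rho_2^+))$, and hence of each $F_j'(\rho_j^+)$ individually.

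The main technical point -- and likely the only step needing care -- is the local Lipschitz property of $\varphi_j$. One has to be mindful that the potential is only uniquely defined (up to additive constants) on the support of $\rho_j^+$; however, the construction in the proof of Lemma \ref{lem:el_yosida} already provided a distinguished extension $\varphi_j=\tfrac12|\cdot|^2-\Phi_j$ via the convex conjugate $\Phi_j=\Psi_1^*$, and this extension is globally Lipschitz (with constant at most $R_j+\operatorname{diam}(\supp\rho_j^+)$, the latter being finite by the uniform bounds from Lemma \ref{lem:eeps_bounds} and the finiteness of $\me_\eps(\rho^+)$). With that extension fixed, the chain of implications above closes, and the lemma follows.
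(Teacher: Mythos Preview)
Your approach is essentially identical to the paper's: invert the map $\Gamma_\eps$ from Lemma~\ref{lem:minim_fj_lip} and verify that the right-hand side of \eqref{eq:el_yosida} is locally Lipschitz, the only nontrivial ingredient being the potential $\varphi_j$. The paper's one-line justification is that $\Phi_j=\tfrac12|\cdot|^2-\varphi_j$ is convex and \emph{finite on all of $\Rd$} (finiteness being read off from the Euler--Lagrange identity itself), hence locally Lipschitz.

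Your final paragraph contains two small slips that you should clean up, though they do not break the argument. First, $\varphi_j$ is \emph{not} globally Lipschitz (the quadratic part prevents that); what is globally Lipschitz is $\Phi_j=\Psi_j^*$, with constant $R_j$, since $\Psi_j=+\infty$ outside $B_{R_j}$ forces every subgradient of its conjugate to lie in $\overline{B_{R_j}}$. Local Lipschitz continuity of $\varphi_j$ then follows. Second, the appeal to $\operatorname{diam}(\supp\rho_j^+)<\infty$ is unjustified: Lemma~\ref{lem:eeps_bounds} only yields second-moment bounds, not compact support of $\rho_j^+$. Fortunately you do not need this quantity at all once you use the correct Lipschitz constant $R_j$ for $\Phi_j$.
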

\begin{proof}
This follows from \eqref{eq:el_yosida} in the same way as in Lemma \ref{lem:minim_fj_lip}, since the right-hand side of \eqref{eq:el_yosida} is locally Lipschitz in $\Rd$. Note that the Kantorovich potentials $\varphi_j$ are of the form $\frac{1}{2}|\cdot|^2 - \Phi_j$, where $\Phi_j$ are convex functions which are finite in $\Rd$, since the left-hand side of \eqref{eq:el_yosida} is finite everywhere.
\end{proof}
With this regularity result, we can prove the following estimate for $\nabla F_j'(\rho_j^+)$:
\begin{lemma} \label{lem:h1est_bigrho_2}
Let $\hat\rrho$ satisfy the assumptions of Lemma \ref{lem:el_yosida}, and assume that $\eps$ is such that $3 \kappa^2 \eps^2 < 1$, where $\kappa^2 := \max\{\kappa_{1,1}, \kappa_{1,2}\}^2 + \max\{\kappa_{2,1},  \kappa_{2,2}\}^2$ with the constants $\kappa_{j,i}$ from Hypothesis \ref{hyp:theta}. Then there exists a constant $B$, independent of $\hat\rrho$ and $\rrho^+$, such that
\begin{equation*}
\sum_{j = 1,2} \intrd \left|\nabla F_j'(\rho_j^+)\right|^2\,\rho_j^+\,\intd x \leq B\, \left(\frac{\me_\eps(\hat\rrho) - \me_\eps(\rrho^+)}{\tau} + \frac{2 C_K^2}{\lambda}\, \me_\eps(\rrho^+)\right).
\end{equation*}
\end{lemma}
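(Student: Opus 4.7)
The strategy is to leverage Lemma \ref{lem:h1est_bigrho} and absorb the cross-diffusion correction on the left-hand side, using the smallness of $\eps$ together with the pointwise bound on $\theta_{j,i}$ from Hypothesis \ref{hyp:theta}. Since Lemma \ref{lem:min_lip} guarantees that the $F_j'(\rho_j^+)$ are locally Lipschitz, so is $\partial_{r_j}h(\rho_1^+,\rho_2^+)=\theta_j(F_1'(\rho_1^+),F_2'(\rho_2^+))$, and the chain rule gives the pointwise identity
\begin{equation*}
    \nabla\partial_{r_j}h(\rho_1^+,\rho_2^+) = \theta_{j,1}(\uu)\,\nabla F_1'(\rho_1^+) + \theta_{j,2}(\uu)\,\nabla F_2'(\rho_2^+),\qquad \uu=(F_1'(\rho_1^+),F_2'(\rho_2^+)).
\end{equation*}

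The key observation is that the crucial $\min$-factor in \eqref{eq:thetabdd_swap} yields $|\theta_{j,i}(\uu)|^2\,\rho_j^+\le\kappa_{j,i}^2\,\rho_i^+$. Abbreviating $I_j:=\intrd|\nabla F_j'(\rho_j^+)|^2\rho_j^+\,\intd x$, I would apply $(a+b)^2\le2(a^2+b^2)$ to the expression for $\nabla\partial_{r_j}h$ and obtain, after multiplying by $\rho_j^+$ and integrating,
\begin{equation*}
    \intrd |\nabla\partial_{r_j}h(\rho_1^+,\rho_2^+)|^2\,\rho_j^+\,\intd x \le 2(\kappa_{j,1}^2 I_1 + \kappa_{j,2}^2 I_2).
\end{equation*}
Summing over $j=1,2$ and bounding $\kappa_{1,i}^2+\kappa_{2,i}^2\le\max\{\kappa_{1,1},\kappa_{1,2}\}^2+\max\{\kappa_{2,1},\kappa_{2,2}\}^2=\kappa^2$, this gives $\sum_j\intrd|\nabla\partial_{r_j}h|^2\rho_j^+\,\intd x\le 2\kappa^2(I_1+I_2)$.

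Next, writing $\nabla F_j'(\rho_j^+)=\nabla[F_j'(\rho_j^+)+\eps\partial_{r_j}h(\rho_1^+,\rho_2^+)]-\eps\nabla\partial_{r_j}h(\rho_1^+,\rho_2^+)$, I apply Young's inequality with parameter $\delta=2$, namely $(a+b)^2\le 3a^2+\tfrac32b^2$, multiply by $\rho_j^+$ and integrate. Denoting by $J_j$ the $j$-th summand on the left-hand side of \eqref{eq:h1est_bigrho} and summing over $j$, the previous bound yields
\begin{equation*}
    I_1+I_2 \le 3(J_1+J_2) + \tfrac32\eps^2\cdot 2\kappa^2(I_1+I_2) = 3(J_1+J_2)+3\eps^2\kappa^2(I_1+I_2).
\end{equation*}
Under the smallness assumption $3\kappa^2\eps^2<1$, the last term is absorbable, giving $I_1+I_2\le\frac{3}{1-3\kappa^2\eps^2}(J_1+J_2)$.

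To conclude, I insert Lemma \ref{lem:h1est_bigrho}, which bounds $J_1+J_2$ by $4\bigl(\frac{\me_\eps(\hat\rrho)-\me_\eps(\rrho^+)}{\tau}+\frac{2C_K^2}{\lambda}\me_\eps(\rrho^+)\bigr)$, so the claim follows with $B=\tfrac{12}{1-3\kappa^2\eps^2}$, which is independent of $\hat\rrho$ and $\rrho^+$. The only delicate step is the pointwise $\theta_{j,i}$-bound combined with the $\rho_j^+$-weight, which is tailor-made for the hypothesis \eqref{eq:thetabdd_swap}; everything else is elementary algebra.
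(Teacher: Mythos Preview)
Your proof is correct and follows essentially the same route as the paper: the chain rule for $\nabla\partial_{r_j}h$, the weight-swapping bound $\theta_{j,i}^2\rho_j^+\le\kappa_{j,i}^2\rho_i^+$ from \eqref{eq:thetabdd_swap}, and absorption via $3\kappa^2\eps^2<1$. The only cosmetic difference is that the paper applies $(a+b+c)^2\le3(a^2+b^2+c^2)$ directly to the three-term decomposition, whereas you first group the two $\theta$-terms and then use a weighted two-term Young inequality; both arrive at the identical constant $B=12/(1-3\kappa^2\eps^2)$.
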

\begin{proof}
The gradient of $F_j'(\rho_j^+)$ exists almost everywhere by local Lipschitz-continuity, and the same is true for $F_j'(\rho_j^+) + \eps \partial_{r_j}h(\rho_1^+, \rho_2^+)$, see the proof of Lemma \ref{lem:h1est_bigrho}. Thus, the gradient of $\partial_{r_j}h(\rho_1^+, \rho_2^+)$ is also well-defined in $\Rd$. The claim will now follow from Lemma \ref{lem:h1est_bigrho} and Hypothesis \ref{hyp:theta}: With $\uu^+ := (F_1'(\rho_1^+), F_2'(\rho_2^+))$, we have
\begin{align*}
\nabla \partial_{r_j}h(\rho_1^+, \rho_2^+) = \nabla \theta_j(\uu^+) = \theta_{j,1}(\uu^+) \nabla F_1'(\rho_1^+) + \theta_{j,2}(\uu^+) \nabla F_2'(\rho_2^+).
\end{align*}
By splitting the integral, we thus obtain for $j = 1,2$ the estimate
\begin{align*}
&\intrd \left|\nabla F_j'(\rho_j^+)\right|^2\,\rho_j^+\,\intd x = \intrd \left| \nabla \left[F_j'(\rho_j^+) + \eps \partial_{r_j}h(\rho_1^+, \rho_2^+)\right] - \eps \sum_{i = 1,2} \theta_{j,i}(\uu^+) \nabla F_i'(\rho_i^+) \right|^2\,\rho_j^+\,\intd x \\ &\leq 3 \intrd \left| \nabla [F_j'(\rho_j^+) + \eps \partial_{r_j}h(\rho_1^+, \rho_2^+)]\right|^2\,\rho_j^+\,\intd x + 3 \eps^2 \sum_{i = 1,2} \intrd \theta^2_{j,i}(\uu^+) \left| \nabla F_i'(\rho_i^+)\right|^2\,\rho_j^+ \,\intd x.
\end{align*}
Summing over $j = 1,2$ and applying \eqref{eq:h1est_bigrho} for the first term and \eqref{eq:thetabdd_swap} for the other two yields
\begin{align*}
\sum_{j = 1,2} \intrd \left|\nabla F_j'(\rho_j^+)\right|^2\,\rho_j^+\,\intd x &\leq 12\, \frac{\me_\eps(\hat\rrho) - \me_\eps(\rrho^+)}{\tau} + \frac{24 C_K^2}{\lambda}\, \me_\eps(\rrho^+) \\ &+ 3\kappa^2\eps^2 \sum_{i = 1,2} \intrd \left|\nabla F_i'(\rho_i^+)\right|^2\,\rho_i^+\,\intd x.
\end{align*}
Subtracting the last term and using $3 \kappa^2\eps^2 < 1$ yields the claim with $B = \frac{12}{1 - 3 \kappa^2\eps^2}$.
\end{proof}
What makes Lemma \ref{lem:h1est_bigrho_2} important is that it controls $|\nabla F_j'(\rho_j^+)|^2$ on the set where $\rho_j^+$ is large. Together with Lemma \ref{lem:heat_comparison}, which controls the same gradient when $\rho_j^+$ is small, this implies an explicit global $L^2$-bound for $\nabla F_j(\rho_j^+)$:
\begin{corollary} \label{cor:l2bound_gradF}
Let $\hat\rrho$ satisfy the assumptions of Lemma \ref{lem:el_yosida}. Let $\alpha > 0$ be arbitrary, and denote by $A$ and $B$ the constants from Lemmas \ref{lem:heat_comparison} and \ref{lem:h1est_bigrho_2}, respectively. Then it holds
\begin{equation} \label{eq:l2bound_gradF}
\sum_{j = 1,2} \intrd \left|\nabla F_j'(\rho_j^+)\right|^2\,\intd x \leq C\, \left(\frac{\me_\eps(\hat\rrho) - \me_\eps(\rrho^+)}{\tau} + \frac{2 C_K^2}{\lambda}\, \me_\eps(\rrho^+) + \frac{\mh_c(\hat\rrho) - \mh_c(\rrho^+)}{\tau} + 2 d C_K \right),
\end{equation}
with the constant $C := \max\{\frac{B}{\alpha}, 2A\}$.
\end{corollary}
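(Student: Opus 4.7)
The plan is to decompose the integral $\intrd |\nabla F_j'(\rho_j^+)|^2\,\intd x$ according to whether $\rho_j^+ \leq \alpha$ or $\rho_j^+ > \alpha$, and bound each piece by one of the two preceding regularity results. This pairing is natural because Lemma \ref{lem:heat_comparison} controls $\nabla F_j'$ of the truncated density $[\rho_j^+]_\alpha$ in unweighted $L^2$ (which sees only values below $\alpha$), whereas Lemma \ref{lem:h1est_bigrho_2} controls $\nabla F_j'(\rho_j^+)$ in the weighted space $L^2(\rho_j^+\,\intd x)$ (which is efficient precisely where $\rho_j^+$ is bounded away from zero).

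For the ``low-density'' contribution, I use that $F_j'(\rho_j^+)$ is locally Lipschitz by Lemma \ref{lem:min_lip}, together with the classical fact that the weak gradient of a locally Lipschitz function vanishes a.e. on any of its level sets. This yields
\begin{align*}
\nabla F_j'([\rho_j^+]_\alpha) = \nabla F_j'(\rho_j^+)\,\mathbf{1}_{\{\rho_j^+ < \alpha\}} \qquad \text{a.e. in } \Rd,
\end{align*}
so that $\int_{\{\rho_j^+ \leq \alpha\}} |\nabla F_j'(\rho_j^+)|^2\,\intd x = \intrd |\nabla F_j'([\rho_j^+]_\alpha)|^2\,\intd x$. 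Summing over $j = 1,2$ and invoking \eqref{eq:h1_estimate_min} thus bounds this part by $2A$ times $(\mh_c(\hat\rrho)-\mh_c(\rrho^+))/\tau + 2dC_K$.

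For the ``high-density'' contribution, on $\{\rho_j^+ > \alpha\}$ we have $\rho_j^+/\alpha > 1$, so
\begin{align*}
\int_{\{\rho_j^+ > \alpha\}} |\nabla F_j'(\rho_j^+)|^2\,\intd x \leq \frac{1}{\alpha}\intrd |\nabla F_j'(\rho_j^+)|^2\,\rho_j^+\,\intd x.
\end{align*}
Summing over $j$ and applying Lemma \ref{lem:h1est_bigrho_2} yields a bound of $B/\alpha$ times $(\me_\eps(\hat\rrho) - \me_\eps(\rrho^+))/\tau + (2C_K^2/\lambda)\me_\eps(\rrho^+)$. Adding the two contributions and pulling out the common prefactor $C := \max\{B/\alpha, 2A\}$ gives \eqref{eq:l2bound_gradF}.

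I do not anticipate any serious obstacle; the only mildly technical point is the identification of $\nabla F_j'([\rho_j^+]_\alpha)$ with $\nabla F_j'(\rho_j^+)\,\mathbf{1}_{\{\rho_j^+ < \alpha\}}$, which relies on the Lipschitz regularity furnished by Lemma \ref{lem:min_lip} and on the level-set vanishing property of weak gradients.
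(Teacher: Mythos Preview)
Your proposal is correct and follows essentially the same argument as the paper: split the integral into the regions $\{\rho_j^+ < \alpha\}$ and $\{\rho_j^+ \geq \alpha\}$, invoke local Lipschitz continuity (Lemma~\ref{lem:min_lip}) to identify the low-density part with $\intrd |\nabla F_j'([\rho_j^+]_\alpha)|^2\,\intd x$ and apply Lemma~\ref{lem:heat_comparison}, and bound the high-density part by $\frac{1}{\alpha}\intrd |\nabla F_j'(\rho_j^+)|^2\rho_j^+\,\intd x$ and apply Lemma~\ref{lem:h1est_bigrho_2}. The paper's proof is slightly terser but otherwise identical.
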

\begin{proof}
For $j = 1,2$, we split the integral into one part where $\rho_j^+$ is large and one where it is small. By local Lipschitz-continuity, it holds $\nabla F_j'(\rho_j^+) = \nabla F_j'([ \rho_j^+ ]_\alpha)$ on $\{\rho_j^+ < \alpha \}$, hence
\begin{align*}
\intrd \left|\nabla F_j'(\rho_j^+)\right|^2\,\intd x &= \int_{\{\rho_j^+ \geq \alpha\}} \left|\nabla F_j'(\rho_j^+)\right|^2\,\intd x + \int_{\{\rho_j^+ < \alpha\}} \left|\nabla F_j'(\rho_j^+)\right|^2\,\intd x \\
&\leq \frac{1}{\alpha} \intrd \left|\nabla F_j'(\rho_j^+)\right|^2\,\rho_j^+\,\intd x + \intrd \left|\nabla F_j'([\rho_j^+]_\alpha)\right|^2\,\intd x.
\end{align*}
The claim now follows by summing over $j = 1,2$ and applying Lemmas \ref{lem:h1est_bigrho_2} and \ref{lem:heat_comparison}.
\end{proof}

\subsection{Generalization of the time-discrete error estimates}
With the help of the estimate from Corollary \ref{cor:l2bound_gradF}, we can now prove the time-discrete error estimate \eqref{eq:weaksol_estim_error} for much more general $\hat\rrho$. Concretely, we prove the following theorem:
\begin{theorem} \label{thm:discrete_error}
Let $\hat\rrho \in \mptrd^2$ be such that $\me_\eps(\hat\rrho) < +\infty$. Then there exists a minimizer $\rrho^+ \in \mptrd^2$ of the functional $\eepstaurhat{\cdot}$, such that for all $\zeta \in C^\infty_c(\Rd)$:
\begin{equation*}
\mr_1 + \mr_2 \leq \left( \me_\eps(\hat\rrho) - \me_\eps(\rrho^+) \right)\, \|\zeta\|_{C^2}\,,
\end{equation*}
where $\mr_1$ and $\mr_2$ are the integral errors defined as in Corollary \ref{cor:weaksol_conditions}.
\end{theorem}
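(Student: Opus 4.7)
The plan is a density/approximation argument: I would approximate a generic $\hat\rrho \in \mptrd^2$ of finite energy by a sequence $\hat\rrho^k$ satisfying the hypotheses of Lemma \ref{lem:el_yosida}, apply Corollary \ref{cor:weaksol_conditions} to the associated minimizers $\rrho^{+,k}$, and pass to the limit. A convenient approximation is
\[
\hat\rho_j^k := c_j^k \left[(\hat\rho_j \ast \phi_k) + \delta_k \mathbbm{1}_{B_{R_k}}\right] \mathbbm{1}_{B_{R_k}},
\]
with $\phi_k$ a positive mollifier, $\delta_k \to 0^+$, $R_k \to \infty$, and $c_j^k \to 1$ restoring unit mass; each $\hat\rho_j^k$ is then bounded, strictly positive on $B_{R_k}$, and vanishes outside, so the hypotheses of Lemma \ref{lem:el_yosida} are met. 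Convexity of $\mf_{2\eps}$ (Proposition \ref{prop:asmpt_corollaries}) together with Jensen, the identity $\mf_\eps = \frac{1}{2}\mf_{2\eps} + \frac{1}{2}(F_1+F_2)$, and continuity of the interaction term under $L^1$ convergence with tight supports yield $\me_\eps(\hat\rrho^k) \to \me_\eps(\hat\rrho)$ and $\dst(\hat\rrho^k, \hat\rrho) \to 0$.

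Next, Lemma \ref{lem:yosida_min_exist} provides minimizers $\rrho^{+,k}$ of $\eepstau(\cdot \mid \hat\rrho^k)$ with $\me_\eps(\rrho^{+,k}) \leq \me_\eps(\hat\rrho^k)$ uniformly bounded, and Lemma \ref{lem:eeps_coer} extracts a weak $L^1$-convergent subsequence $\rrho^{+,k} \rightharpoonup \rrho^+ \in \mptrd^2$. To identify $\rrho^+$ as a minimizer of $\eepstau(\cdot \mid \hat\rrho)$, I would pass to the limit in $\eepstau(\rrho^{+,k} \mid \hat\rrho^k) \leq \eepstau(\tilde\rrho \mid \hat\rrho^k)$ against an arbitrary competitor $\tilde\rrho$ of finite energy, using Lemma \ref{lem:eeps_lsc} and lower semi-continuity of $\wass$ on the left, and $\wass(\hat\rho_j^k, \hat\rho_j) \to 0$ on the right.

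Corollary \ref{cor:weaksol_conditions} applied to $\hat\rrho^k$ then gives $\mr_1^k + \mr_2^k \leq \bigl(\me_\eps(\hat\rrho^k) - \me_\eps(\rrho^{+,k})\bigr) \|\zeta\|_{C^2}$, whose $\limsup$ is bounded by $\bigl(\me_\eps(\hat\rrho) - \me_\eps(\rrho^+)\bigr)\|\zeta\|_{C^2}$ by energy convergence and lsc. The task reduces to showing $\mr_j^k \to \mr_j$. The linear difference quotient $\intrd (\rho_j^{+,k} - \hat\rho_j^k) \zeta\,\intd x / \tau$ converges by weak/strong $L^1$. For the nonlinear gradient pieces, I would first establish pointwise a.e. convergence $\rho_j^{+,k} \to \rho_j^+$: the at-least-quadratic growth of $F_j$ at infinity forced by \eqref{eq:fjp_growth}--\eqref{eq:fj_power} makes $\rho_j^{+,k}$ bounded in $L^2(\Rd)$ via the energy bound, and Lemma \ref{lem:h1est_bigrho_2} supplies $\int \rho_j^{+,k}|\nabla F_j'(\rho_j^{+,k})|^2 < \infty$ uniformly. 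Introducing the primitive $G_j$ with $G_j'(r) := \sqrt{r}\,F_j''(r)$ turns this into a local $H^1$-bound on $G_j(\rho_j^{+,k})$, whence Rellich--Kondrachov plus the strict monotonicity of $G_j$ yields subsequential a.e. convergence. The self-diffusion integral then passes to the limit through the integration-by-parts identity
\[
\intrd \rho_j \nabla F_j'(\rho_j) \cdot \nabla \zeta\,\intd x = -\intrd \bigl[\rho_j F_j'(\rho_j) - F_j(\rho_j)\bigr] \Delta\zeta\,\intd x,
\]
where the integrand on the right is $L^1$-dominated by \eqref{eq:estim_fjp_eeps} and pointwise convergent, so Vitali applies; the interaction integral is immediate from $C^1_{\mathrm{loc}}$-convergence of $K \ast \rho_{j'}^{+,k}$ under tightness. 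The main obstacle I anticipate is the coupling integral: decomposing $\nabla \partial_{r_j} h(\rrho) = \sum_i \theta_{j,i}(\uu) \nabla F_i'(\rho_i)$ and using the bound $\rho_j |\theta_{j,i}(\uu)| \leq \kappa_{j,i} \sqrt{\rho_i \rho_j}$ from Hypothesis \ref{hyp:theta} (with the convention that $\rho_j \theta_{j,i}/\sqrt{\rho_i} = 0$ on $\{\rho_i = 0\}$, justified by the vanishing of $\theta_{j,i}$ on $\partial \Rnn^2$), one must arrange a strong-times-weak $L^2_{\mathrm{loc}}$ pairing between the rescaled prefactor $[\rho_j^{+,k} \theta_{j,i}(\uu^{+,k}) / \sqrt{\rho_i^{+,k}}] \nabla \zeta$ and the weakly convergent $\sqrt{\rho_i^{+,k}}\, \nabla F_i'(\rho_i^{+,k})$. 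Securing the requisite strong convergence of the prefactor in the absence of any $L^\infty$-control on $\rho_j^{+,k}$ is the delicate technical point of the argument.
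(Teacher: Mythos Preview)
Your overall architecture---approximate $\hat\rrho$, invoke Corollary~\ref{cor:weaksol_conditions} for the approximants, pass to the limit---is exactly the paper's. The substantive divergence is in the compactness tool: you work only with the \emph{weighted} estimate of Lemma~\ref{lem:h1est_bigrho_2}, introducing $G_j$ with $G_j'(r)=\sqrt{r}\,F_j''(r)$, whereas the paper combines Lemma~\ref{lem:h1est_bigrho_2} with the heat--flow comparison Lemma~\ref{lem:heat_comparison} to obtain the \emph{unweighted} bound of Corollary~\ref{cor:l2bound_gradF}. That corollary gives $\nabla F_j'(\rho_{j,n}^+)$ uniformly in $L^2(S)$, hence $F_j'(\rho_{j,n}^+)$ uniformly in $H^1(S)$; Rellich then yields $F_j'(\rho_{j,n}^+)\to F_j'(\rho_j^+)$ strongly in $L^2(S)$, from which $\rho_{j,n}^+\to\rho_j^+$ strongly in $L^2(S)$ follows by the at-least-linear growth of $F_j'$. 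All three gradient pieces then pass to the limit by a plain strong--weak $L^2$ pairing, with the weak limits of $\nabla F_j'(\rho_{j,n}^+)$ and $\nabla\partial_{r_j}h(\rrho_n^+)=\sum_i\theta_{j,i}(\uu_n^+)\nabla F_i'(\rho_{i,n}^+)$ identified directly via integration by parts against the strongly convergent $F_j'(\rho_{j,n}^+)$ and $\theta_j(\uu_n^+)$.

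Without Corollary~\ref{cor:l2bound_gradF} your route develops a genuine gap at the coupling term, and not where you locate it. The prefactor $\rho_j^{+,k}\theta_{j,i}(\uu^{+,k})/\sqrt{\rho_i^{+,k}}$ \emph{does} converge strongly in $L^2_{\mathrm{loc}}$: its square is dominated by $\kappa_{j,i}^2\rho_j^{+,k}$ via \eqref{eq:thetabdd_swap}, and $\rho_j^{+,k}$ is equi-integrable and a.e.\ convergent, so Vitali applies. The real obstruction is identifying the weak $L^2$-limit of $\nabla G_i(\rho_i^{+,k})$ as $\nabla G_i(\rho_i^+)$, which needs $G_i(\rho_i^{+,k})\to G_i(\rho_i^+)$ in $L^1_{\mathrm{loc}}$. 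Since $G_i(r)\le\sqrt{r}\,F_i'(r)$ and Hypothesis~\ref{hyp:general} permits $F_i''$ of exponential type (only $\limsup F_i'/F_i<\infty$ is assumed), $G_i$ can outgrow $F_i$; the energy bound $\int F_i(\rho_i^{+,k})\le 2\me_\eps(\hat\rrho)$ therefore does not supply equi-integrability of $G_i(\rho_i^{+,k})$. The same issue bites your self-diffusion step: the integrand $\rho_j F_j'(\rho_j)-F_j(\rho_j)$ is not in general dominated by a fixed multiple of $F_j(\rho_j)$ under \eqref{eq:estim_dfj}, so the Vitali claim is unjustified. Both problems evaporate once you bring in Lemma~\ref{lem:heat_comparison} (note that $\me_\eps(\hat\rrho)<\infty$ forces $\mh_c(\hat\rrho)<\infty$, so the entropy is available) and hence Corollary~\ref{cor:l2bound_gradF}.
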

\begin{proof}
In Corollary \ref{cor:weaksol_conditions}, the claim has already been shown under the more restrictive assumptions that both components of $\hat\rrho$ are strictly positive almost everywhere inside balls $B_{R_j}$ and vanish almost everywhere else. We prove the more general case by approximating $\hat\rrho$ by a sequence of pairs that satisfy these stronger assumptions.

Let $\hat\rrho$ be as above. Without loss of generality, we can assume $\hat\rrho \in \sspc$. We start by showing that the assumption $\me_\eps(\hat\rrho) <+\infty$ implies that $\mh_c(\hat\rrho) < +\infty$ as well. To see this, observe that by comparing the asymptotic growths of the derivatives, it holds $r \log r \leq cF_j(r)$ for every $r \geq 0$ with some constant $c$. It thus holds for every $\rrho \in \mptrd$
\begin{align*}
\mh_c(\rrho) = \intrd \left[\rho_1 \log \rho_1 + \rho_2 \log \rho_2\right]\,\intd x \leq c\intrd \left[ F_1(\rho_1) + F_2(\rho_2)\right]\,\intd x \leq 2c\,\me_\eps(\rrho),
\end{align*}
proving that $\mh_c(\hat\rrho)$ is finite.

For $j = 1,2$, we now take a sequence of positive radii $0 < R_{j,1} < R_{j, 2} < \dots$ such that $R_{j,n} \nearrow$  $R_{j, \infty} := \sup\,\{R > 0: \supp\, \hat\rho_j \nsubseteq B_R\} \in (0, +\infty]$.  Further, we fix constants $H_j,C_j>0$ such that for every $n$, it holds
\begin{equation*}
|B_{R_{j,n}}'| \geq C_j\,, \quad \text{where } B_{R_{j,n}}':= \{\hat\rho_j \leq H_j\} \cap B_{R_{j,n}}.
\end{equation*}
Define for every $n$ the probability densities
\begin{equation*}
\hat\rho_{j,n} := \mathds{1}_{B_{R_{j,n}}}\,\hat\rho_j + \frac{m_{j,n}}{|B_{R_{j,n}}'|} \mathds{1}_{B_{R_{j,n}}'}\,, \quad \text{with } m_{j,n}:= 1 - \int_{B_{R_{j,n}}} \hat\rho_j\,\intd x \in (0, 1].
\end{equation*}
For the densities $\hat\rho_{j,n}$, we have $\{\hat\rho_{j,n} > 0\} = B_{R_{j,n}}$. Hence, the pairs $\hat\rrho_n := (\hat\rho_{1,n}, \hat\rho_{2,n})$ satisfy the assumptions of Corollary \ref{cor:weaksol_conditions}, and thus for every minimizer $\rrho_n^+$ of $\eepstau(\cdot|\hat\rho_n)$, the estimate \eqref{eq:weaksol_estim_error}. The same estimate for $\hat\rrho$, which is the claim, thus follows if we prove for $n \to \infty$ the convergences
\begin{equation} \label{eq:integral_1_cvgce}
\intrd \frac{\rho_{1,n}^+ - \hat\rho_{1,n}}{\tau}\,\zeta\,\intd x \rightarrow \intrd \frac{\rho_1^+ - \hat\rho_1}{\tau}\,\zeta\,\intd x,
\end{equation}
\begin{equation} \label{eq:integral_2_cvgce}
\begin{split}
&\intrd \rho_{1,n}^+\, \nabla\left[F_1'(\rho_{1,n}^+) + \eps\partial_{r_1}h(\rho_{1,n}^+, \rho_{2,n}^+) + K \ast \rho_{2,n}^+\right]\cdot\nabla \zeta\,\intd x \\
&\rightarrow \intrd \rho_1^+\, \nabla\left[F_1'(\rho_1^+) + \eps\partial_{r_1}h(\rho_1^+, \rho_2^+) + K \ast \rho_2^+\right]\cdot\nabla \zeta\,\intd x\,,
\end{split}
\end{equation}

\begin{equation} \label{eq:energy_cvgce}
\limsup_{n \to \infty}\, \left( \me_\eps(\hat\rrho_n) - \me_\eps(\rrho^+_n) \right) \leq  \me_\eps(\hat\rrho) - \me_\eps(\rrho^+) 
\end{equation}
for some minimizer $\rrho^+$ of $\eepstaurhat{\cdot}$, up to subsequence. The convergence of the other component's terms will follow analogously.

We start by observing that by construction, $\hat\rho_{j,n} \to \hat\rho_j$ in $L^1(\Rd)$, and with respect to the $\wass$-metric. Additionally, it holds that $\me_\eps(\hat\rrho_n) \to \me_\eps(\hat\rrho)$ and $\mh_c(\hat\rrho_n) \to \mh_c(\hat\rrho)$, which can be shown from the construction by dominated convergence. In particular, since $\me_\eps(\hat\rrho)$ and $\mh_c(\hat\rrho)$ are finite, $(\me_\eps(\hat\rrho_n))_n$ and $(\mh_c(\hat\rrho_n))_n$ are bounded sequences. Since it holds $\me_\eps(\rrho_n^+) \leq \me_\eps(\hat\rrho_n)$, the sequence $(\me_\eps(\rrho^+_n))_n$ is thus bounded as well. By Lemma \ref{lem:yosida_min_exist}, it holds $\mom_1[\rho_1^+] + \mom_1[\rho_2^+] = \mom_1[\hat\rho_1^n] + \mom_1[\hat\rho_2^n] \to \mom_1[\hat\rho_1] + \mom_1[\hat\rho_2] = 0$. Lemma \ref{lem:eeps_coer} thus yields $\rrho_n^+ \weakto \rrho^+ \in \mptrd^2$ weakly in $L^1$ up to subsequence. We check that $\rrho^+$ is a global minimizer of the functional $\eepstaurhat{\cdot}$. Indeed, it holds
\begin{equation} \label{eq:energy_lsc}
\me_\eps(\rrho^+) \leq \liminf_{n \to \infty} \me_\eps(\rrho_n^+)
\end{equation}
by weak $L^1$-convergence of the $\rrho_n^+$ and Lemma \ref{lem:eeps_lsc}. By lower semicontinuity of the distance with respect to weak $L^1$-convergence, the triangle inequality and $\dst(\hat\rrho_n, \hat\rrho) \to 0$, we also get
\begin{align*}
\dst(\rrho^+, \hat\rrho)^2 \leq \liminf_{n \to \infty} \dst(\rrho_n^+, \hat\rrho)^2 \leq \liminf_{n \to \infty} \dst(\rrho_n^+, \hat\rrho_n)^2.
\end{align*}
Together with the minimality of $\rrho_n^+$ for $\eepstau(\cdot|\hat\rrho_n)$, this implies for every $\rrho \in \mptrd$:
\begin{align*}
&\eepstaurhat{\rrho^+} = \frac{1}{2\tau} \dst(\rrho^+, \hat\rrho)^2 + \me_\eps(\rrho^+) \leq \liminf_{n \to \infty} \left( \frac{1}{2\tau} \dst(\rrho_n^+, \hat\rrho_n)^2 + \me_\eps(\rrho_n^+) \right) \\
&\leq \liminf_{n \to \infty} \left( \frac{1}{2\tau} \dst(\rrho, \hat\rrho_n)^2 + \me_\eps(\rrho) \right) =  \frac{1}{2\tau} \dst(\rrho, \hat\rrho)^2 + \me_\eps(\rrho) = \eepstaurhat{\rrho}\,,
\end{align*}
proving that $\rrho^+$ is a minimizer of $\eepstaurhat{\cdot}$.

The convergence \eqref{eq:integral_1_cvgce} is clear by narrow convergence of $\hat\rho_{1,n} \to \hat\rho_1$ and $\rho_{1,n}^+ \to \rho_1^+$. Since \eqref{eq:energy_lsc} and $\me_\eps(\hat\rrho_n) \to \me_\eps(\hat\rrho)$ imply the inequality \eqref{eq:energy_cvgce}, it remains to prove \eqref{eq:integral_2_cvgce}. We show this convergence by proving that in $L^2(S)$ with $S := \supp\,\zeta \subset \subset \Rd$, it holds for $j = 1,2$:
\begin{align*}
\begin{split}
\nabla F_j'(\rho_{j,n}^+) \weakto \nabla F_j'(\rho_j^+), \quad  \nabla \partial_{r_j}h(\rho_{1,n}^+, \rho_{2,n}^+) &\weakto \nabla \partial_{r_j}h(\rho_1^+, \rho_2^+), \quad \nabla K \ast \rho_{j,n}^+ \weakto  \nabla K \ast \rho_j^+\text{ weakly,} \\
\rho_{j,n}^+ &\to \rho_j^+ \text{ strongly.}
\end{split}
\end{align*}
We start by showing that $(F_j'(\rho_{j,n}^+))_n$ is a bounded sequence in $H^1(S)$. In order to apply Corollary \ref{cor:l2bound_gradF}, note that the sequences $(\me_\eps(\hat\rrho_n))_n$, $(\me_\eps(\rrho^+_n))_n$ and $(\mh_c(\hat\rrho_n))_n$ are bounded from above, and that $(\me_\eps(\rrho^+_n))_n$ and $(\mh_c(\rrho^+_n))_n$ are bounded from below, which follows from non-negativity of $\me_\eps$ and the general inequality
\begin{align} \label{eq:entropy_lower_bd}
\intrd \rho \log \rho \,\intd x \geq 1 - \mom_2[\rho] - \pi^{d/2} \qquad \text{ for all } \rho \in \mptrd,
\end{align}
and from the boundedness of the second moments of $\rho_{j,n}^+$. Hence, the right hand sides of \eqref{eq:l2bound_gradF} are bounded from above by a constant for all $n$, which implies uniform boundedness of $\nabla F_1'(\rho^+_{1,n})$ in $L^2(\Rd)$. In order to also get uniform boundedness of $F_j'(\rho_{j,n}^+)$ in $L^2(S)$, note that $F_j'(\rho_{j,n}^+)$ is uniformly bounded in $L^1(\Rd)$ by inequality \eqref{eq:estim_fjp_eeps}, thus uniform boundedness in $L^2(S)$ follows from the Poincaré inequality in the smooth and bounded set $S$.
Hence, the sequence $(F_j'(\rho_{j,n}^+))_n$ is bounded in $H^1(S)$. Alaoglu's and Rellich's theorems thus imply $F_j'(\rho_{j,n}^+) \to u_j^+$ strongly in $L^2(S)$, weakly in $H^1(S)$, for some $u_j^+ \in H^1(S)$. The identification $u_j^+ = F_j'(\rho_j^+)$ now follows from the narrow convergence $\rho_{j,n}^+ \to \rho_j^+$, since $\rho_{j,n}^+ \to (F_j')^{-1}(u_j^+)$ pointwise almost everywhere along a subsequence. This finishes the proof of the weak $L^2(S)$-convergence $\nabla F_j'(\rho_{j,n}^+) \weakto \nabla F_j'(\rho_j^+)$.

In order to obtain weak $L^2(S)$-convergence $\nabla \partial_{r_j}h(\rho_{1,n}^+, \rho_{2,n}^+) \weakto \nabla \partial_{r_j}h(\rho_1^+, \rho_2^+)$, we write
\begin{align*}
\nabla \partial_{r_j}h(\rho_{1,n}^+, \rho_{2,n}^+) = \nabla \theta_j\left(F_1'(\rho_{1,n}^+), F_2'(\rho_{2,n}^+)\right) = \sum_{i=1,2} \theta_{j,i}\left(F_1'(\rho_{1,n}^+), F_2'(\rho_{2,n}^+)\right) \nabla F_i'(\rho_{i,n}^+).
\end{align*}
Denoting $\uu_n^+:= (u_{1,n}^+, u_{2,n}^+) := \left(F_1'(\rho_{1,n}^+), F_2'(\rho_{2,n}^+)\right)$, it thus follows from assumption \eqref{eq:thetabdd_swap}:
\begin{align*}
\|\nabla \partial_{r_j}h(\rho_{1,n}^+, \rho_{2,n}^+)\|_{L^2(S)} \leq \sum_{i = 1,2} \left( \int_S \theta^2_{j,i}\left(\uu_n^+\right) |\nabla u_{i,n}^+|^2\,\intd x \right)^{\frac{1}{2}} \leq \sum_{i = 1,2} \kappa_{j,i}\, \|u_{i,n}^+\|_{L^2(S)},
\end{align*}
which is uniformly bounded for all $n$, since $u_{i,n}^+ = F_i'(\rho_{i,n}^+)$ is uniformly bounded in $L^2(S)$. Hence up to subsequence, $\nabla \partial_{r_j}h(\rho_{1,n}^+, \rho_{2,n}^+) \weakto v$ for some $v \in L^2(S; \Rd)$. In order to identify the limit $v = \nabla \partial_{r_j}h(\rho_{1}^+, \rho_{2}^+)$, notice that for every $\psi \in C^\infty_c(S; \Rd)$, it holds
\begin{align*}
\int_S \nabla \partial_{r_j}h(\rho_{1,n}^+, \rho_{2,n}^+) \cdot \psi\,\intd x &= - \int_S \partial_{r_j}h(\rho_{1,n}^+, \rho_{2,n}^+)\ \dv\, \psi\,\intd x \\
&\to - \int_S \partial_{r_j}h(\rho_{1}^+, \rho_{2}^+)\ \dv\,\psi\,\intd x = \int_S \nabla\partial_{r_j}h(\rho_{1}^+, \rho_{2}^+)\cdot \psi\,\intd x,
\end{align*}
where we have used that $\partial_{r_j}h(\rho_{1,n}^+, \rho_{2,n}^+) \to \partial_{r_j}h(\rho_1^+, \rho_2^+)$ strongly in $L^2(S)$, which follows from $\partial_{r_j}h(\rho_{1,n}^+, \rho_{2,n}^+) = \theta_j\left(F_1'(\rho_{1,n}^+), F_2'(\rho_{2,n}^+)\right)$ and the $L^2(S)$-convergence $F_i'(\rho_{i,n}^+) \to F_i'(\rho_i^+)$, since $\theta_j$ is globally $(\kappa_{j,1} + \kappa_{j,2})$-Lipschitz by \eqref{eq:thetabdd_swap}. This implies $v = \nabla\partial_{r_j}h(\rho_{1}^+, \rho_{2}^+)$.

To prove $\nabla K \ast \rho_{j,n}^+ \weakto  \nabla K \ast \rho_j^+$ weakly in $L^2(S)$, fix any $\varphi \in L^2(\Rd; \Rd)$ with $\varphi \equiv 0$ in $\Rd \setminus S$. It now holds
\begin{align*}
\int_S \varphi \cdot \nabla K \ast \rho_{j,n}^+\,\intd x = -\intrd (\nabla K \ast \varphi)\, \rho_{j,n}^+\,\intd x \to -\intrd (\nabla K \ast \varphi)\, \rho_j^+\,\intd x = \int_S \varphi \cdot \nabla K \ast \rho_j^+\,\intd x,
\end{align*}
since $\rho_{j,n}^+ \to \rho_j^+$ narrowly with uniformly bounded second moments, and since $\nabla K \ast \varphi$ is continuous and grows at most linearly by the growth assumption on $K$.

It remains to prove $\rho_{j,n}^+ \to \rho_j^+$ strongly in $L^2(S)$, which we derive from the strong $L^2(S)$-convergence of $F_j'(\rho_{j,n}^+) \to F_j'(\rho_j^+)$. Since for non-negative functions $u_n$ and $u$, the convergence $u_n \to u$ in $L^2$ is equivalent to the convergence $u_n^2 \to u^2$ in $L^1$, it is sufficient to show that every subsequence of $((\rho_{j,n}^+)^2)_n$ contains a further subsequence that converges to $(\rho_j^+)^2$ strongly in $L^1(S)$. Since strong $L^1(S)$-convergence follows from equi-integrability together with pointwise convergence almost everywhere, and since $F_j'(\rho_{j,n}^+) \to F_j'(\rho_j^+)$ strongly in $L^2(S)$, we only need to prove equi-integrability of the sequence $((\rho_{j,n}^+)^2)_n$ in $S$. By the $L^1(S)$-convergence $F_j'(\rho_{j,n}^+)^2 \to F_j'(\rho_j^+)^2$, the sequence $(F_j'(\rho_{j,n}^+)^2)_n$ is equi-integrable in $S$. It thus follows that
\begin{align*}
\rho_{j,n}^2 = g\left(F_j'(\rho_{j,n}^+)^2\right) \quad \text{ with } g(r):= (F_j')^{-1}(\sqrt r)^2
\end{align*}
is also equi-integrable in $S$, since the non-decreasing function $g$ grows at most linearly for large $r$ by assumption \eqref{eq:fjp_growth} on $F_j''$. This finishes the proof of $\rho_{j,n}^+ \to \rho_j^+$ strongly in $L^2(S)$, thus concluding the proof of the theorem.
\end{proof}
\begin{remark}
From the approximation argument in the proof above, we also obtain the following generalization of Corollary \ref{cor:l2bound_gradF}: If $\hat\rrho$ is such that $\me_\eps(\hat\rrho) < +\infty$, it holds
\begin{equation} \label{eq:l2bound_gradF_gen}
\sum_{j = 1,2} \intrd \left|\nabla F_j'(\rho_j^+)\right|^2\,\intd x \leq C\, \left(\frac{\me_\eps(\hat\rrho) - \me_\eps(\rrho^+)}{\tau} + \frac{2 C_K^2}{\lambda}\, \me_\eps(\hat\rrho) + \frac{\mh_c(\hat\rrho) - \mh_c(\rrho^+)}{\tau} + 2 d C_K \right),
\end{equation}
where the only difference to \eqref{eq:l2bound_gradF} is the appearance of $\me_\eps(\hat\rrho)$ instead of $\me_\eps(\rrho^+)$ in the second term. To see this, notice that \eqref{eq:l2bound_gradF} holds for every $\hat\rrho_n$ and $\rrho_n^+$ from the proof of Theorem \ref{thm:discrete_error}. On the left-hand side, we can pass to the limit $n \to \infty$ since $\nabla F_j'(\rho_{j,n}^+) \weakto \nabla F_j'(\rho_j^+)$ weakly in $L^2(\Rd)$ and the $L^2$-norm is weakly lower semicontinuous. The weak $L^2(\Rd)$-convergence follows from the uniform boundedness of $\nabla F_j'(\rho_{j,n}^+)$ that is obtained from \eqref{eq:l2bound_gradF}, Alaoglu's theorem and the weak $L^2_{loc}$-convergence proven above. On the right-hand side, we use that $\mh_c(\hat\rrho_n) \to \mh_c(\hat\rrho)$, $\me_\eps(\rrho^+) \leq \liminf_{n} \me_\eps(\rrho_n^+)$, $\mh_c(\rrho^+) \leq \liminf_{n} \mh_c(\rrho_n^+)$ and $\me_\eps(\hat\rrho) = \lim_n \me_\eps(\hat\rrho_n) \geq \limsup_n \me_\eps(\rrho_n^+)$, to pass to the limit and obtain \eqref{eq:l2bound_gradF_gen}.
\end{remark}
\subsection{Interpolation and convergence to weak solution} \label{subsec:weaksol}
We fix any initial data $\rrho^0 \in \sspc$ such that $\me_\eps(\rrho^0)$ is finite. For every $\tau > 0$, we iteratively define a sequence of feasible pairs $(\hat\rrho_n)_n = (\hat\rho_{1,n}, \hat\rho_{2,n})_n \subset \sspc$ in the following way:
\begin{equation} \label{eq:interpol_def}
\hat\rrho_0 := \rrho^0, \qquad \hat\rrho_n := \rrho_{n-1}^+ \in \argmin_{\rrho \in \sspc} \eepstau(\rrho|\hat\rrho_{n-1}) \ \text{ for all } n \geq 1
\end{equation}
where the minimizer $\rrho_{n-1}^+$ is chosen to be the one constructed in Theorem \ref{thm:discrete_error}. Notice that the assumption $\me_\eps(\hat\rrho_{n-1}) < +\infty$ for the theorem is fulfilled in every iteration, as $\me_\eps(\hat\rrho_n) \leq \me_\eps(\hat\rrho_{n-1})$ by Lemma \ref{lem:yosida_min_exist}.
We now define the piecewise constant interpolating curve $\rrho^\tau = (\rho_1^\tau, \rho_2^\tau): [0,+\infty) \to \sspc$ as
\begin{align*}
\rrho^\tau(t) := \hat\rrho_{n+1} = \rrho_n^+ \quad \text{ for all } t \text{ such that } n\tau < t \leq (n+1)\tau.
\end{align*}
The goal is to prove that for some subsequence $\tau \downarrow 0$, the curves $\rrho^\tau$ converge in some topology to a continuous limit curve $\rrho: [0,+\infty) \to \sspc$, which is a weak solution to the evolution equations \eqref{eq:system-intro}. We first prove that the piecewise constant interpolations $\rrho^\tau$ satisfy a weak formulation of the system up to a small error, which is a direct consequence of Theorem \ref{thm:discrete_error}.
\begin{corollary} \label{cor:weaksol_tau_error}
Let $\zeta \in C^\infty_c(\R \times \Rd)$ be arbitrary. Define for $\tau > 0$ the time-discrete approximation $\zeta^\tau$ on $[0, \infty) \times \Rd$ as $\zeta^\tau(t,x) := \zeta_n(x) := \zeta(n\tau, x)$ for $n\tau \leq t < (n+1)\tau$. Then it holds
\begin{equation} \label{eq:weaksol_tau_error}
\left| \int_0^\infty\intrd \left[  \rho_1^\tau\, \partial_t \zeta - \rho_1^\tau\,\nabla\left[F_1'(\rho_1^\tau) + \eps \partial_{r_1}h(\rho_1^\tau, \rho_2^\tau) + K \ast \rho_2^\tau\right]\cdot\nabla \zeta^\tau \right]\,\intd x\intd t + \intrd \hat\rho_1 \zeta_0\,\intd x \right| \leq C\tau,
\end{equation}
and analogously for the other component, where $C = \me_\eps(\hat\rrho)\,\|\zeta\|_{C^2}$ is independent of $\tau$.
\end{corollary}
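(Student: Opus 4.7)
The plan is to derive the claim from Theorem \ref{thm:discrete_error} applied step by step along the scheme \eqref{eq:interpol_def}, glued together by a summation-by-parts in time. Since $\rho_1^\tau$ and $\zeta^\tau$ are constant on each slab $(n\tau,(n+1)\tau]$, integrating $\rho_1^\tau\,\partial_t\zeta$ across each slab produces the telescoping difference $\zeta_{n+1}-\zeta_n$ paired with $\hat\rho_{1,n+1}$. A straightforward Abel rearrangement---legitimate because $\zeta$ has compact time support so only finitely many summands are nonzero---yields
\begin{align*}
\int_0^\infty\!\intrd \rho_1^\tau\,\partial_t\zeta\,\dn x\,\dn t \;=\; -\intrd \hat\rho_{1,0}\,\zeta_0\,\dn x \;-\; \sum_{n=0}^\infty\intrd (\hat\rho_{1,n+1}-\hat\rho_{1,n})\,\zeta_n\,\dn x,
\end{align*}
where $\hat\rho_{1,0}=\rho_1^0$ is the first component of the initial datum.

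The flux integral in \eqref{eq:weaksol_tau_error} is already piecewise constant on the same slabs and hence evaluates to $\tau\sum_{n\ge 0}\intrd \hat\rho_{1,n+1}\,\nabla[\cdots]_{n+1}\cdot\nabla\zeta_n\,\dn x$, where $[\cdots]_{n+1}$ denotes the bracket in \eqref{eq:weaksol_tau_error} evaluated at $\hat\rrho_{n+1}$. Combining both identities, the quantity inside the absolute value on the left-hand side of \eqref{eq:weaksol_tau_error} is exactly $-\tau$ times the sum over $n$ of the signed expressions whose absolute values appear as $\mr_1$ in Corollary \ref{cor:weaksol_conditions}, applied to the transition $\hat\rrho_n\to\hat\rrho_{n+1}$ with test function $\zeta_n$. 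Since by construction each $\hat\rrho_{n+1}$ is the particular minimizer produced by Theorem \ref{thm:discrete_error}, that theorem applies at each step and gives
\begin{align*}
\mr_1 + \mr_2 \;\le\; \bigl(\me_\eps(\hat\rrho_n)-\me_\eps(\hat\rrho_{n+1})\bigr)\,\|\zeta_n\|_{C^2} \;\le\; \bigl(\me_\eps(\hat\rrho_n)-\me_\eps(\hat\rrho_{n+1})\bigr)\,\|\zeta\|_{C^2}.
\end{align*}

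Summing in $n$ telescopes the energies, and Lemma \ref{lem:yosida_min_exist} ensures the iteration is energy-dissipating, so $\sum_{n\ge 0}\bigl(\me_\eps(\hat\rrho_n)-\me_\eps(\hat\rrho_{n+1})\bigr)\le\me_\eps(\hat\rrho_0)=\me_\eps(\rrho^0)$. This yields the asserted bound with $C=\me_\eps(\rrho^0)\,\|\zeta\|_{C^2}$; the second component is handled identically after swapping the indices $1\leftrightarrow 2$. The argument is essentially bookkeeping: the only nontrivial input is Theorem \ref{thm:discrete_error}, and the one thing to watch is the Abel summation---specifically, the compact time support of $\zeta$ kills the would-be upper boundary contribution and isolates the initial term $-\intrd \hat\rho_{1,0}\zeta_0\,\dn x$ that exactly cancels the additive $+\intrd \hat\rho_1\,\zeta_0\,\dn x$ appearing in \eqref{eq:weaksol_tau_error}.
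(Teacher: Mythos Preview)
Your argument is correct and follows essentially the same route as the paper's proof: both integrate over the slabs $(n\tau,(n+1)\tau)$, perform a summation by parts in $n$ to convert $\rho_1^\tau\,\partial_t\zeta$ into a sum of difference quotients against $\zeta_n$, recognize the resulting $n$-th summand as the signed quantity whose absolute value is $\mr_1$ from Theorem \ref{thm:discrete_error}, and then telescope the energy differences. The only step left implicit in your write-up is the passage from the single-component bound needed here to the two-component estimate $\mr_1+\mr_2$ supplied by Theorem \ref{thm:discrete_error}; this is harmless since $\mr_1,\mr_2\ge 0$ gives $\mr_1\le\mr_1+\mr_2$.
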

\begin{proof}
Since $\rrho^\tau$ and $\zeta^\tau$ do not depend on $t$ in every interval $(n\tau, (n+1)\tau)$, it holds
\begin{align*}
\int_0^\infty\intrd \rho_1^\tau\, \partial_t\zeta\,\intd x\intd t &= \sum_{n = 0}^\infty \int_{n\tau}^{(n+1)\tau} \intrd \rho_{1,n}^+\,\partial_t\zeta\,\intd x\intd t
= \sum_{n = 0}^\infty \intrd \rho_{1,n}^+\,\left(\zeta_{n+1} -\zeta_n\right)\,\intd x \\
&= \tau\sum_{n= 0}^\infty \intrd \frac{\hat\rho_{1,n} - \rho_{1,n}^+}{\tau}\, \zeta_n\,\intd x - \intrd \hat\rho_1 \zeta_0\,\intd x
\end{align*}
and
\begin{align*}
&\int_0^\infty \intrd \rho_1^\tau\,\nabla\left[F_1'(\rho_1^\tau) + \eps \partial_{r_1}h(\rho_1^\tau, \rho_2^\tau) + K \ast \rho_2^\tau\right]\cdot\nabla \zeta^\tau \,\intd x\intd t \\
&= \tau \sum_{n=0}^\infty \intrd \rho_{1,n}^+\,\nabla\left[F_1'(\rho_{1,n}^+) + \eps \partial_{r_1}h(\rho_{1,n}^+, \rho_{2,n}^+) + K \ast \rho_{2,n}^+\right]\cdot\nabla \zeta_n \,\intd x.
\end{align*}
By applying Theorem \ref{thm:discrete_error}, we thus obtain the estimate
\begin{align*}
&\left| \int_0^\infty\intrd \left[  \rho_1^\tau\, \partial_t \zeta - \rho_1^\tau\,\nabla\left[F_1'(\rho_1^\tau) + \eps \partial_{r_1}h(\rho_1^\tau, \rho_2^\tau) + K \ast \rho_2^\tau\right]\cdot\nabla \zeta^\tau \right]\,\intd x\intd t + \intrd \hat\rho_1 \zeta_0\,\intd x \right| \\
&\leq \tau \sum_{n=0}^\infty \left(\me_\eps(\hat\rrho_n) - \me_\eps(\rrho^+_n)\right)\,\|\zeta_n\|_{C^2} \leq \me_\eps(\rrho^0)\,\|\zeta\|_{C^2} \tau,
\end{align*}
where we have used $\me_\eps(\rrho_n^+) \geq 0$, $\rrho_n^+ = \hat\rrho_{n+1}$ and $\hat\rrho_0 = \rrho^0$. The proof for the other component is analogous.
\end{proof}
In order to establish convergence of $\rrho^\tau$ to some limiting curve $\rrho$, we first prove the following a priori estimate on the jump sizes of the piecewise constant curves $\rrho^\tau$:
\begin{lemma} \label{lem:rtau_quasicont}
For every $\tau > 0$ and every $s,t \geq 0$, it holds
\begin{equation} \label{eq:rtau_quasicont}
\dst\left(\rrho^\tau(s), \rrho^\tau(t)\right)^2 \leq 2 \me_\eps(\rrho^0)\left(|s-t| + \tau\right)
\end{equation}
\end{lemma}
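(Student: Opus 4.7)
The strategy is the standard telescoping argument for piecewise constant interpolants of a minimizing movement scheme, combining the one-step distance/energy estimate \eqref{eq:step_estimate} with the triangle and Cauchy–Schwarz inequalities.

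Without loss of generality we may assume $s < t$, choose integers $m \leq n$ with $s \in (m\tau, (m+1)\tau]$ and $t \in (n\tau, (n+1)\tau]$, so that by the definition of the interpolation $\rrho^\tau(s) = \hat\rrho_{m+1}$ and $\rrho^\tau(t) = \hat\rrho_{n+1}$. If $m = n$ the left-hand side of \eqref{eq:rtau_quasicont} vanishes and there is nothing to prove, so assume $m < n$. The triangle inequality for $\dst$ together with the identities $\hat\rrho_{k+1} = \rrho_k^+$ gives
\begin{align*}
    \dst(\rrho^\tau(s), \rrho^\tau(t)) \leq \sum_{k = m+1}^{n} \dst(\hat\rrho_k, \hat\rrho_{k+1}) = \sum_{k = m+1}^{n} \dst(\hat\rrho_k, \rrho_k^+).
\end{align*}
Applying Cauchy–Schwarz in the sum yields
\begin{align*}
    \dst(\rrho^\tau(s), \rrho^\tau(t))^2 \leq (n-m)\sum_{k = m+1}^{n} \dst(\hat\rrho_k, \rrho_k^+)^2.
\end{align*}

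The next step is the key energy telescope. By the one-step estimate \eqref{eq:step_estimate} from Lemma \ref{lem:yosida_min_exist}, each jump satisfies $\dst(\hat\rrho_k, \rrho_k^+)^2 \leq 2\tau\,(\me_\eps(\hat\rrho_k) - \me_\eps(\rrho_k^+))$. Since $\hat\rrho_{k+1} = \rrho_k^+$, the resulting sum telescopes:
\begin{align*}
    \sum_{k = m+1}^{n} \dst(\hat\rrho_k, \rrho_k^+)^2 \leq 2\tau \sum_{k = m+1}^n \bigl(\me_\eps(\hat\rrho_k) - \me_\eps(\hat\rrho_{k+1})\bigr) = 2\tau\bigl(\me_\eps(\hat\rrho_{m+1}) - \me_\eps(\hat\rrho_{n+1})\bigr).
\end{align*}
The monotonicity $\me_\eps(\hat\rrho_k) \leq \me_\eps(\hat\rrho_0) = \me_\eps(\rrho^0)$ (again a consequence of \eqref{eq:step_estimate}) together with non-negativity of $\me_\eps$ bounds this by $2\tau\,\me_\eps(\rrho^0)$. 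Combining,
\begin{align*}
    \dst(\rrho^\tau(s), \rrho^\tau(t))^2 \leq 2\tau (n-m)\,\me_\eps(\rrho^0).
\end{align*}

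To convert the factor $\tau(n-m)$ into $|t - s| + \tau$, observe that $t > n\tau$ and $s \leq (m+1)\tau$, so $t - s > (n - m - 1)\tau$, i.e.\ $(n-m)\tau < (t - s) + \tau$. Inserting this into the previous estimate gives \eqref{eq:rtau_quasicont}. I do not anticipate any serious obstacle; the only minor subtlety is the careful bookkeeping of which interval contains $s$ and which contains $t$ so that the index shift in $\rrho^\tau(s) = \hat\rrho_{m+1}$ matches the telescoping, but this is routine.
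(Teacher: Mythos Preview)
Your proof is correct and follows essentially the same argument as the paper: triangle inequality plus Cauchy--Schwarz on the telescoped jumps, the one-step energy estimate \eqref{eq:step_estimate} to telescope the energies, and the index bookkeeping $(n-m)\tau < (t-s)+\tau$ to finish. The only cosmetic difference is that the paper indexes via $\rrho_k^+$ rather than $\hat\rrho_{k+1}$, but this is the same object.
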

\begin{proof}
Fix $\tau > 0$. By Lemma \ref{lem:yosida_min_exist}, it holds $\dst(\hat\rrho, \rrho^+)^2 \leq 2\tau ( \me_\eps(\hat\rrho) - \me_\eps(\rrho^+))$ for all $\hat\rrho \in \mptrd^2$. Thus for all integers $m \geq n \geq 0$ and every $s,t \geq 0$ such that $\rrho^\tau(s) = \rrho_n^+$ and $ \rrho^\tau(t) = \rrho_m^+$, the triangle inequality for the metric $\dst$ and the identity $\rrho_k^+ = \hat\rrho_{k+1}$ yield
\begin{align*}
&\dst\left(\rrho^\tau(s), \rrho^\tau(t)\right)^2 = \dst\left(\rrho_n^+, \rrho_m^+\right)^2 \leq \left( \sum_{k = n}^{m-1} \dst(\rrho_k^+, \rrho_{k+1}^+)\right)^2 \leq (m-n) \sum_{k = n}^{m-1} \dst(\rrho_k^+, \rrho_{k+1}^+)^2 \\
&\leq 2\tau(m-n)\sum_{k = n}^{m-1} \left(\me_\eps(\rrho_k^+) - \me_\eps(\rrho_{k+1}^+) \right) = 2\tau(m-n)\left(\me_\eps(\rrho_n^+) - \me_\eps(\rrho_{m}^+) \right) \leq 2\tau(m-n)\, \me_\eps(\rrho^0).
\end{align*}
By construction of $\rrho^\tau$, we have $s \leq (n + 1)\tau$ and $t > m\tau$, which imply $(m-n)\tau \leq t-s+\tau$. Inserting into the above estimate yields \eqref{eq:rtau_quasicont}. The case $n > m$ is analogous.
\end{proof}

With the help of Lemma \ref{lem:rtau_quasicont}, we prove the following convergence result for $\rrho^\tau$:
\begin{lemma} \label{lem:interpol_cvgce}
There exists a subsequence $\tau \downarrow 0$ and a curve $\rrho^*:[0,+\infty) \to \sspc$, which is Hölder-$\frac{1}{2}$-continuous with respect to the metric $\dst$, such that $\rho_j^\tau(t) \weakto \rho_j^*(t)$ weakly in $L^1(\Rd)$ at every $t \geq 0$.
\end{lemma}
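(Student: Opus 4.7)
The plan is to extract a limit by a diagonal argument over a countable dense set of times, to establish H\"older-$\frac{1}{2}$ regularity on that set via the quasi-continuity estimate of Lemma \ref{lem:rtau_quasicont}, to extend the limit curve by completeness of $(\sspc, \dst)$, and finally to upgrade pointwise convergence to all times via a three-term triangle inequality.

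First I would note that the scheme is energy-decreasing (Lemma \ref{lem:yosida_min_exist}), so $\me_\eps(\rrho^\tau(t)) \leq \me_\eps(\rrho^0)$ uniformly in $t$ and $\tau$; the whole family $\{\rrho^\tau(t)\}$ thus lies in a single sublevel set of $\me_\eps$ in $\sspc$, which is pre-compact in the weak $L^1$-topology by Lemma \ref{lem:eeps_coer}. I would then fix a countable dense subset $Q \subset [0, +\infty)$, and extract by the usual diagonal procedure a subsequence $\tau_k \downarrow 0$ along which $\rrho^{\tau_k}(q) \weakto \rrho^*(q)$ weakly in $L^1$ for every $q \in Q$, for some $\rrho^*(q) \in \sspc$.

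Next I would use lower semicontinuity of the Wasserstein metric under narrow (and \emph{a fortiori} weak $L^1$) convergence to transfer the quasi-continuity bound to the limit: taking $\liminf_k$ in $\dst(\rrho^{\tau_k}(s), \rrho^{\tau_k}(t))^2 \leq 2\me_\eps(\rrho^0)(|s-t|+\tau_k)$ for $s,t \in Q$ yields
\begin{equation*}
\dst(\rrho^*(s), \rrho^*(t))^2 \leq 2\me_\eps(\rrho^0)\,|s-t|,
\end{equation*}
so $\rrho^*|_Q$ is uniformly H\"older-$\frac{1}{2}$-continuous in $\dst$. Since $\sspc$ is closed in $(\mptrd^2, \dst)$ (because $\wass$-convergence preserves first moments, and hence the zero combined center of mass constraint), the space $(\sspc, \dst)$ is complete, and $\rrho^*|_Q$ extends uniquely to a H\"older-$\frac{1}{2}$-continuous curve $\rrho^*: [0, +\infty) \to \sspc$.

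For a time $t \notin Q$, I would take any weak $L^1$-cluster point $\rrho^{**}$ of $\{\rrho^{\tau_k}(t)\}$ along some further subsequence $\tau_{k_m}$, and show $\rrho^{**} = \rrho^*(t)$: for any $q_n \in Q$ with $q_n \to t$, the lower-semicontinuous version of the quasi-continuity estimate applied to $(q_n, t)$ gives $\dst(\rrho^*(q_n), \rrho^{**})^2 \leq 2\me_\eps(\rrho^0)|t-q_n|$, and sending $n \to \infty$ together with the $\dst$-continuity of $\rrho^*$ forces $\dst(\rrho^*(t), \rrho^{**}) = 0$. Uniqueness of the cluster point combined with the weak $L^1$-pre-compactness then yields $\rrho^{\tau_k}(t) \weakto \rrho^*(t)$ weakly in $L^1$. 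The only (mild) obstacle is the interplay between the weak $L^1$-topology, in which pre-compactness is given, and the Wasserstein metric $\dst$, in which the quasi-continuity estimate lives; this is resolved cleanly by the one-sided lower semicontinuity of $\dst$ under narrow limits, which is all the argument ever needs.
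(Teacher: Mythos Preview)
Your proof is correct and follows essentially the same route as the paper: diagonal extraction over a countable dense set of times using the weak $L^1$-compactness from Lemma~\ref{lem:eeps_coer}, transfer of the quasi-continuity estimate \eqref{eq:rtau_quasicont} to the limit via lower semicontinuity of $\dst$, extension by completeness of $(\sspc,\dst)$, and identification of cluster points at the remaining times. The only cosmetic difference is that the paper uses $\Q\cap[0,+\infty)$ as its dense set, whereas you allow an arbitrary countable dense $Q$.
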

\begin{proof}
For every $t \geq 0$ and $\tau > 0$, it holds $\rrho^\tau(t) \in \sspc$ and
$\me_\eps(\rrho^\tau(t)) \leq \me_\eps(\hat\rrho) < +\infty$.
Lemma \ref{lem:eeps_coer} thus implies weak $L^1$-compactness of the sequence at every $t \geq 0$. By a diagonal argument, this implies existence of a single subsequence $\tau \downarrow 0$, along which $\rho_j^\tau(t)$ is weakly $L^1$-convergent to some $\rho_j^*(t)$ at every $t \in \Q \cap [0, +\infty)$ for both $j$. By lower semicontinuity of the distance $\dst$ with respect to weak $L^1$-convergence and Lemma \ref{lem:rtau_quasicont}, it holds for every $s,t \in \Q \cap [0,+\infty)$:
\begin{align*}
\dst(\rrho^*(s), \rrho^*(t))^2 \leq \liminf_\tau \dst\left(\rrho^\tau(s), \rrho^\tau(t)\right)^2 \leq \liminf_\tau 2 \me_\eps(\hat\rrho)\left(|s-t| + \tau\right) = 2 \me_\eps(\hat\rrho) |s-t|.
\end{align*}
Hence, the function $\rrho^*: \Q \cap [0,+\infty) \to \sspc$ is Hölder-$\frac{1}{2}$-continuous. Since $(\sspc, \dst)$ is a complete metric space, there exists a unique (Hölder-$\frac{1}{2}$-)continuous extension $\rrho^*: [0, +\infty) \to \sspc$. The claim that $\rho_j^\tau(t) \weakto \rho_j^*(t)$ weakly at every $t \geq 0$ now follows directly from weak compactness, since $\rho_j^\tau(t) \weakto \rho_j \in \mptrd$ implies by the same arguments for every $s \in \Q \cap [0,+\infty)$ the estimate
\begin{align*}
\dst(\rrho, \rrho^*(s))^2 \leq 2 \me_\eps(\rrho^0) |s-t|
\end{align*}
and thus $\rrho = \rrho^*(t)$ by continuity of $\rrho^*$. This finishes the proof.
\end{proof}

In the following theorem, we prove that along some further subsequence, we have $\rrho^\tau \to \rrho^*$ in some stronger topology than proven in the previous lemma, which in combination with Corollary \ref{cor:weaksol_tau_error} allows us to show that the limit curve $\rrho^*$ satisfies the weak formulation of the system of evolution equations stated in Theorem \ref{thm:main}.

\begin{theorem} \label{thm:cvgce_to_weak_sol}
Let $\rrho^*: [0, +\infty) \to \sspc$ be the limit curve constructed in the previous lemma. There exists a further subsequence $\tau \downarrow 0$ along which
\begin{equation} \label{eq:cvgces_tau}
\begin{split}
\left.
\begin{array}{c}
\rho_j^\tau \to \rho_j^* \\ [1ex]
F_j'(\rho_j^\tau) \to F_j'(\rho_j^*)
\end{array}
\right\}
\quad \text{ strongly in } L^2_{loc}([0, +\infty) \times \Rd), \\
\left.
\begin{array}{c}
\nabla F_j'(\rho_j^\tau) \weakto \nabla F_j'(\rho_j^*) \\[1ex]
\nabla \partial_{r_j} h(\rho_1^\tau, \rho_2^\tau) \weakto\nabla \partial_{r_j} h(\rho_1^*, \rho_2^*)\\[1ex]
\nabla K \ast \rho_j^\tau \weakto \nabla K \ast \rho_j^*
\end{array}
\right\}
\quad \text{ weakly in } L^2_{loc}([0, +\infty) \times \Rd).
\end{split}
\end{equation}
The limit curve $\rrho^*$ is a weak solution to the system of evolution PDEs \eqref{eq:system-intro}. More precisely, for every $\zeta \in C^\infty_c([0, +\infty) \times \Rd)$, it holds
\begin{equation} \label{eq:weak_pde_lim}
\begin{split}
\int_0^\infty\intrd \left[  \rho_1^*\, \partial_t \zeta - \rho_1^*\,\nabla\left[F_1'(\rho_1^*) + \eps \partial_{r_1}h(\rho_1^*, \rho_2^*) + K \ast \rho_2^*\right]\cdot\nabla \zeta \right]\,\intd x\intd t + \intrd \rho_1^0 \zeta_0\,\intd x &= 0 \\
\int_0^\infty\intrd \left[  \rho_2^*\, \partial_t \zeta - \rho_2^*\,\nabla\left[F_2'(\rho_2^*) + \eps \partial_{r_2}h(\rho_1^*, \rho_2^*) + K \ast \rho_1^*\right]\cdot\nabla \zeta \right]\,\intd x\intd t + \intrd \rho_2^0 \zeta_0\,\intd x &= 0.
\end{split}
\end{equation}
\end{theorem}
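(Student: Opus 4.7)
The plan is to extract, from compactness bounds derivable from the preceding results, enough convergence along a subsequence $\tau \downarrow 0$ to pass to the limit in the time-discrete approximate weak formulation \eqref{eq:weaksol_tau_error} of Corollary \ref{cor:weaksol_tau_error}. The pointwise-in-time weak $L^1$ convergence is already in hand via Lemma \ref{lem:interpol_cvgce}; what is missing is space-time convergence of sufficient strength that the nonlinear flux and the cross-diffusion term can be identified in the limit.

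\textbf{Uniform space-time bounds.} I would first apply the generalized estimate \eqref{eq:l2bound_gradF_gen} to each step of the minimizing movement scheme, multiply through by $\tau$ and sum over all $n$ with $n\tau \leq T$. The differences $\me_\eps(\hat\rrho_n)-\me_\eps(\rrho_n^+)$ and $\mh_c(\hat\rrho_n)-\mh_c(\rrho_n^+)$ telescope; using $\me_\eps(\hat\rrho_n) \leq \me_\eps(\rrho^0)$ by Lemma \ref{lem:yosida_min_exist}, finiteness of $\mh_c(\rrho^0)$ (which follows from $\me_\eps(\rrho^0)<+\infty$ and the comparison $r\log r \lesssim F_j(r)$ used in the proof of Theorem \ref{thm:discrete_error}), and the lower entropy bound \eqref{eq:entropy_lower_bd} together with uniform second-moment control \eqref{eq:estim_inter_2mom}, I obtain a $\tau$-uniform estimate
\begin{equation*}
\int_0^T \intrd \bigl|\nabla F_j'(\rho_j^\tau)\bigr|^2\,\intd x\intd t \leq C(T).
\end{equation*}
Combined with the $L^1$-bound \eqref{eq:estim_fjp_eeps} and the Poincaré inequality on bounded domains, this gives a uniform bound on $F_j'(\rho_j^\tau)$ in $L^2([0,T];H^1_{\mathrm{loc}}(\Rd))$.

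\textbf{Strong and weak compactness.} To upgrade to strong $L^2_{\mathrm{loc}}$ convergence, I would combine the spatial $H^1_{\mathrm{loc}}$ bound with the Wasserstein Hölder estimate \eqref{eq:rtau_quasicont} from Lemma \ref{lem:rtau_quasicont}. Using the pointwise-in-time weak $L^1$ convergence already obtained in Lemma \ref{lem:interpol_cvgce}, Rellich's theorem gives strong $L^2_{\mathrm{loc}}$ convergence at each $t$, and the uniform time-modulus provided by \eqref{eq:rtau_quasicont} lets me upgrade to strong $L^2_{\mathrm{loc}}([0,T]\times\Rd)$ convergence of $F_j'(\rho_j^\tau)$ to some $u_j^*$, via a Lions–Aubin-type argument or direct application of a Simon compactness criterion. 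Along a further subsequence, $F_j'(\rho_j^\tau) \to u_j^*$ pointwise a.e., and continuity of $(F_j')^{-1}$ identifies $u_j^* = F_j'(\rho_j^*)$; the same equi-integrability trick as at the end of the proof of Theorem \ref{thm:discrete_error} then promotes this to strong $L^2_{\mathrm{loc}}$ convergence $\rho_j^\tau \to \rho_j^*$. The weak $L^2_{\mathrm{loc}}$ convergences then follow: for $\nabla F_j'(\rho_j^\tau)$ by Alaoglu and identification via distributional derivatives, for $\nabla \partial_{r_j}h(\rho_1^\tau,\rho_2^\tau)$ via the chain-rule decomposition $\sum_i \theta_{j,i}(\uu^\tau)\nabla F_i'(\rho_i^\tau)$ combined with uniform boundedness of $\theta_{j,i}$ from Hypothesis \ref{hyp:theta} and their strong convergence (by continuity composed with strong convergence of $\uu^\tau$), and for $\nabla K\ast \rho_j^\tau$ via narrow convergence in time together with the at-most-linear growth of $\nabla K$.

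\textbf{Passage to the limit in the weak formulation.} Starting from \eqref{eq:weaksol_tau_error}, the error $C\tau \to 0$ vanishes. The initial term converges trivially since $\hat\rho_j=\rho_j^0$ and $\zeta^\tau(0,\cdot)=\zeta(0,\cdot)$. The $\partial_t\zeta$ term passes to the limit using pointwise-in-time narrow convergence $\rho_j^\tau(t)\weakto\rho_j^*(t)$, uniform-in-$\tau$ second-moment bounds and dominated convergence in $t$. The decisive flux term
$\rho_j^\tau\,\nabla\bigl[F_j'(\rho_j^\tau)+\eps\partial_{r_j}h(\rrho^\tau)+K\ast\rho_{j'}^\tau\bigr]\cdot\nabla\zeta^\tau$
is treated by strong-weak pairing: $\rho_j^\tau \to \rho_j^*$ strongly in $L^2_{\mathrm{loc}}$, the bracket converges weakly in $L^2_{\mathrm{loc}}$ by the previous step, and $\nabla\zeta^\tau\to\nabla\zeta$ uniformly on the support of $\zeta$. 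This yields \eqref{eq:weak_pde_lim}.

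\textbf{Main obstacle.} The principal technical difficulty is Step~2: the minimizing movement construction does not provide Bochner-type time regularity that would make a classical Aubin–Lions argument directly applicable. The Wasserstein-Hölder bound of Lemma \ref{lem:rtau_quasicont} must be carefully paired with the $H^1_{\mathrm{loc}}$ spatial bound — either by extending one of the Wasserstein-based compactness results (in the spirit of Rossi–Savaré) or by first proving strong $L^2_{\mathrm{loc}}$-in-space convergence at each $t$ from Rellich and then upgrading to space-time strong convergence via dominated convergence. Equally delicate is the identification of the weak limit of $\nabla\partial_{r_j}h(\rrho^\tau)$, which genuinely requires the quantitative bounds on $\theta_{j,i}$ assumed in Hypothesis \ref{hyp:theta}.
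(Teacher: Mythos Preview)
Your proposal follows essentially the same route as the paper: telescope \eqref{eq:l2bound_gradF_gen} to get the $L^2([0,T];H^1_{\mathrm{loc}})$ bound, invoke a Rossi--Savar\'e-type compactness result to pair it with the Wasserstein quasi-H\"older estimate \eqref{eq:rtau_quasicont}, then identify all the weak limits and pass to the limit in \eqref{eq:weaksol_tau_error} by strong--weak pairing. The paper does exactly this, using \cite[Theorem 2]{RS} with a carefully constructed coercive functional $\mathcal F(u)=\|F_j'(u)\|_{H^1(\Omega)}^2$ and a pseudo-distance $g$ built from the restricted Wasserstein distance.

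Two points deserve tightening. First, your alternative ``Rellich at each fixed $t$'' is not available: the $H^1$ bound is only in $L^2$-in-time, so there is no uniform-in-$\tau$ $H^1$ bound at a given $t$, and even if there were, the extracted subsequence would depend on $t$. The Rossi--Savar\'e route is not an option here but the mechanism that actually works. Second, that compactness result only delivers $\rho_j^\tau(t)\to\rho_j^*(t)$ strongly in $L^2(\Omega)$ at \emph{a.e.} $t$; to upgrade to strong $L^2([0,T]\times\Omega)$ one still needs equi-integrability of $t\mapsto\|F_j'(\rho_j^\tau(t))\|_{L^2(\Omega)}^2$. The paper obtains this via a Sobolev interpolation: writing $\|u\|_{L^2}^{2q}\lesssim \|u\|_{L^{2^*}}^2\|u\|_{L^1}^{1/r'}$ for a suitable $q>1$, the $L^{2^*}$ factor is controlled by the $H^1$ bound and the $L^1$ factor by \eqref{eq:estim_fjp_eeps}, yielding a uniform $L^q$-in-time bound on $\|F_j'(\rho_j^\tau)\|_{L^2}^2$ and hence equi-integrability. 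Your reference to ``the same equi-integrability trick as at the end of the proof of Theorem \ref{thm:discrete_error}'' points to a different (spatial) issue; the time-equi-integrability step is separate and should be made explicit.
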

\begin{proof}
We fix some $T > 0$ and some smooth open bounded subset $\Omega \subset \Rd$, and prove the existence of a subsequence $\tau \downarrow 0$ such that the set of convergences \eqref{eq:cvgces_tau} holds in $L^2([0,T]\times\Omega)$. Since $T$ and $\Omega$ are arbitrary, the $L^2_{loc}$-convergence then follows directly.

We start by integrating \eqref{eq:l2bound_gradF_gen} from $0$ to $T$ to obtain a $\tau$-independent $L^2([0,T]\times\Omega)$-bound on $\nabla F_j'(\rho_j^\tau)$. With $N = \left\lceil T/\tau - 1 \right\rceil$, we have by non-negativity of $\me_\eps$ and the inequality \eqref{eq:entropy_lower_bd}:
\begin{equation} \label{eq:gradfjtau_bound}
\begin{split}
&\sum_{j = 1,2} \int_0^T \int_\Omega \left| \nabla F_j'(\rho_j^\tau)\right|^2\,\intd x \intd t \leq \sum_{n=0}^N \int_{n\tau}^{(n+1)\tau} \sum_{j = 1,2} \intrd \left| \nabla F_j'(\rho_{j,n}^+)\right|^2\,\intd x \intd t \\
&\leq  \sum_{n=0}^N \tau C\, \left(\frac{\me_\eps(\hat\rrho_n) - \me_\eps(\rrho_n^+)}{\tau} + \frac{2 C_K^2}{\lambda}\, \me_\eps(\hat\rrho_n) + \frac{\mh_c(\hat\rrho_n) - \mh_c(\rrho_n^+)}{\tau} + 2 d C_K \right) \\
&\leq C \left( \me_\eps(\rrho^0) - \me_\eps(\rrho_n^+) + \left(N + 1\right)\tau\frac{2C_K^2}{\lambda} \me_\eps(\rrho^0) + \mh_c(\rrho^0) - \mh_c(\rrho_n^+) + 2dC_K\tau \right) \\
&\leq C\left(\left(1 + (T + \tau)\frac{2C_K^2}{\lambda}\right) \,\me_\eps(\rrho^0) + \mh_c(\rrho^0) + \mom_2[\rho_{1,n}^+] + \mom_2[\rho_{2,n}^+] - 2 + 2\pi^{d/2} + 2dC_K\right) \\
&\leq \hat C\, \left(\me_\eps(\rrho^0) + \mh_c(\rrho^0) + 1\right)
\end{split}
\end{equation}
for some constant $\hat C$ independent of $\tau > 0$ sufficiently small. At the end, we have used $\mom_2[\rho_{1,n}^+] + \mom_2[\rho_{2,n}^+] \leq \frac{2}{\lambda} \me_\eps(\rrho_n^+) \leq \frac{2}{\lambda} \me_\eps(\rrho^0)$, which follows from \eqref{eq:estim_fj_int_eeps} and \eqref{eq:estim_inter_2mom}.

The Poincaré inequality in $\Omega$ yields a $\tau$-independent constant $C_\Omega$ such that
\begin{equation} \label{eq:fjtau_poinc}
\int_0^T \int_\Omega F_j'(\rho_j^\tau)^2\,\intd x\intd t \leq C_\Omega \int_0^T \left( \int_\Omega \left|\nabla F_j'(\rho_j^\tau)\right|^2\,\intd x + \left( \int_\Omega F_j'(\rho_j^\tau)\,\intd x \right)^2 \right)\,\intd t.
\end{equation}
The first integral on the right is uniformly bounded in $\tau$ as seen above, while the second one is uniformly bounded by using \eqref{eq:estim_fjp_eeps} and the uniform bound $\me_\eps(\rrho^\tau(t)) \leq \me_\eps(\rrho^0)$.
Hence, the whole expression on the right-hand side of \eqref{eq:fjtau_poinc} is uniformly bounded in $\tau$. Together, \eqref{eq:gradfjtau_bound} and \eqref{eq:fjtau_poinc} imply existence of a $\tau$-independent constant $M < +\infty$ such that
\begin{equation} \label{eq:fjtau_h1bound}
\int_0^T \|F_j'(\rho_j^\tau)\|_{H^1(\Omega)}^2 \,\intd t \leq M
\end{equation}
for all $\tau \in (0, \bar\tau]$ for some $\bar\tau > 0$. We shall use this estimate together with the generalized version \cite[Theorem 2]{RS} of the Aubin-Lions theorem to prove existence of a subsequence $\tau \downarrow 0$ along which $\rho_j^\tau(t) \to \rho_j^*(t)$ strongly in $L^2(\Omega)$ at almost every $t \in [0,T]$. Specifically, we construct maps $\mf: L^2(\Omega) \to [0, +\infty]$ and $g: L^2(\Omega) \times L^2(\Omega) \to [0, +\infty]$ such that $\mf$ is lower semicontinuous and coercive and $g$ is lower semicontinuous, both with respect to the strong topology on $L^2(\Omega)$, and it holds
\begin{gather} \label{eq:Fnormcoerciveint}
\int_0^T \mf(\rho_j^\tau(t))\,\intd t \leq M \quad \text{ for all } \tau \in (0, \bar\tau], \\ \label{eq:gcompmap}
\lim_{h \downarrow 0} \sup_{\tau \in (0, \bar\tau]} \int_0^{T-h} g\left(\rho_j^\tau(t+h), \rho_j^\tau(t)\right)\,\intd t = 0.
\end{gather}
Since \eqref{eq:fjtau_h1bound} and the at least linear growth of $F_j'$ imply $\rho_j^\tau(t) \in L^2(\Omega)$ at almost every $t \in [0,T]$, the expressions in \eqref{eq:Fnormcoerciveint} and \eqref{eq:gcompmap} are well-defined for every $\tau \in (0, \bar\tau]$. We now claim that
\begin{align*}
\mf(u) &:= \left\{
\begin{array}{ll}
\|F_j'(u)\|_{H^1(\Omega)}^2 & \text{ if } F_j'(u) \in H^1(\Omega) \\
+\infty & \text{ otherwise}
\end{array}
\right.,
 \\
g(u,v) &:= \inf\left\{\wass(\rho, \eta)^2: \rho, \eta \in \mptrd, \ \mom_2[\rho],\mom_2[\eta] \leq \frac{2}{\lambda}\me_\eps(\rrho^0),\ \rho|_\Omega = u, \eta|_\Omega = v\right\}
\end{align*}
fulfill all desired properties. \eqref{eq:Fnormcoerciveint} is just \eqref{eq:fjtau_h1bound} by definition of $\mf$. To prove \eqref{eq:gcompmap}, we use that $\mom_2[\rho_j^\tau(s)] \leq \frac{2}{\lambda} \me_\eps(\rrho^0)$ for all $\tau > 0$ and $s \geq 0$. By definition of $g$, this implies
\begin{align*}
g\left(\rho_j^\tau(t+h), \rho_j^\tau(t)\right) \leq \wass\left(\rho_j^\tau(t+h), \rho_j^\tau(t)\right)^2 \leq \dst\left(\rrho^\tau(t+h), \rrho^\tau(t)\right)^2
\end{align*}
for all $h > 0$ and $t \in [0, T-h]$. Together with estimate \eqref{eq:rtau_quasicont}, this yields for every fixed $\hat\tau \in (0, \bar\tau]$ the estimate
\begin{align*}
\sup_{\tau \in (0, \hat\tau]} \int_0^{T-h} g\left(\rho_j^\tau(t+h), \rho_j^\tau(t)\right)\,\intd t \leq \sup_{\tau \in (0, \hat\tau]} \int_0^{T-h} \dst\left(\rrho^\tau(t+h), \rrho^\tau(t)\right)^2\,\intd t \leq 2T \me_\eps(\rrho^0)(h + \hat\tau)
\end{align*}
Additionally, we prove that for every $\hat\tau \in (0, \bar\tau]$, it holds
\begin{align*}
\lim_{h \downarrow 0} \sup_{\tau \in [\hat\tau, \bar\tau]} \int_0^{T-h} g\left(\rho_j^\tau(t+h), \rho_j^\tau(t)\right)\,\intd t = 0.
\end{align*}
Indeed, for every $\tau \in [\hat\tau, \bar\tau]$ and every $h < \hat\tau$, it follows from the fact that $\rho_j^\tau$ is constant on intervals of length $\tau > h$ and inequality \eqref{eq:step_estimate} that
\begin{align*}
\int_0^{T-h} g\left(\rho_j^\tau(t+h), \rho_j^\tau(t)\right)\,\intd t \leq \sum_{n = 1}^{\left\lceil T/\tau \right\rceil} h\dst(\hat\rrho_n, \rrho_{n}^+)^2 \leq 2\tau h \sum_{n=1}^{\left\lceil T/\tau \right\rceil} \left(\me_\eps(\hat\rrho_n) - \me_\eps(\rrho_n^+)\right) \leq 2\bar\tau h \me_\eps(\rrho^0).
\end{align*}
Together, these two results yield \eqref{eq:gcompmap}, since $\hat\tau$ can be arbitrarily small:
\begin{align*}
\limsup_{h \downarrow 0} \sup_{\tau \in (0, \bar\tau]} \int_0^{T-h} g\left(\rho_j^\tau(t+h), \rho_j^\tau(t)\right)\,\intd t \leq \lim_{h \downarrow 0} 2T \me_\eps(\rrho^0)(h + \hat\tau) = 2T \me_\eps(\rrho^0)\hat\tau.
\end{align*}

In order to prove lower semicontinuity of $g$ with respect to strong $L^2(\Omega)$-convergence, and compatibility of $g$ in the sense that $g(u, v) = 0\ \Rightarrow\ u = v$ for $u,v \in L^2(\Omega)$, we first observe that the infimum in the definition of $g$ is attained as a minimum, unless $g(u,v) = +\infty$. To see this, note that in this case, the set
\begin{align*}
\left\{\rho, \eta \in \mptrd: \ \mom_2[\rho],\mom_2[\eta] \leq \frac{2}{\lambda}\me_\eps(\rrho^0),\ \rho|_\Omega = u, \eta|_\Omega = v\right\}
\end{align*}
is nonempty and compact with respect to narrow convergence, since pre-compactness follows from the bound on the second moments of $\rho$ and $\eta$, and closedness from the fact that $\Omega$ is an open set and the map $\rho \mapsto \mom_2[\rho]$ is lower semicontinuous with respect to narrow convergence. This implies attainment of the minimum by lower semicontinuity of the Wasserstein distance in the narrow topology.

With this observation, the property $g(u, v) = 0\ \Rightarrow\ u = v$ is clear. To prove lower semicontinuity, let $u_n \to u$ and $v_n \to v$ be two convergent sequences in $L^2(\Omega)$. Without loss of generality, we can assume $g(u_n, v_n) < +\infty$ for all $n$ and $g(u_n, v_n) \to C$ with some $C < +\infty$. Hence for every $n$, there are $\rho_n, \eta_n \in \mptrd$ such that $\mom_2[\rho_n], \mom_2[\eta_n] \leq \frac{2}{\lambda}\me_\eps(\rrho^0)$, $\rho_n|_\Omega = u_n$, $ \eta_n|_\Omega = v_n$ and $g(u_n, v_n) = \wass(\rho_n, \eta_n)^2$. By the same compactness argument as above, it holds $\rho_n \to \rho$ and $\eta_n \to \eta$ narrowly up to subsequence, where $\rho$ and $\eta$ are admissible in the minimization problem in the definition of $g(u,v)$. Hence
\begin{align*}
g(u,v) \leq \wass(\rho, \eta)^2 \leq \liminf_{n \to \infty} \wass(\rho_n, \eta_n)^2 = \lim_{n \to \infty} g(u_n, v_n) = C,
\end{align*}
proving lower semicontinuity of $g$.

To show coercivity of the functional $\mf$, observe that for any sequence $(u_n) \subset L^2(\Omega)$ along which $\mf(u_n)$ is uniformly bounded for all $n$, the sequence $(F_j'(u_n))$ is weakly compact in $H^1(\Omega)$, hence by Rellich's theorem, $F_j'(u_n) \to v$ strongly in $L^2(\Omega)$ up to subsequence. This implies $u_n \to u := (F_j')^{-1}(v)$ strongly in $L^2(\Omega)$ along the same subsequence by the at least linear growth of $F_j'(r)$ for large $r$, as seen in the proof of Theorem \ref{thm:discrete_error}, thus showing compactness of sublevel sets of $\mf$.

In order to prove lower semicontinuity for $\mf$, let $u_n \to u$ in $L^2(\Omega)$. Since without loss of generality, we can assume that $(\mf(u_n))$ is bounded, we again have that $(F_j'(u_n))$ is bounded and thus weakly compact in $H^1(\Omega)$. Together with the $L^2$-convergence $u_n \to u$, this implies $F_j'(u_n) \weakto F_j'(u)$ weakly in $H^1(\Omega)$. The claim thus follows from lower semicontinuity of the norm with respect to weak convergence.

All together, this shows that we can apply \cite[Theorem 2]{RS} to the sequence of functions $\rho_j^\tau \in L^2([0,T]; L^2(\Omega))$ to obtain that along a subsequence $\tau \downarrow 0$, it holds $\rho_j^\tau(t) \to \rho_j^*(t)$ strongly in $L^2(\Omega)$ at almost every $t \geq 0$.

In order to get from this the claimed convergence $\rho_j^\tau \to \rho_j^*$ in $L^2([0,T] \times \Omega)$, which is equivalent to $\rho_j^\tau \to \rho_j^*$ strongly in $L^2([0,T]; L^2(\Omega))$, we additionally show that the family of functions $[0,T] \ni t \mapsto \|F_j'(\rho_j^\tau(t))\|_{L^2(\Omega)}^2$ is equi-integrable. It follows from the Sobolev embedding theorem that there exists an exponent $2^* > 2$ and a constant $C_\Omega$ such that $\|u\|_{L^{2^*}(\Omega)} \leq C_\Omega \|u\|_{H^1(\Omega)}$ for all $u \in H^1(\Omega)$. We denote
\begin{align*}
r := \frac{2^*}{2} > 1, \quad r' := \frac{r}{r-1}, \quad q := \frac{3r-1}{2r} > 1.
\end{align*}
Notice that $2(q-1)r'=1$. We use Hölder twice to obtain at any $t \geq 0$ with $u := F_j'(\rho_j^\tau(t))$:
\begin{align*}
\|u\|_{L^2(\Omega)}^{2q} &= \left( \int_\Omega 1 \cdot u^2\,\intd x \right)^q \leq |\Omega|^{q - 1}  \int_\Omega u^{2q}\,\intd x = |\Omega|^{q - 1}  \int_\Omega u^2 u^{2(q-1)} \,\intd x \\
&\leq |\Omega|^{q - 1} \left(\int_\Omega u^{2r}\,\intd x\right)^\frac{1}{r}\left(\int_\Omega u^{2(q-1)r'}\,\intd x \right)^\frac{1}{r'} = |\Omega|^{q-1} \|u\|_{L^{2^*}(\Omega)}^2 \|u\|_{L^1(\Omega)}^\frac{1}{r'} \\
&\leq |\Omega|^{q - 1} C_\Omega^2  \left(\alpha_j(1 + 2 \me_\eps(\rrho^0))\right)^\frac{1}{r'}\,\|u\|_{H^1(\Omega)}^2 =:  C \|u\|_{H^1(\Omega)}^2.
\end{align*}
In the last estimate, we used the Sobolev inequality and \eqref{eq:estim_fjp_eeps}. With \eqref{eq:fjtau_h1bound}, this yields
\begin{align*}
\int_0^T \|F_j'(\rho_j^\tau)\|_{L^2(\Omega)}^{2q} \,\intd t \leq C \int_0^T \|F_j'(\rho_j^\tau)\|_{H^1(\Omega)}^{2} \,\intd t \leq C M
\end{align*}
for all $\tau > 0$. Since $q > 1$, this implies equi-integrability of $\|F_j'(\rho_j^\tau)\|_{L^2(\Omega)}^2$ which also implies equi-integrability of $\|\rho_j^\tau\|_{L^2(\Omega)}^2$ by the at least linear growth of $F_j'$. Together with the pointwise almost everywhere convergence $\rho_j^\tau(t) \to \rho_j^*(t)$ in $L^2(\Omega)$, Vitali's convergence theorem thus yields $\rho_j^\tau \to \rho_j^*$ strongly in $L^2([0,T] \times \Omega)$.

The convergence $F_j'(\rho_j^\tau) \to F_j'(\rho_j^*)$ in $L^2([0,T] \times \Omega)$ also follows immediately, since pointwise almost everywhere convergence along a subsequence follows from continuity of $F_j'$ and the convergence $\rho_j^\tau \to \rho_j^*$ in $L^2([0,T] \times \Omega)$, and equi-integrability has been shown above.

By the bound \eqref{eq:gradfjtau_bound}, the sequence $\nabla F_j'(\rho_j^\tau)$ is uniformly bounded in $\tau$ in $L^2([0,T] \times \Omega; \Rd)$. Alaoglu's theorem thus yields $\nabla F_j'(\rho_j^\tau) \weakto v \in L^2([0,T] \times \Omega; \Rd)$ weakly. Now it holds for every $\psi \in C^\infty_c([0,T] \times \Omega; \Rd)$, using the $L^2([0,T] \times \Omega)$-convergence $F_j'(\rho_j^\tau) \to F_j'(\rho_j^*)$:
\begin{align*}
&\int_0^T \int_\Omega F_j'(\rho_j^*) \ \dv \, \psi\,\intd x\intd t = \lim_\tau \int_0^T \int_\Omega F_j'(\rho_j^\tau) \ \dv \, \psi\,\intd x\intd t \\
&= -\lim_\tau \int_0^T \int_\Omega \nabla F_j'(\rho_j^\tau) \cdot \psi\,\intd x = -\int_0^T \int_\Omega v \cdot \psi\,\intd x,
\end{align*}
implying $v = \nabla F_j'(\rho_j^\tau)$.

The weak $L^2([0,T] \times \Omega)$-convergence $\nabla \partial_{r_j} h(\rho_1^\tau, \rho_2^\tau) \weakto\nabla \partial_{r_j} h(\rho_1^*, \rho_2^*)$ follows from the strong $L^2$-convergence $F_j'(\rho_j^\tau) \to F_j'(\rho_j^*)$ and assumption \eqref{eq:thetabdd_swap} by the same proof as in Theorem \ref{thm:discrete_error}. It thus remains to prove $\nabla K \ast \rho_j^\tau \weakto \nabla K \ast \rho_j^*$ weakly in $L^2([0,T] \times \Omega; \Rd)$. Let $\varphi \in L^2([0,T] \times \Omega; \Rd)$ be arbitrary. Again as in the proof of Theorem \ref{thm:discrete_error}, the narrow convergence $\rho_j^\tau(t) \to \rho_j^*(t)$ in $\Rd$ at every $t$ and boundedness of the second moments imply
\begin{align*}
\int_\Omega \varphi_t \cdot \nabla K \ast \rho_j^\tau(t)\,\intd x \to \int_\Omega \varphi_t \cdot \nabla K \ast \rho_j^*(t)\,\intd x 
\end{align*}
at every $t \in [0,T]$ where $\varphi_t = \varphi(t, \cdot) \in L^2(\Omega)$, which holds at almost every $t$. The estimates $|\nabla K (x-y)|^2 \leq C_K^2 |x-y|^2 \leq 2C_K^2(|x|^2 + |y|^2)$ and $\mom_2[\rho_j^\tau] \leq \frac{2}{\lambda} \me_\eps(\rrho^0)$ further yield
\begin{align*}
\left| \int_\Omega \varphi_t \cdot \nabla K \ast \rho_j^\tau(t)\,\intd x \right| &\leq \|\varphi_t\|_{L^2(\Omega)} \left(\int_\Omega \intrd \left| \nabla K(x-y) \right|^2 \rho_j^\tau(y) \,\intd y \intd x  \right)^{\frac{1}{2}} \\
&\leq \|\varphi_t\|_{L^2(\Omega)}  \sqrt 2\, C_K \left(\frac{2}{\lambda}|\Omega|\, \me_\eps(\rrho^0) + \int_\Omega |x|^2\, \intd x  \right)^{\frac{1}{2}}.
\end{align*}
which is integrable from $0$ to $T$ since $\varphi \in L^2([0,T] \times \Omega)$. Hence by dominated convergence
\begin{align*}
\int_0^T \int_\Omega \varphi \cdot \nabla K \ast \rho_j^\tau \,\intd x \intd t \to \int_0^T \int_\Omega \varphi \cdot \nabla K \ast \rho_j^* \,\intd x \intd t,
\end{align*}
proving weak $L^2$-convergence $\nabla K \ast \rho_j^\tau \weakto \nabla K \ast \rho_j^*$ in $[0,T] \times \Omega$.

The claim that $\rrho^*$ solves the weak form \eqref{eq:weak_pde_lim} of the evolution equations now follows directly from the set of convergences \eqref{eq:cvgces_tau} and Corollary \ref{cor:weaksol_tau_error}: By the regularity of $\zeta$, it holds $\nabla\zeta^\tau \to \nabla \zeta$ uniformly in $[0, \infty) \times \Rd$, and since $\zeta$ has compact support in $[0, \infty) \times \Rd$, the local convergences shown in \eqref{eq:cvgces_tau} are sufficient to prove convergence of the integrals on the left-hand side of \eqref{eq:weaksol_tau_error}.
\end{proof}

\section{Exponential convergence to equilibrium} \label{sec:exp_cvgce}
In this section, we prove that the weak solution $\rrho^*$ to \eqref{eq:system-intro} constructed above converges exponentially in time to a global minimizer $\bar\rrho = (\bar\rho_1, \bar\rho_2) \in \sspc$ of the energy functional $\me_\eps$. En passant, it follows that this minimizer is unique.

As explained in the introduction, we split the energy as follows:
\[ \me_\eps(\rrho)=\ml_\eps(\rrho)+\eps \mn_\eps(\rrho) \] 
with functionals $\me_\eps$ and $\mn_\eps$ given in \eqref{eq:L_def} and \eqref{eq:N_def}, respectively. For the definition of the auxiliary potentials $V_1$ and $V_2$ by \eqref{eq:V_def}, any of the a priori many minimizers $\bar\rrho$ of $\me_\eps$ can be used.
We shall prove that $\ml_\eps$ is geodesically $\lambda_\eps$-convex with some $0 < \lambda_\eps < \lambda$, and that $\bar\rrho$ is its unique global minimizer over $\sspc$. Furthermore, we show that it defines a Lyapunov functional for the minimizing movement scheme from the previous section, which will lead to exponential convergence to the equilibrium.

\subsection{Geodesic convexity of the auxiliary functional}
We start by proving  geodesic $\lambda$-convexity of the interaction energy.
\begin{lemma} \label{lem:inter_geod_conv}
The interaction energy $\rrho \mapsto \intrd \rho_1 K \ast \rho_2\,\intd x$ is geodesically $\lambda$-convex on $\sspc$.
\end{lemma}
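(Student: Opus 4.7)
The plan is to prove geodesic $\lambda$-convexity by direct computation along a generalized geodesic in the product Wasserstein space, exploiting $\lambda$-convexity of $K$ pointwise, and using the center-of-mass constraint defining $\sspc$ to dispose of a cross term.

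Fix $\rrho^0,\rrho^1\in\sspc$ absolutely continuous (the general case follows by approximation since the interaction energy is continuous in the Wasserstein topology under a second-moment bound by Lemma \ref{lem:eeps_bounds}). By Brenier's theorem, there exist optimal transport maps $T_j$ with $T_j\#\rho_j^0=\rho_j^1$ for $j=1,2$, and the geodesic is $\rrho^t=(\rho_1^t,\rho_2^t)$ with $\rho_j^t=\bigl((1-t)\mathrm{id}+tT_j\bigr)\#\rho_j^0$. A change of variables then yields
\begin{equation*}
    \intrd \rho_1^t K\ast\rho_2^t\,\intd x
    =\iintrdrd K\!\bigl((1-t)(x-y)+t(T_1(x)-T_2(y))\bigr)\,\rho_1^0(x)\rho_2^0(y)\,\intd x\intd y.
\end{equation*}

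First I would apply the pointwise $\lambda$-convexity of $K$ along the straight segment from $x-y$ to $T_1(x)-T_2(y)$, which gives
\begin{equation*}
    K\!\bigl((1-t)(x-y)+t(T_1(x)-T_2(y))\bigr)
    \le (1-t)K(x-y)+tK(T_1(x)-T_2(y))-\tfrac{\lambda}{2}t(1-t)\,\bigl|(T_1(x)-x)-(T_2(y)-y)\bigr|^2,
\end{equation*}
and then integrate against $\rho_1^0\otimes\rho_2^0$ to obtain
\begin{equation*}
    \intrd \rho_1^t K\ast\rho_2^t\,\intd x
    \le (1-t)\intrd \rho_1^0 K\ast\rho_2^0\,\intd x+t\intrd \rho_1^1 K\ast\rho_2^1\,\intd x
    -\tfrac{\lambda}{2}t(1-t)\,Q,
\end{equation*}
where $Q:=\iintrdrd|(T_1(x)-x)-(T_2(y)-y)|^2\rho_1^0\rho_2^0\,\intd x\intd y$.

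The key step is then to show $Q\ge\dst(\rrho^0,\rrho^1)^2$ on $\sspc$. Expanding the square and using $T_j\#\rho_j^0=\rho_j^1$ with $\intrd \rho_j^0\,\intd x=1$ gives
\begin{equation*}
    Q=\wass(\rho_1^0,\rho_1^1)^2+\wass(\rho_2^0,\rho_2^1)^2
    -2\bigl(\mom_1[\rho_1^1]-\mom_1[\rho_1^0]\bigr)\cdot\bigl(\mom_1[\rho_2^1]-\mom_1[\rho_2^0]\bigr).
\end{equation*}
Setting $a:=\mom_1[\rho_1^1]-\mom_1[\rho_1^0]$, the zero-total-center-of-mass condition imposed by $\sspc$ forces $\mom_1[\rho_2^j]=-\mom_1[\rho_1^j]$ for $j=0,1$, so the second factor equals $-a$ and the cross term becomes $+2|a|^2\ge 0$; hence $Q\ge\dst(\rrho^0,\rrho^1)^2$. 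Combining this with the previous display yields the required $\lambda$-convexity inequality along the geodesic.

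The only mild obstacle is the cross term: without the $\sspc$ constraint it can have either sign and one would only get a smaller modulus of convexity (possibly negative). The constraint turns this nuisance into a bonus, giving exactly the same modulus $\lambda$ as the pointwise convexity of $K$.
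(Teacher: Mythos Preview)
Your proposal is correct and follows essentially the same argument as the paper's proof: both push the interaction energy back through the optimal maps, apply pointwise $\lambda$-convexity of $K$, expand the resulting square, and use the $\sspc$ constraint to turn the cross term $-2(\mom_1[\rho_1^1]-\mom_1[\rho_1^0])\cdot(\mom_1[\rho_2^1]-\mom_1[\rho_2^0])$ into $+2|a|^2\ge0$. The only minor differences are that the paper handles the non-absolutely-continuous case via weak lower semi-continuity rather than continuity (lower semi-continuity is all that is needed for the convexity inequality), and that what you call a ``generalized geodesic'' is in fact just the ordinary geodesic in the product Wasserstein space.
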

\begin{proof}
 This is a special case of Zinsl's result \cite{Zinsl} on uniform geodesic convexity of $n$-component interaction functionals. In the case at hand, the proof simplifies as follows:

Take any $\rrho^0, \rrho^1 \in \sspc$. For simplicity, we assume that $\rrho^0$ and $\rrho^1$ are absolutely continuous with respect to the Lebesgue measure. The general case then follows by density arguments and weak lower semi-continuity of the interaction energy, see e.g. \cite[Prop. 9.2.10]{AGS}. Denote by $T_j$ the optimal transport maps pushing $\rho^0_j$ to $\rho^1_j$, and denote by $(\rrho^s)_{0 \leq s \leq 1}$ the geodesic between $\rrho^0$ and $\rrho^1$, given by $\rho^s_j = T_j^s\#\rho_j^0$ with $T_j^s = (1-s)\mathrm{id} + s T_j$. From the properties of the push-forward and $\lambda$-convexity of $K$, we obtain for the interaction energy of $\rrho^s$:
\begin{align*}
\intrd \rho^s_1 K \ast \rho_2^s\,\intd x &= \iintrdrd \rho_1^0(x) \rho_2^0(y) \,K\left(T_1^s(x) - T_2^s(y)\right)\,\intd x \intd y \\
&= \iintrdrd \rho_1^0(x) \rho_2^0(y) \,K\left((1-s)(x-y) + s(T_1(x) - T_2(y)\right)\,\intd x \intd y \\
&\leq (1-s)\intrd \rho_1^0 K \ast \rho_2^0 \,\intd x + s \intrd \rho_1^1 K \ast \rho_2^1 \,\intd x \\ &- \frac{\lambda}{2}s(1-s)\iintrdrd \rho_1^0(x) \rho_2^0(y) \,|x-y-T_1(x)+T_2(y)|^2\,\intd x \intd y.
\end{align*}
For the last integral, $\wass(\rho^0_j,\rho^1_j)^2 = \intrd \rho_j^0\, |x-T_j(x)|^2\,\intd x$ and $\rrho^0, \rrho^1 \in \sspc$ yield
\begin{align*}
&\iintrdrd \rho_1^0(x) \rho_2^0(y) \,|x-y-T_1(x)+T_2(y)|^2\,\intd x \intd y \\
&= \dst(\rrho^0, \rrho^1)^2 - 2\left(\intrd \rho_1^0(x)\, (x-T_1(x))\,\intd x\right)\cdot \left(\intrd \rho_2^0(y)\,(y-T_2(y)) \,\intd y\right) \\
&= \dst(\rrho^0, \rrho^1)^2 - 2 \left(\mom_1[\rho_1^0] - \mom_1[\rho_1^1]\right)\cdot\left(\mom_1[\rho_2^0] - \mom_1[\rho_2^1]\right) \\
&= \dst(\rrho^0, \rrho^1)^2 + 2\left|\mom_1[\rho_1^0] - \mom_1[\rho_1^1]\right|^2 \geq \dst(\rrho^0, \rrho^1)^2.
\end{align*}
Inserting above, this proves the inequality
\begin{align*}
\intrd \rho^s_1 K \ast \rho_2^s\,\intd x \leq (1-s)\intrd \rho_1^0 K \ast \rho_2^0 \,\intd x + s \intrd \rho_1^1 K \ast \rho_2^1 \,\intd x - \frac{\lambda}{2}s(1-s)\,\dst(\rrho^0, \rrho^1)^2,
\end{align*}
which is geodesic $\lambda$-convexity of the interaction energy.
\end{proof}
We assume in all of the following that $\eps$ is
small enough such that $\eps K_0 < \lambda$ with the constant $K_0$ from Corollary \ref{cor:vj_semiconv}.
\begin{corollary} \label{cor:l_conv_min}
The functional $\ml_\eps$ is $\lambda_\eps$-convex along geodesics in $\sspc$ with the positive modulus $\lambda_\eps = \lambda - \eps K_0$.
\end{corollary}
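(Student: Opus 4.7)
The plan is to decompose $\ml_\eps$ into four parts and check the geodesic convexity modulus of each separately; the total modulus in $(\sspc,\dst)$ will be the minimum of the individual moduli, thanks to the product structure of geodesics in the combined metric. Writing
\begin{align*}
\ml_\eps(\rrho)
&= \underbrace{\intrd F_1(\rho_1)\dd x}_{=:\mi_1(\rrho)}
 + \underbrace{\intrd F_2(\rho_2)\dd x}_{=:\mi_2(\rrho)}
 + \underbrace{\intrd \rho_1 K\ast\rho_2\dd x}_{=:\mk(\rrho)}
 + \underbrace{\eps\intrd (\rho_1 V_1 + \rho_2 V_2)\dd x}_{=:\mv(\rrho)},
\end{align*}
I would treat these in turn and then add.

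First I would recall that a $\dst$-geodesic between $\rrho^0,\rrho^1\in\sspc$ is just the pair of $\wass$-geodesics in each component, i.e.\ $\rho_j^s=T_j^s\#\rho_j^0$ with $T_j^s=(1-s)\mathrm{id}+sT_j$, and that $\dst(\rrho^0,\rrho^1)^2=\wass(\rho_1^0,\rho_1^1)^2+\wass(\rho_2^0,\rho_2^1)^2$. For $\mi_1$ and $\mi_2$, McCann's condition \eqref{eq:mccann_cond} gives geodesic $0$-convexity in each component along the respective $\wass$-geodesic, hence $\mi_1+\mi_2$ is $0$-convex along $\dst$-geodesics. For $\mk$, Lemma \ref{lem:inter_geod_conv} proved just above gives geodesic $\lambda$-convexity in $(\sspc,\dst)$, which is precisely the (combined-distance) convexity inequality we need.

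For the potential part $\mv$, I would use the elementary fact that for a potential $W\in C^2(\Rd)$ that is $\mu$-semiconvex, i.e.\ $\nabla^2 W\ge \mu\,\mathbf{1}$, the functional $\rho\mapsto\intrd W\rho\dd x$ is geodesically $\mu$-convex on $(\mptrd,\wass)$: along $\rho^s=T^s\#\rho^0$ this follows from the pointwise $\mu$-convexity inequality
\[
W\bigl((1-s)x+sT(x)\bigr)\le (1-s)W(x)+sW(T(x)) - \tfrac{\mu}{2}s(1-s)|x-T(x)|^2
\]
integrated against $\rho^0$. Applying this with $W=V_j$ and $\mu=-K_0$ (Corollary \ref{cor:vj_semiconv}) to each component gives
\[
\mv(\rrho^s)\le (1-s)\mv(\rrho^0)+s\mv(\rrho^1)-\tfrac{-\eps K_0}{2}s(1-s)\bigl[\wass(\rho_1^0,\rho_1^1)^2+\wass(\rho_2^0,\rho_2^1)^2\bigr],
\]
that is, $\mv$ is geodesically $(-\eps K_0)$-convex in $(\sspc,\dst)$.

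Summing the four convexity inequalities term by term, and using that $(-\frac{\mu_1}{2}\wass(\rho_1^0,\rho_1^1)^2 - \frac{\mu_2}{2}\wass(\rho_2^0,\rho_2^1)^2)\le -\frac{\min(\mu_1,\mu_2)}{2}\dst(\rrho^0,\rrho^1)^2$ whenever the per-component moduli coincide (here they do, both equal $0$ for the $\mi_j$, both $\lambda$ and $-\eps K_0$ appear with the full $\dst^2$), yields geodesic $(\lambda-\eps K_0)$-convexity of $\ml_\eps$, which is the claim. There is no real obstacle here: the only mild care needed is the bookkeeping of moduli in the combined metric, which works out cleanly because the two components carry identical moduli in each of the three nontrivial pieces ($\mi$, $\mv$, and $\mk$, the last via Lemma \ref{lem:inter_geod_conv}). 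Positivity of $\lambda_\eps$ is ensured by the standing smallness assumption $\eps K_0<\lambda$.
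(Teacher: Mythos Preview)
Your proof is correct and follows essentially the same approach as the paper's: decompose $\ml_\eps$ into the internal energies (geodesically $0$-convex by McCann's condition), the interaction energy ($\lambda$-convex by Lemma \ref{lem:inter_geod_conv}), and the potential terms ($(-\eps K_0)$-convex by Corollary \ref{cor:vj_semiconv}), then sum the moduli. You provide somewhat more detail on the product structure of geodesics and the pointwise argument for the potential part, but the argument is the same.
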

\begin{proof}
The $F$-terms are geodesically convex, since the $F_j$ satisfy the McCann condition \eqref{eq:mccann_cond}. Since $V_j$ is $(-K_0)$-semiconvex in $\Rd$, the functionals $\eps\int \rho_j V_j$ are geodesically $(-\eps K_0)$-semiconvex, which then holds for their sum as well, since they only depend on one of the components each. Together with the previous lemma, this implies the $\lambda_\eps$-convexity of $\ml_\eps$ in $\sspc$.
\end{proof}
As a direct consequence of geodesic $\lambda_\eps$-convexity, we prove the following technical inequality for $\ml_\eps$:

\begin{lemma} \label{lem:above_tangent}
Let $\rrho^0, \rrho^1 \in \sspc$ be pairs and assume that both components of $\rrho^0$ are absolutely continuous with respect to the Lebesgue measure. Additionally, we assume that $\nabla F_j'(\rho_j^0)$ exists almost everywhere in $\Rd$ and lies in $L^2(\intd \rho_j^0)$. Then, denoting by $T_j$ the (unique) optimal transport maps pushing $\rho_j^0$ to $\rho_j^1$, it holds
\begin{equation} \label{eq:above_tangent}
\begin{split}
\ml_\eps(\rrho^1) - \ml_\eps(\rrho^0) &\geq \intrd \rho_1^0\, \nabla\left[F_1'(\rho_1^0) + \eps V_1 + K \ast \rho_2^0\right] \cdot (T_1(x) - x) \,\intd x \\
 &+ \intrd \rho_2^0\, \nabla\left[F_2'(\rho_2^0) + \eps V_2 + K \ast \rho_1^0\right] \cdot (T_2(x) - x)\,\intd x + \frac{\lambda_\eps}{2} \dst(\rrho^1, \rrho^0)^2.
\end{split}
\end{equation}
\end{lemma}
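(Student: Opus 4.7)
The strategy is to apply the $\lambda_\eps$-geodesic convexity of $\ml_\eps$ from Corollary \ref{cor:l_conv_min} along the displacement geodesic connecting $\rrho^0$ to $\rrho^1$, and then to identify the first-order derivative at $s=0^+$ with the explicit expression appearing in \eqref{eq:above_tangent}. Since $\rrho^0$ is absolutely continuous, the maps $T_j$ are uniquely determined and the Wasserstein geodesic is given by $\rho_j^s = (T_j^s)_\#\rho_j^0$ with $T_j^s = (1-s)\mathrm{id} + sT_j$. Geodesic $\lambda_\eps$-convexity then says that $s \mapsto \ml_\eps(\rrho^s) + \frac{\lambda_\eps}{2}s(1-s)\dst(\rrho^0,\rrho^1)^2$ is convex on $[0,1]$; comparing the chord with the right tangent at $s=0$ yields
\begin{equation*}
    \ml_\eps(\rrho^1) - \ml_\eps(\rrho^0) \;\geq\; \left.\frac{\dd}{\dd s}\right|_{s=0^+}\! \ml_\eps(\rrho^s) \;+\; \frac{\lambda_\eps}{2}\,\dst(\rrho^0,\rrho^1)^2,
\end{equation*}
provided the right derivative exists (convexity guarantees existence in $[-\infty,+\infty)$).

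The remaining task is to compute the right derivative of each of the three pieces of $\ml_\eps$. For the potential terms, the pushforward identity gives $\intrd V_j \,\dd \rho_j^s = \intrd V_j(T_j^s(x))\rho_j^0(x)\dd x$, and the $(-K_0)$-semiconvexity/Lipschitzness of $\nabla V_j$ from Corollary \ref{cor:vj_semiconv} makes differentiation under the integral sign legitimate, producing $\intrd \rho_j^0\,\nabla V_j \cdot (T_j - \mathrm{id})\dd x$. For the interaction term, writing
\begin{equation*}
    \intrd \rho_1^s K \ast \rho_2^s \dd x = \iintrdrd K\!\left(T_1^s(x) - T_2^s(y)\right)\rho_1^0(x)\rho_2^0(y)\dd x\dd y,
\end{equation*}
the global $C_K$-bound on $\nabla^2K$ justifies differentiation in $s$; after splitting the chain-rule derivative into two pieces and using that $\nabla K$ is odd (radial symmetry of $K$), one obtains $\intrd\rho_1^0\,\nabla(K\ast\rho_2^0)\cdot(T_1-\mathrm{id})\dd x + \intrd\rho_2^0\,\nabla(K\ast\rho_1^0)\cdot(T_2-\mathrm{id})\dd x$.

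The delicate contribution is the internal energy $\intrd F_j(\rho_j^s)\dd x$. The McCann condition \eqref{eq:mccann_cond} ensures that $s \mapsto \intrd F_j(\rho_j^s)\dd x$ is convex, so its right derivative at $s=0$ exists and equals $\lim_{s\to 0^+}\tfrac1s[\intrd F_j(\rho_j^s)-\intrd F_j(\rho_j^0)]\dd x$. Formally, by change of variables,
\begin{equation*}
    \intrd F_j(\rho_j^s)\dd y = \intrd F_j\!\left(\tfrac{\rho_j^0(x)}{\det\nabla T_j^s(x)}\right)\det\nabla T_j^s(x)\dd x,
\end{equation*}
and differentiating at $s=0$ (where $\det\nabla T_j^0=1$) gives $-\intrd P_j(\rho_j^0)\,\dv(T_j-\mathrm{id})\dd x$ with pressure $P_j(r)=rF_j'(r)-F_j(r)$. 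Since $\nabla P_j(\rho_j^0) = \rho_j^0\,\nabla F_j'(\rho_j^0)$ distributionally, an integration by parts converts this into $\intrd \rho_j^0\,\nabla F_j'(\rho_j^0)\cdot(T_j-\mathrm{id})\dd x$, which is finite by the hypothesis $\nabla F_j'(\rho_j^0)\in L^2(\dd\rho_j^0)$ together with $T_j-\mathrm{id}\in L^2(\dd\rho_j^0)$ (the latter because $\wass(\rho_j^0,\rho_j^1)<\infty$) via Cauchy--Schwarz.

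The main obstacle is making the last step rigorous without smoothness of $\rho_j^0$ or global regularity of $T_j$. The cleanest route is to invoke the general Wasserstein subdifferential calculus of Ambrosio--Gigli--Savar\'e (\cite[Ch.~10.4]{AGS}): for a displacement convex internal energy, the hypothesis $\nabla F_j'(\rho_j^0)\in L^2(\dd\rho_j^0)$ places $\nabla F_j'(\rho_j^0)$ in the (strong) subdifferential at $\rho_j^0$, whence the above-tangent formula for the internal energy is exactly the standard consequence of geodesic convexity. Alternatively, one can mollify $\rho_j^0$ to obtain smooth approximations $\rho_j^{0,\delta}$, apply the computation above where all integrations by parts are classical, and pass to the limit $\delta\to 0$ using the $L^2(\dd\rho_j^0)$-bound on $\nabla F_j'(\rho_j^0)$ together with lower semicontinuity of the internal energy along the approximating geodesics. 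Summing the three contributions and inserting into the inequality from the first paragraph yields \eqref{eq:above_tangent}.
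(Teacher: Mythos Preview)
Your proposal is correct and follows essentially the same route as the paper: both start from the above-tangent formula implied by $\lambda_\eps$-geodesic convexity (the paper cites \cite[Prop.~5.29]{Vill}), then handle the three pieces of $\ml_\eps$ separately---potential terms via semiconvexity of $V_j$ (the paper uses \cite[Thm.~5.30]{Vill}), the interaction term by an explicit dominated-convergence computation, and the internal energy via the AGS subdifferential calculus (\cite[Thm.~10.4.6]{AGS}), exactly as you suggest as the ``cleanest route.'' The only cosmetic difference is that the paper writes the $V_j$ and $F_j$ contributions directly as lower bounds on the right derivative rather than asserting equality, which is all that is needed.
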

\begin{proof}
Denote by $(\rrho^s)_{0 \leq s \leq 1}$ the geodesic between $\rrho^0$ and $\rrho^1$, as in the proof of Lemma \ref{lem:inter_geod_conv}. Since $\ml_\eps$ is geodesically $\lambda_\eps$-convex, there holds the above-tangent formula
\begin{equation} \label{eq:ab_tan_general}
\ml_\eps(\rrho^1) - \ml_\eps(\rrho^0) \geq \ddszerop^+ \ml_\eps(\rrho^s) + \frac{\lambda_\eps}{2} \dst(\rrho^1, \rrho^0)^2,
\end{equation}
see \cite[Proposition 5.29]{Vill}. To compute the derivative, we use \cite[Theorem 5.30]{Vill} to obtain
\begin{align*}
\ddszerop^+ \intrd \rho_j^s V_j\,\intd x \geq \intrd \rho_1^0\, \nabla V_j \cdot (T_j(x)-x)\,\intd x,
\end{align*}
using that the $V_j$ are $(-K_0)$-semiconvex in $\Rd$. For the $F_j$-terms, we apply \cite[Theorem 10.4.6]{AGS}: Using our assumption $\nabla F_j'(\rho_j^0) \in L^2(\intd \rho_j^0)$, it follows that $\nabla F_j'(\rho_j^0)$ lies in the subdifferential of the geodesically convex functional $\rho_j \mapsto \intrd F_j(\rho_j)\,\intd x$ at $\rho_j^0$. Thus,
\begin{align*}
\ddszerop^+ \intrd F_j(\rho_j^s)\,\intd x \geq \intrd \rho_j^0\,\nabla F_j'(\rho_j^0)\cdot \left(T_j(x) - x\right)\,\intd x.
\end{align*}
For the interaction term, we use $\rho_j^s = T_j^s \# \rho_j^0$ to obtain
\begin{align*}
&\intrd \left[ \rho_1^s K \ast \rho_2^s - \rho_1^0 K \ast \rho_2^0 \right]\,\intd x = \iintrdrd \rho_1^0(x) \rho_2^0(y)\, \left[ K\left(T_1^s(x) - T_2^s(y)\right) - K(x-y) \right]\,\intd x \intd y \\
&= \iintrdrd \rho_1^0(x) \rho_2^0(y) \int_0^1 \frac{\intd }{\intd t}\, K\left(T_1^{st}(x) - T_2^{st}(y)\right) \,\intd t\,\intd x \intd y \\
& = s \iintrdrd \rho_1^0(x) \rho_2^0(y) \int_0^1 \nabla K\left(T_1^{st}(x) - T_2^{st}(y)\right)\cdot \left(T_1(x) - x + y - T_2(y)\right)\,\intd t \,\intd x \intd y
\end{align*}
To apply dominated convergence after dividing by $s$, observe that
\begin{align*}
|\nabla K\left(T_1^{st}(x) - T_2^{st}(y)\right)| \leq C_K |T_1^{st}(x) - T_2^{st}(y)| \leq C_K \left(|x-y| + |T_1(x) - x| + |T_2(y) - y|\right)
\end{align*}
for every $s$ and $t$, and that $\iint \rho_1^0 \rho_2^0\, |T_j - \mathrm{id}|^2 \leq \dst(\rrho^0, \rrho^1)^2 < +\infty$, and that the $\rho_j^0$ have bounded second moments. Dominated convergence and the continuity of $\nabla K$ thus yield
\begin{align*}
\ddszerop \intrd \rho_1^s K \ast \rho_2^s \,\intd x &= \iintrdrd \rho_1^0(x) \rho_2^0(y) \nabla K(x-y)\cdot \left(T_1(x) - x + y - T_2(y)\right)\,\intd x \intd y \\
&= \intrd \left[ \rho_1^0\, \nabla K \ast \rho_2^0 \cdot (T_1(x) - x) + \rho_2^0\, \nabla K \ast \rho_1^0 \cdot (T_2(x) - x) \right]\,\intd x.
\end{align*}
Inserting these derivatives into \eqref{eq:ab_tan_general} yields \eqref{eq:above_tangent}.
\end{proof}
\begin{corollary} \label{cor:l_min_rhobar}
With the global minimizer $\bar\rrho \in \sspc$ of $\me_\eps$ chosen in the construction of $\ml_\eps$, every $\rrho \in \sspc$ satisfies the inequality
\begin{equation} \label{eq:l_conv_estim}
\ml_\eps(\rrho) - \ml_\eps(\bar\rrho) \geq \frac{\lambda_\eps}{2}\dst(\rrho, \bar\rrho)^2.
\end{equation}
In particular, $\bar\rrho$ is the unique global minimizer of $\ml_\eps$ over $\sspc$.
\end{corollary}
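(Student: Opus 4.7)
The plan is to apply Lemma \ref{lem:above_tangent} with $\rrho^0 = \bar\rrho$ and $\rrho^1 = \rrho$, and then use the fact that $\bar\rrho$ satisfies the Euler-Lagrange system for $\me_\eps$ (Lemma \ref{lem:el_cineq}) to force both first-order terms on the right-hand side to vanish. The conceptual point is that the auxiliary potentials $V_j$ were chosen in \eqref{eq:V_def} precisely so that the Euler-Lagrange condition for $\me_\eps$ at $\bar\rrho$ reads identically as the Euler-Lagrange condition for $\ml_\eps$ at $\bar\rrho$; hence $\bar\rrho$ is a critical point of $\ml_\eps$, and $\lambda_\eps$-convexity of $\ml_\eps$ (Corollary \ref{cor:l_conv_min}) upgrades this to a global quadratic lower bound.

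To execute this, I first verify the regularity assumptions of Lemma \ref{lem:above_tangent}. By Lemma \ref{lem:minim_bdd}, $\bar\rrho$ is bounded and compactly supported, hence both components are absolutely continuous with respect to the Lebesgue measure. By Lemma \ref{lem:minim_fj_lip}, each $F_j'(\bar\rho_j)$ is globally Lipschitz on $\Rd$, so $\nabla F_j'(\bar\rho_j)$ exists a.e., is essentially bounded, and therefore lies in $L^2(\intd \bar\rho_j)$ since $\bar\rho_j$ has finite mass. Denoting by $T_j$ the optimal transport map from $\bar\rho_j$ to $\rho_j$, Lemma \ref{lem:above_tangent} yields
\begin{equation*}
\ml_\eps(\rrho) - \ml_\eps(\bar\rrho) \geq \sum_{j=1,2} \intrd \bar\rho_j\, \nabla\bigl[F_j'(\bar\rho_j) + \eps V_j + K \ast \bar\rho_{j'}\bigr]\cdot (T_j(x) - x)\,\intd x + \frac{\lambda_\eps}{2}\dst(\rrho, \bar\rrho)^2.
\end{equation*}

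Next, I will show that both integrals on the right vanish. Since $V_j = \partial_{r_j} h(\bar\rho_1, \bar\rho_2)$ by \eqref{eq:V_def}, the bracketed expression is exactly the left-hand side of the Euler-Lagrange inequality in Lemma \ref{lem:el_cineq}. On the open set $\{\bar\rho_j > 0\} = \{K \ast \bar\rho_{j'} < C_j\}$ (cf.\ Lemma \ref{lem:el_rhogr0}), this bracket equals the constant $C_j$ almost everywhere; because every summand is Lipschitz (by Lemma \ref{lem:minim_fj_lip}, Corollary \ref{cor:vj_semiconv}, and the $C^2$-regularity of $K$), equality holds in fact everywhere on this open set, so the gradient vanishes there. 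On the complement $\{\bar\rho_j = 0\}$ the prefactor $\bar\rho_j$ kills the integrand. In either case the integrand is zero a.e., and \eqref{eq:l_conv_estim} follows.

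Uniqueness of $\bar\rrho$ as global minimizer of $\ml_\eps$ on $\sspc$ is then immediate from $\lambda_\eps > 0$: if $\tilde\rrho \in \sspc$ also minimized $\ml_\eps$, then \eqref{eq:l_conv_estim} applied with $\rrho = \tilde\rrho$ forces $\dst(\tilde\rrho, \bar\rrho) = 0$. I do not anticipate any serious obstacle; the only delicate point is the passage from the a.e.\ Euler-Lagrange identity to pointwise vanishing of the gradient, which requires the Lipschitz regularity established in Lemma \ref{lem:minim_fj_lip} rather than merely the measurable identity.
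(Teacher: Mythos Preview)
Your proposal is correct and follows essentially the same approach as the paper: apply Lemma \ref{lem:above_tangent} with $\rrho^0=\bar\rrho$, verify its hypotheses via Lemma \ref{lem:minim_fj_lip}, and use the Euler-Lagrange identity from Lemma \ref{lem:el_cineq} together with $V_j=\partial_{r_j}h(\bar\rrho)$ to conclude that the first-order integrals vanish on $\{\bar\rho_j>0\}$. Your additional remark about Lipschitz regularity upgrading the a.e.\ identity to a pointwise one on the open positivity set is a helpful clarification but not a departure from the paper's argument.
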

\begin{proof}
Take any $\rrho \in \sspc$. We apply the previous lemma with $\rrho^0 = \bar\rrho$ and $\rrho^1 = \rrho$. Note that by Lemma \ref{lem:minim_fj_lip}, $F_j'(\bar\rho_j)$ is globally Lipschitz, so in particular $\nabla F_j'(\bar\rho_j) \in L^2(\intd \bar\rho_j)$, hence the previous lemma is applicable. By Lemma \ref{lem:el_cineq}, the gradient of $F_j'(\bar\rho_j) + \eps\partial_{r_j} h(\bar\rho_1, \bar\rho_2) + K \ast \bar\rho_{j'}$ vanishes on the set $\{\bar\rho_j > 0\}$, hence the integrals in \eqref{eq:above_tangent} vanish, yielding \eqref{eq:l_conv_estim}. The second claim follows immediately by non-negativity of the distance.
\end{proof}

In order to analyze the functionals $\ml_\eps$ and $\mn_\eps$ further, we derive the following representation for the difference $\ml_\eps(\rrho) - \ml_\eps(\bar\rrho)$:
\begin{lemma}
For any $\rrho \in \sspc$ which is absolutely continuous with respect to the Lebesgue measure, it holds
\begin{equation} \label{eq:ldiff_rep}
\ml_\eps(\rrho) - \ml_\eps(\bar\rrho) = \mi_F(\rrho) + \mi_K(\rrho) + \mk(\rrho)
\end{equation}
with the three functionals
\begin{align*}
&\mi_F(\rrho) := \intrd [\dst_{F_1}(\rho_1| \bar\rho_1) + \dst_{F_2}(\rho_2|\bar\rho_2) ] \,\intd x \\
&\mi_K(\rrho):= \intrd [(K \ast \bar\rho_2 - C_1)_+\, \rho_1 + (K \ast \bar\rho_1 - C_2)_+\, \rho_2] \,\intd x \\
&\mk(\rrho) := \intrd (\rho_1 - \bar\rho_1) K \ast (\rho_2 - \bar\rho_2) \,\intd x.
\end{align*}
\end{lemma}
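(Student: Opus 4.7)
The plan is to prove this by direct algebraic decomposition, exploiting the Euler–Lagrange equation for $\bar\rrho$ and the definition of the auxiliary potentials $V_j$.

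First I would split $\ml_\eps(\rrho)-\ml_\eps(\bar\rrho)$ according to its three structural parts: the self-diffusion integrands $F_j(\rho_j)-F_j(\bar\rho_j)$, the interaction difference $\rho_1 K*\rho_2 - \bar\rho_1 K*\bar\rho_2$, and the affine contribution $\eps\int(\rho_j-\bar\rho_j)V_j$. For the self-diffusion part I would use the very definition of the Bregman divergence,
\begin{equation*}
F_j(\rho_j)-F_j(\bar\rho_j)=\dst_{F_j}(\rho_j|\bar\rho_j)+F_j'(\bar\rho_j)(\rho_j-\bar\rho_j),
\end{equation*}
pulling out the piece that will become $\mi_F$ and leaving a linear remainder. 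For the interaction part I would employ the bilinear expansion
\begin{equation*}
\rho_1 K*\rho_2-\bar\rho_1 K*\bar\rho_2 = (\rho_1-\bar\rho_1)K*(\rho_2-\bar\rho_2)+(\rho_1-\bar\rho_1)K*\bar\rho_2+\bar\rho_1 K*(\rho_2-\bar\rho_2),
\end{equation*}
whose first integral is $\mk(\rrho)$; Fubini combined with the radial symmetry of $K$ lets me rewrite the last term as $\int(\rho_2-\bar\rho_2)K*\bar\rho_1\dn x$.

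Next I would collect, for each index $j=1,2$, all remaining linear contributions into a single expression $\int(\rho_j-\bar\rho_j)\bigl[F_j'(\bar\rho_j)+\eps V_j+K*\bar\rho_{j'}\bigr]\dn x$. Here is where the Euler–Lagrange identity \eqref{eq:euler-lagrange} enters: since $V_j=\partial_{r_j}h(\bar\rho_1,\bar\rho_2)$, it gives $F_j'(\bar\rho_j)+\eps V_j=(C_j-K*\bar\rho_{j'})_+$, and the elementary pointwise identity $(C-a)_++a=C+(a-C)_+$ turns the bracketed expression into $C_j+(K*\bar\rho_{j'}-C_j)_+$. The constant piece contributes $C_j\int(\rho_j-\bar\rho_j)\dn x=0$ because both $\rho_j$ and $\bar\rho_j$ are probability densities, and by the support characterization $\supp\,\bar\rho_j=\{K*\bar\rho_{j'}\le C_j\}$ from Theorem \ref{thm:euler-lagrange}, the factor $(K*\bar\rho_{j'}-C_j)_+$ vanishes $\bar\rho_j$-a.e.; hence only $\int\rho_j(K*\bar\rho_{j'}-C_j)_+\dn x$ survives, which is exactly the $j$-th summand of $\mi_K(\rrho)$.

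This computation is essentially a rearrangement, so there is no deep obstacle; the only care needed is to justify Fubini and the splitting of integrals. All of this is unproblematic once I note that $V_j$ is supported in the compact set $\supp\,\bar\rho_1\cap\supp\,\bar\rho_2$ (because $\partial_{r_j}h$ vanishes on $\partial\Rnn^2$ by Hypothesis \ref{hyp:general}), hence bounded, and that each individual term produced by the bilinear expansion of the interaction energy is separately integrable thanks to $\me_\eps(\rrho),\me_\eps(\bar\rrho)<+\infty$ together with the quadratic control $0\le K(z)\le \tfrac{C_K}{2}|z|^2$ and the uniform second-moment bounds afforded by Lemma \ref{lem:eeps_bounds}. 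Assembling the three identified pieces then yields exactly $\mi_F(\rrho)+\mi_K(\rrho)+\mk(\rrho)$.
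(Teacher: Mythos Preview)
Your proposal is correct and follows essentially the same route as the paper's own proof: both arguments use the Bregman identity to extract $\mi_F$, invoke the Euler--Lagrange relation $F_j'(\bar\rho_j)+\eps V_j=(C_j-K\ast\bar\rho_{j'})_+$ together with the pointwise identity $(C-a)_++a=C+(a-C)_+$, exploit unit mass and the support characterization $\bar\rho_j(K\ast\bar\rho_{j'}-C_j)_+=0$, and recombine the four cross terms of the interaction into $\mk(\rrho)$. The only difference is cosmetic ordering---the paper substitutes $\eps V_j$ via Euler--Lagrange first and then expands, whereas you expand first and collect the linear remainders afterward---and your added remarks on integrability are a welcome clarification that the paper leaves implicit.
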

\begin{proof}
 From the definition of $\ml_\eps$ and the fact that $\eps V_j = (C_j - K \ast \bar\rho_{j'})_+ - F_j'(\bar\rho_j)$ by the Euler-Lagrange system \eqref{eq:euler-lagrange}, we obtain
\begin{align} \nonumber
&\ml_\eps(\rrho) - \ml_\eps(\bar\rrho) \\ \nonumber
&= \intrd [F_1(\rho_1) - F_1(\bar\rho_1) + F_2(\rho_2) - F_2(\bar\rho_2) + (\rho_1 - \bar\rho_1) \eps V_1 + (\rho_2 - \bar\rho_2) \eps V_2 \\ \nonumber
&\quad + \rho_1 K \ast \rho_2 - \bar\rho_1 K \ast \bar\rho_2 ]\, \intd x \\ \nonumber
&= \intrd [\dst_{F_1}(\rho_1| \bar\rho_1) + \dst_{F_2}(\rho_2|\bar\rho_2) \\ \label{eq:ldiff_1}
&\quad  + (C_1 - K \ast \bar\rho_2)_+ (\rho_1 - \bar\rho_1) + (C_2 - K \ast \bar\rho_1)_+ (\rho_2 - \bar\rho_2) + \rho_1 K \ast \rho_2 - \bar\rho_1 K \ast \bar\rho_2] \, \intd x.
\end{align}
Using the identity $(C_1 - K \ast \bar\rho_2)_+ = (K \ast \bar\rho_2 - C_1)_+ + C_1 - K \ast \bar\rho_2$, together with the equality of mass of $\rho_1$ and $\bar\rho_1$, and the fact that $\bar\rho_1 \equiv 0$ in $\{K \ast \bar\rho_2 > C_1 \}$ by Lemma \ref{lem:el_rhogr0}, we have
\begin{align*}
\intrd (C_1 - K \ast \bar\rho_2)_+ (\rho_1 - \bar\rho_1) \, \intd x &= \intrd \left[(K \ast \bar\rho_2 - C_1)_+ (\rho_1 - \bar\rho_1) + (C_1 - K \ast \bar\rho_2) (\rho_1 - \bar\rho_1)  \right] \, \intd x \\
&= \intrd \left[ (K \ast \bar\rho_2 - C_1)_+ \rho_1 -\rho_1 K \ast \bar\rho_2 + \bar\rho_1 K \ast \bar\rho_2 \right]\,\intd x
\end{align*}
and similarly for $\intrd (C_2 - K \ast \bar\rho_1)_+ (\rho_2 - \bar\rho_2) \, \intd x$. Inserting into \eqref{eq:ldiff_1} yields
\begin{align*}
&\ml_\eps(\rrho) - \ml_\eps(\bar\rrho) \\ 
&= \intrd [\dst_{F_1}(\rho_1| \bar\rho_1) + \dst_{F_2}(\rho_2| \bar\rho_2) + (K \ast \bar\rho_2 - C_1)_+ \rho_1 + (K \ast \bar\rho_1 - C_2)_+ \rho_2 \\ \nonumber
&\quad + \rho_1 K \ast \rho_2 + \bar\rho_1 K \ast \bar\rho_2 - \rho_1 K \ast \bar\rho_2 - \rho_2 K \ast \bar\rho_1  ] \, \intd x \\
&=  \mi_F(\rrho) + \mi_K(\rrho) + \mk(\rrho),
\end{align*}
proving the claim.
\end{proof}

Note that $\mi_F(\rrho)$ is always non-negative by convexity of $F_1$ and $F_2$, and the same is true for $\mi_K(\rrho)$. However, if $K$ is not the quadratic potential $\frac{\lambda}{2} |\cdot|^2$, the term $\mk(\rrho)$ can be negative. However, there hold the following general estimates:
\begin{lemma}
Take any $\rrho \in \sspc$. There holds the pointwise estimate
\begin{equation} \label{eq:pw_estim_nablaK}
\left| \nabla \left[ K \ast (\bar\rho_j - \rho_j) \right]\right| \leq C_K\, \wass(\rho_j, \bar\rho_j)
\end{equation}
for $j = 1,2$
everywhere in $\Rd$. The functional $\mk$ from the previous lemma satisfies
\begin{equation} \label{eq:estim_intK}
\mk(\rrho) \geq -\frac{C_K}{2} \dst(\rrho, \bar\rrho)^2.
\end{equation}
Moreover, it holds
\begin{equation} \label{eq:I_estim_diffL}
\mi_F(\rrho) + \mi_K(\rrho) \leq \left(1 + \frac{C_K}{\lambda_\eps}\right) (\ml_\eps(\rrho) - \ml_\eps(\bar\rrho)).
\end{equation}
\end{lemma}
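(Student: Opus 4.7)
The lemma bundles three related facts; all three rely on the $C_K$-bound $\|\nabla^2 K\|\le C_K$, via Taylor-type expansions in the optimal transport maps $T_j$ pushing $\bar\rho_j$ to $\rho_j$ (which exist and are unique since $\bar\rho_j$ is absolutely continuous by Theorem \ref{thm:euler-lagrange} combined with Lemma \ref{lem:minim_fj_lip}).

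For the pointwise estimate \eqref{eq:pw_estim_nablaK}, I would rewrite
\begin{align*}
\nabla\bigl[K\ast(\bar\rho_j-\rho_j)\bigr](x)
=\intrd \nabla K(x-y)\,\bar\rho_j(y)\,\dd y-\intrd \nabla K(x-y)\,\rho_j(y)\,\dd y
=\intrd\bigl[\nabla K(x-y)-\nabla K(x-T_j(y))\bigr]\,\bar\rho_j(y)\,\dd y,
\end{align*}
using that $T_j\#\bar\rho_j=\rho_j$. Since $\nabla K$ is $C_K$-Lipschitz, the integrand is bounded by $C_K|T_j(y)-y|$, and Cauchy-Schwarz against $\bar\rho_j$ yields $C_K\,\wass(\bar\rho_j,\rho_j)$.

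For \eqref{eq:estim_intK}, the key trick is to rewrite $\mk(\rrho)$ as a double integral against $\bar\rho_1\otimes\bar\rho_2$ using both optimal maps $T_1,T_2$. Since $T_j\#\bar\rho_j=\rho_j$, one has $\int f\,(\rho_j-\bar\rho_j)\dd x=\int[f\circ T_j-f]\,\bar\rho_j\,\dd x$ for any bounded continuous $f$. Applying this twice gives
\begin{align*}
\mk(\rrho)=\iintrdrd\bigl[K(T_1(y)-T_2(w))-K(T_1(y)-w)-K(y-T_2(w))+K(y-w)\bigr]\bar\rho_1(y)\bar\rho_2(w)\,\dd y\,\dd w.
\end{align*}
The bracketed expression is the mixed second difference of $K$; by the fundamental theorem of calculus in two variables and $\|\nabla^2 K\|\le C_K$, its modulus is bounded by $C_K|T_1(y)-y|\cdot|T_2(w)-w|$. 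Integrating, Cauchy-Schwarz on each factor separately, and using $ab\le\tfrac12(a^2+b^2)$ delivers $|\mk(\rrho)|\le\tfrac{C_K}{2}\dst(\rrho,\bar\rrho)^2$, which gives \eqref{eq:estim_intK}. This Taylor-type bound is the main technical step.

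Finally, \eqref{eq:I_estim_diffL} is a short combination of the previous results with Corollary \ref{cor:l_min_rhobar}: rearranging the decomposition \eqref{eq:ldiff_rep} gives
\begin{align*}
\mi_F(\rrho)+\mi_K(\rrho)=\bigl(\ml_\eps(\rrho)-\ml_\eps(\bar\rrho)\bigr)-\mk(\rrho)\le\bigl(\ml_\eps(\rrho)-\ml_\eps(\bar\rrho)\bigr)+\frac{C_K}{2}\dst(\rrho,\bar\rrho)^2,
\end{align*}
and inserting the $\lambda_\eps$-convexity bound $\dst(\rrho,\bar\rrho)^2\le\tfrac{2}{\lambda_\eps}(\ml_\eps(\rrho)-\ml_\eps(\bar\rrho))$ from \eqref{eq:l_conv_estim} yields the stated estimate. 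The proof of \eqref{eq:estim_intK} for general $\rrho\in\sspc$ (not necessarily absolutely continuous) can then be obtained by the standard density/approximation argument, using weak lower semicontinuity of $\wass$; for the hypothesis of \eqref{eq:I_estim_diffL} absolute continuity is already assumed via the decomposition \eqref{eq:ldiff_rep}.
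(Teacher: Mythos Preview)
Your proof is correct. Parts \eqref{eq:pw_estim_nablaK} and \eqref{eq:I_estim_diffL} follow the paper verbatim (transport-map rewriting plus Jensen/Cauchy--Schwarz for the former; combine \eqref{eq:ldiff_rep}, \eqref{eq:estim_intK} and \eqref{eq:l_conv_estim} for the latter).

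For \eqref{eq:estim_intK} you take a genuinely different route. The paper uses only the map $T_1$: it writes
\[
\mk(\rrho)=\int_0^1\intrd \bar\rho_1(x)\,\nabla\bigl[K\ast(\rho_2-\bar\rho_2)\bigr](T_1^s(x))\cdot(T_1(x)-x)\,\dd x\,\dd s,
\]
then invokes the already-proven pointwise bound \eqref{eq:pw_estim_nablaK} on the gradient factor and applies Young's inequality. Your approach is symmetric: you push forward with \emph{both} maps to obtain a mixed second difference of $K$, bound it by $C_K\,|T_1(y)-y|\,|T_2(w)-w|$ via the Hessian bound, and then factorise. Your argument is slightly more direct and treats the two components on equal footing; the paper's has the structural advantage that \eqref{eq:estim_intK} is exhibited as an immediate corollary of \eqref{eq:pw_estim_nablaK}, so the three parts of the lemma build on one another. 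Either way the same sharp bound $|\mk(\rrho)|\le\tfrac{C_K}{2}\dst(\rrho,\bar\rrho)^2$ drops out.

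One small remark: the density/approximation afterthought is unnecessary. Since $\bar\rho_j$ is absolutely continuous, Brenier's theorem already furnishes the maps $T_j$ for arbitrary targets $\rho_j\in\mptrd$, so your computation goes through directly for any $\rrho\in\sspc$.
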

\begin{proof}
To prove \eqref{eq:pw_estim_nablaK}, let $T:\Rd \to \Rd$ be the optimal map transporting $\bar\rho_j$ to $\rho_j$, given by Brenier's Theorem. This allows us to write, fixing some $x \in \Rd$:
\begin{align*}
\nabla \left[K \ast (\bar\rho_j - \rho_j) \right](x) &= \intrd \nabla K(x - y) \left(\bar\rho_j(y) - \rho_j(y)\right) \,\intd y \\
&= \intrd \nabla K(x - y) \bar\rho_j(y)\,\intd y - \intrd \nabla K(x - y) (T \# \bar\rho_j)(y)\,\intd y \\
&= \intrd \left[ \nabla K(x-y) - \nabla K(x-T(y)) \right]\, \bar\rho_j(y) \,\intd y
\end{align*}
The global estimate $\| \nabla^2 K \| \leq C_K$ implies $|\nabla K(x-y) - \nabla K(x-T(y))| \leq C_K |y - T(y)|$ for every $y \in \Rd$. Together with Jensen's inequality, this yields
\begin{align*}
\left| \nabla \left[ K \ast (\bar\rho_j - \rho_j) \right](x)\right|^2 &= \left|\intrd \left[ \nabla K(x-y) - \nabla K(x-T(y)) \right]\, \bar\rho_j(y) \,\intd y \right|^2 \\
&\leq \intrd \left| \nabla K(x-y) - \nabla K(x-T(y)) \right|^2\, \bar\rho_j(y) \,\intd y \\
&\leq C_K^2 \intrd |y-T(y)|^2\bar\rho_j(y) \,\intd y = C_K^2\, \wass(\rho_j, \bar\rho_j)^2,
\end{align*}
proving the claimed estimate \eqref{eq:pw_estim_nablaK}. To prove  \eqref{eq:estim_intK}, let $T_1$ be an optimal transport map pushing $\bar\rho_1$ to $\rho_1$. For any $s \in [0,1]$, denote $T_s(x) := (1-s)x + sT_1(x)$. Using $\rho_1 = T_1 \# \bar\rho_1$ and Fubini, which is applicable by boundedness of the second moments of $\rho_j$ and $\bar\rho_j$, we obtain
\begin{align*}
\mk(\rrho) &= \iintrdrd (\rho_1(x) - \bar\rho_1(x))\,(\rho_2(y) - \bar\rho_2(y))\,K(x-y)\,\intd x \intd y \\
&= \iintrdrd \bar\rho_1(x)\, (\rho_2(y) - \bar\rho_2(y))\,[K(T_1(x) - y) - K(x-y)]\, \intd x \intd y \\
&= \iintrdrd \bar\rho_1(x)\, (\rho_2(y) - \bar\rho_2(y)) \left[ \int_0^1 \nabla K(T_s(x) - y) \cdot (T_1(x) - x) \, \intd s \right] \, \intd x \intd y \\
&= \int_0^1 \iintrdrd \bar\rho_1(x)\, (\rho_2(y) - \bar\rho_2(y))\, \nabla K(T_s(x) - y) \cdot (T_1(x) - x)\, \intd x \intd y\, \intd s \\
&= \int_0^1 \intrd \bar\rho_1(x) \, \nabla [K \ast (\rho_2 - \bar\rho_2)](T_s(x)) \cdot (T_1(x) - x)\, \intd x\,\intd s.
\end{align*}
This integral can be estimated with the help of the pointwise estimate \eqref{eq:pw_estim_nablaK} for $\rho_2$:
\begin{align*}
|\mk(\rrho)| &\leq \int_0^1 \intrd \bar\rho_1(x)\, \left|\nabla [K \ast (\rho_2 - \bar\rho_2)](T_s(x)) \cdot (T_1(x) - x)\right|\,\intd x\,\intd s \\
&\leq \frac{1}{2C_K} \int_0^1 \intrd \bar\rho_1(x)\, \left|\nabla [K \ast (\rho_2 - \bar\rho_2)](T_s(x))\right|^2\,\intd x\,\intd s \\
&\quad + \frac{C_K}{2} \int_0^1 \intrd \bar\rho_1(x) \, \left|T_1(x) - x\right|^2\,\intd x\,\intd s \\
&\leq \frac{1}{2C_K} C_K^2 \wass(\rho_2, \bar\rho_2)^2 + \frac{C_K}{2} \wass(\rho_1, \bar\rho_1)^2 = \frac{C_K}{2} \dst(\rrho, \bar\rrho)^2,
\end{align*}
implying in particular $\mk(\rrho) \geq -\frac{C_K}{2} \dst(\rrho, \bar\rrho)^2$, which is \eqref{eq:estim_intK}. 

Combining \eqref{eq:ldiff_rep} with the estimates \eqref{eq:estim_intK} and \eqref{eq:l_conv_estim} yields
\begin{align*}
\mi_F(\rrho) + \mi_K(\rrho) = \ml_\eps(\rrho) - \ml_\eps(\bar\rrho) - \mk(\rrho) \leq \left(1 + \frac{C_K}{\lambda_\eps}\right) (\ml_\eps(\rrho) - \ml_\eps(\bar\rrho)),
\end{align*}
proving \eqref{eq:I_estim_diffL}, thus concluding the proof.
\end{proof}
\subsection{Controlling the non-convex terms}

The next goal, which is a main step in our proof of exponential convergence, is to prove the following proposition that estimates the influence of the non-convex terms $\mn_\eps$.

\begin{proposition} \label{thm:key_estimate}
There are $\eps$-independent constants $c_1, c_2 > 0$ such that for $j = 1,2$ and every sufficiently small $\eps > 0$:
\begin{equation}
\begin{split} \label{eq:key_estimate}
&\intrd \left| \nabla \left[\partial_{r_j} h(\rrho) - V_j \right] \right|^2 \rho_j \, \intd x \leq c_1 (\ml_\eps(\rrho) - \ml_\eps(\bar\rrho)) \\
&+ c_2 \intrd \left[ \left| \nabla \left[ F_1'(\rho_1) + \eps V_1 + K \ast \rho_2 \right] \right|^2 \rho_1 + \left| \nabla \left[ F_2'(\rho_2) + \eps V_2 + K \ast \rho_1 \right] \right|^2 \rho_2 \right] \intd x
\end{split}
\end{equation}
for all $\rrho \in \sspc$ with $F_j'(\rho_j)$ locally Lipschitz.
\end{proposition}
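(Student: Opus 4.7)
The starting point is the chain-rule identity $\nabla\partial_{r_j}h(\rrho)=\sum_i\theta_{j,i}(\uu)\nabla u_i$ with $\uu=(F_1'(\rho_1),F_2'(\rho_2))$, and likewise $\nabla V_j=\nabla\theta_j(\bar\uu)=\sum_i\theta_{j,i}(\bar\uu)\nabla\bar u_i$ with $\bar\uu=(F_1'(\bar\rho_1),F_2'(\bar\rho_2))$. Subtracting and inserting the mixed term $\sum_i\theta_{j,i}(\uu)\nabla\bar u_i$ gives the additive decomposition
\[
\nabla[\partial_{r_j}h(\rrho)-V_j] \;=\; \underbrace{\sum_i[\theta_{j,i}(\uu)-\theta_{j,i}(\bar\uu)]\nabla F_i'(\bar\rho_i)}_{=:A_j} \;+\; \underbrace{\sum_i\theta_{j,i}(\uu)\nabla[F_i'(\rho_i)-F_i'(\bar\rho_i)]}_{=:B_j},
\]
and $(|A_j|+|B_j|)^2\le 2|A_j|^2+2|B_j|^2$ reduces the task to estimating $\int|A_j|^2\rho_j\,\dd x$ and $\int|B_j|^2\rho_j\,\dd x$ separately.

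For the \emph{coefficient-change} term $A_j$, Lemma~\ref{lem:minim_fj_lip} provides an $\eps$-independent constant $L$ with $\|\nabla F_i'(\bar\rho_i)\|_\infty\le L$, hence $|A_j|^2\le 2L^2\sum_i|\theta_{j,i}(\uu)-\theta_{j,i}(\bar\uu)|^2$. By Lemma~\ref{lem:minim_bdd} the values $\bar\rho_i$ lie in a uniform interval $[0,H_0]$, so the Bregman-type inequality \eqref{eq:thetabregmanbd} with $H=H_0$ is applicable. Using $\rho_j\le\max\{\rho_1,\rho_2\}$ and integrating then yields $\int|A_j|^2\rho_j\,\dd x\le C\,\mi_F(\rrho)$, and combining with \eqref{eq:I_estim_diffL} gives a bound of order $\ml_\eps(\rrho)-\ml_\eps(\bar\rrho)$.

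For the \emph{gradient-change} term $B_j$, the density-swap bound $|\theta_{j,i}(\uu)|^2\rho_j\le\kappa_{j,i}^2\rho_i$ from \eqref{eq:thetabdd_swap} converts the $\rho_j$-weight into a $\rho_i$-weight, reducing the task to estimating $\int|\nabla F_i'(\rho_i)-\nabla F_i'(\bar\rho_i)|^2\rho_i\,\dd x$. Writing $\nabla F_i'(\rho_i)=\nabla G_i-\eps\nabla V_i-\nabla K\ast\rho_{i'}$, where $G_i:=F_i'(\rho_i)+\eps V_i+K\ast\rho_{i'}$ is the integrand on the right-hand side of \eqref{eq:key_estimate}, and invoking the Euler-Lagrange system \eqref{eq:euler-lagrange} from Theorem~\ref{thm:euler-lagrange} to get $\nabla F_i'(\bar\rho_i)=-\eps\nabla V_i-\nabla K\ast\bar\rho_{i'}$ a.e.\ on $\{\bar\rho_i>0\}$ and $\nabla F_i'(\bar\rho_i)=0$ a.e.\ on $\{\bar\rho_i=0\}$, I obtain (noting also that $\nabla V_i=0$ a.e.\ on $\{\bar\rho_i=0\}$, since the boundary condition on $h$ forces $V_i\equiv 0$ there)
\[
\nabla F_i'(\rho_i)-\nabla F_i'(\bar\rho_i) \;=\; \nabla G_i \;-\; \nabla K\ast(\rho_{i'}-\bar\rho_{i'}) \;-\; (\nabla K\ast\bar\rho_{i'})\,\mathbf{1}_{\{\bar\rho_i=0\}}.
\]

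Squaring and integrating against $\rho_i\,\dd x$ yields three contributions. The $|\nabla G_i|^2$ term is exactly the RHS of \eqref{eq:key_estimate}. The convolution-difference term is bounded pointwise by \eqref{eq:pw_estim_nablaK} as $|\nabla K\ast(\rho_{i'}-\bar\rho_{i'})|^2\le C_K^2\wass(\rho_{i'},\bar\rho_{i'})^2\le C_K^2\dst(\rrho,\bar\rrho)^2$, which by Corollary~\ref{cor:l_min_rhobar} is bounded by $(2C_K^2/\lambda_\eps)(\ml_\eps(\rrho)-\ml_\eps(\bar\rrho))$. The remaining residual $\int_{\{\bar\rho_i=0\}}|\nabla K\ast\bar\rho_{i'}|^2\rho_i\,\dd x$ is the main obstacle; I control it by deriving the pointwise inequality $|\nabla K(z)|^2\le 2C_KK(z)$ from the Hessian bound, $K(0)=\nabla K(0)=0$, and $K\ge 0$ (one-step Taylor in the steepest-descent direction $z-\nabla K(z)/C_K$), which via Cauchy-Schwarz gives $|\nabla K\ast\bar\rho_{i'}|^2\le 2C_K\,K\ast\bar\rho_{i'}$. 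On $\{\bar\rho_i=0\}$ Theorem~\ref{thm:euler-lagrange} gives $K\ast\bar\rho_{i'}\ge C_i$, and the splitting $K\ast\bar\rho_{i'}=(K\ast\bar\rho_{i'}-C_i)_++C_i$ sends the first piece into $\mi_K(\rrho)$, bounded via \eqref{eq:I_estim_diffL}. The residual $C_i\int_{\{\bar\rho_i=0\}}\rho_i\,\dd x$ is handled by combining the quadratic lower bound $(K\ast\bar\rho_{i'}-C_i)_+\ge(\lambda/2)\,\mathrm{dist}(\cdot,\supp\bar\rho_i)^2$ (a direct consequence of the $\lambda$-convexity of $K\ast\bar\rho_{i'}$ and the convexity of $\supp\bar\rho_i=\{K\ast\bar\rho_{i'}\le C_i\}$) with a Chebyshev argument that controls the far-field mass by $\mi_K$ and a Bregman-type bound near the boundary via $\mi_F$, the latter using that $D_{F_i}(\rho_i|\bar\rho_i)=F_i(\rho_i)$ on $\{\bar\rho_i=0\}$ together with the super-linear growth of $F_i$. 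Adding all contributions over $j=1,2$ and choosing $c_1,c_2$ appropriately yields \eqref{eq:key_estimate} with $\eps$-independent constants, the hardest step being the linear-in-$(\ml_\eps-\ml_\eps(\bar\rrho))$ control of this residual term.
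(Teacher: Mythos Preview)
Your decomposition into the coefficient-change term $A_j$ and the gradient-change term $B_j$ is natural, and your treatment of $A_j$ via the Bregman bound \eqref{eq:thetabregmanbd} is essentially the paper's estimate of the terms $B_{4l}$. The $\nabla G_i$ and $\nabla K\ast(\rho_{i'}-\bar\rho_{i'})$ pieces of $B_j$ are also fine and match the paper's $B_{1l}$ and $B_{2l}$. The genuine gap is in the residual: after the density-swap you are left with $C_i\int_{\{\bar\rho_i=0\}}\rho_i\,\dd x$, and this quantity is \emph{not} linearly controlled by $\ml_\eps(\rrho)-\ml_\eps(\bar\rrho)$. A scaling counterexample (smooth it slightly to meet the Lipschitz hypothesis): keep $\rho_{i'}=\bar\rho_{i'}$, write $\supp\bar\rho_i=B_R$, and set $\rho_i=(1-\delta^2)\bar\rho_i+\tfrac{\delta^2}{|A_\delta|}\mathbf 1_{A_\delta}$ with $A_\delta=B_{R+\delta}\setminus B_R$. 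Then $\int_{\{\bar\rho_i=0\}}\rho_i=\delta^2$, whereas $\mi_K(\rrho)\sim\delta^3$ (since $(K\ast\bar\rho_{i'}-C_i)_+\lesssim\delta$ on $A_\delta$), $\mi_F(\rrho)\sim\delta^4+\delta\cdot F_i(\delta)\lesssim\delta^3$ by \eqref{eq:fj_power}, and $\mk(\rrho)=0$; hence $(\ml_\eps(\rrho)-\ml_\eps(\bar\rrho))/\int_{\{\bar\rho_i=0\}}\rho_i\sim\delta\to0$. Your Chebyshev/super-linear sketch cannot close this: the super-linear growth of $F_i$ only controls $\rho_i$ by $F_i(\rho_i)$ when the density is large, while here the density in $A_\delta$ is $\sim\delta\to 0$.

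The cure, and what the paper does, is to refrain from applying the density-swap $\theta_{j,i}^2(\uu)\rho_j\le\kappa_{j,i}^2\rho_i$ before treating this residual. Keeping the factor $\theta_{j,i}(\uu)$ multiplicatively, the problematic term reads $\int_{\{\bar\rho_i=0\}}\theta_{j,i}^2(\uu)\,|\nabla(K\ast\bar\rho_{i'})|^2\,\rho_j\,\dd x$. On $\{\bar\rho_i=0\}$ one has $\bar u_i=F_i'(\bar\rho_i)=0$, so $\theta_{j,i}(\bar\uu)=0$ by \eqref{eq:thetabdd_swap}, and therefore $\theta_{j,i}^2(\uu)=|\theta_{j,i}(\uu)-\theta_{j,i}(\bar\uu)|^2$. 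Now the same Bregman inequality \eqref{eq:thetabregmanbd} that you used for $A_j$ converts $\rho_j\,|\theta_{j,i}(\uu)-\theta_{j,i}(\bar\uu)|^2$ into $\dst_{F_1}(\rho_1|\bar\rho_1)+\dst_{F_2}(\rho_2|\bar\rho_2)$, giving a clean bound by $\mi_F(\rrho)$ once $|\nabla(K\ast\bar\rho_{i'})|^2$ is controlled by an $\eps$-uniform constant on the relevant set (the paper writes $|\nabla(K\ast\bar\rho_{i'})|^2\le\tfrac{2C_K^2}{\lambda}[(K\ast\bar\rho_{i'}-C_i)_+ + c_0]$ with $c_0=C_i-\min K\ast\bar\rho_{i'}$ bounded via Lemma~\ref{lem:minim_bdd}, and sends the first piece into $\mi_K$). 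In short: the $\theta$-factor is the handle that turns the residual into a Bregman term, and it is lost exactly at the point where you apply the swap.
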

\begin{proof}
Fix any pair $\rrho \in \sspc$ as above, and write $\uu = (U_1, U_2) := (F_1'(\rho_1), F_2'(\rho_2))$, and similarly $\bar\uu := (F_1'(\bar\rho_1), F_2'(\bar\rho_2))$. Note that $\partial_{r_j} h(\rrho) = \theta_j(\uu)$ inherits local Lipschitz-continuity from $F_j'(\rho_j)$, since $\theta_j$ is Lipschitz on $\Rnn^2$ by \eqref{eq:thetabdd_swap}. Recall also that $F_j'(\bar\rho_j)$ is Lipschitz on $\Rd$ by Lemma \ref{lem:minim_fj_lip}. Taking the gradient with respect to $x \in \Rd$ is thus well-defined almost everywhere for all functions considered. We have
\begin{equation*}
\nabla \theta_j(\uu) = \sum_{i = 1,2} \theta_{j,i}(\uu)\, \nabla U_i = \sum_{i = 1,2} \theta_{j,i}(\uu)\, \nabla F_i'(\rho_i).
\end{equation*}
Hence, we can rewrite the left-hand side of \eqref{eq:key_estimate}:
\begin{align}
\nonumber
&\intrd \left| \nabla \left[\partial_{r_j} h(\rrho) - V_j \right] \right|^2 \rho_j \, \intd x = \intrd \left| \nabla \left[ \theta_j(\uu) - \theta_j(\bar\uu) \right] \right|^2 \rho_j \, \intd x \\ \label{eq:int_innersum}
&= \intrd \left| \sum_{i = 1,2} \left[ \theta_{j,i}(\uu)\, \nabla F_i'(\rho_i) - \theta_{j,i}(\bar\uu)\, \nabla F_i'(\bar\rho_i) \right] \right|^2 \rho_j \, \intd x.
\end{align}
We split this sum into eight parts as follows:
\begin{align*}
&\sum_{i = 1,2} \left[ \theta_{j,i}(\uu)\, \nabla F_i'(\rho_i) - \theta_{j,i}(\bar\uu)\, \nabla F_i'(\bar\rho_i) \right] \\
&= \theta_{j,1}(\uu)\, \nabla \left[ F_1'(\rho_1)  + \eps V_1 + K \ast \rho_2 \right] + \theta_{j,2}(\uu)\,\nabla \left[ F_2'(\rho_2) + \eps V_2 + K \ast \rho_1 \right] \\
&\quad + \theta_{j,1}(\uu) \, \nabla \left[K \ast (\bar\rho_2 - \rho_2) \right] + \theta_{j,2}(\uu) \, \nabla \left[K \ast (\bar\rho_1 - \rho_1) \right] \\
&\quad - \theta_{j,1}(\uu)\, \nabla \left[ F_1'(\bar\rho_1)  + \eps V_1 + K \ast \bar\rho_2 \right] - \theta_{j,2}(\uu)\, \nabla \left[ F_2'(\bar\rho_2) + \eps V_2 + K \ast \bar\rho_1 \right] \\
&\quad + \left( \theta_{j,1}(\uu) - \theta_{j,1}(\bar\uu) \right)\,\nabla F_1'(\bar\rho_1) + \left( \theta_{j,2}(\uu) - \theta_{j,2}(\bar\uu) \right)\,\nabla F_2'(\bar\rho_2) \\
&=: B_{11} + B_{12} + B_{21} + B_{22} + B_{31} + B_{32} + B_{41} + B_{42}
\end{align*}
Inserting into \eqref{eq:int_innersum}, we obtain
\begin{equation} \label{eq:int_outersum}
\intrd \left| \nabla \left[\partial_{r_j} h(\rrho) - V_j \right] \right|^2 \rho_j \, \intd x = \intrd \left| \sum_{k = 1}^4 \sum_{l = 1}^2 B_{kl} \right|^2 \rho_j \, \intd x
\leq 8 \sum_{k = 1}^4 \sum_{l = 1}^2 \intrd |B_{kl}|^2 \rho_j \, \intd x.
\end{equation}
We shall prove the claim by showing that each of the eight summands in \eqref{eq:int_outersum} can be controlled by the right-hand side of \eqref{eq:key_estimate} separately. Explicitly, we need estimates of the form
\begin{equation*}
\intrd |B_{kl}|^2 \rho_j\, \intd x \leq c_{1} (\ml_\eps(\rrho) - \ml_\eps(\bar\rrho)) + c_{2} A(\rrho)
\end{equation*}
with constants $c_1, c_2$, where
\begin{equation*}
A(\rrho) := \intrd \left[ \left| \nabla \left[ F_1'(\rho_1) + \eps V_1 + K \ast \rho_2 \right] \right|^2 \rho_1 + \left| \nabla \left[ F_2'(\rho_2) + \eps V_2 + K \ast \rho_1 \right] \right|^2 \rho_2 \right]\, \intd x.
\end{equation*}

We may assume $j=1$, the other case is analogous. First, we observe that the function $\theta^2_{1,1}(\uu)$ is globally bounded on $\Rd$ by $\kappa^2_{1,1}$ according to \eqref{eq:thetabdd_swap}, and thus
\begin{equation*}
\intrd |B_{11}|^2 \rho_1\, \intd x \leq \kappa_{1,1}^2 \intrd \left| \nabla \left[ F_1'(\rho_1)  + \eps V_1 + K \ast \rho_2 \right] \right|^2 \rho_1 \, \intd x \leq \kappa_{1,1}^2 A(\rrho).
\end{equation*}
In order to get a similar estimate for the $B_{12}$-term, we use again \eqref{eq:thetabdd_swap} to see that the inequality $\theta_{1,2}^2(\uu)\rho_1 \leq \kappa_{1,2}^2 \rho_2$ holds in $\Rd$, hence
\begin{equation*}
\intrd |B_{12}|^2 \rho_1\, \intd x \leq \kappa_{1,2}^2 \intrd \left| \nabla \left[ F_2'(\rho_2)  + \eps V_2 + K \ast \rho_1 \right] \right|^2 \rho_2 \, \intd x \leq \kappa_{1,2}^2 A(\rrho).
\end{equation*}
For the $B_{21}$- and $B_{22}$-terms, we use again \eqref{eq:thetabdd_swap} and the estimates \eqref{eq:pw_estim_nablaK} and \eqref{eq:l_conv_estim} to obtain
\begin{align*}
\intrd |B_{21}|^2 \rho_1\, \intd x &= \intrd |\theta_{1,1}(\uu) |^2 |\nabla \left[K \ast (\bar\rho_2 - \rho_2) \right]|^2 \rho_1 \, \intd x  \\
&\leq \kappa_{1,1}^2 C_K^2 \wass(\rho_2, \bar\rho_2)^2 \leq \frac{2\kappa_{1,1}^2 C_K^2}{\lambda_\eps} (\ml_\eps(\rrho) - \ml_\eps(\bar\rrho)).
\end{align*}
The term $\intrd |B_{22}|^2 \rho_1\, \intd x$ is estimated analogously, using the property $|\theta_{1,2}(\uu)|^2\rho_1 \leq \kappa_{1,2}^2 \rho_2$.

For $B_{31}$, we use that $F_1'(\bar\rho_1)  + \eps V_1 = ( C_1- K \ast \bar\rho_2)_+$ by the Euler-Lagrange equation, yielding
\begin{align*}
|B_{31}|^2 &= \left| \theta_{1,1}(\uu)\, \nabla \left[ F_1'(\bar\rho_1)  + \eps V_1 + K \ast \bar\rho_2 \right] \right|^2 = \theta_{1,1}^2(\uu)\, \left| \nabla \left[ ( C_1- K \ast \bar\rho_2)_+ + K \ast \bar\rho_2 \right] \right|^2 \\
&= \theta_{1,1}^2(\uu) \, | \nabla [(K \ast \bar\rho_2 - C_1)_+]|^2 = \theta_{1,1}^2(\uu) \, | \nabla [K \ast \bar\rho_2 - C_1]|^2 \, \mathds{1}_{\{K \ast \bar\rho_2 > C_1\}}.
\end{align*}
The function $K \ast \bar\rho_2 - C_1$ inside the gradient is $\lambda$-convex with its second derivative bounded by $C_K$. With $x_0 := \argmin_{x \in \Rd} [K \ast \bar\rho_2(x) - C_1]$ and $c_0 :=   C_1 - K \ast \bar\rho_2(x_0)$, it thus holds
\begin{equation*}
|\nabla [K \ast \bar\rho_2(x) - C_1]|^2 \leq C_K^2 |x-x_0|^2 \leq \frac{2C_K^2}{\lambda} (K \ast \bar\rho_2(x) - C_1 + c_0).
\end{equation*}
Inserting above and using $(K \ast \bar\rho_2 - C_1)\,\mathds{1}_{\{K \ast \bar\rho_2 > C_1\}} = (K \ast \bar\rho_2 - C_1)_+$ yields
\begin{equation} \label{eq:b31_rep}
|B_{31}|^2 \leq \theta_{1,1}^2(\uu)\, \frac{2C_K^2}{\lambda} (K \ast \bar\rho_2 - C_1)_+ + \theta_{1,1}^2(\uu)\, \frac{2C_K^2 c_0}{\lambda}\, \mathds{1}_{\{K \ast \bar\rho_2 > C_1\}}
\end{equation}
Integrated against $\rho_1$, the first term is easily controlled by $\mi_K(\rrho)$:
\begin{equation} \label{eq:first_leq_ikr}
\intrd \theta_{1,1}^2(\uu)\, \frac{2C_K^2}{\lambda} (K \ast \bar\rho_2 - C_1)_+\, \rho_1\,\intd x \leq \frac{2C_K^2\kappa_{1,1}^2}{\lambda} \mi_K(\rrho).
\end{equation}
In the second term, note that the constant $c_0$ is non-negative, as otherwise $C_1 - K \ast \bar\rho_2$ would be negative everywhere in $\Rd$, contradicting $\{\bar\rho_1 > 0 \} = \{C_1 - K \ast \bar\rho_2 > 0 \}$ from Lemma \ref{lem:el_rhogr0}. Using the fact that on $\{K \ast \bar\rho_2 > C_1\}$, it is $\bar\rho_1 = 0$ and thus $\theta_{1,1}(\bar\uu)= 0$ by \eqref{eq:thetabdd_swap}, and inequality \eqref{eq:thetabregmanbd} together with $\bar\rho_1, \bar\rho_2 \leq H_0$ from Lemma \ref{lem:minim_bdd}, yields
\begin{equation} \label{eq:sec_leq_ifr}
\begin{split}
&\intrd \theta^2_{1,1}(\uu)\, \frac{2C_K^2 c_0}{\lambda}\, \mathds{1}_{\{K \ast \bar\rho_2 > C_1\}}\, \rho_1\,\intd x = \frac{2C_K^2 c_0}{\lambda} \int_{\{K \ast \bar\rho_2 > C_1\}} |\theta_{1,1}(\uu) - \theta_{1,1}(\bar\uu)|^2\,\rho_1 \,\intd x \\
&\leq \frac{2C_K^2 c_0 \beta_{H_0}}{\lambda} \int_{\{K \ast \bar\rho_2 > C_1\}} [\dst_{F_1}(\rho_1| \bar\rho_1) + \dst_{F_2}(\rho_2|\bar\rho_2) ] \,\intd x \leq \frac{2C_K^2 c_0 \beta_{H_0}}{\lambda}\, \mi_F(\rrho).
\end{split}
\end{equation}
The last inequality follows from the non-negativity of $\dst_{F_1}$ and $\dst_{F_2}$.
Combining the estimates \eqref{eq:first_leq_ikr} and \eqref{eq:sec_leq_ifr} yields with \eqref{eq:b31_rep}:
\begin{equation*}
\intrd |B_{31}|^2 \rho_1\, \intd x \leq \frac{2C_K^2}{\lambda} \max\{\kappa_{1,1}^2, c_0 \beta_{H_0}\}\, \left(\mi_K(\rrho)  + \mi_F(\rrho)\right).
\end{equation*}
With \eqref{eq:I_estim_diffL}, this implies that up to a constant factor, $\intrd |B_{31}|^2 \rho_1\, \intd x$ can be bounded from above by $\ml_\eps(\rrho) - \ml_\eps(\bar\rrho)$, which is the claimed estimate. Note that by Lemma \ref{lem:minim_bdd}, the constant $c_0 \leq C_1$ is $\eps$-uniformly bounded for small enough $\eps$.

For the $B_{32}$-term, we have the estimate
\begin{equation*}
|B_{32}|^2 \leq \theta_{1,2}^2(\uu)\, \frac{2C_K^2}{\lambda} (K \ast \bar\rho_1 - C_2)_+ + \theta_{1,2}^2(\uu)\, \frac{2C_K^2 \tilde{c}_0}{\lambda}\, \mathds{1}_{\{K \ast \bar\rho_1 > C_2\}}
\end{equation*}
with $\tilde c_0 = \max_{x} \left[ C_2 - K \ast \bar\rho_1(x) \right] > 0$, which is derived analogously to \eqref{eq:b31_rep}. Integrated against $\rho_1$, the two terms are now estimated like above, using $\theta^2_{1,2}(\uu) \rho_1 \leq \rho_2$ from \eqref{eq:thetabdd_swap}, and $|\theta_{1,2}(\uu) - \theta_{1,2}(\bar\uu)|^2\,\rho_1 \leq \beta_{H_0}[\dst_{F_1}(\rho_1| \bar\rho_1) + \dst_{F_2}(\rho_2|\bar\rho_2)]$ from \eqref{eq:thetabregmanbd}.

The last two terms $B_{4l}$ for $l = 1,2$ can be estimated using the Lipschitz-continuity of $F_l'(\bar\rho_l)$ from Lemma \ref{lem:minim_fj_lip}, implying $\left|\nabla F_l'(\bar\rho_l)\right|^2 \leq L_0^2$ with an $\eps$-independent constant $L_0$. Together with \eqref{eq:thetabregmanbd}, we obtain
\begin{align*}
\intrd |B_{4l}|^2 \rho_1 \, \intd x &= \intrd \left|\theta_{1,l}(\uu) - \theta_{1,l}(\bar\uu)\right|^2 \left|\nabla F_l'(\bar\rho_l)\right|^2\,\rho_1 \, \intd x \\
&\leq L_0^2\beta_{H_0} \intrd [\dst_{F_1}(\rho_1| \bar\rho_1) + \dst_{F_2}(\rho_2|\bar\rho_2) ] \,\intd x = L_0^2\beta_{H_0} \,\mi_F(\rrho).
\end{align*}
Together with \eqref{eq:I_estim_diffL} and the fact that $\mi_K(\rrho)$ is non-negative, this yields an estimate of the desired form, finishing the proof.
\end{proof}
In addition to Proposition \ref{thm:key_estimate}, which formally controls the slope of $\mn_\eps$ by the one of $\ml_\eps$, we prove the following lemma, which estimates the non-convex functional $\mn_\eps$ itself by $\ml_\eps$.
\begin{lemma} \label{lem:neps_estimate_leps}
There is an $\eps$-independent constant $C_\mn > 0$ such that for all sufficiently small $\eps > 0$ and all $\rrho \in \mptrd$, it holds
\begin{align*}
\left| \mn_\eps(\rrho) - \mn_\eps(\bar\rrho)\right| &\leq C_\mn\, \left( \ml_\eps(\rrho) - \ml_\eps(\bar\rrho)\right) \\
(1-\eps C_\mn)\left( \ml_\eps(\rrho) - \ml_\eps(\bar\rrho)\right) \leq \me_\eps(\rrho) &- \me_\eps(\bar\rrho) \leq (1 + \eps C_\mn) \left( \ml_\eps(\rrho) - \ml_\eps(\bar\rrho)\right).
\end{align*}
\end{lemma}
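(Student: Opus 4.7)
The plan is to extract a pointwise Bregman-type bound on $h$ from a symmetric convexity property of the hypotheses. For every $x \in \Rd$, introduce the second-order Taylor remainder of $h$ at $\bar\rrho(x)$,
\[
D_h(\rrho(x)|\bar\rrho(x)) := h(\rho_1,\rho_2) - h(\bar\rho_1,\bar\rho_2) - V_1\,(\rho_1-\bar\rho_1) - V_2\,(\rho_2-\bar\rho_2),
\]
so that, by the defining identities \eqref{eq:V_def}, $\mn_\eps(\rrho) - \mn_\eps(\bar\rrho) = \intrd D_h(\rrho|\bar\rrho)\,\intd x$. The goal then reduces to controlling $D_h$ pointwise in terms of the Bregman divergences $\dst_{F_1}(\rho_1|\bar\rho_1)$ and $\dst_{F_2}(\rho_2|\bar\rho_2)$.

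The key observation is that the Hessian bound \eqref{eq:bddswap_h_r} is invariant under the sign flip $h \mapsto -h$, since $|\partial_{r_i}\partial_{r_j}(-h)| = |\partial_{r_i}\partial_{r_j} h|$. Therefore the convexity argument in Proposition \ref{prop:asmpt_corollaries} applies verbatim to both $\pm h$: after possibly shrinking $\eps_0$, \emph{both}
\[
F_1(r_1) + F_2(r_2) + 2\eps_0\,h(\rr) \quad\text{and}\quad F_1(r_1) + F_2(r_2) - 2\eps_0\,h(\rr)
\]
are convex on $\Rnn^2$. The non-negativity of their Bregman divergences, evaluated at the pair $(\rrho(x),\bar\rrho(x))$, produces the twin inequalities $\dst_{F_1}(\rho_1|\bar\rho_1) + \dst_{F_2}(\rho_2|\bar\rho_2) \pm 2\eps_0 D_h(\rrho|\bar\rrho) \geq 0$, i.e. the pointwise sandwich estimate
\[
\bigl|D_h(\rrho|\bar\rrho)\bigr| \leq \frac{1}{2\eps_0}\bigl[\dst_{F_1}(\rho_1|\bar\rho_1) + \dst_{F_2}(\rho_2|\bar\rho_2)\bigr].
\]

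Integrating this bound over $\Rd$ gives $|\mn_\eps(\rrho) - \mn_\eps(\bar\rrho)| \leq \frac{1}{2\eps_0}\,\mi_F(\rrho)$. Combined with \eqref{eq:I_estim_diffL} together with the non-negativity of $\mi_K(\rrho)$, this yields $|\mn_\eps(\rrho) - \mn_\eps(\bar\rrho)| \leq \frac{1}{2\eps_0}(1+C_K/\lambda_\eps)(\ml_\eps(\rrho) - \ml_\eps(\bar\rrho))$. Choosing $\eps$ small enough that $\lambda_\eps \geq \lambda/2$ gives the first claim with the $\eps$-independent constant $C_\mn := (1 + 2C_K/\lambda)/(2\eps_0)$. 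The second claim is immediate from the splitting $\me_\eps - \me_\eps(\bar\rrho) = (\ml_\eps - \ml_\eps(\bar\rrho)) + \eps(\mn_\eps - \mn_\eps(\bar\rrho))$ together with the first claim and the fact that $\ml_\eps(\rrho) - \ml_\eps(\bar\rrho) \geq 0$ by Corollary \ref{cor:l_min_rhobar}.

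The main technical point, and the only mild obstacle, is realising that the Hessian bound \eqref{eq:bddswap_h_r} is insensitive to the sign of $h$, which is precisely what allows the convexity machinery of Proposition \ref{prop:asmpt_corollaries} to be applied twice and thereby sandwich $D_h$ from both sides; once this is recognised, the remainder is a short integration combined with already-established estimates.
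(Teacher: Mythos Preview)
Your proof is correct and follows essentially the same strategy as the paper: both arguments bound the second-order Taylor remainder $D_h(\rrho|\bar\rrho)$ pointwise by the Bregman divergences $\dst_{F_1}+\dst_{F_2}$, then integrate and invoke \eqref{eq:I_estim_diffL}. The paper obtains this pointwise bound by writing out the integral-remainder form of Taylor's theorem for $h$ and estimating the Hessian quadratic form entry by entry via \eqref{eq:bddswap_h_r}, arriving at the constant $\kappa=\max_i\{\kappa_{1,i}+\kappa_{2,i}\}$. Your route is a slightly slicker repackaging of the same idea: you observe that the Hessian bound \eqref{eq:bddswap_h_r} is sign-symmetric in $h$, so Proposition~\ref{prop:asmpt_corollaries} also yields convexity of $F_1+F_2-2\eps_0 h$, and the two non-negativity statements for the corresponding Bregman divergences give the sandwich directly. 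Since the proof of Proposition~\ref{prop:asmpt_corollaries} uses only $|\partial_{r_i}\partial_{r_j}h|$, the same $\eps_0$ works for $-h$ without shrinking, and one has $\tfrac{1}{2\eps_0}\ge\kappa$, so your constant is no better but the argument avoids redoing the explicit Taylor computation.
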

\begin{proof}
We may assume that both components of $\rrho$ are absolutely continuous with respect to the Lebesgue measure. From the definition of $\mn_\eps$ and the potentials $V_j$, it follows
\begin{equation} \label{eq:ndiff_rep}
\mn_\eps(\rrho) - \mn_\eps(\bar\rrho) = \intrd \left[h(\rrho) - h(\bar\rrho) - \partial_{r_1}h(\bar\rrho)(\rho_1 - \bar\rho_1) - \partial_{r_2}h(\bar\rrho)(\rho_2 - \bar\rho_2)\right]\,\intd x.
\end{equation}
We shall derive a pointwise estimate for this integrand by means of $\dst_{F_1}$ and $\dst_{F_2}$. With $\rr_s = (r_{1,s}, r_{2,s}) = (\rr + s(\bar\rr - \rr))$ and $\uu_s = (F_1'(r_{1,s}), F_2'(r_{2,s}))$, it holds for all $\rr, \bar\rr \in \Rnn^2$ by Taylor's formula for $h$:
\begin{equation} \label{eq:taylor_h}
\begin{split}
&h(\rr) - h(\bar\rr) - \nabla_\rr h(\bar\rr)\cdot(\rr - \bar\rr) = \int_0^1(\bar\rr - \rr)\cdot \nabla_\rr^2 h(\rr_s)(\bar\rr - \rr)\, s\,\intd s \\
&= \int_0^1 (\bar\rr - \rr)\cdot\begin{pmatrix}
\theta_{1,1}(\uu_s) & \theta_{1,2}(\uu_s) \\ \theta_{2,1}(\uu_s) & \theta_{2,2}(\uu_s)
\end{pmatrix} \begin{pmatrix}
F_1''(r_{1,s}) & 0 \\ 0 & F_2''(r_{2,s})
\end{pmatrix}(\bar\rr - \rr)\,s\,\intd s.
\end{split}
\end{equation}
With assumption \eqref{eq:thetabdd_swap} and using that $\theta_{1,2}(\uu_s)F_2''(r_{2,s}) = \partial_{r_1}\partial_{r_2}h(\rr_s) = \theta_{2,1}(\uu_s)F_1''(r_{1,s})$ and the non-negativity of $F_j''$, we can estimate
\begin{align*}
&\left|(\bar\rr - \rr)\cdot\begin{pmatrix}
\theta_{1,1}(\uu_s) & \theta_{1,2}(\uu_s) \\ \theta_{2,1}(\uu_s) & \theta_{2,2}(\uu_s)
\end{pmatrix} \begin{pmatrix}
F_1''(r_{1,s}) & 0 \\ 0 & F_2''(r_{2,s})
\end{pmatrix}(\bar\rr - \rr) \right| \\
&\leq \sum_{j=1,2} \left|\theta_{j,j}(\uu_s)\right|(\bar r_j - r_j)^2F_j''(r_{j,s}) + 2\left|\theta_{2,1}(\uu_s)(\bar r_1 - r_1)(\bar r_2 - r_2) \right| F_1''(r_{1,s}) \\
&\leq (\kappa_{1,1} + \kappa_{2,1})(\bar r_1 - r_1)^2 F_1''(r_{1,s}) + (\kappa_{1,2} + \kappa_{2,2})(\bar r_2 - r_2)^2 F_2''(r_{2,s}) \\ &\leq \kappa\, (\bar\rr - \rr)\cdot \begin{pmatrix}
F_1''(r_{1,s}) & 0 \\ 0 & F_2''(r_{2,s})
\end{pmatrix}(\bar\rr - \rr)
\end{align*}
with the constant $\kappa = \max_{i=1,2}\{\kappa_{1,i} + \kappa_{2,i}\}$, at every $s \in [0,1]$. Integrated from 0 to 1, this implies with \eqref{eq:taylor_h} and the same Taylor formula argument as above
\begin{align*}
\left| h(\rr) - h(\bar\rr) - \nabla_\rr h(\bar\rr)\cdot(\rr - \bar\rr) \right| \leq \kappa \left( \dst_{F_1}(r_1|\bar r_1) + \dst_{F_2}(r_2|\bar r_2) \right)
\end{align*}
for every $\rr, \bar\rr \in \Rnn^2$. Hence with \eqref{eq:ndiff_rep} and \eqref{eq:I_estim_diffL}, we obtain
\begin{align*}
\left|\mn_\eps(\rrho) - \mn_\eps(\bar\rrho)\right| \leq \kappa\, \mi_F(\rrho) \leq \kappa \left(1 + \frac{C_K}{\lambda_\eps}\right) (\ml_\eps(\rrho) - \ml_\eps(\bar\rrho)) \leq \kappa \left(1 + \frac{2C_K}{\lambda}\right) (\ml_\eps(\rrho) - \ml_\eps(\bar\rrho)),
\end{align*}
assuming that $\eps$ is small enough such that $\lambda_\eps \geq \frac{\lambda}{2}$.
This proves the first claimed inequality. The second claim follows directly from the first by the identity $\me_\eps = \ml_\eps + \eps \mn_\eps$. 
\end{proof}

Next, we show that in the inequality $\eqref{eq:key_estimate}$, the term $c_1 (\ml_\eps(\rrho) - \ml_\eps(\bar\rrho))$ can be controlled by the other term on the right hand side and thus can be omitted.
\begin{corollary} \label{cor:estim_slopes}
Fix $\bar\eps$ small enough such that $\lambda_\eps = \lambda - K_0 \eps \geq \frac{\lambda}{2}$ for all $\eps \in (0, \bar\eps]$. Then
there exists an $\eps$-independent constant $C$ such that for all $\eps \in (0, \bar\eps]$ and all $\rrho \in \sspc$ with $\nabla F_j'(\rho_j) \in L^2(\intd \rho_j)$, it holds
\begin{equation*}
\begin{split}
&\intrd \left[ \left| \nabla \left[\partial_{r_1} h(\rrho) - V_1 \right] \right|^2 \,\rho_1 + \left| \nabla \left[\partial_{r_2} h(\rrho) - V_2 \right] \right|^2\,\rho_2 \right] \, \intd x \\
&\leq C \intrd \left[ \left| \nabla \left[ F_1'(\rho_1) + \eps V_1 + K \ast \rho_2 \right] \right|^2 \,\rho_1 + \left| \nabla \left[ F_2'(\rho_2) + \eps V_2 + K \ast \rho_1 \right] \right|^2 \,\rho_2  \right]\, \intd x.
\end{split}
\end{equation*}
\end{corollary}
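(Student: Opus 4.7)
The plan is to show that the extra term $c_1(\ml_\eps(\rrho) - \ml_\eps(\bar\rrho))$ appearing on the right-hand side of Proposition \ref{thm:key_estimate} is itself controlled by $A(\rrho)$, where
\[
A(\rrho) := \intrd \left[ \left| \nabla [ F_1'(\rho_1) + \eps V_1 + K \ast \rho_2 ] \right|^2 \rho_1 + \left| \nabla [ F_2'(\rho_2) + \eps V_2 + K \ast \rho_1 ] \right|^2 \rho_2 \right] \intd x.
\]
The quantity $A(\rrho)^{1/2}$ is the natural Wasserstein slope of $\ml_\eps$, and since $\ml_\eps$ is $\lambda_\eps$-geodesically convex with unique global minimizer $\bar\rrho$ by Corollary \ref{cor:l_conv_min}, one expects a Polyak--Łojasiewicz type inequality $\ml_\eps(\rrho) - \ml_\eps(\bar\rrho) \leq A(\rrho)/(2\lambda_\eps)$.

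To prove this PL inequality concretely, I would apply the above-tangent formula of Lemma \ref{lem:above_tangent} with the roles reversed, i.e.\ with $\rrho^0 = \rrho$ and $\rrho^1 = \bar\rrho$. The hypotheses of that lemma are satisfied under the assumptions of the corollary: $\rrho$ has absolutely continuous components (since $\nabla F_j'(\rho_j) \in L^2(\intd\rho_j)$ presupposes a density), and $\bar\rrho$ is admissible as a target by Theorem \ref{thm:euler-lagrange}. Letting $T_j$ be the optimal map from $\rho_j$ to $\bar\rho_j$, Lemma \ref{lem:above_tangent} yields
\[
\ml_\eps(\rrho) - \ml_\eps(\bar\rrho) \leq -\sum_{j=1,2} \intrd \rho_j\, \nabla[F_j'(\rho_j) + \eps V_j + K\ast\rho_{j'}]\cdot (T_j(x) - x)\,\intd x - \frac{\lambda_\eps}{2}\dst(\rrho,\bar\rrho)^2.
\]

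Next I would apply Cauchy--Schwarz componentwise, using $\intrd \rho_j |T_j(x)-x|^2\,\intd x = \wass(\rho_j,\bar\rho_j)^2$, and then a second Cauchy--Schwarz across $j$:
\[
\left| \sum_{j} \intrd \rho_j \nabla[\ldots_j] \cdot (T_j - \mathrm{id})\,\intd x \right| \leq A(\rrho)^{1/2}\, \dst(\rrho,\bar\rrho).
\]
Substituting into the previous display and maximizing the right-hand side over $\dst(\rrho,\bar\rrho) \geq 0$ (the maximum of $s \mapsto A^{1/2} s - \tfrac{\lambda_\eps}{2} s^2$ is $A/(2\lambda_\eps)$) gives the PL inequality
\[
\ml_\eps(\rrho) - \ml_\eps(\bar\rrho) \leq \frac{A(\rrho)}{2\lambda_\eps}.
\]

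Finally, summing the estimate of Proposition \ref{thm:key_estimate} over $j = 1,2$ and inserting the PL inequality yields
\[
\text{LHS of corollary} \leq 2c_1 (\ml_\eps(\rrho) - \ml_\eps(\bar\rrho)) + 2c_2 A(\rrho) \leq \Bigl(\frac{c_1}{\lambda_\eps} + 2c_2\Bigr) A(\rrho),
\]
and since $\lambda_\eps \geq \lambda/2$ by assumption on $\bar\eps$, the constant $C := 2c_1/\lambda + 2c_2$ is $\eps$-independent, which is the claim. The only nontrivial point is justifying the applicability of Lemma \ref{lem:above_tangent} in this direction, but the regularity assumption $\nabla F_j'(\rho_j) \in L^2(\intd\rho_j)$ built into the corollary's statement is precisely what is required.
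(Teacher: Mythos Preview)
Your proposal is correct and follows essentially the same route as the paper: the paper also applies Lemma \ref{lem:above_tangent} with $\rrho^0=\rrho$, $\rrho^1=\bar\rrho$ to derive the Polyak--\L ojasiewicz inequality $\ml_\eps(\rrho)-\ml_\eps(\bar\rrho)\le A(\rrho)/(2\lambda_\eps)$ (stated there as \eqref{eq:geoconv_estim}), and then combines it with Proposition \ref{thm:key_estimate} to obtain $C=2(c_1/\lambda+c_2)$. The only cosmetic difference is that the paper reaches the PL inequality via Young's inequality with weights $\tfrac{1}{2\lambda_\eps}$ and $\tfrac{\lambda_\eps}{2}$ rather than your Cauchy--Schwarz plus quadratic maximization, which of course are equivalent.
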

\begin{remark}
Formally, this is the inequality
\begin{equation*}
\intrd \left[\left|\nabla \frac{\delta \mn_\eps}{\delta \rho_1} \right|^2\rho_1 + \left|\nabla\frac{\delta \mn_\eps}{\delta \rho_2}\right|^2\rho_2\right]\,\intd x \leq C \intrd \left[\left|\nabla \frac{\delta \ml_\eps}{\delta \rho_1} \right|^2\rho_1 + \left|\nabla\frac{\delta \ml_\eps}{\delta \rho_2}\right|^2\rho_2\right]\,\intd x
\end{equation*}
for all $\rrho \in \sspc$ as above, which is expected to imply exponential convergence to equilibrium as demonstrated heuristically in the introduction.
\end{remark}
\begin{proof}
We use Lemma \ref{lem:above_tangent} with $\rrho^0 = \rrho$ and $\rrho^1 = \bar\rrho$. Using that by definition of $T_j$, it holds $\wass(\rho_j^1, \rho_j^0)^2 = \intrd \rho_j^0\, |T_j(x) - x|^2\,\intd x$, we estimate the integrals on the right-hand side:
\begin{align*}
&\intrd \rho_1^0\, \nabla\left[F_1'(\rho_1^0) + \eps V_1 + K \ast \rho_2^0\right] \cdot (T_1(x) - x) \,\intd x \\ &\geq - \frac{1}{2\lambda_\eps} \intrd \rho_1^0\, \left|  \nabla\left[F_1'(\rho_1^0) + \eps V_1 + K \ast \rho_2^0\right] \right|^2\,\intd x - \frac{\lambda_\eps}{2} \wass(\rho_j^1, \rho_j^0)^2
\end{align*}
Inserting into \eqref{eq:above_tangent} and multiplying by $-1$ yields
\begin{equation} \label{eq:geoconv_estim}
\begin{split}
&\ml_\eps(\rrho) - \ml_\eps(\bar\rrho) \\
&\leq \frac{1}{2\lambda_\eps} \intrd \left[ \rho_1\,\left| \nabla \left[ F_1'(\rho_1) + \eps V_1 + K \ast \rho_2 \right] \right|^2  +\rho_2\, \left| \nabla \left[ F_2'(\rho_2) + \eps V_2 + K \ast \rho_1 \right] \right|^2  \right]\, \intd x.
\end{split}
\end{equation}
We set $C := 2\left(\frac{c_1}{\lambda} + c_2\right)$ with $c_1$ and $c_2$ from \eqref{eq:key_estimate}. Observing that by assumption $\frac{1}{2\lambda_\eps} \leq \frac{1}{\lambda}$, the claim now follows from Proposition \ref{thm:key_estimate}.
\end{proof}

As a consequence of this estimate, we prove uniqueness of the minimizer of $\me_\eps$ for every sufficiently small $\eps$, as claimed in Theorem \ref{thm:main}.

\begin{corollary} \label{cor:min_unique}
Assuming that $\eps > 0$ is sufficiently small, the global minimizer $\bar\rrho \in \sspc$ of the functional $\me_\eps$ is unique.
\end{corollary}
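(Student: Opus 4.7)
The plan is to derive uniqueness as a direct consequence of Lemma \ref{lem:neps_estimate_leps} together with the strict convexity estimate of Corollary \ref{cor:l_min_rhobar}, exploiting the fact that the auxiliary functional $\ml_\eps$ was built from an a priori chosen minimizer $\bar\rrho$ but is $\lambda_\eps$-convex on all of $\sspc$.

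Concretely, fix $\bar\rrho \in \sspc$ a global minimizer of $\me_\eps$ obtained in Corollary \ref{cor:ex_min}, and let $\ml_\eps$, $\mn_\eps$, $V_1$, $V_2$ be constructed from this $\bar\rrho$ as in \eqref{eq:L_def}--\eqref{eq:V_def}. Suppose $\tilde\rrho \in \sspc$ is another global minimizer of $\me_\eps$, so that $\me_\eps(\tilde\rrho) = \me_\eps(\bar\rrho)$. Applying the lower bound in Lemma \ref{lem:neps_estimate_leps} to $\rrho = \tilde\rrho$ yields
\begin{equation*}
    (1 - \eps C_\mn)\bigl(\ml_\eps(\tilde\rrho) - \ml_\eps(\bar\rrho)\bigr) \leq \me_\eps(\tilde\rrho) - \me_\eps(\bar\rrho) = 0.
\end{equation*}
Choosing $\eps < 1/C_\mn$ (which is a legitimate smallness condition since $C_\mn$ is $\eps$-independent), the factor $1 - \eps C_\mn$ is strictly positive, and we conclude $\ml_\eps(\tilde\rrho) \leq \ml_\eps(\bar\rrho)$.

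On the other hand, by Corollary \ref{cor:l_min_rhobar}, $\bar\rrho$ is the unique global minimizer of $\ml_\eps$ over $\sspc$, and it holds
\begin{equation*}
    \ml_\eps(\tilde\rrho) - \ml_\eps(\bar\rrho) \geq \frac{\lambda_\eps}{2}\,\dst(\tilde\rrho, \bar\rrho)^2.
\end{equation*}
Combining the two inequalities forces $\dst(\tilde\rrho, \bar\rrho) = 0$, hence $\tilde\rrho = \bar\rrho$. The only subtlety worth highlighting is conceptual rather than technical: the inequality from Lemma \ref{lem:neps_estimate_leps} uses the particular $\bar\rrho$ that entered the construction of $V_1, V_2$, so the argument proves that any candidate second minimizer $\tilde\rrho$ must coincide with this chosen $\bar\rrho$; since the argument applies to \emph{any} initial choice of minimizer, this rules out multiplicity altogether. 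No appeal to the long-time behaviour of the flow, nor to the refined slope estimate of Corollary \ref{cor:estim_slopes}, is needed for the uniqueness statement itself.
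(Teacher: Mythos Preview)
Your proof is correct and is genuinely simpler than the paper's argument. Both proofs reduce uniqueness to the statement $\ml_\eps(\tilde\rrho) \leq \ml_\eps(\bar\rrho)$ and then invoke Corollary~\ref{cor:l_min_rhobar}, but they reach that inequality differently. The paper works at the level of \emph{slopes}: it applies the Euler--Lagrange system for the second minimizer $\bar\eeta$ and the key slope inequality of Corollary~\ref{cor:estim_slopes} to force the integral $\sum_j\int|\nabla\md_j(\bar\eeta)|^2\bar\eta_j\,\intd x$ to vanish, and then uses the above-tangent estimate \eqref{eq:geoconv_estim} to conclude $\ml_\eps(\bar\eeta)\le\ml_\eps(\bar\rrho)$. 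You instead work at the level of \emph{energies}, applying the sandwich inequality of Lemma~\ref{lem:neps_estimate_leps} directly to $\me_\eps(\tilde\rrho)=\me_\eps(\bar\rrho)$; this bypasses both the Euler--Lagrange system for $\tilde\rrho$ and Corollary~\ref{cor:estim_slopes} entirely.

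What your route buys is economy: uniqueness follows from the $\eps$-smallness condition $\eps<1/C_\mn$ alone, without invoking the refined constant $C$ from the slope estimate. What the paper's route buys is a slightly stronger statement in spirit: since it uses only the Euler--Lagrange system of $\bar\eeta$ (not its minimality directly), the same argument would rule out any sufficiently regular \emph{critical point} of $\me_\eps$ distinct from $\bar\rrho$, not merely a second global minimizer. For the corollary as stated, however, your argument is complete and preferable.
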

\begin{proof}
Fix some $\eps \leq \bar\eps$ such that $\sqrt C \eps < 1$, with $\bar\eps, C$ being the constants from the previous corollary. Fix a global minimizer $\bar\rrho$ of $\me_\eps$ and denote $V_j = \partial_{r_j} h(\bar\rrho)$, as above. Now suppose $\bar\eeta = (\bar\eta_1, \bar\eta_2) \in \sspc$ is another global minimizer. Applying Theorem \ref{thm:euler-lagrange} for $\bar\eeta$, we get the Euler-Lagrange system \eqref{eq:euler-lagrange}, which we rewrite as
\begin{align*}
 - \eps \partial_{r_1}h(\bar\eeta) &= F_1'(\bar\eta_1) - (C_{1,\eta} - K \ast \bar\eta_2)_+ \\
 - \eps \partial_{r_2}h(\bar\eeta) &= F_2'(\bar\eta_2) - (C_{2,\eta} - K \ast \bar\eta_1)_+
\end{align*}
with constants $C_{j,\eta}$. Applying Corollary \ref{cor:estim_slopes} with $\rrho = \bar\eeta$, which is possible as $\nabla F_j'(\bar\eta_j)$ is globally bounded in $\Rd$ by Lemma \ref{lem:minim_fj_lip}, thus yields
\begin{align*}
&C \intrd \left[ \left| \nabla \left[ F_1'(\bar\eta_1) + \eps V_1 + K \ast \bar\eta_2 \right] \right|^2\, \bar\eta_1 + \left| \nabla \left[ F_2'(\bar\eta_2) + \eps V_2 + K \ast \bar\eta_1 \right] \right|^2\, \bar\eta_2 \right] \intd x \\
&\geq \intrd \left| \nabla \left[V_1 - \partial_{r_1} h(\bar\eeta)  \right] \right|^2\, \bar\eta_1 \, \intd x + \intrd \left| \nabla \left[V_2 - \partial_{r_2} h(\bar\eeta) \right] \right|^2\, \bar\eta_2 \, \intd x \\
&= \frac{1}{\eps^2} \left[ \intrd \left| \nabla \left[\eps V_1 - \eps \partial_{r_1} h(\bar\eeta)\right] \right|^2\, \bar\eta_1 \, \intd x + \intrd \left| \nabla \left[\eps V_2 - \eps\partial_{r_2} h(\bar\eeta) \right] \right|^2\, \bar\eta_2 \, \intd x \right] \\
&\geq C \left[ \intrd \left| \nabla \left[F_1'(\bar\eta_1) + \eps V_1 - (C_{1,\eta} - K \ast \bar\eta_2)_+ \right] \right|^2\, \bar\eta_1 \, \intd x \right. \\ &\quad + \left. \intrd \left| \nabla \left[F_2'(\bar\eta_2) + \eps V_2 - (C_{2,\eta} - K \ast \bar\eta_1)_+ \right] \right|^2\, \bar\eta_2 \, \intd x \right] \\
&= C \intrd \left[ \left| \nabla \left[ F_1'(\bar\eta_1) + \eps V_1 + K \ast \bar\eta_2 \right] \right|^2\, \bar\eta_1 + \left| \nabla \left[ F_2'(\bar\eta_2) + \eps V_2 + K \ast \bar\eta_1 \right] \right|^2\, \bar\eta_2 \right] \intd x,
\end{align*}
implying equality in every step.
The last equality follows from the fact that $C_{1,\eta} > K \ast \bar\eta_2$ on $\{\bar\eta_1 > 0\}$ and $C_{2,\eta} > K \ast \bar\eta_1$ on $\{\bar\eta_2 > 0\}$. Since $C < \frac{1}{\eps^2}$ by assumption, equality in the second-to-last step means that the sum of the integral terms must vanish, and thus every expression above is equal to 0. In particular,
\begin{equation*}
\intrd \left[ \left| \nabla \left[ F_1'(\bar\eta_1) + \eps V_1 + K \ast \bar\eta_2 \right] \right|^2\, \bar\eta_1 + \left| \nabla \left[ F_2'(\bar\eta_2) + \eps V_2 + K \ast \bar\eta_1 \right] \right|^2\, \bar\eta_2 \right] \intd x = 0.
\end{equation*}
Together with \eqref{eq:geoconv_estim}, this implies $\ml_\eps(\bar\eeta) - \ml_\eps(\bar\rrho) \leq 0$. Since $\bar\rrho$ is the unique global minimizer of $\ml_\eps$ by Corollary \ref{cor:l_min_rhobar}, this yields $\bar\eeta = \bar\rrho$.
\end{proof}
This uniqueness result allows us to prove additional properties of the minimizer $\bar\rrho \in \sspc$. In particular, we obtain the following symmetry result:
\begin{lemma} \label{lem:min_uniq}
Under the same assumptions on $\eps$ as in the previous corollary, both components $\bar\rho_j$ of the unique global minimizer $\bar\rrho \in \sspc$ are radially symmetric around their center of mass $\mom_1[\bar\rho_j] = 0$.
\end{lemma}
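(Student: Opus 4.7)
The plan is to exploit the rotational invariance of the energy $\me_\eps$ together with the uniqueness of the minimizer established in Corollary \ref{cor:min_unique}. First I would verify that for any orthogonal matrix $O \in O(d)$, the action $\rrho \mapsto \rrho \circ O := (\bar\rho_1(O\,\cdot), \bar\rho_2(O\,\cdot))$ leaves $\me_\eps$ invariant. Indeed, the diffusion terms $\int F_j(\rho_j)$ and the coupling term $\int h(\rho_1,\rho_2)$ are invariant under any measure-preserving change of variables. For the interaction term, the change of variables $x' = Ox$, $y' = Oy$ together with the radial symmetry of $K$ (hypothesis: $K(O^{-1}z)=K(z)$) yields
\begin{equation*}
\iintrdrd \bar\rho_1(Ox)\,\bar\rho_2(Oy)\,K(x-y)\,\intd x\intd y = \iintrdrd \bar\rho_1(x')\,\bar\rho_2(y')\,K(x'-y')\,\intd x'\intd y'.
\end{equation*}

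Next I would check that $\bar\rrho \circ O \in \sspc$. A direct substitution shows $\mom_1[\bar\rho_j(O\,\cdot)] = O^{-1}\mom_1[\bar\rho_j]$, hence the combined center of mass of $\bar\rrho \circ O$ equals $O^{-1}(\mom_1[\bar\rho_1] + \mom_1[\bar\rho_2]) = 0$. Since $\me_\eps(\bar\rrho \circ O) = \me_\eps(\bar\rrho)$, the pair $\bar\rrho \circ O$ is also a global minimizer of $\me_\eps$ on $\sspc$. By the uniqueness statement of Corollary \ref{cor:min_unique} (valid under the assumed smallness of $\eps$), this forces $\bar\rho_j(Ox) = \bar\rho_j(x)$ a.e.\ in $\Rd$ for every $O \in O(d)$ and $j=1,2$.

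The rotational symmetry $\bar\rho_j \circ O = \bar\rho_j$ for all $O \in O(d)$ immediately gives radial symmetry of each $\bar\rho_j$ about the origin. In particular, the first moment satisfies $\mom_1[\bar\rho_j] = O^{-1}\mom_1[\bar\rho_j]$ for every orthogonal $O$, which forces $\mom_1[\bar\rho_j] = 0$ for $j = 1,2$. This completes the proof. There is no essential obstacle: the argument is a standard invariance-plus-uniqueness symmetrization, and all analytic ingredients (invariance of each piece of the energy under joint rotations, and uniqueness of the minimizer) have already been established in the preceding sections.
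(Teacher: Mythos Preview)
Your proof is correct and follows essentially the same approach as the paper's: exploit rotational invariance of $\me_\eps$ (via the radial symmetry of $K$ and the invariance of the local terms under measure-preserving substitutions), note that the rotated pair stays in $\sspc$, and invoke the uniqueness from Corollary~\ref{cor:min_unique} to conclude $\bar\rho_j(O\,\cdot)=\bar\rho_j$ for all $O\in O(d)$. Your version is slightly more explicit in checking $\bar\rrho\circ O\in\sspc$ and in deducing $\mom_1[\bar\rho_j]=0$, but the argument is identical in substance.
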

\begin{proof}
The claim will follow from rotational invariance of the energy $\me_\eps$. Let $R: \Rd \to \Rd$ be any orthogonal transformation. Define the pair of probability densities $\rrho^R := (\rho_1^R, \rho_2^R)$ by $\rho_j^R(x) := \bar\rho_j(Rx)$. It holds $\rrho^R \in \sspc$, and
\begin{align*}
\me_\eps(\rrho^R) &= \iintrdrd (\mf_\eps(\bar\rho_1(Rx), \bar\rho_2(Ry)) + \bar\rho_1(Rx)\bar\rho_2(Ry) K(x-y) )\, \intd x \intd y \\
&= \iintrdrd \left(\mf_\eps(\bar\rho_1(x'), \bar\rho_2(y')) + \bar\rho_1(x')\bar\rho_2(y') K(R^Tx' - R^Ty') \right)\, \intd x' \intd y' \\
&= \me_\eps(\bar\rrho),
\end{align*}
where we have substituted $(x', y') = (Rx, Ry)$. The last equality follows from the fact that $K(R^Tx' - R^Ty') = K(x' - y')$ by radial symmetry of the interaction potential. By uniqueness of the minimizer $\bar\rrho$, this implies $\rrho^R = \bar\rrho$. Since $R$ was arbitrary, this yields radial symmetry of $\bar\rho_j$.
\end{proof}

\subsection{Exponential decay}

We shall now use the estimates from Lemma \ref{lem:above_tangent} and Corollary \ref{cor:estim_slopes} to prove a decay estimate for the functional $\ml_\eps$ in the minimizing movement scheme for $\me_\eps$, as analyzed in section \ref{sec:min_move_scheme}.
\begin{lemma} \label{lem:decay_estim_step}
Let $\hat\rrho \in \mptrd^2$ be a pair such that $\me_\eps(\hat\rrho) < +\infty$, and let $\rrho^+$ be a global minimizer of $\eepstaurhat{\cdot}$. Then with the constant $C$ from Corollary \ref{cor:estim_slopes}, there holds
\begin{equation} \label{eq:decay_estim_step}
\ml_\eps(\hat\rrho) - \ml_\eps(\rrho^+) \geq 2 \lambda_\eps \tau \left( 1- \sqrt C \eps\right)\, \left( \ml_\eps(\rrho^+) - \ml_\eps(\bar\rrho)\right).
\end{equation}
\end{lemma}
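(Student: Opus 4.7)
The plan is to combine the above-tangent formula from Lemma \ref{lem:above_tangent} with the Euler-Lagrange system \eqref{eq:el_map_yosida} for the minimizer $\rrho^+$, then use Corollary \ref{cor:estim_slopes} to control the cross terms produced by the splitting $\me_\eps = \ml_\eps + \eps\mn_\eps$.

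I first treat the case when $\hat\rrho$ satisfies the stronger assumptions of Lemma \ref{lem:el_yosida}, so that the Euler-Lagrange equations are available in the form
\[T_j(x)-x = \tau\,\nabla\bigl[F_j'(\rho_j^+) + \eps\partial_{r_j}h(\rho_1^+,\rho_2^+) + K\ast\rho_{j'}^+\bigr],\]
where $T_j\#\rho_j^+ = \hat\rho_j$. Denoting for brevity
\[A_j := \nabla\bigl[F_j'(\rho_j^+)+\eps V_j + K\ast\rho_{j'}^+\bigr], \qquad B_j := \nabla\bigl[\partial_{r_j}h(\rho_1^+,\rho_2^+) - V_j\bigr],\]
one has $T_j(x)-x = \tau(A_j + \eps B_j)$. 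Applying Lemma \ref{lem:above_tangent} with $\rrho^0 = \rrho^+$ and $\rrho^1=\hat\rrho$ (whose hypotheses are met thanks to the local Lipschitz regularity of $F_j'(\rho_j^+)$ from Lemma \ref{lem:min_lip} and the $L^2(\intd\rho_j^+)$-bound on $\nabla F_j'(\rho_j^+)$ from Lemma \ref{lem:h1est_bigrho_2}), and dropping the nonnegative quadratic distance term on the right-hand side, gives
\[\ml_\eps(\hat\rrho) - \ml_\eps(\rrho^+) \geq \tau\sum_{j=1,2}\intrd\rho_j^+\bigl(|A_j|^2 + \eps\,A_j\cdot B_j\bigr)\,\intd x.\]

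Next I estimate the cross term by Cauchy-Schwarz followed by Corollary \ref{cor:estim_slopes}:
\[\Bigl|\sum_j\intrd\rho_j^+\,A_j\cdot B_j\,\intd x\Bigr| \leq \Bigl(\sum_j\intrd\rho_j^+|A_j|^2\,\intd x\Bigr)^{1/2}\Bigl(\sum_j\intrd\rho_j^+|B_j|^2\,\intd x\Bigr)^{1/2} \leq \sqrt{C}\sum_j\intrd\rho_j^+|A_j|^2\,\intd x.\]
Therefore
\[\ml_\eps(\hat\rrho) - \ml_\eps(\rrho^+) \geq \tau(1-\sqrt{C}\eps)\sum_{j=1,2}\intrd\rho_j^+|A_j|^2\,\intd x.\]
Finally, inequality \eqref{eq:geoconv_estim} established in the proof of Corollary \ref{cor:estim_slopes} reads
\[\sum_{j=1,2}\intrd\rho_j^+|A_j|^2\,\intd x \geq 2\lambda_\eps\bigl(\ml_\eps(\rrho^+)-\ml_\eps(\bar\rrho)\bigr),\]
and combining these two bounds yields \eqref{eq:decay_estim_step} under the restrictive assumptions on $\hat\rrho$.

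To remove those assumptions, I will invoke the approximation scheme from the proof of Theorem \ref{thm:discrete_error}: given a general $\hat\rrho$ with $\me_\eps(\hat\rrho)<+\infty$, take the sequence $\hat\rrho_n$ and corresponding minimizers $\rrho_n^+$ constructed there, for which the desired inequality has already been verified. Passing to the limit $n\to\infty$ uses $\me_\eps(\hat\rrho_n)\to\me_\eps(\hat\rrho)$, the weak-$L^1$ convergence $\rrho_n^+\weakto\rrho^+$, and weak lower semicontinuity of $\me_\eps$ (hence of $\ml_\eps$, modulo the $\eps\mn_\eps$ part, which is continuous under the available convergences by Lemma \ref{lem:neps_estimate_leps}). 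The main technical obstacle is precisely this last passage to the limit: the left-hand side involves a difference of energies at two different measures, so controlling $\limsup_n\ml_\eps(\rrho_n^+)\ge \ml_\eps(\rrho^+)$ requires lower semicontinuity and the convergence $\ml_\eps(\hat\rrho_n)\to\ml_\eps(\hat\rrho)$, both of which follow from the regularity estimates already established in Section \ref{sec:min_move_scheme} combined with Lemma \ref{lem:neps_estimate_leps}. Everything else is a routine composition of the earlier results.
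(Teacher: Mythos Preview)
Your proof is essentially the same as the paper's. The only cosmetic difference in the main computation is that you use Cauchy--Schwarz on the cross term $\sum_j\int\rho_j^+ A_j\cdot B_j$, whereas the paper uses Young's inequality $A_j\cdot B_j \ge -\tfrac{\sqrt C}{2}|A_j|^2 - \tfrac{1}{2\sqrt C}|B_j|^2$ termwise before invoking Corollary~\ref{cor:estim_slopes}; both routes produce the identical factor $(1-\sqrt C\eps)$.

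One small point about the approximation step: your detour through Lemma~\ref{lem:neps_estimate_leps} to argue that $\mn_\eps$ is ``continuous under the available convergences'' is unnecessary and not really what that lemma says (it only compares $\mn_\eps(\rrho)$ to $\mn_\eps(\bar\rrho)$ via $\ml_\eps$, which would be circular here). The paper instead observes directly that $\ml_\eps$ itself is lower semicontinuous under narrow convergence: the $F_j$-terms and the interaction term are lsc by Lemma~\ref{lem:eeps_lsc}, and the potential terms $\int\rho_j V_j$ are continuous because the $V_j$ are bounded and continuous (Corollary~\ref{cor:vj_semiconv}). Rewriting \eqref{eq:decay_estim_step} as
\[
\ml_\eps(\hat\rrho_n) \ge \bigl(1 + 2\lambda_\eps\tau(1-\sqrt C\eps)\bigr)\,\ml_\eps(\rrho_n^+) - 2\lambda_\eps\tau(1-\sqrt C\eps)\,\ml_\eps(\bar\rrho),
\]
one then passes to the limit using $\ml_\eps(\hat\rrho_n)\to\ml_\eps(\hat\rrho)$ (which follows from the explicit construction of $\hat\rrho_n$) and $\liminf_n\ml_\eps(\rrho_n^+)\ge\ml_\eps(\rrho^+)$.
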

\begin{proof}
We first prove the estimate under the stronger assumption that $\hat\rho_1$ and $\hat\rho_2$ are strictly positive inside balls $B_{R_1}$ and $B_{R_2}$ and vanish almost everywhere outside, so that Lemma \ref{lem:el_yosida} and its consequences are applicable. In this case, it follows from Lemma \ref{lem:h1est_bigrho_2} that $\nabla F_j'(\rho_j^+) \in L^2(\intd \rho_j^+)$, hence we can apply Lemma \ref{lem:above_tangent} to $\rrho^0 = \rrho^+$ and  $\rrho^1 = \hat\rrho$ to obtain
\begin{align*}
\ml_\eps(\hat\rrho) - \ml_\eps(\rrho^+) &\geq \intrd \rho_1^+\, \nabla\left[F_1'(\rho_1^+) + \eps V_1 + K \ast \rho_2^+\right] \cdot (T_1(x) - x) \,\intd x \\
&+ \intrd \rho_2^+\, \nabla\left[F_2'(\rho_2^+) + \eps V_2 + K \ast \rho_1^+\right] \cdot (T_2(x) - x)\,\intd x + \frac{\lambda_\eps}{2} \dst(\hat\rrho, \rrho^+)^2
\end{align*}
with the optimal transport maps $T_j$ pushing $\rho_j^+$ to $\hat\rho_j$. For brevity, we denote $\md_j(\rrho) := F_j'(\rho_j) + \eps V_j + K \ast \rho_{j'}$. Inserting \eqref{eq:el_map_yosida} and omitting the non-negative distance term yields
\begin{align*}
&\ml_\eps(\hat\rrho) - \ml_\eps(\rrho^+)\geq \tau \sum_{j = 1,2} \intrd \rho_j^+\,\nabla\md_j(\rrho^+) \cdot \nabla \left[F_j'(\rho_j^+) + \eps \partial_{r_j} h(\rho_1^+, \rho_2^+) + K \ast \rho_{j'}^+\right]\,\intd x \\
&= \tau \sum_{j = 1,2} \left( \eps\intrd \rho_j^+\,\nabla\md_j(\rrho^+) \cdot \nabla \left[\partial_{r_j}h(\rho_1^+, \rho_2^+) - V_j\right]\,\intd x + \intrd \rho_j^+\, \left| \nabla \md_j(\rrho^+) \right|^2\,\intd x \right) \\
&\geq \tau \sum_{j = 1,2} \left( \left(1 - \frac{\sqrt C}{2}\eps \right) \intrd \rho_j^+\, \left| \nabla \md_j(\rrho^+) \right|^2\,\intd x - \frac{\eps}{2\sqrt C} \intrd \rho_j^+\left|\nabla\left[\partial_{r_j}h(\rho_1^+, \rho_2^+) - V_j\right]\right|^2\,\intd x \right) \\
&\geq \tau \sum_{j = 1,2} \left(1- \sqrt C \eps\right) \intrd \rho_j^+\, \left| \nabla \md_j(\rrho^+) \right|^2\,\intd x \geq 2 \lambda_\eps \tau \left( 1- \sqrt C \eps\right)\, \left( \ml_\eps(\rrho^+) - \ml_\eps(\bar\rrho)\right),
\end{align*}
where in the last two inequalities we used Corollary \ref{cor:estim_slopes} and \eqref{eq:geoconv_estim} with $\rrho = \rrho^+$, respectively.

In the case of a more general $\hat\rrho$, we use the approximating sequence of pairs $\hat\rrho_n$ from the proof of Theorem \ref{thm:discrete_error}, which satisfy the stronger assumptions. Hence, we can apply the proven result to $\hat\rrho_n$ to obtain
\begin{align*}
\ml_\eps(\hat\rrho_n) \geq 2 \lambda_\eps \tau \left( 1- \sqrt C \eps\right)\, \left( \ml_\eps(\rrho^+_n) - \ml_\eps(\bar\rrho)\right) + \ml_\eps(\rrho^+_n).
\end{align*}
It follows from the construction that $\ml_\eps(\hat\rrho_n) \to \ml_\eps(\hat\rrho)$. On the right-hand side, we use the narrow convergence $\rrho_n^+ \to \rrho^+$ and lower semi-continuity of $\ml_\eps$ with respect to narrow convergence, which holds since the $V_j$ are bounded and continuous. This proves the claim.
\end{proof}
\begin{remark}
To see why Lemma \ref{lem:decay_estim_step} yields the desired decay estimate, observe that by abbreviating the relative energy as $\ml^r_\eps(\rrho) := \ml_\eps(\rrho) - \ml_\eps(\bar\rrho)$, \eqref{eq:decay_estim_step} can be written as
\begin{align*}
\frac{\ml_\eps^r(\rrho^+) - \ml^r_\eps(\hat\rrho)}{\tau} \leq -2\lambda_\eps \left(1 - \sqrt C \eps \right) \ml^r_\eps(\rrho^+).
\end{align*}
In the following theorem, we show that this estimate leads to exponential decay of $\ml_\eps^r$ along the limit curve $\rrho^*$ constructed in Lemma \ref{lem:interpol_cvgce}, which thus can be viewed as a discrete version of Gronwall's lemma.
\end{remark}
\begin{theorem} \label{thm:decay_estim}
For any initial condition $\rrho^0 \in \sspc$ with $\me_\eps(\rrho^0)$ finite, the weak solution $\rrho^*:[0,+\infty) \to \sspc$ to system \eqref{eq:system-intro} with $\rrho^*(0) = \rrho^0$ from subsection \ref{subsec:weaksol} satisfies
\begin{equation} \label{eq:decay_estim}
\ml_\eps(\rrho^*(t)) - \ml_\eps(\bar\rrho) \leq e^{-2\lambda_\eps \left(1-\sqrt C \eps \right)t}\, \left( \ml_\eps(\rrho^0) - \ml_\eps(\bar\rrho)\right)
\end{equation}
at every $t \geq 0$. In particular, if $\eps < 1/\sqrt C$, the functional $\ml_\eps$ decays at an exponential rate along the solution curve $\rrho^*$. If in addition, $\eps < \frac{1}{C_\mn}$ with $C_\mn$ from Lemma \ref{lem:neps_estimate_leps}, it holds
\begin{equation} \label{eq:decay_estim_eeps}
\me_\eps(\rrho^*(t)) - \me_\eps(\bar\rrho) \leq \frac{1+\eps C_\mn}{1-\eps C_\mn}\, e^{-2\lambda_\eps \left(1-\sqrt C \eps \right)t}\, \left( \me_\eps(\rrho^0) - \me_\eps(\bar\rrho)\right),
\end{equation}
at every $t \geq 0$, proving the exponential decay for $\me_\eps$ claimed in Theorem \ref{thm:main}.
\end{theorem}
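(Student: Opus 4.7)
\medskip

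My plan is to first establish an iterated decay estimate at the discrete level via the minimizing movement scheme, then pass to the continuous limit $\tau\downarrow 0$, and finally transfer the bound from $\ml_\eps$ to $\me_\eps$ using Lemma \ref{lem:neps_estimate_leps}.

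First, I would apply Lemma \ref{lem:decay_estim_step} iteratively along the discrete sequence $(\hat\rrho_n)$ from \eqref{eq:interpol_def}. Writing $a_n := \ml_\eps(\hat\rrho_n)-\ml_\eps(\bar\rrho)\ge 0$ and setting $\mu:=2\lambda_\eps(1-\sqrt C\eps)>0$ (here I use the standing assumption $\eps<1/\sqrt C$), the lemma gives $a_n - a_{n+1}\ge \mu\tau\, a_{n+1}$, hence
\begin{equation*}
a_{n+1}\le \frac{a_n}{1+\mu\tau}, \qquad a_n\le (1+\mu\tau)^{-n}\,a_0.
\end{equation*}
For any $t\ge 0$, setting $n=n(t,\tau):=\lceil t/\tau\rceil$ so that $\rrho^\tau(t)=\hat\rrho_{n}$ (with $a_0=\ml_\eps(\rrho^0)-\ml_\eps(\bar\rrho)$), this yields
\begin{equation*}
\ml_\eps(\rrho^\tau(t))-\ml_\eps(\bar\rrho)\le (1+\mu\tau)^{-n(t,\tau)}\bigl(\ml_\eps(\rrho^0)-\ml_\eps(\bar\rrho)\bigr).
\end{equation*}

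Second, I would pass to the limit $\tau\downarrow 0$ along the subsequence from Theorem \ref{thm:cvgce_to_weak_sol}. Since $n(t,\tau)\tau\to t$, the right-hand side converges to $e^{-\mu t}\bigl(\ml_\eps(\rrho^0)-\ml_\eps(\bar\rrho)\bigr)$. For the left-hand side, I use weak lower semicontinuity of $\ml_\eps$ with respect to weak $L^1$-convergence: the term $\intrd F_j(\rho_j)\,\intd x$ is lsc as in Lemma \ref{lem:eeps_lsc}, the interaction term $\intrd \rho_1 K\ast\rho_2\,\intd x$ is lsc by Lemma \ref{lem:eeps_lsc}, and the linear terms $\eps\intrd \rho_j V_j\,\intd x$ pass to the limit because $V_j$ is bounded and continuous (by Corollary \ref{cor:vj_semiconv} and the compact support of $\bar\rho_{1},\bar\rho_2$, $V_j$ is globally Lipschitz and vanishes outside a compact set). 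Combined with the convergence $\rho_j^\tau(t)\weakto \rho_j^*(t)$ weakly in $L^1$ from Lemma \ref{lem:interpol_cvgce}, this gives
\begin{equation*}
\ml_\eps(\rrho^*(t))-\ml_\eps(\bar\rrho)\le \liminf_{\tau\downarrow 0}\bigl(\ml_\eps(\rrho^\tau(t))-\ml_\eps(\bar\rrho)\bigr)\le e^{-\mu t}\bigl(\ml_\eps(\rrho^0)-\ml_\eps(\bar\rrho)\bigr),
\end{equation*}
which is \eqref{eq:decay_estim}.

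Third, for the bound \eqref{eq:decay_estim_eeps} on $\me_\eps$, I would sandwich $\me_\eps-\me_\eps(\bar\rrho)$ between $(1\pm\eps C_\mn)\bigl(\ml_\eps-\ml_\eps(\bar\rrho)\bigr)$ using Lemma \ref{lem:neps_estimate_leps}: the upper inequality gives $\me_\eps(\rrho^*(t))-\me_\eps(\bar\rrho)\le (1+\eps C_\mn)\bigl(\ml_\eps(\rrho^*(t))-\ml_\eps(\bar\rrho)\bigr)$, to which I apply \eqref{eq:decay_estim}, and the lower inequality applied at $t=0$ yields $\ml_\eps(\rrho^0)-\ml_\eps(\bar\rrho)\le (1-\eps C_\mn)^{-1}\bigl(\me_\eps(\rrho^0)-\me_\eps(\bar\rrho)\bigr)$, producing the factor $(1+\eps C_\mn)/(1-\eps C_\mn)$.

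The only delicate point is the lower semicontinuity step, which requires ensuring that the auxiliary potentials $V_1,V_2$ (bounded continuous and vanishing outside a compact set by the properties of $\bar\rrho$) are admissible test functions against the weakly $L^1$-converging sequence $\rho_j^\tau(t)$; this follows from the tightness already built into the $\sspc$-valued curves via the uniform second-moment bound $\mom_2[\rho_j^\tau(t)]\le \tfrac{2}{\lambda}\me_\eps(\rrho^0)$. The rest of the argument is essentially a discrete Gronwall inequality combined with the limit procedure.
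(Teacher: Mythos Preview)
Your proposal is correct and follows essentially the same approach as the paper: iterate Lemma~\ref{lem:decay_estim_step} to obtain a discrete Gronwall-type bound along the minimizing movement scheme, pass to the limit $\tau\downarrow 0$ using lower semicontinuity of $\ml_\eps$ (the paper phrases this via narrow convergence and the boundedness/continuity of $V_j$, you via weak $L^1$ --- equivalent here), and then convert to $\me_\eps$ via Lemma~\ref{lem:neps_estimate_leps}. The only cosmetic difference is that you assume $\mu=2\lambda_\eps(1-\sqrt C\eps)>0$ upfront, whereas the paper notes the recursion $a_{n+1}\le a_n/(1+\mu\tau)$ is valid as soon as $1+\mu\tau>0$, which also covers the (uninteresting) case $\mu\le 0$ stated in \eqref{eq:decay_estim}.
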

\begin{proof}
Fix any $\eps > 0$ and consider the interpolation scheme \eqref{eq:interpol_def}. Applying Lemma \ref{lem:decay_estim_step}, we obtain for every $n$ the estimate
\begin{align*}
\ml_\eps(\hat\rrho_n) - \ml_\eps(\rrho_n^+) \geq 2 \lambda_\eps \tau \left( 1- \sqrt C \eps\right)\, \left( \ml_\eps(\rrho_n^+) - \ml_\eps(\bar\rrho)\right)
\end{align*}
After adding $\ml_\eps(\rrho_n^+) - \ml_\eps(\bar\rrho)$ and assuming $\tau$ is small enough such that $2\lambda_\eps \tau(1-\sqrt C \eps) > -1$, this becomes
\begin{align*}
\ml_\eps(\rrho_n^+) - \ml_\eps(\bar\rrho) \leq \left(1 + 2 \lambda_\eps \tau \left( 1- \sqrt C \eps\right) \right)^{-1} \left(\ml_\eps(\hat\rrho_n) - \ml_\eps(\bar\rrho)\right).
\end{align*}
Using that $\rrho_{n-1}^+ = \hat\rrho_{n}$ for every $n \geq 1$, induction over $n$ yields
\begin{equation} \label{eq:decay_estim_multi}
\ml_\eps(\rrho_n^+) - \ml_\eps(\bar\rrho) \leq \left(1 + 2 \lambda_\eps \tau \left( 1- \sqrt C \eps\right) \right)^{-(n+1)} \left(\ml_\eps(\rrho^0) - \ml_\eps(\bar\rrho)\right)
\end{equation}
for every $n \geq 0$. Now fix $t > 0$ and $n$ such that $\rrho^\tau(t) = \rrho_n^+$, which holds for $n\tau < t \leq (n+1)\tau$. In particular, we have $n + 1 \geq t/\tau$, and thus with \eqref{eq:decay_estim_multi},
\begin{equation} \label{eq:decay_estim_tau}
\ml_\eps\left(\rrho^\tau(t)\right) - \ml_\eps(\bar\rrho) \leq \left(1 + 2 \lambda_\eps \tau \left( 1- \sqrt C \eps\right) \right)^{-t/\tau} \left(\ml_\eps(\rrho^0) - \ml_\eps(\bar\rrho)\right)
\end{equation}
In the limit $\tau \downarrow 0$, the right-hand side of \eqref{eq:decay_estim_tau} converges to the right-hand side of \eqref{eq:decay_estim}. On the left-hand side, note that $\rrho^\tau(t) \to \rrho^*(t)$ narrowly by Lemma \ref{lem:interpol_cvgce} and that $\ml_\eps$ is lower semi-continuous with respect to narrow convergence. This proves \eqref{eq:decay_estim}. The estimate \eqref{eq:decay_estim_eeps} follows directly from \eqref{eq:decay_estim} and Lemma \ref{lem:neps_estimate_leps}.
\end{proof}
Note that $\lambda_\eps(1 - \sqrt C \eps) = \lambda - (K_0 + \sqrt C)\eps + K_0\sqrt C \eps^2$, hence the exponential decay estimate \eqref{eq:expdecay_intro} claimed in the introduction is satisfied for every sufficiently small $\eps$ with any constants $C_0 > K_0 + \sqrt C$ and $C_1 > 1$.

In order to prove that this decay estimate implies exponential convergence of $\rrho^*$ to $\bar\rrho$, we first need the following technical lemma, which estimates the behaviour of the $\bar\rho_j$ near the boundary of their support.
\begin{lemma}  \label{lem:estim_intfjpp}
For $\bar\eps > 0$ sufficiently small, there exists an $\eps$-independent constant $A < +\infty$ such that for every $\eps \in (0, \bar\eps]$, it holds
\begin{align*}
\int_{\{\bar\rho_j > 0\}} \frac{1}{F_j''(\bar\rho_j)}\,\intd x \leq A.
\end{align*}
\end{lemma}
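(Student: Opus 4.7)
The plan is as follows. For sufficiently small $\eps$, Corollary \ref{cor:min_unique} gives uniqueness of the minimizer $\bar\rrho$, so Lemma \ref{lem:min_uniq} applies and $\bar\rho_j$ is radially symmetric about the origin; by radial symmetry of $K$ and $\bar\rho_{j'}$, the convolution $K\ast\bar\rho_{j'}$ is radially symmetric as well, and I write $\bar\rho_j(x) = \tilde\rho_j(|x|)$ and $K\ast\bar\rho_{j'}(x) = \phi_{j'}(|x|)$. The idea is to leverage the $\lambda$-convexity of $K\ast\bar\rho_{j'}$ to obtain a linear-in-distance lower bound on $C_j - \phi_{j'}$ at the boundary of $\supp\bar\rho_j$, combine it with the power law \eqref{eq:fjpp_power} for $F_j''$ near zero, and conclude via a polar-coordinates computation.

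\textbf{Step 1: Geometry of the support.} Decomposing the Hessian of the radially symmetric $C^2$-function $K\ast\bar\rho_{j'}$ in polar coordinates, its tangential eigenvalues equal $\phi_{j'}'(r)/r$, so the $\lambda$-convexity $\nabla^2(K\ast\bar\rho_{j'})\geq\lambda\,\mathbf{1}$ forces $\phi_{j'}'(r)\geq\lambda r$ for $r>0$. Hence $\phi_{j'}$ is strictly increasing, and by Theorem \ref{thm:euler-lagrange} the support of $\bar\rho_j$ is a closed ball $\overline{B_{R_j}}$ with $\phi_{j'}(R_j)=C_j$. Integrating $\phi_{j'}'$ from $r$ to $R_j$ yields the crucial linear boundary decay
\[ C_j - \phi_{j'}(r) \;\geq\; \tfrac{\lambda}{2}(R_j^2-r^2) \;\geq\; \tfrac{\lambda R_j}{2}(R_j-r) \quad\text{for all }r\in[0,R_j]. \]
Lemma \ref{lem:minim_bdd} gives a uniform upper bound on $R_j$, while $1=\int\bar\rho_j\leq H_0\,|B_{R_j}|$ supplies a uniform lower bound $R_j\geq R_{\min}>0$.

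\textbf{Step 2: Lower bound on $\bar\rho_j$ and on $F_j''(\bar\rho_j)$.} For $\eps$ small, the Euler--Lagrange estimate \eqref{eq:el_fj_lowerbd} derived in the proof of Lemma \ref{lem:minim_bdd} gives $F_j'(\tilde\rho_j(r))\geq\tfrac{1}{1+\eps\kappa_{j,j}}(C_j-\phi_{j'}(r))\geq c(R_j-r)$ on the support. On the subset where $\tilde\rho_j(r)\leq r_0$, the upper power-law bound $F_j'(s)\leq\tfrac{M_j}{\beta_j+1}s^{\beta_j+1}$ from \eqref{eq:fjp_power} inverts to $\tilde\rho_j(r)\geq c'(R_j-r)^{1/(\beta_j+1)}$, and then \eqref{eq:fjpp_power} yields $F_j''(\tilde\rho_j(r))\geq m_j\,\tilde\rho_j(r)^{\beta_j}\geq c''(R_j-r)^{\beta_j/(\beta_j+1)}$, with all constants $\eps$-independent.

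\textbf{Step 3: Assembling the integral.} Split $\int_{\{\bar\rho_j>0\}}F_j''(\bar\rho_j)^{-1}\,\intd x$ at the level $\{\bar\rho_j=r_0\}$. The contribution from $\{\bar\rho_j>r_0\}$ is uniformly bounded because $F_j''$ is continuous and strictly positive on the compact interval $[r_0,H_0]$ and the support has uniformly bounded volume (Lemma \ref{lem:minim_bdd}). The contribution from $\{0<\bar\rho_j\leq r_0\}$ reduces by polar coordinates to a constant multiple of $\int_0^{R_j}r^{d-1}(R_j-r)^{-\beta_j/(\beta_j+1)}\,\intd r$, which is finite precisely because $\beta_j/(\beta_j+1)<1$, and is uniformly controlled by the $\eps$-uniform two-sided bounds on $R_j$.

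The main obstacle is Step 1: producing an $\eps$-uniform lower bound on $C_j-K\ast\bar\rho_{j'}$ that vanishes only \emph{linearly} at $\partial\supp\bar\rho_j$. Radial symmetry is decisive here, as it converts the matrix inequality $\nabla^2(K\ast\bar\rho_{j'})\geq\lambda\,\mathbf{1}$ into the clean one-dimensional rate $\phi_{j'}'(r)\geq\lambda r$; once this linear rate is in hand, the remainder of the argument is a routine power-counting in polar coordinates.
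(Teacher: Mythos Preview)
Your proposal is correct and follows essentially the same route as the paper: radial symmetry makes the support a ball, $\lambda$-convexity of $K\ast\bar\rho_{j'}$ yields linear decay of $C_j-\phi_{j'}(r)$ at the boundary, the Euler--Lagrange estimate \eqref{eq:el_fj_lowerbd} together with the power laws \eqref{eq:fjpp_power}--\eqref{eq:fjp_power} converts this into $F_j''(\bar\rho_j)\gtrsim (R_j-|x|)^{\beta_j/(\beta_j+1)}$, and the resulting singular integral is finite because the exponent is strictly below $1$. The only cosmetic difference is that the paper compresses your Step~2 into a single chain $F_j''(\bar\rho_j)\gtrsim F_j'(\bar\rho_j)^{\beta_j/(\beta_j+1)}\gtrsim (C_j-K\ast\bar\rho_{j'})_+^{\beta_j/(\beta_j+1)}$ and then directly bounds $\int (C_j-K\ast\bar\rho_{j'})^{-\beta_j/(\beta_j+1)}$, whereas you pass through an intermediate lower bound on $\tilde\rho_j$ itself; the substance is identical.
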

\begin{proof}
The proof is based on assumption \eqref{eq:fjpp_power} and the estimates from Lemma \ref{lem:minim_bdd}. Recall that for $j = 1,2$, it holds
\begin{align*}
(C_j - K \ast \bar\rho_{j'})_+ \leq (1+\eps \kappa_{j,j})\,F_j'(\bar\rho_j),
\end{align*}
see \eqref{eq:el_fj_lowerbd}. \eqref{eq:fjpp_power} implies that for a sufficiently small $r_0 > 0$, we have for every $r \in [0, r_0]$ the estimate
\begin{align*}
M_j F_j''(\bar\rho_j) \geq \bar\rho_j^{\,\beta_j} \geq \left(m_j(\beta_j+1)\right)^{\frac{\beta_j}{\beta_j+1}} F_j'(\bar\rho_j)^{\frac{\beta_j}{\beta_j+1}} \geq\left(\frac{m_j(\beta_j+1)}{1+\eps \kappa_{j,j}}\right)^{\frac{\beta_j}{\beta_j+1}} (C_j - K \ast \bar\rho_{j'})_+^{\frac{\beta_j}{\beta_j+1}}
\end{align*}
with the constants $m_j,M_j,\beta_j$ from \eqref{eq:fjpp_power}. The last estimate follows from \eqref{eq:el_fj_lowerbd}. Since the constant in the last expression is $\eps$-uniformly bounded for small $\eps$, the claim will follow if we show that the value of the integral
\begin{align*}
\int_{\{\bar\rho_j > 0\}} (C_j - K \ast \bar\rho_{j'})_+^{-\frac{\beta_j}{\beta_j+1}}\,\intd x = \int_{\{K \ast \bar\rho_{j'} < C_j\}} (C_j - K \ast \bar\rho_{j'})^{-\frac{\beta_j}{\beta_j+1}}\,\intd x
\end{align*}
is $\eps$-uniformly bounded for small $\eps$. Recall that by Lemma \ref{lem:min_uniq}, $\bar\rho_{j'}$ is radially symmetric, thus $K \ast \bar\rho_{j'}$, which is $\lambda$-convex, is radially increasing around 0. This implies that the set $\{\bar\rho_j > 0\} = \{K \ast \bar\rho_{j'} < C_j\}$ is a ball $B_R$, whose radius $R$ is $\eps$-uniformly bounded by Lemma \ref{lem:minim_bdd}. Since the density $\bar\rho_j$ is $\eps$-uniformly bounded by $H_0$, again by Lemma \ref{lem:minim_bdd}, the radius $R$ is also uniformly bounded away from 0 for small $\eps$. By $\lambda$-convexity and radial symmetry, this implies that the gradient of $K \ast \bar\rho_{j'}$ is uniformly bounded away from 0 near the boundary of $B_R$, implying that for some constant $c > 0$, it holds $C_j - K \ast \bar\rho_{j'}(x) \geq c\, (R-|x|)$  at every $x \in B_R$, meaning we can estimate the integral above by
\begin{align*}
\int_{\{K \ast \bar\rho_{j'} < C_j\}} (C_j - K \ast \bar\rho_{j'})^{-\frac{\beta_j}{\beta_j+1}}\,\intd x \leq c^{-\frac{\beta_j}{\beta_j+1}}\int_{B_R} (R - |x|)^{-\frac{\beta_j}{\beta_j+1}}\,\intd x,
\end{align*}
and the integral on the right is finite since $0 \leq \frac{\beta_j}{\beta_j + 1} < 1$. This proves the claim.
\end{proof}
With the help of this estimate, we can prove the following version of the Csiszár-Kullback inequality for the functional $\ml_\eps$:
\begin{lemma} \label{lem:csiszar-kullback}
There is an $\eps$-independent constant $C_{CK}$ such that for all $\rrho \in \sspc$ with $\rho_j$ absolutely continuous w.r.t the Lebesgue measure and all $\eps$ such that $\lambda_\eps \geq \frac{\lambda}{2}$, it holds
\begin{align*}
\|\rho_1 - \bar\rho_1\|_{L^1(\Rd)}^2 + \|\rho_2 - \bar\rho_2\|_{L^1(\Rd)}^2 \leq C_{CK}\left(\ml_\eps(\rrho) - \ml_\eps(\bar\rrho)\right).
\end{align*}
\end{lemma}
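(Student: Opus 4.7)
Write $\ml_\eps^{\mathrm{rel}} := \ml_\eps(\rrho) - \ml_\eps(\bar\rrho)$, which is non-negative by Corollary \ref{cor:l_min_rhobar}. Since the trivial bound $\|\rho_j - \bar\rho_j\|_{L^1(\Rd)} \leq 2$ always holds, I may assume $\ml_\eps^{\mathrm{rel}} \leq 1$ (the complementary regime is then absorbed by enlarging $C_{CK}$). By Lemma \ref{lem:min_uniq} together with \eqref{eq:supp_minimizer}, $\Omega_j := \supp\bar\rho_j = \overline{B_{R_j}}$ for some $R_j$ uniformly bounded above and below in $\eps$ (Lemma \ref{lem:minim_bdd}). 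Mass conservation yields
$$\|\rho_j - \bar\rho_j\|_{L^1(\Rd)} = 2\int_{\Omega_j}(\rho_j-\bar\rho_j)_+\,\intd x + 2\int_{\Rd\setminus\Omega_j}\rho_j\,\intd x,$$
and I will bound each piece by $C\sqrt{\ml_\eps^{\mathrm{rel}}}$ separately.

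For the interior term, combining hypotheses \eqref{eq:fjpp_power} and \eqref{eq:fjp_growth} with the uniform upper bound $\bar\rho_j \leq H_0$ from Lemma \ref{lem:minim_bdd} yields an $\eps$-independent constant $c > 0$ such that $F_j''(s) \geq c\, F_j''(\bar\rho_j)$ for every $s \in [\bar\rho_j, \rho_j]$. Plugging this into the Bregman representation $\dst_{F_j}(\rho_j|\bar\rho_j) = (\rho_j-\bar\rho_j)^2\int_0^1 s\,F_j''(\bar\rho_j + s(\rho_j-\bar\rho_j))\,\intd s$ gives $\dst_{F_j}(\rho_j|\bar\rho_j) \geq \tfrac{c}{2}F_j''(\bar\rho_j)(\rho_j-\bar\rho_j)_+^2$. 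Cauchy-Schwarz and Lemma \ref{lem:estim_intfjpp} then produce
$$\int_{\Omega_j}(\rho_j-\bar\rho_j)_+\,\intd x \leq \Bigl(\int_{\Omega_j}\frac{\intd x}{F_j''(\bar\rho_j)}\Bigr)^{1/2}\Bigl(\tfrac{2}{c}\mi_F(\rrho)\Bigr)^{1/2} \leq C\sqrt{\mi_F(\rrho)} \leq C'\sqrt{\ml_\eps^{\mathrm{rel}}},$$
the last inequality by \eqref{eq:I_estim_diffL} together with $\mi_K(\rrho) \geq 0$.

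For the exterior term, radial symmetry of $\bar\rho_{j'}$ (Lemma \ref{lem:min_uniq}) yields $\nabla K*\bar\rho_{j'}(0) = 0$, and a one-dimensional integration along a radial ray using $\lambda$-convexity of $K$ gives $|\nabla K*\bar\rho_{j'}|$ at $|x| = R_j$ bounded below by $\lambda R_j$. Taylor-expanding $K*\bar\rho_{j'}$ around $\partial B_{R_j}$ (where it equals $C_j$) produces
$$K*\bar\rho_{j'}(x) - C_j \geq \lambda R_j(|x|-R_j) + \tfrac{\lambda}{2}(|x|-R_j)^2\qquad\text{for }|x|\geq R_j,$$
so that $\int_{\Rd\setminus\Omega_j}(|x|-R_j)\rho_j\,\intd x \leq \mi_K(\rrho)/(\lambda R_j) \leq C\ml_\eps^{\mathrm{rel}}$. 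For $\eta \in (0, 1]$ and $M > 0$, I split the exterior integral at $|x| = R_j + \eta$: the far-field piece is bounded by $C\ml_\eps^{\mathrm{rel}}/\eta$ from this linear moment, while on the near-field shell $\ms_\eta := \{R_j < |x| < R_j + \eta\}$ I further split by whether $\rho_j \lessgtr M$. Setting $g(M) := \inf_{r > M}F_j(r)/r$, which diverges as $M\to\infty$ by the superlinear growth of $F_j$ implied by \eqref{eq:fjp_growth}, and using $\rho_j \leq F_j(\rho_j)/g(M)$ on $\{\rho_j > M\}$ combined with $\dst_{F_j}(\rho_j|0) = F_j(\rho_j)$ on $\Rd\setminus\Omega_j$, I obtain
$$\int_{\ms_\eta}\rho_j\,\intd x \leq CM R_j^{d-1}\eta + \frac{\mi_F(\rrho)}{g(M)}.$$
Fixing $M = 1$ so that $g(M)$ is an $\eps$-independent positive constant and optimizing $\eta \sim \sqrt{\ml_\eps^{\mathrm{rel}}}$ then yields $\int_{\Rd\setminus\Omega_j}\rho_j\,\intd x \leq C''\sqrt{\ml_\eps^{\mathrm{rel}}}$ in the regime $\ml_\eps^{\mathrm{rel}} \leq 1$.

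The main obstacle is the narrow boundary shell $\ms_\eta$, where neither the moment control provided by $\mi_K$ nor the Bregman divergence $\mi_F$ alone is sharp enough: mass may concentrate arbitrarily close to $\partial\Omega_j$ with vanishing $(|x|-R_j)$-weight, while the superlinear growth of $F_j$ only constrains pointwise density. The crux of the argument is to exploit both simultaneously via the $M$-cutoff. In the interior, the analogous difficulty is the degeneracy $F_j''(\bar\rho_j) \to 0$ near $\partial\Omega_j$, which is precisely resolved by Lemma \ref{lem:estim_intfjpp}.
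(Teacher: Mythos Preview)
Your argument is correct (modulo a harmless slip: the Taylor remainder weight in the Bregman formula should be $(1-s)$ rather than $s$, though either yields the same lower bound once $F_j''$ is bounded below on the segment). However, your route is substantially more elaborate than the paper's.

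The key simplification you miss is that mass conservation equally gives
\[
\|\rho_j - \bar\rho_j\|_{L^1(\Rd)} = 2\int_{\{\rho_j < \bar\rho_j\}}(\bar\rho_j - \rho_j)\,\intd x,
\]
and since $\{\rho_j < \bar\rho_j\} \subset \{\bar\rho_j > 0\}$, this integral lives entirely inside $\Omega_j$. The exterior analysis --- your moment bound via $\mi_K$, the shell $\ms_\eta$, the $M$-cutoff, the reduction to $\ml_\eps^{\mathrm{rel}} \leq 1$ --- is therefore unnecessary. The paper proceeds exactly as your interior step, just on the negative part: it shows $\dst_{F_j}(\rho_j|\bar\rho_j) \geq c\,F_j''(\bar\rho_j)(\bar\rho_j - \rho_j)^2$ on $\{\rho_j < \bar\rho_j\}$, applies Cauchy--Schwarz with the weight $F_j''(\bar\rho_j)^{-1}$, and concludes by Lemma~\ref{lem:estim_intfjpp} and \eqref{eq:I_estim_diffL}. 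Only $\mi_F$ is used; $\mi_K$ never enters.

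Your approach does give a finer decomposition of where the mass discrepancy sits --- you separately control exterior leakage by $\mi_K$ and interior overshoot by $\mi_F$ --- which could be informative in other contexts. But here it costs a page of work that the choice of the opposite sign in the mass-conservation identity renders avoidable.
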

\begin{proof}
Inequality \eqref{eq:I_estim_diffL} and $\lambda_\eps \geq \frac{\lambda}{2}$ give the estimate
\begin{align*}
\intrd\left[\dst_{F_1}(\rho_1|\bar\rho_1) + \dst_{F_2}(\rho_2|\bar\rho_2)\right]\,\intd x \leq \left(1 + \frac{2C_K}{\lambda}\right) \left(\ml_\eps(\rrho) - \ml_\eps(\bar\rrho)\right),
\end{align*}
thus the claim will follow if we show an estimate of the form
\begin{align*}
\|\rho_j - \bar\rho_j\|_{L^1(\Rd)}^2  \leq C \intrd \dst_{F_j}(\rho_j|\bar\rho_j)\,\intd x
\end{align*}
with some $\eps$-uniform constant $C$. To prove this, observe that
\begin{equation} \label{eq:l1_split}
\|\rho_j - \bar\rho_j\|_{L^1(\Rd)} = \int_{\{\rho_j < \bar\rho_j\}} (\bar\rho_j - \rho_j)\,\intd x + \int_{\{\rho_j \geq \bar\rho_j\}} (\rho_j - \bar\rho_j)\,\intd x = 2 \int_{\{\rho_j < \bar\rho_j\}} (\bar\rho_j - \rho_j)\,\intd x,
\end{equation}
where the last equality follows from the fact that $\rho_j$ and $\bar\rho_j$ have unit mass. Additionally, it follows from the non-negativity of $F_j''$ that for every $0 \leq r < \bar r$, it holds
\begin{align*}
\dst_{F_j}(r|\bar r) &= \int_r^{\bar r} F_j''(s)(s-r)\,\intd s \geq \int_{\frac{r+\bar r}{2}}^{\bar r} F_j''(s) (s-r)\,\intd s \geq \left( \frac{\bar r - r}{2}\right)^2 \min_{s \in \left[\frac{r+\bar r}{2}, \bar r\right]} F_j''(s).
\end{align*}
The minimum on the right-hand side can be estimated further with assumption \eqref{eq:fjpp_power}, as for every $s \in \left[\frac{r+\bar r}{2}, \bar r\right]$, it holds $s \geq \frac{\bar r}{2}$ and thus
\begin{align*}
F_j''(s) \geq \frac{1}{M_j}s^{\beta_j} \geq \frac{1}{M_j 2^{\beta_j}}\, \bar r^{\beta_j} \geq \frac{m_j}{M_j 2^{\beta_j}} F_j''(\bar r).
\end{align*}
Combining the last two inequalities yields
\begin{align*}
\dst_{F_j}(r|\bar r) \geq \frac{m_j}{M_j 2^{\beta_j + 2}} \,F_j''(\bar r) (\bar r - r)^2.
\end{align*}
With \eqref{eq:l1_split}, the Cauchy-Schwarz inequality, Lemma \ref{lem:estim_intfjpp} and $\dst_{F_j}(r|\bar r) \geq 0$, we obtain
\begin{align*}
&\|\rho_j - \bar\rho_j\|_{L^1(\Rd)}^2 = 4 \left( \int_{\{\rho_j < \bar\rho_j\}} (\bar\rho_j - \rho_j)\,\intd x \right)^2  \\
&\leq 4 \left(\int_{\{\rho_j < \bar\rho_j\}} F_j''(\bar\rho_j)(\bar\rho_j - \rho_j)^2\,\intd x\right) \left( \int_{\{\rho_j < \bar\rho_j\}} \frac{1}{F_j''(\bar\rho_j)}\,\intd x \right) \leq \frac{M_j 2^{\beta_j + 4} A}{m_j} \intrd \dst_{F_j}(\rho_j|\bar \rho_j)\,\intd x.
\end{align*}
This is an inequality of the desired form given above, thus proving the claim.
\end{proof}
Exponential convergence to the equilibrium for the solution curve $\rrho^*$ is now a direct consequence of the previous results:

\begin{theorem}
Assume that $\bar \eps >0 $ is sufficiently small. Then given any $\eps \in (0, \bar\eps]$ and any initial data $\rrho^0 \in \sspc$ with $\me_\eps(\rrho^0) < +\infty$, the weak solution $\rrho^*$ to system \eqref{eq:system-intro} from subsection \ref{subsec:weaksol} satisfies at every $t \geq 0$ the estimate
\begin{align*}
\|\rho_1^*(t) - \bar\rho_1\|_{L^1(\Rd)}^2 + \|\rho_2^*(t) - \bar\rho_2\|_{L^1(\Rd)}^2 &\leq C_{CK}\, \left( \ml_\eps(\rrho^0) - \ml_\eps(\bar\rrho)\right)\, e^{-2\lambda_\eps \left(1-\sqrt C \eps \right)t} \\
&\leq \frac{C_{CK}}{1 - \eps C_\mn} \left( \me_\eps(\rrho^0) - \me_\eps(\bar\rrho)\right)\, e^{-2\lambda_\eps \left(1-\sqrt C \eps \right)t}
\end{align*}
with the constants $C_{CK}$ from the previous lemma and $C_\mn$ from Lemma \ref{lem:neps_estimate_leps}. In particular, $\rrho^*(t)$ converges to the unique global minimizer $\bar\rrho$ of the energy functional $\me_\eps$ at an exponential rate in $L^1$.
\end{theorem}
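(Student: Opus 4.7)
The plan is to combine the two heavy lemmas that have just been established: the Csiszár–Kullback-type inequality of Lemma \ref{lem:csiszar-kullback}, which controls the $L^1$-distance between any admissible pair and the minimizer by $\ml_\eps(\rrho)-\ml_\eps(\bar\rrho)$, and the exponential decay estimate \eqref{eq:decay_estim} from Theorem \ref{thm:decay_estim}, which bounds the time evolution of precisely that excess. The theorem will then follow by concatenating these two estimates, with the version in terms of $\me_\eps$ obtained from Lemma \ref{lem:neps_estimate_leps}.

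First I would fix $\bar\eps>0$ small enough that all hypotheses of the previous results apply simultaneously: in particular $\lambda_\eps=\lambda-K_0\eps\ge\lambda/2$, $\eps C_\mn<1$, $\sqrt C\eps<1$, and the uniqueness statement of Corollary \ref{cor:min_unique} is in force, so that $\bar\rrho$ is the unique global minimizer of $\me_\eps$ in $\sspc$ and Lemma \ref{lem:csiszar-kullback} is applicable. Next I verify that at every $t\ge 0$, the configuration $\rrho^*(t)$ meets the hypotheses of Lemma \ref{lem:csiszar-kullback}: membership in $\sspc$ is given by construction in subsection \ref{subsec:weaksol}, and absolute continuity of each component follows from $\me_\eps(\rrho^*(t))<+\infty$ (by narrow lower semicontinuity of $\me_\eps$, $\me_\eps(\rrho^*(t))\le \me_\eps(\rrho^0)<+\infty$, and $\me_\eps$ is $+\infty$ on measures with a singular part).

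With these preparations, the main chain of inequalities reads
\begin{align*}
\|\rho_1^*(t)-\bar\rho_1\|_{L^1(\Rd)}^2 + \|\rho_2^*(t)-\bar\rho_2\|_{L^1(\Rd)}^2
&\le C_{CK}\bigl(\ml_\eps(\rrho^*(t))-\ml_\eps(\bar\rrho)\bigr) \\
&\le C_{CK}\,e^{-2\lambda_\eps(1-\sqrt C\eps)t}\bigl(\ml_\eps(\rrho^0)-\ml_\eps(\bar\rrho)\bigr),
\end{align*}
where the first line is Lemma \ref{lem:csiszar-kullback} applied to $\rrho=\rrho^*(t)$ and the second line is \eqref{eq:decay_estim}. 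This is the first asserted estimate. For the second, I would invoke Lemma \ref{lem:neps_estimate_leps}, which in particular gives $\ml_\eps(\rrho^0)-\ml_\eps(\bar\rrho)\le (1-\eps C_\mn)^{-1}\bigl(\me_\eps(\rrho^0)-\me_\eps(\bar\rrho)\bigr)$ once $\eps C_\mn<1$; substituting into the bound above yields the second inequality. Convergence in $L^1$ to $\bar\rrho$ at an exponential rate is then immediate because $\me_\eps(\rrho^0)-\me_\eps(\bar\rrho)$ is a finite constant and the exponential prefactor $e^{-2\lambda_\eps(1-\sqrt C\eps)t}$ tends to zero.

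There is essentially no obstacle left to overcome here: all the analytic work has been absorbed into Lemma \ref{lem:csiszar-kullback} and Theorem \ref{thm:decay_estim}. The only point requiring mild care is the justification that $\rrho^*(t)$ satisfies the hypotheses of the Csiszár–Kullback inequality at \emph{every} $t\ge 0$ (not merely for a.e. $t$), which is why I would record the absolute continuity argument explicitly; this uses only that $\me_\eps$ is non-increasing along the limit curve, a fact already built into the construction via $\me_\eps(\rrho^\tau(t))\le\me_\eps(\rrho^0)$ and lower semicontinuity of $\me_\eps$ under narrow convergence from Lemma \ref{lem:eeps_lsc}.
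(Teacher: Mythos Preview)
Your proposal is correct and matches the paper's own proof essentially line for line: the paper simply states that the first inequality follows from Lemma \ref{lem:csiszar-kullback} and Theorem \ref{thm:decay_estim}, and the second from Lemma \ref{lem:neps_estimate_leps}. Your added remark on why $\rrho^*(t)$ is absolutely continuous at every $t$ is a small elaboration beyond what the paper spells out, but it is correct and does no harm.
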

\begin{proof}
The first inequality follows directly from Lemma \ref{lem:csiszar-kullback} and Theorem \ref{thm:decay_estim}, while the second one follows from Lemma \ref{lem:neps_estimate_leps}.
\end{proof}

\section*{Acknowledgements}
The authors would like to thank Antonio Esposito for numerous discussions and suggestions.

\appendix
\section{Example}
In this section, we give a generic example of functions $F_1, F_2$ and $h$ and prove that they satisfy all assumption from Hypotheses \ref{hyp:general} and \ref{hyp:theta} under certain conditions. Note that the assumptions on $K$ are independent from $F_1, F_2$ and $h$ and vice versa.

\begin{proposition} \label{prop:example}
Let $a_1, a_2, b_1, b_2,\gamma$ be positive real numbers. Consider
\begin{align*}
F_1(r) := \frac{1}{a_1}\,r^{a_1}, \quad F_2(r) := \frac{1}{a_2}\, r^{a_2}, \quad h(\rr) := \frac{r_1^{b_1} r_2^{b_2}}{(1+r_1+r_2)^\gamma}.
\end{align*}
Then the nonlinearities $F_1$ and $F_2$ and the coupling function $h$ fulfill all the assumptions from Hypotheses \ref{hyp:general} and \ref{hyp:theta} if and only if for $j = 1,2$, it holds
\begin{equation} \label{eq:example_cond}
a_j \geq 2, \qquad b_j \geq 2a_j - 1,\qquad b_1 + b_2 \leq \gamma + \min\{a_1, a_2\}.
\end{equation}
\end{proposition}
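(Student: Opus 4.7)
The proposition is an iff, and both directions reduce to elementary power-function arithmetic. The first step is to dispatch Hypothesis \ref{hyp:general} for $F_j$: since $F_j'(r) = r^{a_j - 1}$ and $F_j''(r) = (a_j - 1) r^{a_j - 2}$, the power bound \eqref{eq:fjpp_power} with exponent $\beta_j = a_j - 2$ together with the decay condition \eqref{eq:fjp_growth} collectively force $a_j \geq 2$, while McCann's condition \eqref{eq:mccann_cond} reduces to $a_j + 1/a_j \geq 2$ and is automatic. All boundary conditions on $h$ and its first derivatives follow from $b_1, b_2 \geq 3 > 1$, and $C^2$-regularity of $h$ on $\Rnn^2$ from $b_1, b_2 \geq 2$.

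\textbf{Bounds on $\partial_{r_i} \partial_{r_j} h$.} The main step is Hypothesis \ref{hyp:theta}. By repeated product/chain rule, every second derivative of $h$ admits the factorization
\[
\partial_{r_i}\partial_{r_j} h(\rr) = r_1^{b_1 - \mathds{1}_{\{i=1\}} - \mathds{1}_{\{j=1\}}}\, r_2^{b_2 - \mathds{1}_{\{i=2\}} - \mathds{1}_{\{j=2\}}}\, (1+r_1+r_2)^{-\gamma}\, P_{i,j}(\rr),
\]
where $P_{i,j}$ is a polynomial in the bounded quantities $r_k/(1+r_1+r_2)$, hence $|P_{i,j}|$ is globally bounded on $\Rnn^2$. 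Substituting $F_i''(r_i) = (a_i - 1) r_i^{a_i - 2}$, inequality \eqref{eq:bddswap_h_r} splits into four scalar inequalities per index pair $(i,j)$ (one per term in the $\min$), each of the form $r_1^{\alpha_1} r_2^{\alpha_2} \leq C(1+r_1+r_2)^\gamma$ on $\Rnn^2$, with $\alpha_1, \alpha_2$ affine in $a_1, a_2, b_1, b_2$. Such an inequality holds on all of $\Rnn^2$ iff $\alpha_1, \alpha_2 \geq 0$ (small-$r$ regime) and $\alpha_1 + \alpha_2 \leq \gamma$ (large-$r$ regime, using $r_1^{\alpha_1} r_2^{\alpha_2} \leq (r_1+r_2)^{\alpha_1+\alpha_2}$). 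Working through the list, the binding constraints come from $(i,j)=(j,j)$: pairing against $F_j'(r_j)$ in the small-$r$ regime yields $b_j \geq 2a_j - 1$, while pairing against $1$ in the large-$r$ regime yields $b_1 + b_2 \leq \gamma + a_j$, hence $b_1 + b_2 \leq \gamma + \min\{a_1, a_2\}$ after minimising over $j$. All other inequalities are strictly weaker under these constraints.

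\textbf{Lipschitz continuity and necessity.} Since $(F_k')^{-1}(u) = u^{1/(a_k - 1)}$, substituting into $\theta_{j,i}(\uu) = \partial_{r_i}\partial_{r_j} h / F_i''$ yields a product $u_1^{p_1} u_2^{p_2}$ times a smooth bounded function of $u_1^{1/(a_1-1)}, u_2^{1/(a_2-1)}$. Under \eqref{eq:example_cond}, a direct exponent count gives $p_k = 0$ or $p_k \geq 1$, yielding local Lipschitz continuity on $\Rnn^2$. For necessity: $a_j < 2$ directly breaks Hypothesis \ref{hyp:general}; taking $r_1 \to 0^+$ at fixed small $r_2 > 0$ in $|\partial_{r_1}^2 h| \leq \kappa\, F_1''(r_1)\, F_1'(r_1)$ forces $b_1 \geq 2a_1 - 1$ (and symmetrically for index $2$); and taking $r_1 = r_2 = r \to +\infty$ in $|\partial_{r_j}^2 h| \leq \kappa\, F_j''(r_j)$ forces $b_1 + b_2 \leq \gamma + a_j$ for each $j$.

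\textbf{Main obstacle.} The chief difficulty is bookkeeping: a substantial number of scalar inequalities must be tracked and the binding constraints identified among them. Once the factorization above is in hand, each inequality is elementary.
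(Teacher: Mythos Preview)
Your approach is essentially the paper's: both reduce Hypothesis~\ref{hyp:theta} to boundedness of functions of the form $r_1^{\alpha_1}r_2^{\alpha_2}(1+r_1+r_2)^{-\gamma'}$ (the paper writes these as $Q_{b_1',b_2',\gamma'}$ and tracks linear combinations via a $V(\cdot)$ notation, you use the factorization with bounded $P_{i,j}$), identify the criterion $\alpha_1,\alpha_2\ge 0$, $\alpha_1+\alpha_2\le\gamma'$, and extract the same binding constraints from the diagonal terms $\theta_{j,j}$.

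The one place your sketch is thinner is the Lipschitz step. Your criterion ``$p_k=0$ or $p_k\ge 1$ yields local Lipschitz'' is not quite right as stated: the ``smooth bounded function'' you multiply by is smooth in $r_k=u_k^{1/(a_k-1)}$, not in $u_k$, so differentiating in $u_k$ produces a chain-rule factor $u_k^{1/(a_k-1)-1}$ that blows up at $0$ when $a_k>2$. In particular, $p_k=0$ would \emph{not} give Lipschitz in that case. The fix is immediate---when $p_k\ge 1$ the singular factor is absorbed since $p_k+\tfrac{1}{a_k-1}-1>0$---and under \eqref{eq:example_cond} one always has $p_k\ge 1$ (the $p_k=0$ alternative never arises), so the argument goes through once this is made explicit. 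The paper handles this by computing $\partial_{u_k}Q_{b_1',b_2',\gamma'}(u_1^{1/(a_1-1)},u_2^{1/(a_2-1)})$ directly and reading off the local-boundedness condition $b_k'\ge a_k-1$.
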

\begin{proof}
We have $F_j'(r) = r^{a_j-1}, F_j''(r) = (a_j - 1) r^{a_j - 2}$. Hence assumption \eqref{eq:fjpp_power} is satisfied with $\beta_j = a_j - 2$, which is non-negative if and only if $a_j \geq 2$. In this case, the McCann-condition \eqref{eq:mccann_cond} is always fulfilled, since \begin{align*}
F_j(r) - rF_j'(r) + r^2F_j''(r) = \left(\frac{1}{a_j} -1+a_j - 1 \right)\, r^{a_j} \geq 0.
\end{align*}
The other conditions on $F_j$ from Hypothesis \ref{hyp:general} are clearly satisfied as well. In order to verify the flatness assumptions on $h$, in particular Hypothesis \ref{hyp:theta}, we introduce the following notations: Given arbitrary real numbers $b_1', b_2'$ and $\gamma'$, we define $Q_{b_1',b_2',\gamma'}: \Rnn^2 \to \R$ as
\begin{align*}
Q_{b_1',b_2',\gamma'}(\rr) := \frac{r_1^{b_1'}r_2^{b_2'}}{(1+r_1+r_2)^{\gamma'}}.
\end{align*}
Note that $h = Q_{b_1,b_2,\gamma}$. Observe that $Q_{b_1',b_2',\gamma'}$ is bounded in $\Rnn^2$ if and only if
\begin{equation} \label{eq:qbound_cond}
b_1', b_2' \geq 0,\quad b_1'+b_2' \leq \gamma',
\end{equation}
while for local boundedness, only the first condition is required. If $b_1',b_2' >0$, the function $Q_{b_1',b_2',\gamma'}$ vanishes on $\partial \Rnn^2$.

Additionally, for any functions $Q_1, \dots, Q_n: \Rnn^2 \to \R$, we define $V(Q_1,\dots,Q_n)$ to be the set of all linear combinations of the functions $Q_1,\dots,Q_n$. It is clear that $Q \in V(Q_1,\dots,Q_n)$ is (locally) bounded in $\Rnn^2$ if every $Q_i$ is (locally) bounded for $i = 1,\dots,n$, and that $Q$ vanishes everywhere on $\partial\Rnn^2$ if every $Q_i$ does. Now, observe that
\begin{equation} \label{eq:qdiff_rep}
\begin{split}
\partial_{r_1} Q_{b_1',b_2', \gamma'} = b_1'\,Q_{b_1'-1,b_2',\gamma'} - \gamma'\,Q_{b_1',b_2',\gamma'+1} \in V(Q_{b_1'-1,b_2',\gamma'}, Q_{b_1',b_2',\gamma'+1}) \\
\partial_{r_2} Q_{b_1',b_2', \gamma'} = b_2'\,Q_{b_1',b_2'-1,\gamma'} - \gamma'\,Q_{b_1',b_2',\gamma'+1} \in V(Q_{b_1',b_2'-1,\gamma'}, Q_{b_1',b_2',\gamma'+1}).
\end{split}
\end{equation}
In follows that $\partial_{r_1} h \in V(Q_{b_1-1,b_2,\gamma}, Q_{b_1,b_2,\gamma+1})$ and $\partial_{r_2} h \in V(Q_{b_1,b_2-1,\gamma}, Q_{b_1,b_2,\gamma+1})$. Hence, the assumption $\partial_{r_j}h(\rr) = 0$ for every $\rr \in \partial\Rnn^2$ from Hypothesis \ref{hyp:general} is satisfied if and only if $b_j > 1$ for $j = 1,2$. It thus remains to verify the assumptions from Hypothesis \ref{hyp:theta}. We start with condition \eqref{eq:bddswap_h_r}. For the second derivatives of $h$, we obtain by applying \eqref{eq:qdiff_rep} twice:
\begin{equation} \label{eq:d2h_rep}
\begin{split}
\partial_{r_1}\partial_{r_1}h &\in V(Q_{b_1 - 2, b_2, \gamma}, Q_{b_1 - 1,b_2,\gamma+1}, Q_{b_1,b_2,\gamma + 2}) \\
\partial_{r_1}\partial_{r_2}h &\in V(Q_{b_1 - 1, b_2 - 1, \gamma}, Q_{b_1 - 1,b_2,\gamma+1}, Q_{b_1,b_2 - 1,\gamma + 1}, Q_{b_1,b_2,\gamma + 2}) \\
\partial_{r_2}\partial_{r_2}h &\in V(Q_{b_1, b_2 - 2, \gamma}, Q_{b_1,b_2 - 1,\gamma+1}, Q_{b_1,b_2,\gamma + 2}).
\end{split}
\end{equation}
In order to prove \eqref{eq:bddswap_h_r}, note that for any $m_1,m_2 \in \R$, it holds
\begin{align} \label{eq:q_power}
r_1^{m_1}r_2^{m_2}Q_{b_1',b_2',\gamma'}(\rr) = Q_{b_1' + m_1,b_2' + m_2,\gamma'}(\rr).
\end{align}
From this observation and $F_j'(r) = r^{a_j - 1}$, $F_j''(r) = (a_j - 1)r^{a_j - 2}$, we obtain with \eqref{eq:d2h_rep}
\begin{equation} \label{eq:theta_rep_q}
\begin{split}
&\frac{\partial_{r_1}\partial_{r_1} h(\rr)}{F_1''(r_1)} \in V(Q_{b_1 - a_1, b_2, \gamma}, Q_{b_1 - a_1 + 1,b_2,\gamma+1}, Q_{b_1 - a_1 + 2,b_2,\gamma + 2}) \\
&\frac{\partial_{r_1}\partial_{r_1} h(\rr)}{F_1''(r_1)F_1'(r_1)} \in V(Q_{b_1 - 2a_1 + 1, b_2, \gamma}, Q_{b_1 - 2a_1 + 2,b_2,\gamma+1}, Q_{b_1 - 2a_1 + 3,b_2,\gamma + 2}) \\
&\frac{\partial_{r_1}\partial_{r_1} h(\rr)}{F_1''(r_1)F_2'(r_2)} \in V(Q_{b_1 - a_1, b_2 - a_2 + 1, \gamma}, Q_{b_1 - a_1 + 1,b_2 - a_2 +1,\gamma+1}, Q_{b_1 - a_1 + 2,b_2 - a_2 + 1,\gamma + 2}) \\
&\frac{\partial_{r_1}\partial_{r_2} h(\rr)}{F_1''(r_1)} \in V(Q_{b_1 - a_1 + 1, b_2 - 1, \gamma}, Q_{b_1 -a_1+1,b_2,\gamma+1}, Q_{b_1 - a_1 + 2,b_2 - 1,\gamma + 1}, Q_{b_1 - a_1 + 2,b_2,\gamma + 2}) \\
&\frac{\partial_{r_1}\partial_{r_2} h(\rr)}{F_1''(r_1)F_1'(r_1)} \in V(Q_{b_1 - 2a_1 + 2, b_2 - 1, \gamma}, Q_{b_1 -2a_1+2,b_2,\gamma+1}, Q_{b_1 - 2a_1 + 3,b_2 - 1,\gamma + 1}, Q_{b_1 - 2a_1 + 3,b_2,\gamma + 2}) \\
&\frac{\partial_{r_1}\partial_{r_2} h(\rr)}{F_1''(r_1)F_2'(r_2)} \in V(Q_{b_1 - a_1 + 1, b_2 - a_2, \gamma}, Q_{b_1 - a_1+1,b_2 - a_2 + 1,\gamma+1}, Q_{b_1 - a_1 + 2,b_2 - a_2,\gamma + 1}, Q_{b_1 - a_1 + 2,b_2 - a_2 + 1,\gamma + 2}) \\
&\frac{\partial_{r_1}\partial_{r_2} h(\rr) \sqrt{r_2}}{F_1''(r_1)\sqrt{r_1}} \in V(Q_{b_1 - a_1 + \frac{1}{2}, b_2 - \frac{1}{2}, \gamma}, Q_{b_1 -a_1+\frac{1}{2},b_2 + \frac{1}{2},\gamma+1}, Q_{b_1 - a_1 + \frac{3}{2},b_2 - \frac{1}{2},\gamma + 1}, Q_{b_1 - a_1 + \frac{3}{2},b_2 + \frac{1}{2},\gamma + 2}).
\end{split}
\end{equation}
In order for inequality \eqref{eq:bddswap_h_r} to be true, we need all these functions to be bounded in $\Rnn^2$. All other functions that are assumed to be bounded by \eqref{eq:bddswap_h_r} are analogous to one of the functions appearing above, hence we obtain the set of conditions on $a_1, a_2,b_1,b_2, \gamma$ required to fulfill \eqref{eq:bddswap_h_r} by analyzing when all functions above are bounded. In order to get this boundedness, \eqref{eq:qbound_cond} needs to be satisfied for all $b_1', b_2', \gamma'$ appearing above. The condition $b_1' \geq 0$ yields $b_1 \geq 2a_1 - 1$, since $b_1 - 2a_1 + 1$ appearing in the second line is the smallest $b_1'$ appearing above. By symmetry, this implies that $b_2 \geq 2a_2 - 1$ is needed too, which is sufficient for $b_2' \geq 0$ for all $b_2'$ above. For the inequality $b_1' + b_2' \leq \gamma'$ observe that in every line above, the value of $b_1' + b_2' - \gamma'$ is the same for all $Q_{b_1', b_2', \gamma'}$ appearing in that line, thus we only need to analyze the first term. The largest value of $b_1' + b_2' - \gamma'$ appearing above is $b_1 - a_1 + b_2 - \gamma$ in the first line, hence we require $b_1 + b_2 \leq \gamma + a_1$, and by symmetry, $b_1 + b_2 \leq \gamma + \min\{a_1, a_2\}$. This shows that $F_1, F_2$ and $h$ satisfy assumption \eqref{eq:bddswap_h_r} if and only if \eqref{eq:example_cond} holds.

It remains to show that \eqref{eq:example_cond} implies local Lipschitz-continuity of all $\theta_{j,i}$. It holds
\begin{align*}
\theta_{j,i}(\uu) = \frac{\partial_{r_i}\partial_{r_j}h(\rr)}{F_i''(r_i)} \ \text{ with } \rr = (r_1, r_2) = \left(u_1^\frac{1}{a_1 - 1}, u_2^\frac{1}{a_2 - 1}\right).
\end{align*}
We analyze under which condition a function of the form $\uu \mapsto Q_{b_1', b_2', \gamma'}(u_1^{p_1}, u_2^{p_2})$ with positive exponents $p_1, p_2$ is locally Lipschitz in $\Rnn^2$. It holds by \eqref{eq:qdiff_rep} and \eqref{eq:q_power}:
\begin{align*}
\partial_{u_1} Q_{b_1', b_2', \gamma'}(u_1^{p_1}, u_2^{p_2}) &= p_1 u_1^{p_1 - 1} \left(b_1' Q_{b_1' - 1, b_2', \gamma'}(u_1^{p_1}, u_2^{p_2}) - \gamma' Q_{b_1', b_2', \gamma' + 1}(u_1^{p_1}, u_2^{p_2})\right) \\
&\in V\left(Q_{b_1' - 1 + \frac{p_1 - 1}{p_1}, b_2', \gamma'}(u_1^{p_1}, u_2^{p_2}), Q_{b_1' + \frac{p_1 - 1}{p_1}, b_2', \gamma' + 1}(u_1^{p_1}, u_2^{p_2})\right),
\end{align*}
which by \eqref{eq:qbound_cond} is locally bounded if and only if $b_1' + \frac{p_1 - 1}{p_1} \geq 1$ and $b_2' \geq 0$. By applying the same argument for $\partial_{u_2} Q_{b_1', b_2', \gamma'}(u_1^{p_1}, u_2^{p_2})$, if follows that $\uu \mapsto Q_{b_1', b_2', \gamma'}(u_1^{p_1}, u_2^{p_2})$ is locally Lipschitz in $\Rnn^2$ if and only if $b_j' - \frac{1}{p_j}\geq 0$ for $j = 1,2$. In our case, we have $\frac{1}{p_j} = a_j - 1$. Hence, local Lipschitz-continuity of $\theta_{j,i}$ holds if and only if $b_j' - a_j + 1\geq 0$ for every $b_j'$ appearing in the first and fourth line of \eqref{eq:theta_rep_q}. This however follows from the inequality $b_j \geq 2a_j - 1$, thus concluding the proof.
\end{proof}

\bibliography{main}
\bibliographystyle{abbrv}

\end{document}